\def\1{\mathbbm{1}}
\pgfplotsset{compat=1.17}
\theoremstyle{plain}
\newtheorem{theorem}{Theorem}[section]
\newtheorem{corollary}[theorem]{Corollary}
\newtheorem{lemma}[theorem]{Lemma}
\newtheorem{proposition}[theorem]{Proposition}
\theoremstyle{definition}
\newtheorem{definition}[theorem]{Definition}
\newcounter{cnstcnt}
\newcommand{\newconstant}[1]{
  \refstepcounter{cnstcnt}
  \ensuremath{c_{\thecnstcnt}}
  \label{#1}
  }
\newcommand{\cte}[1]{ \begin{NoHyper}\ensuremath{c_{\ref{#1}}}\end{NoHyper} }
\newcounter{prfpart}
\newcounter{prfparthref}
\newcommand{\proofpart}[1]{
  \par
  \addvspace{\medskipamount}
  \noindent\underline{\emph{Part \refstepcounter{prfpart}\theprfpart
  \stepcounter{prfparthref}: #1}}
  \par\nobreak
  \addvspace{\smallskipamount}
}
\newcounter{subprfpart}[prfpart]
\renewcommand{\thesubprfpart}{\theprfpart.\arabic{subprfpart}}
\newcommand{\subproofpart}[1]{
  \par
  \addvspace{\medskipamount}
  \noindent\underline{\emph{Part \refstepcounter{subprfpart}\thesubprfpart: #1}}
  \par\nobreak
  \addvspace{\smallskipamount}
}
\title{Singular value distribution of dense random matrices with block Markovian dependence}
\author{Jaron Sanders} 
\email{jaron.sanders@tue.nl}
\author{Alexander Van Werde}
\email{a.van.werde@tue.nl}
\address{Eindhoven University of Technology\\ Department of Mathematics \& Computer Science\\ The Netherlands}
\date{}
\begin{document}

\begin{abstract}
    A block Markov chain is a Markov chain whose state space can be partitioned into a finite number of clusters such that the transition probabilities only depend on the clusters.  
    Block Markov chains thus serve as a model for Markov chains with communities.
    This paper establishes limiting laws for the singular value distributions of the empirical transition matrix and empirical frequency matrix associated to a sample path of the block Markov chain whenever the length of the sample path is $\Theta(n^2)$ with $n$ the size of the state space.

    The proof approach is split into two parts. 
    First, we introduce a class of symmetric random matrices with dependent entries called approximately uncorrelated random matrices with variance profile. We establish their limiting eigenvalue distributions by means of the moment method.    
    Second, we develop a coupling argument to show that this general-purpose result applies to the singular value distributions associated with the block Markov chain. 
\end{abstract}

\maketitle

\noindent 
\textbf{Keywords:} Block Markov chains, random matrices, approximately uncorrelated, variance profile, Poisson limit theorem.\\
 
\noindent
\textbf{MSC2020:} 60B20; 60J10.

\section{Introduction}
\label{sec: Introduction}

Understanding hidden structures that underlie sequential data is an important challenge in data science. 
Not only do these structures give insight into the complex process which generates the data; once the structure is determined, any subsequent analysis can benefit from a reduction in dimensionality.

An attribute of sequential data is that past and future samples are dependent. 
Many tools for data analysis however have only been analyzed for independent data, because dependencies make the mathematical analysis of a system more difficult.
Still, one can simply apply a tool originally designed for independent data and hope for insights. 
The conclusions one derives assuming independence will in the best scenario, but not necessarily, coincide with the conclusions that one would have derived in the appropriate dependent setting.

One such tool of the trade are spectral methods \cite{shi2000normalized,stratos2013spectral,abbe2017community,athreya2017statistical}.
The term spectral methods refers here to any algorithm that makes use of the eigenvalues and eigenvectors of a matrix built from the data. 
Eigenvalues and eigenvectors are indeed routinely used to understand the underlying structure in data.
If, for instance, the empirical eigenvalue distribution does not match the theoretical predictions associated with some model, then this may signify that the model is missing some important component of the process which generated the data \cite{sarkar2018spectral,rai2018network,li2019testing,mosam2021breakdown}. 
Another example occurs in principal component analysis, where the eigenvalue distribution of an empirical covariance matrix can be used to detect the appropriate number of principal components \cite{veraart2016denoising, ke2021estimation}.
If the matrix which is built from the data is non-Hermitian then algorithms typically employ singular values and singular vectors instead of eigenvalues and eigenvectors. 
Here recall that the $i$th largest singular value $s_i(M)$ of a square real matrix $M$ is defined in terms of the $i$th largest eigenvalue of $MM^{\rm{T}}$ as $s_i(M) := (\lambda_i(MM^{\rm{T}}))^{1/2}$.   

We are specifically interested in the singular value distribution of the \emph{sample frequency matrix}
\begin{equation}
    \hat{N}_X 
    := 
    ( \hat{N}_{X,ij} )_{i,j=1}^n
    \quad
    \textnormal{where}
    \quad 
    \hat{N}_{X,ij}
    := 
    \sum_{t=0}^{\ell-1} \1_{ X_t = i, X_{t+1} = j}
    \label{eq: Definition__NhatX}
\end{equation} 
built from a dependent data sequence $X_0, X_1, \ldots, X_\ell$ taking values in $\{1,\ldots,n\}$ for some positive integer $n\in \mathbb{Z}_{\geq 1}$.
The term \emph{singular value distribution of $\hat{N}_X$} here refers to the measure $\nu_{\hat{N}_X}$ defined by 
\begin{equation}
    \nu_{\hat{N}_X}([a,b]) := \frac{1}{n}\#\{i \in \{1,\ldots,n \}: a\leq s_i(\hat{N}_X) \leq b \} \label{eq: Def_SingvalDistribution}
\end{equation}
for any $a < b$. 
While the singular value distribution of a random matrix is well understood when the elements are independent, this is not so when the matrix is constructed from dependent sequential data as is the case for $\hat{N}_X$. 

This paper models the sequential data $X_0, \allowbreak X_1, \allowbreak  \ldots, \allowbreak X_{\ell}$ as being generated by a block Markov chain.
Block Markov chains are a model for dependent sequential data with an underlying community structure and have been used to develop and analyse community detection algorithms for sequential data; see \cite{zhang2020spectral,sanders2020clustering}.  
Besides these two papers and the present paper, the only other rigorous analysis of the spectral properties of $\hat{N}_X$ when $X$ is a block Markov chain can be found in \cite{sanders2021spectral}. 
There, an asymptotic distance between the $K$ largest singular values and the $n-K$ smallest singular values is established. 

The current paper establishes a limiting law for the singular value distribution of block Markovian random matrices such as $\hat{N}_X$ as the size $n$ of the state space tends to infinity and the length of the data sequence satisfies $\ell = \Theta(n^2)$.
For example, \Cref{thm: SingValN} describes the limiting law associated to $\hat{N}_X$, which is visualized in \Cref{fig: SingVals}. 
\Cref{thm: SingValN} furthermore implies that the singular value distribution in block Markovian random matrices behaves as one would predict assuming the entries $\hat{N}_{X,ij}$ are independent. 
This gives some legitimacy to the practice of assuming some independence, particularly when the dependencies are asymptotically equivalent to those of a block Markov chain and the data sequence is sufficiently long.
One can however not ignore the Markovian dependence entirely since it causes the frequencies for different transitions to have different distributions.  
Indeed, the singular value distribution following from \Cref{thm: SingValN} does not necessarily agree with the singular value distribution which one would find assuming that the different entries $\hat{N}_{X,ij}$ are identically distributed. 

We next state our main results in \Cref{sec: results}.
This is followed by an overview of the literature in \Cref{sec: RelatedLiterature}. 
\Cref{sec: Notation} then provides notation and preliminaries in preparation of the proofs.
Proof outlines are given in \Cref{sec: SketchOfProofs}, and a brief comparison between our theoretical predictions and the singular value distribution obtained from an actual dataset is done in \Cref{sec: Taxi}. 
Finally, all details for the proof are provided in \Cref{sec: RemainingProofs}.

\subsection{Results}
\label{sec: results}

Our main object of study, which will subsequently be formally defined, are Markov chains which have a community structure.
More precisely, the transition probabilities between states should only depend on the communities to which these states belong. 

Fix a positive integer $K\in \mathbb{Z}_{\geq 1}$. 
For any $n \in \mathbb{Z}_{\geq K}$, pick a partition $(\mathcal{V}_k)_{k=1}^K$ of $\mathcal{V}:= \{1,\ldots,n \}$ consisting only of nonempty sets. 
Let $p$ be the transition matrix for an irreducible acyclic Markov chain on $\{1,\ldots,K \}$ with equilibrium distribution $\pi$. 
The \emph{block Markov chain }with cluster transition matrix $p$ and clusters $(\mathcal{V}_k)_{k =1}^K$ is then the Markov chain on $\mathcal{V}$ with transition probability $P_{i,j}:= p_{x,y}/\# \mathcal{V}_{y}$ for every $i\in \mathcal{V}_x$ and $j\in \mathcal{V}_y$. 
\Cref{fig: BMC} schematically depicts a block Markov chain. 

\begin{figure}[tb]
    \centering
    \includegraphics[width=0.6\textwidth]{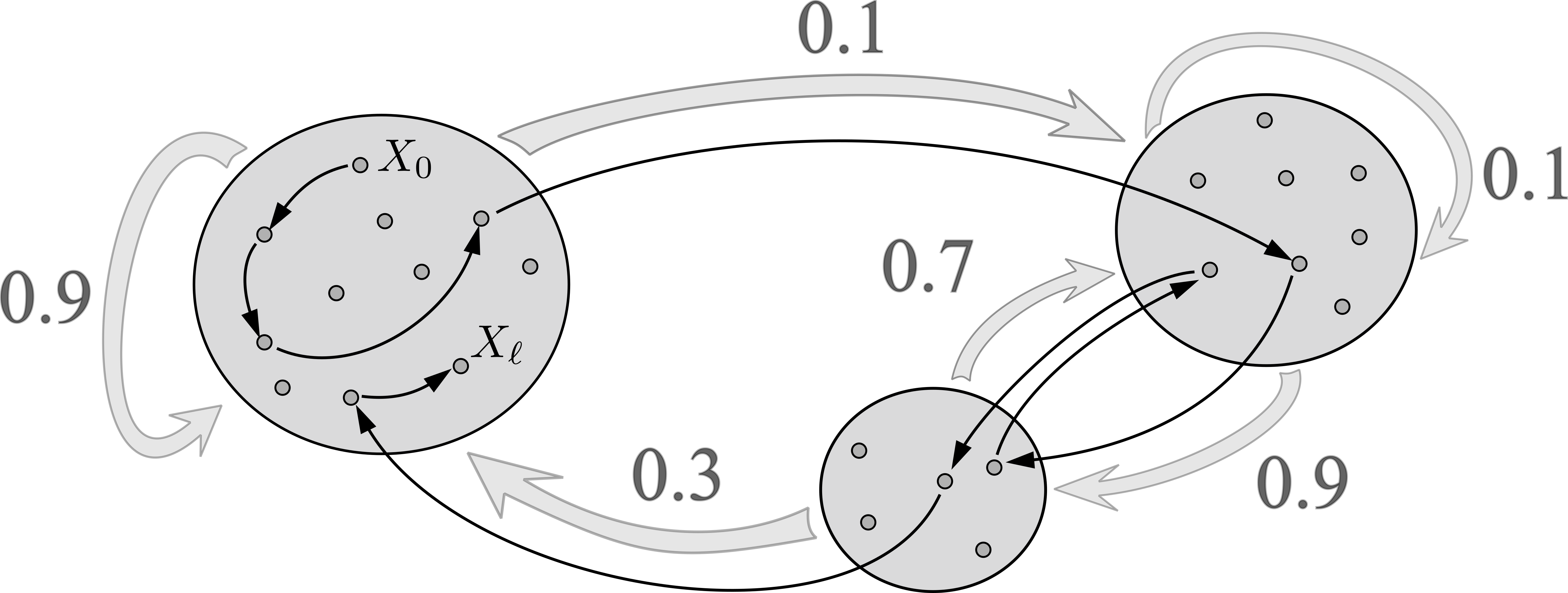}
    \caption{Visualization of a block Markov chain on $K=3$ clusters with $p = [[0.9,0.1,0],\allowbreak [0,0.1,0.9], \allowbreak [0.3,0.7,0]]$. The thick arrows visualize the cluster transition probabilities $p_{k,l}$ and the thin arrow visualise the transitions $(X_t,X_{t+1})$ of a sample path $(X_t)_{t=0}^\ell$. The starting point $X_0$ was chosen to lie in the leftmost cluster $\mathcal{V}_1$.       
    }
    \label{fig: BMC}
\end{figure} 

In the subsequent results we are concerned with the asymptotic regime where $n$ tends to infinity. 
Fix a sequence of strictly positive real numbers $\alpha := (\alpha_1,\ldots,\alpha_K)$ with $\sum_{k= 1}^K \alpha_k = 1$ and assume that for every $k\in \{1,\ldots,K \}$ it holds that $\# \mathcal{V}_k = \alpha_k n  + o(n)$.
Let $X:= (X_t)_{t=0}^\ell$ denote a sample path from the block Markov chain with an arbitrary starting distribution for $X_0$ and with length $\ell = \lambda n^2 + o(n^2)$ for some fixed $\lambda \in \mathbb{R}_{>0}$.
Recall the definition of the empirical frequency matrix $\hat{N}_X$ from \cref{eq: Definition__NhatX} and note that $\hat{N}_{X,ij}$ counts the number of traversals of edge $(i,j)$.
The \emph{empirical transition matrix} $\hat{P}_X$ associated with the sample path is given by  
\begin{align}
    \hat{P}_X := (\hat{P}_{X,ij})_{i,j=1}^n
    \quad
    \textnormal{where}
    \quad
    \hat{P}_{X,ij}:= \frac{\hat{N}_{X,ij}}{\sum_{k=1}^n\hat{N}_{X,ik}}.
    \label{eq: EmpTransition}
\end{align} 
It will be shown in \Cref{cor: Pi1Exists} that there is no division by zero in \cref{eq: EmpTransition} asymptotically almost surely. 
 
Some terminology is required to state the main results. 
The \emph{Stieltjes transform} of a finite nonzero measure $\mu$ on $\mathbb{R}$ is the analytic function $s:\mathbb{C}^+ \to \mathbb{C}^-$ given by $s(z) = \int 1/(z-x) {\mathop{}\!\mathrm{d}}\mu(x)$ where $\mathbb{C}^+ := \{z\in\mathbb{C}: \operatorname{Im}(z)>0\}$ is the upper half-plane and $\mathbb{C}^- := \{z\in\mathbb{C}: \operatorname{Im}(z)<0\}$ is the lower half-plane. 
Let us remark that some authors refer to the Stieltjes transform by another name such as \emph{Cauchy transform} or \emph{Cauchy--Stieltjes transform}. 
Further, let us warn that some authors employ a convention which differs by a minus sign from the notation employed here; they instead consider the map $z\mapsto \int 1/(x-z) {\mathop{}\!\mathrm{d}}\mu(x)$.  
The relevance of the Stieltjes transform for our purposes is that $\mu$ can be recovered from $s(z)$ by the Stieltjes inversion formula \cite[Theorem B.8.]{bai2010spectral} which states that for any continuity points $a<b$
\begin{align}
    \mu([a,b]) 
    = 
    -\frac{1}{\pi}\lim_{\varepsilon\to 0^+} \int_{a}^b \operatorname{Im}(s(x + \sqrt{-1}\varepsilon)) {\mathop{}\!\mathrm{d}}x.
    \label{eq: StieltjesInv}
\end{align} 
A sequence of random measures $\mu_n$ on $\mathbb{R}$ is said to \emph{converge weakly in probability} to a finite measure $\mu$ if $\int f(x){\mathop{}\!\mathrm{d}}\mu_n(x) \to \int f(x){\mathop{}\!\mathrm{d}}\mu(x)$ in probability for every continuous bounded function $f\in \mathcal{C}_b(\mathbb{R})$.
Finally, the \emph{symmetrization} of a measure $\mu$ on $\mathbb{R}_{\geq 0}$ is the measure $\operatorname{sym}(\mu)$ on $\mathbb{R}$ given by $A\mapsto (\mu(A \cap  \mathbb{R}_{\geq 0}) + \mu((- A) \cap \mathbb{R}_{\geq 0}))/2$ where $A$ ranges over all measurable subsets of $\mathbb{R}$ and $-A:= \{-a: a\in A \}$. 

\begin{figure}[tb]
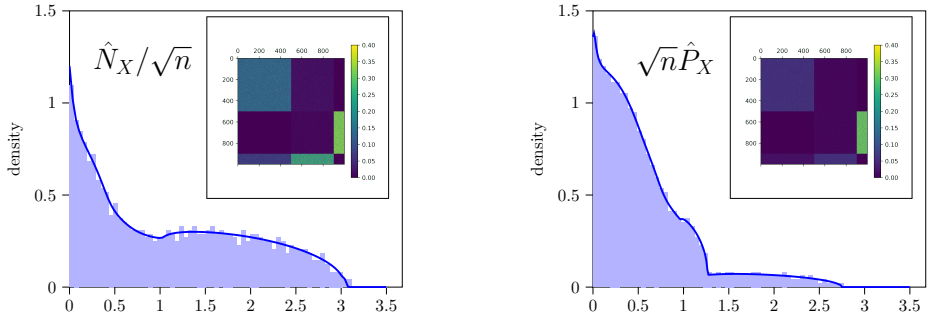
 
    \centering
    \begin{subfigure}[b]{0.45\textwidth}
        \centering
        \resizebox{.95\linewidth}{!}{\input{figures/BMC_N3.tex}}  
    \end{subfigure}
    \hfill 
    \begin{subfigure}[b]{0.45\textwidth}
        \centering
        \resizebox{.95\linewidth}{!}{\input{figures/BMC_P3.tex}}  
    \end{subfigure}
    \caption{
    On the left: $\hat{N}_X/\sqrt{n}$ and a frequency-based histogram of the singular values (bars) compared to our theoretical predictions (solid line). 
    On the right: $\sqrt{n}\hat{P}_X$ and its singular values. 
    These figures were made by sampling a path with $n = 1000, \ell = 2n^2$, $\alpha = (0.5,0.4,0.1)$ and cluster transition matrix $p = [[0.9,0.1,0],[0,0.1,0.9],[0.3,0.7,0]]$.
    The systems of equations in \Cref{thm: SingValN} and \Cref{thm: SingValP} were solved using the algorithm in \cite[Proposition 4.1]{helton2007operator} after which we used the Stieltjes inversion forumula \cref{eq: StieltjesInv} to recover the measures. 
    }
    \label{fig: SingVals}
\end{figure}

Visualizations of the following results are shown in \Cref{fig: SingVals}. 
\begin{theorem}
    \label{thm: SingValN}

    The empirical singular value distribution $\nu_{\hat{N}_X/\sqrt{n}}$ converges weakly in probability to a compactly supported probability measure $\nu$ on $\mathbb{R}_{\geq 0}$. 
    Moreover, the symmetrization $\operatorname{sym}(\nu)$ has Stieltjes transform $s(z) = \sum_{i=1}^{K} \alpha_i (a_i(z)\allowbreak + a_{K + i}(z))/2$ where $a_1,\ldots,a_{2K}$ are the unique analytic functions from $\mathbb{C}^+$ to $\mathbb{C}^-$ such that the following system of equations is satisfied  
    \begin{align}
        a_i(z)^{-1} &= z -\sum_{j=1}^{K}  \lambda \pi(i)\alpha_i^{-1} p_{i,j} a_{K+j}(z)\\ 
        a_{i+K}(z)^{-1} &= z - \sum_{j=1}^{K} \lambda \pi(j)\alpha_i^{-1} p_{j,i} a_{j}(z)  
    \end{align}
    for $i = 1,\ldots,K$.    
\end{theorem}
\newpage

\begin{theorem}
    \label{thm: SingValP}
    The empirical singular value distribution $\nu_{\sqrt{n}\hat{P}_X}$ converges weakly in probability to a compactly supported probability measure $\nu$ on $\mathbb{R}_{\geq 0}$. 
    Moreover, the symmetrization $\operatorname{sym}(\nu)$ has Stieltjes transform $s(z) = \sum_{i=1}^{K} \alpha_i (a_i(z)\allowbreak + a_{K + i}(z))/2$ where $a_1,\ldots,a_{2K}$ are the unique analytic functions from $\mathbb{C}^+$ to $\mathbb{C}^-$ such that the following system of equations is satisfied  
    \begin{align}
        a_i(z)^{-1} &= z - \sum_{j=1}^{K}  \lambda^{-1}\pi(i)^{-1}\alpha_ip_{i,j} a_{K+j}(z)\\ 
        a_{i+K}(z)^{-1} &= z - \sum_{j=1}^{K} \lambda^{-1} \pi(j)^{-1}\alpha_i^{-1}\alpha_j^2  p_{j,i} a_{j}(z)  
    \end{align}
    for $i = 1,\ldots,K$.    
\end{theorem}

Observe that the limiting law described in \Cref{thm: SingValN} is the same as occurs for a random matrix $M$ with mean-zero independent entries matching the variance profile of $\hat{N}_X$ \cite[Theorem 6.1]{zhu2020graphon}. 
By matching variance profile, it is here meant that $\operatorname{Var}[M_{ij}]= \operatorname{Var}[\hat{N}_{X,ij}]$ for all $i,j$.
Similarly, the limiting law in \Cref{thm: SingValP} corresponds to the limiting law of a random matrix with independent entries which instead matches the variance profile of $\sqrt{n}\hat{P}_X$.  
This hints at an underlying more general universality principle, which is commonplace in random matrix theory.
Informally, universality states that the spectrum of a random matrix often only depends on the variance of the entries.

Indeed, the first step of our proof establishes a precise version of this universality statement in \Cref{cor: Stieltjes}.
The proof strategy is essentially a modification of the moment method. 
More specifically, we generalize a result from \cite{hochstattler2016semicircle} concerning the eigenvalue distributions of approximately uncorrelated random matrices to include the possibility of a variance profile. 
The explicit description of the Stieltjes transform of the limiting laws relies on \cite{zhu2020graphon}.  

The second step of our proof establishes that this general-purpose universality principle applies to the block Markovian random matrices $\hat{N}_X$ and $\hat{P}_X$. 
\Cref{prop: ApproxUncorM} contains the corresponding result.
The key difficulty is to control the dependence. 
To this end, we use a coupling-based approach which is original to this paper and in fact the main new ingredient to establish the results.
The reader is referred to \Cref{prop: MomentSketch} for a special case of \Cref{prop: ApproxUncorM} whose proof contains the key ideas.  

For future research, an investigation into what happens when the sample path is much shorter, i.e., $\ell = o(n^2)$, can be considered. 
We anticipate that the results of this paper can be extended to such regimes, in which the empirical frequency matrix is sparser.
Nontrivial modifications would however be required in the part of the proof which relies on the moment method. 
This is because, in a sparse random matrix, normalizing for the variance causes all higher moments of the entries to diverge.
This issue can already be observed at the level of a scalar random variable. 
Namely consider a sequence of Bernoulli random variables $\xi_n$ with probability of success $p_n= o(1)$ as $n$ tends to infinity, and set $\zeta_n := (\xi_n - p_n)/\sqrt{p_n(1 - p_n)}$.
Then $\operatorname{Var}[\zeta_n] = 1$ whereas $\mathbb{E}[\zeta_n^3] = \Theta(p_n^{-1/2})$ diverges.

\section{Related literature}
\label{sec: RelatedLiterature}

\subsection{Block Markov chains}
\label{sec: NumCom}

Block Markov chains are the Markov chain analogue of the stochastic block model, and can similarly be used as a benchmark to investigate community detection problems. Community detection has been studied extensively within the context of the stochastic block model, and we refer interested readers to \cite{gao2017achieving} for an overview. Community detection problems within the context of block Markov chains received attention more recently \cite{du2019mode,duan2019state,sanders2020clustering,zhang2020spectral,zhou2020optimal,zhu2021learning}.

Spectral clustering algorithms for learning low-rank structures in Markov chains from trajectories that specifically utilize the sample frequency matrix $\hat{N}_X$ or the empirical transition matrix $\hat{P}_X$ have been analyzed in \cite{zhang2020spectral,sanders2020clustering}. 
The fact that $\hat{N}_X$ and $\hat{P}_X$ have previously been used in the context of community detection algorithms for Markov chains is also what motivated us to consider these two specific matrices.
One could in principle however also build different matrices from the data.
Appropriately modified, the methods of the current paper should still apply and thereby allow one to derive the associated singular value distributions.

In order to compare algorithmic performance to an information-theoretical lower bound on the detection error rate satisfied under any clustering algorithm, \cite{sanders2020clustering} required a sufficiently sharp upper bound to the largest singular value of $\hat{N}_X - \mathbb{E}[\hat{N}_X]$.
The singular values of $\hat{N}_X$ were recently also considered in \cite{sanders2021spectral}.  
It is established there that $\hat{N}_X$ has $K$ informative singular values of size $\Theta(\ell/n)$, and that the remaining $n-K$ singular values are $O(\sqrt{\ell/n})$. 
Besides the dense regime $\ell = \Omega(n^2)$, the sparser regimes $\ell = \Omega(n\ln n)$ and $\ell = \omega(n)$ are also considered in \cite{sanders2020clustering,sanders2021spectral}.

\subsection{Random matrices generated by stochastic processes}
\label{sec: randomstochastic}

The spectral distributions of matrices whose entries are sampled by means of a stochastic process, such as a Markov chain, were considered in \cite{pfaffel2012limiting,friesen2013semicircle,chakrabarty2016random,peligrad2016limiting,lowe2018semicircle,lowe2019limiting}.
These results are similar to ours in that the randomness is due to the sampling noise of the stochastic process, but differ in the precise construction of the matrix. 

Sample covariance matrices for time series have been considered in \cite{jin2009limiting,yao2012note,pfaffel2012limitingb,banna2015limitingb,banna2016limiting,merlevede2016empirical}. 
Let us note that covariance matrices of time series can also be viewed as an instance of the aforementioned study of random matrices with entries sampled from a stochastic process by consideration of the entries of the data matrix.   
Sample autocovariance matrices of time series have been considered in \cite{liu2015marvcenko,li2015singular,bhattacharjee2016large,bose2020smallest,yao2020eigenvalue,bose2021spectral}.

\subsection{Coupling arguments}

The critical new ingredient in our proof are the coupling arguments which are used to establish \Cref{prop: ApproxUncorM}. 
Coupling arguments are a natural way to deal with the dependence in a Markov chain. 
They have been used in this setting since the seminal paper \cite{doeblin1938expose}. 
Coupling arguments in random matrix theory are however not common place; exceptions we are aware of are \cite{banna2016bernstein,banna2016limiting}.

\subsection{Approximately uncorrelated random matrices}
\label{sec: Dependence}

A class of self-adjoint random matrices with dependent entries and decaying covariances, called approximately uncorrelated random matrices, was studied in \cite{hochstattler2016semicircle}. 
The authors establish that the empirical eigenvalue distribution of an approximately uncorrelated random matrix converges weakly in probability to the semicircular law.   
Our \Cref{cor: Stieltjes} generalizes their approach to admit the possibility that not all entries have the same variance.
Improved results to convergence weakly almost surely are established in \cite{fleermann2021almost,catalano2022random} under additional assumptions. 
It would be interesting to establish similar results in the presence of variance profiles (see the remarks preceding \Cref{prop: PowerSharperSharper}).

\subsection{Random matrices with a variance profile}
\label{sec: variance}

Let $M$ be a random matrix with independent centered entries and variance profile $S_{ij} := \operatorname{Var}[M_{ij}]$. 
The classical results on the spectral properties of random matrices assume that this variance profile is constant, but extensions to nonconstant variance profiles have also been considered \cite{hachem2006empirical,far2008slow,ding2014spectral,aljadeff2015eigenvalues,ding2015some,adhikari2021linear}. 

It is typically necessary to assume that the variance profile has tractable asymptotic behavior.
One notion of tractable asymptotic behavior may be found by employing notions from graphon theory \cite{cook2018non,zhu2020graphon,chatterjee2021spectral}.
In this case the variance profile converges to an integrable function $W: [0,1]^2 \to \mathbb{R}$. 
Let us note that results characterizing eigenvalue distributions in this setting historically preceded the graphon-theoretic terminology; see \cite{shlyakhtenko1996random}.
Graphon theory was originally developed in \cite{lovasz2006limits} as limiting objects for sequences of dense graphs.

Systems of self-consistent equations as in \Cref{thm: SingValN} frequently occur in the theory of random matrices with variance profiles \cite{far2008slow,shlyakhtenko1996random,zhu2020graphon}.
However, solving these equations to determine an explicit expression for the Stieltjes transform $s(z)$ is rarely possible.  
A numerical method based on an iteration of contraction maps has been developed in the field of operator-valued free probability theory \cite{helton2007operator}.

\subsection{Poisson limit theorems for Markov chains}

The variance profiles in our block Markovian setting follow from a Poisson limit theorem; see \Cref{thm: Poisson}. This is in turn deduced from a nonasymptotic Poisson approximation theorem; see \Cref{thm: PoissonNonAss}. 
Poisson limit theorems for Markov chains are a topic of study in their own right. 
We refer to \cite[Section 2.3]{schbath2000overview} and \cite[Section 5]{haydn2013entry} for an overview of the literature. 

Distinct from the literature, the emphasis of \Cref{thm: PoissonNonAss} lies on the fact that the state space is growing. 
Compare this e.g.\ to \cite{pitskel1991poisson} which concerns the number of visits to an increasingly rare cylindrical set in the sample path of a Markov chain on a \emph{fixed} finite state space.
\Cref{thm: PoissonNonAss}'s proof relies on a general Poisson approximation result for sums of dependent random variables from \cite{arratia1989two} which in turn relies on a method from \cite{chen1975poisson}.

\subsection{Random transition matrices}
There has been recent interest in the spectral properties of random walks in a random environment. 
The setting of \cite{bordenave2012circular} is to first sample a random $n\times n$ matrix $M$ of independent and identically distributed nonnegative real random variables of finite variance, and to then construct the random transition matrix $P := D^{-1} M$ with $D$ the diagonal matrix containing the row sums of $M$. 
The results then concern limiting laws for the singular value distribution and eigenvalue distribution of $\sqrt{n}P$. 
Different models for random transition matrices have been considered in \cite{zyczkowski2003random, bordenave2008spectrum,horvat2009ensemble,chafai2009aspects,chafai2010dirichlet,bordenave2011spectrum, kuhn2015random,chi2016random,bordenave2017spectrum,innocentini2018time,oliveira2019random,mosam2021breakdown}.

Our study of $\hat{P}_X$ differs from the study of $P$ in the source of randomness: the randomness in $\hat{P}_X$ is due to the observation noise in the sampled Markov chain in a deterministic environment whereas the randomness in $P$ is due to a perfect observation of a random environment. 
Our situation involves the additional subtlety that one has to deal with the dependence intrinsic to the sampling noise of a Markov chain. 
It should however be mentioned that many of the aforementioned results also concern the distribution of eigenvalues, while we study singular values.

\section{Notation and preliminaries}
\label{sec: Notation}

\subsection{Block Markov chains}
\label{sec: NotBMC}
Block Markov chains were defined in \Cref{sec: results}. 
Denote $\sigma : \mathcal{V}\to \{1,\ldots,K \}$ for the map which sends any $v\in \mathcal{V}_k$ to the cluster index $k$. 
Note that the definition of a block Markov chain implies that $\Sigma_X := (\sigma(X_t))_{t=0}^\ell$ is a Markov chain on the space of clusters $\{1,\ldots,K \}$ with transition matrix $p$.

Recall that it is assumed that $\lvert \#\mathcal{V}_k  - \alpha_k n\rvert = o(n)$ with $\alpha_k >0$ for all $k\in \{1,\ldots,K\}$.
Further, recall that the $\mathcal{V}_k$ are assumed to be nonempty for all $n$.
Hence, there exists some $\alpha_{min}\in \mathbb{R}_{>0}$, independent of $n$, such that $ \# \mathcal{V}_k > \alpha_{min}n$ for all $k\in \{1,\ldots,K\}$.

We denote $E_X := (E_{X,t})_{t=1}^\ell$ for the chain of edges $E_{X,t}:= (X_{t-1},X_t)$ associated with $X$.

\subsection{Graphs}
\label{sec: NotG}

All graphs in this paper are assumed to be finite and are allowed to have self-loops or multiple edges. 
We use the term \emph{simple} to refer to the case where self-loops or multiple edges are not allowed.  
An \emph{ordered tree} is a simple rooted tree such that every vertex is equipped with a total order on its descendants. 
The collection of all ordered trees on $k+1$ vertices is denoted by $\mathcal{T}_k$.

For any $n\in \mathbb{Z}_{\geq 1}$ we denote $\vec{E}_n$ for the set of directed edges $\{1,\ldots,n \}^2$ corresponding to the state space $\mathcal{V}:= \{1,\ldots,n \}$. 
Given a directed edge $e = (i,j)$ and an $n\times n$ matrix $M$ we denote $M_{e}:= M_{ij}$. 
For two vectors of integers $m,m'\in \mathbb{Z}^R$ we denote $m \leq m'$ if $m_i \leq m_i'$ for all $i \in \{1,\ldots,R\}$.

\subsection{Graphon theory}

A \emph{graphon} is an integrable map $W:[0,1]^2 \to \mathbb{R}$ which is symmetric, meaning that $W(x,y) = W(y,x)$ for all $x,y\in [0,1]$. 
We denote $\mathcal{W}_{0}$ for the collection of graphons $W$ such that $0\leq W(x,y) \leq 1$ for all $x,y \in [0,1]$. 

The \emph{cut norm} on a graphon $W$ is defined by 
$
    \Vert W \Vert_\square 
    := 
    \sup_{S,T\subseteq [0,1]}\vert\int_{S\times T} W(x,\allowbreak y) \allowbreak \rm{d}x \rm{d}y \vert
$
where the supremum runs over all measurable subsets $S,T$ of $[0,1]$. 
The \emph{cut metric} $\delta_\square$ on the space of graphons is defined by 
\begin{align}
    \delta_\square(W,W') := \inf_{\phi }\Vert W^\phi - W' \Vert_{\square} \label{eq: CutMet}
\end{align}
where the infinimum runs over all measure preserving bijections $\phi: [0,1]\to [0,1]$ and $W^\phi(x,y) := W(\phi(x),\phi(y))$.  

Given a symmetric matrix $M\in \mathbb{R}^{n\times n}$ one can define a graphon $W^M$ by setting 
\begin{align}
    W^M(x,y) 
    := 
    M_{ij} 
    \quad 
    \text{if } 
    (x,y) 
    \in 
    \Bigl[ \frac{i-1}{n}, \frac{i}{n} \Bigr) 
    \times \Bigl[ \frac{j-1}{n}, \frac{j}{n} \Bigr)
    .
    \label{eq: WMd}
\end{align}
The graphon $W^M$ can be assigned values on the boundaries $x=1$ and $y=1$ by extending continuously.

\subsection{Measure theory}
\label{sec: SpecM}
Recall that the \emph{Stieltjes transform} of a finite measure on $\mathbb{R}$, \emph{weak convergence in probability} of random measures, and the \emph{symmetrization} of a measure on $\mathbb{R}_{\geq 0}$ were defined in \Cref{sec: results}.
For two probability measures $\mu$ and $\nu$ on a countable space $\mathcal{V}$, we can define the \emph{total variation distance} as 
\begin{align}
    d_{\mathrm{TV}}(\mu,\nu):= \frac{1}{2}\sum_{x\in \mathcal{V}}\lvert \mu(x) - \nu(x)\rvert.
    \label{eq: defTV} 
\end{align}

Whenever the $k$th moment of a measure $\mu$ on $\mathbb{R}$ exists, $k\in \mathbb{Z}_{\geq 0}$, the $k$th moment will be denoted as $\mathfrak{m}_k(\mu):= \int x^k d\mu(x)$.  
Note that the definition of the \emph{empirical singular value distribution} of an $n\times n$ real matrix $M$ with singular values $s_1(M) \geq \ldots \geq s_n(M)$ from \cref{eq: Def_SingvalDistribution} can be rephrased as stating that $\nu_M$ is the measure on $\mathbb{R}_{\geq 0}$ given by $\nu_M := n^{-1}\sum_{i=1}^n \delta_{s_i(M)}$ where $\delta_{s_i(M)}$ denotes a point mass at $s_i(M)$.
The \emph{empirical eigenvalue distribution} $\mu_A$ of an $n\times n$ symmetric matrix $A$ with eigenvalues $\lambda_1(A)\geq \ldots \geq \lambda_n(A)$ is similarly defined as the measure on $\mathbb{R}$ given by $\mu_{A} := n^{-1}\sum_{i=1}^n \delta_{\lambda_i(A)}$. 

Denote the \emph{Hermitian dilation of $M$} by
\begin{align}
    H(M)
    := 
    \begin{pmatrix}
        0 & M \\ 
        M^{\mathrm{T}} & 0 \\
    \end{pmatrix}
    .
    \label{eq: Hdil}
\end{align}
Note that the eigenvalues of $H(M)$ are $s_1(M)$, $-s_{1}(M)$, $\ldots$, $s_n(M)$, $-s_n(M)$. 
Hence, $\mu_{H(M)} = \operatorname{sym}(\nu_{M})$ where $\operatorname{sym}(\nu_M)$ denotes the symmetrization of the measure $\nu_M$. 
\subsection{Compressed notation for conditional probability} \label{sec: Notation_TwoLineABCD}
Let $A,B,C,D\in \mathcal{F}$ be events in some probability space $(\Omega,\mathcal{F},\mathbb{P})$ with $\mathbb{P}(C\cap D)\neq 0$. 
We will on a few occasions encounter long expressions involving the associated conditional probability. 
To preserve readability we may then also use the following compressed notation 
\begin{align}
    \mathbb{P}(A,B \mid C,D) =: \mathbb{P}\bigg(\ \begin{aligned}
        A\\ B
     \end{aligned}\ \ \bigg\vert \ \ \begin{aligned}
       C\\ D
    \end{aligned}\ \bigg). 
\end{align}
Similar notation may be used for conditional expectation and unconditional probability. 
\subsection{Asymptotic notation}

We employ the usual conventions for big-$O$ notation:
Let $(x_n)_{n=0}^\infty$ and $(y_n)_{n=0}^\infty$ be two sequences of real numbers. Then $x_n = O(y_n)$ if and only if there exist $C,n_0>0$ such that $\lvert x_n \rvert \leq C\lvert y_n \rvert$ for all $n\geq n_0$. 
Similarly, $x_n = o(y_n)$ if and only if for every $C>0$ there exists some $n_0$ such that $\lvert x_n \rvert \leq C\lvert y_n \rvert$ for all $n \geq n_0$; 
and $x_n = \Omega(y_n)$ if and only if there exist $C,n_0>0$ such that $\lvert x_n \rvert \geq C\lvert y_n \rvert$ for all $n\geq n_0$. 
Finally, $x_n = \Theta(y_n)$ if and only if $x_n = O(y_n)$ as well as $x_n = \Omega(y_n)$. 

If $(x_n)_{n=0}^\infty$ depends on some parameters $a,b$, then the possible dependence of the constants on the parameters is expressed in the notation. 
For example, $x_n = O_a(y_n)$ means that there exist $C,n_0>0$ possibly dependent on $a$ but not on $b$ such that $\lvert x_n \rvert \leq C\lvert y_n \rvert$ for all $n\geq n_0$. 
When emphasizing like such, the parameters $a,b$ are assumed not to depend on $n$.

\section{Proof outline}
\label{sec: SketchOfProofs}

Our proof of \Cref{thm: SingValN} and \Cref{thm: SingValP} consists of two parts: a general-purpose universality result and a reduction argument. 

The first part of our proof is given in \Cref{sec: ApUn} where we generalize the results concerning eigenvalues of approximately uncorrelated random matrices from \cite{hochstattler2016semicircle} to admit the possibility of a variance profile. 
The corresponding general-purpose universality results are \Cref{thm: WeakProbAlUncor} and \Cref{cor: Stieltjes}.
Similar results for matrices with variance profile have previously also appeared in \cite[Theorem 3.2 and Theorem 3.4]{zhu2020graphon} under the assumption that the entries are independent.  
The combination of variance profiles with dependence, i.e., the notion of approximately uncorrelated random matrices with a variance profile, is new.

The second part of our proof is given in \Cref{sec: SketchBMC} and consists of a reduction to \Cref{cor: Stieltjes}.  
This involves two key difficulties. 
First, we need to determine the variance profiles associated with the block Markovian random matrices $\hat{N}_X$ and $\hat{P}_X$. 
These variance profiles are established using \Cref{thm: Poisson}, which states that $\hat{N}_{X,ij}$ is asymptotically Poisson distributed with a rate that depends only on the clusters to which $i$ and $j$ belong.
Second, we need to establish that the block Markovian random matrices are in the approximately uncorrelated universality regime. 
To do so, we develop a coupling argument which shows that the covariance between the number of traversals of different edges decays sufficiently quickly; see \Cref{prop: ApproxUncorM} for the corresponding result.

\subsection{Eigenvalue distributions of approximately uncorrelated random matrices with variance profile}
\label{sec: ApUn}

The results in this section concern the eigenvalues of a symmetric matrix whereas we are interested in the singular values of the matrices $\hat{N}_X$ and $\hat{P}_X$. 
To this end, let us remind the reader of the fact that the study of singular values of any matrix can be reduced to the study of the eigenvalues of a symmetric matrix by a Hermitian dilation (recall \cref{eq: Hdil}).  

\begin{definition}
    \label{def: ApproxUncorr}
    A family of symmetric random matrices $(A_n)_{n=1}^\infty$ is said to be \emph{approximately uncorrelated with variance profile} if, for any non-negative integers $0 \leq r \leq  R$ and $m_1,\ldots,m_R\in \mathbb{Z}_{\geq 0}$ with $m_i = 1$ for $i= 1,\ldots,r$, it holds that 
    \begin{align}
        \max_{\forall k\neq l : \{i_k,j_k\} \neq \{i_l,j_l \} }\left\lvert \mathbb{E}\left[A_{n,i_1j_1}^{m_1} \cdots A_{n,i_Rj_R}^{m_{R}}\right] \right\rvert = O_{m,R}(n^{-r/2})\label{eq: defAUGenMom}
    \end{align}
    and 
    \begin{align}
        \max_{\forall k\neq l : \{i_k,j_k \}\neq  \{i_l,j_l \} }\big\lvert \mathbb{E}\big[  A_{n,i_1j_1}^{2} \cdots  A_{n,i_Rj_R}^{2}\big] - {\textstyle\prod_{k=1}^R}\mathbb{E}\big[A_{n,i_kj_k}^2\big] \big\rvert = o_{R}(1)\label{eq: defAUVar}
    \end{align}
    where the maxima run over all values of $(i_1,j_1),\ldots,(i_R,j_R) \in \{1,\ldots,n \}^2$ with $\{i_k,j_k \}\neq \{i_l,j_l\}$ for all $k\neq l$.
\end{definition} 

In order to identify a limit of the empirical eigenvalue distribution $\mu_{A_n/\sqrt{n}}$ it is necessary to assume that the variance profile has tractable asymptotic behavior. 
We follow the approach taken in \cite[Theorem 3.2]{zhu2020graphon} and employ the homomorphism density.
The \emph{homomorphism density} from a simple graph $F = (V,E)$ on $V = \{1,\ldots,R\}$ to a symmetric matrix $M\in \mathbb{R}^{n\times n}$ is defined by 
\begin{align}
    t(F,M) 
    := 
    \frac{1}{n^{R}}\sum_{i_1,\ldots,i_{R} =1}^n  \prod_{\{v,w \} \in E}M_{i_v,i_w}.
    \label{eq: HomDens}
\end{align}
The name homomorphism density may be explained by the fact that if $A$ is the adjacency matrix of a graph $G$, then $t(F,A)$ counts the number of graph homomorphisms of $F$ to $G$. 
A detailed proof for the following result may be found in \Cref{sec: ProofProbAlUncor}. 

\begin{theorem}
    \label{thm: WeakProbAlUncor}
    Let $(A_n)_{n=1}^\infty$ be a family of symmetric random matrices which are approximately uncorrelated with variance profile $S_n:=(\operatorname{Var}[A_{n,ij}])_{i,j=1}^n$. 
    Assume that for every ordered tree $T\in \cup_{m=0}^\infty\mathcal{T}_m$ it holds that $t(T,S_n)$ has a limit as $n\to \infty$. 
    Then, the empirical eigenvalue distribution of $\mu_{A_n/\sqrt{n}}$ converges weakly in probability to the unique probability measure $\mu$ whose moments are given by 
    \begin{align}
        \mathfrak{m}_{2m}(\mu) 
        = 
        \sum_{T\in \mathcal{T}_m} \lim_{n\to\infty} t(T,S_n),
        \quad 
        \mathfrak{m}_{2m + 1}(\mu) = 0,
    \end{align}
    for every $m\in \mathbb{Z}_{\geq 0}$. Moreover, $\mu$ is compactly supported. 
\end{theorem}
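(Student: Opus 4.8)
\textbf{Proof strategy for \Cref{thm: WeakProbAlUncor}.}
The plan is to run the moment method on the symmetric random matrices $A_n/\sqrt{n}$: show that for each fixed $k$ the random moment $\mathfrak{m}_k(\mu_{A_n/\sqrt{n}}) = n^{-1}\operatorname{Tr}((A_n/\sqrt{n})^k)$ converges in probability to a deterministic limit $\mathfrak{m}_k$, then verify that the sequence $(\mathfrak{m}_k)_k$ satisfies Carleman's condition so that these moments determine a unique compactly supported measure $\mu$, and finally invoke the standard fact that convergence of all moments in probability to the moments of a measure determined by its moments implies weak convergence in probability. The heart of the matter is the first step, which I would split into a computation of the expected moments $\mathbb{E}[n^{-1}\operatorname{Tr}((A_n/\sqrt{n})^k)]$ followed by a variance bound showing the moments concentrate.

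First I would expand
\begin{align}
    \mathbb{E}\Bigl[\tfrac{1}{n}\operatorname{Tr}\bigl((A_n/\sqrt{n})^k\bigr)\Bigr]
    = \frac{1}{n^{1+k/2}}\sum_{i_0,\ldots,i_{k-1}} \mathbb{E}\bigl[A_{n,i_0 i_1} A_{n,i_1 i_2} \cdots A_{n,i_{k-1} i_0}\bigr]
\end{align}
and organize the sum over closed walks $i_0 \to i_1 \to \cdots \to i_{k-1} \to i_0$ of length $k$ according to the multigraph (really the multiset of unordered edges $\{i_t, i_{t+1}\}$) that the walk traces out. Group the repeated edges: if a walk uses $R$ distinct unordered pairs with multiplicities $m_1,\ldots,m_R$ (so $\sum m_j = k$), then \cref{eq: defAUGenMom} with $r$ equal to the number of indices $j$ with $m_j = 1$ bounds the corresponding expectation by $O(n^{-r/2})$. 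A walk that traverses an edge exactly once cannot return, so such walks contribute nothing in the limit only after the counting is done carefully; the familiar graph-theoretic argument shows that the number of index choices producing a given walk-shape with $v$ distinct vertices is $\Theta(n^v)$, and that $v \le 1 + (k - \#\{j : m_j = 1\})/2 \cdot(\text{something})$ — more precisely the surviving contributions come from walks whose edge-multigraph is a tree traversed with every edge used exactly twice, i.e. $k = 2m$ even and the walk is a depth-first traversal of a tree on $m$ edges. For such a walk the expectation \cref{eq: defAUVar} lets me replace $\mathbb{E}[\prod A_{n,\cdot}^2]$ by $\prod \operatorname{Var}[A_{n,\cdot}] + o(1)$, and summing over ordered trees $T \in \mathcal{T}_m$ and over vertex labelings reproduces exactly $n^{-1-m}\sum \prod_{\text{edges}} S_{n,\cdot} = t(T,S_n) + o(1)$ per tree, using the definition \cref{eq: HomDens} of the homomorphism density. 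This yields $\mathbb{E}[\mathfrak{m}_{2m}(\mu_{A_n/\sqrt{n}})] \to \sum_{T\in\mathcal{T}_m}\lim_n t(T,S_n)$ and $\mathbb{E}[\mathfrak{m}_{2m+1}(\mu_{A_n/\sqrt{n}})]\to 0$, the odd case because a closed walk of odd length must use some edge an odd number of times, in particular once, forcing a factor $n^{-1/2}$.

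For concentration I would bound $\operatorname{Var}[n^{-1}\operatorname{Tr}((A_n/\sqrt{n})^k)]$ by expanding it as a double sum over pairs of closed walks and again splitting according to the combined edge-multigraph. The point is that if the two walks share no edge then \cref{eq: defAUGenMom} factorizes the expectation up to $o(1)$-type errors and the contribution cancels against $\mathbb{E}[\cdot]^2$; if they do share an edge the two walks are "connected", which forces the number of free vertices to drop by at least one, producing an extra factor of $n^{-1}$ and hence $\operatorname{Var} = O(1/n)\to 0$. Here one must be a little careful that \cref{eq: defAUGenMom} and \cref{eq: defAUVar} are stated for collections of \emph{distinct} unordered pairs, so the mixed moment over the union of the two walks' edge sets has to be reindexed by distinct pairs with their total multiplicities before the bounds apply — this bookkeeping, together with checking that every repeated-edge configuration genuinely costs the claimed power of $n$, is the main obstacle and the place where the argument is most delicate. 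Finally, the limiting moments satisfy $\mathfrak{m}_{2m} \le \#\mathcal{T}_m \cdot \sup_{T}\lim_n t(T,S_n) \le C^m$ because $\#\mathcal{T}_m$ is the Catalan number $\binom{2m}{m}/(m+1) \le 4^m$ and $0 \le S_{n,ij}\le 1$ forces $t(T,S_n)\le 1$ (the variance of an entry bounded in $[0,1]$ — or, if one only knows the entries are bounded, by a uniform constant); this exponential growth bound gives Carleman's condition, so the moment problem is determinate, $\mu$ is compactly supported, and the proof concludes by the standard moment-convergence-implies-weak-convergence-in-probability argument (e.g. via \cite{bai2010spectral}).
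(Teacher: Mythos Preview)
Your approach is the same as the paper's: moment method, reduction to closed walks, identification of doubled trees as the surviving terms via \cref{eq: defAUGenMom} and \cref{eq: defAUVar}, and a variance bound for concentration. Two places where your sketch is imprecise and the paper is sharper:

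First, the combinatorial counting. You write $v \le 1 + (k - r)/2 \cdot(\text{something})$; the correct bound is $\#V(i) \le (k + r_1(i))/2 + 1$, with \emph{strict} inequality when $r_1(i)>0$. The paper imports this as \Cref{lem: Combinatorics} from \cite{hochstattler2016semicircle}. The strict inequality is exactly what kills the terms with $r_1(i)>0$: there are $o_k(n^{(k+r)/2+1})$ such tuples, each contributing $O_k(n^{-r/2})$, against the normalization $n^{-1-k/2}$.

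Second, the variance computation. You claim that when the two walks share no edge, \cref{eq: defAUGenMom} ``factorizes'' the expectation. It does not; \cref{eq: defAUGenMom} is only an upper bound, and neither \cref{eq: defAUGenMom} nor \cref{eq: defAUVar} gives a factorization across two separate walks. What the paper does instead (\Cref{lem: Var}) is split according to whether the two walks share a \emph{vertex}. If $V(i)\cap V(j)\neq\emptyset$ it concatenates $i$ and $j$ into a single closed walk $\ell_{i,j}$ of length $2k$ and reapplies \Cref{lem: Combinatorics} to get a total contribution $O_k(n^{-1})$. If $V(i)\cap V(j)=\emptyset$ it reruns the expectation argument of \Cref{lem: EA} on the pair: the surviving pairs are those where both $G_i$ and $G_j$ are doubled trees on disjoint vertex sets, and then \cref{eq: defAUVar} applied to the full product of squares gives $\prod \operatorname{Var}$ over all edges of both trees, which happens to factor. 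Summing yields $(\sum_T t(T,S_n))^2 + o_k(1)$, matching $\mathbb{E}[\mathfrak{m}_k]^2$.

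Finally, $S_{n,ij}\le 1$ is not an assumption here; the paper extracts $\max_{i,j}\mathbb{E}[A_{n,ij}^2] = O(1)$ directly from \cref{eq: defAUGenMom} with $R=1$, $r=0$, $m_1=2$, and feeds that constant into the Catalan estimate.
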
 

\begin{proof}[Proof sketch]    
    Just as in the classical moment method, the key step is to show that $\mathbb{E}[\mathfrak{m}_k(\mu_{A_n/\sqrt{n}})] = \mathfrak{m}_k(\mu) + o_k(1)$ for every $k \in \mathbb{Z}_{\geq 0}$. 
    Observe that  
    \begin{align}
        \mathbb{E}[\mathfrak{m}_{k}(\mu_{A_n/\sqrt{n}})] 
        &
        = n^{-1 - k/2}\mathbb{E}[\operatorname{Tr}(A_n^{k})] 
        \\ 
        &
        = n^{-1 - k/2}\sum_{i_1,\ldots,i_k = 1}^n \mathbb{E}[A_{n,i_1i_2} A_{n,i_2i_3} \cdots A_{n,i_{k-1}i_k} A_{n,i_k i_1}]
        \label{eq: MomentExpansion}
    \end{align}
    for every $k \in \mathbb{Z}_{\geq 0}$. 
    Given a sequence of indices $i := (i_1,\ldots,i_k,i_1)$, which occurs on the right-hand side of \cref{eq: MomentExpansion}, let $G_i := (V(i), E(i))$ denote the induced undirected graph with vertex set $V(i):=\{i_1,\ldots,i_k\}$ and edge set $E(i):= \{\{i_1,i_2 \},\{i_2,i_3 \},\ldots,\allowbreak \{i_k,i_1 \} \}$.
    Viewing $i$ as a cycle on $G_i$, let $r_1(i)$ be the number of edges which are traversed exactly once and let $r_2(i)$ be the number of edges which are traversed exactly twice.
    Note that we could in principle also define $r_3(i), r_4(i), \ldots$, but these quantities will not be relevant in the proof.
    
    Consider $P(i):= \mathbb{E}[A_{n,i_1i_2} \cdots A_{n,i_k i_1}]$.    
    In a classical application of the moment method, one has assumed that all entries $A_{n,ij}$ with $i\leq j$ are independent and centered. 
    Under such assumptions, it immediately follows that $P(i)=0$ whenever $r_1(i) >0$. 
    The entries of $A_{n}$ are however not independent in our case. 
    It may thus be that $P(i) \neq 0$. 
    Instead, one has to rely on part \cref{eq: defAUGenMom} in the definition of an approximately uncorrelated random matrix to deduce that $P(i)$ is small whenever $r_1(i)$ is large. 
    Combined with a bound on the number of terms with $r_1(i) = r$, which is stated in \Cref{lem: Combinatorics} and established in \cite{hochstattler2016semicircle}, this is still sufficient to argue that the contribution of the terms with $r_1(i)>0$ is asymptotically negligible.

    When $k = 2m+1$ is odd, the number of terms with $r_1(i)=0$ in \cref{eq: MomentExpansion} is of a smaller order than the normalizing factor $n^{-1 - k/2}$. 
    This yields that $n^{-1 - k/2}\mathbb{E}[\operatorname{Tr}(A_n^k)] = o_k(1)$ for all odd values of $k$.
    When $k=2m$ is even, the asymptotics are dominated by the contribution of those $P(i)$ for which $G_i$ is a tree and $r_2(i) = k/2$.
    This leads to the conclusion that $n^{-1 - k/2}\mathbb{E}[\operatorname{Tr}(A_n^k)] = \sum_{T\in \mathcal{T}_m} t(T,S_n) + o_k(1)$.  
\end{proof}

Note that \Cref{thm: WeakProbAlUncor} also applies to random matrices with independent entries. 
Correspondingly, the limit $\mu$ may be explicitly identified whenever it is known for independent random matrices with the same variance profile. 
The following corollary is an instance of this principle and uses the description provided in \cite[Theorem 3.4]{zhu2020graphon} for eigenvalues of matrices with independent entries. 
The details for the reduction argument are provided in \Cref{sec: ProofCorSystemA}. 

\begin{corollary}
    \label{cor: Stieltjes}
    Let $(A_n)_{n=1}^\infty$ be a family of symmetric random matrices which are approximately uncorrelated with variance profile $S_n:= (\operatorname{Var}[A_{n,ij}])_{i,j=1}^n$.
    Assume that there exists some graphon $W\in \mathcal{W}_0$ such that $\delta_{\square}(W^{S_n},W)\to 0$. 
    Then, the empirical eigenvalue distribution $\mu_{A_n/\sqrt{n}}$ converges weakly in probability to the probability measure $\mu$ whose Stieltjes transform $s(z)$ is given by 
    \begin{align}
        s(z) = \int_{0}^1 a(z,x) dx \label{eq: stiela}
    \end{align}
    where $a(z,x)$ is the unique analytic function from $\mathbb{C}^+\times [0,1]$ to $\mathbb{C}^-$ satisfying the following self-consistent equation 
    \begin{align}
        a(z,x)^{-1} = z - \int_0^1 W(x,y) a(z,y) dy.\label{eq: aselfconsistent}
    \end{align} 
\end{corollary} 

\subsection{Block Markovian random matrices are approximately uncorrelated with variance profile} 
\label{sec: SketchBMC}

The proof of \Cref{thm: SingValN} and \Cref{thm: SingValP} now amounts to a reduction to \Cref{cor: Stieltjes} which is done in three steps. 
First, in \Cref{sec: Reduction} we argue that we may recenter the matrices and we may assume that $X$ starts from its equilibrium distribution. 
Second, in \Cref{sec: Poisson} we determine the variance profiles by means of a Poisson limit theorem. 
Finally, in \Cref{sec: ApproxU} we show that the random matrices are in the approximately uncorrelated regime which is done using a coupling argument.  
The key ideas for the coupling argument are demonstrated in \Cref{sec: Examp}.

\subsubsection{Reduction to centered random matrices when starting in equilibrium}
\label{sec: Reduction} 

A sample path $(Z_t)_{t=0}^\ell$ of a Markov chain $Z$ on the state space $\mathcal{V}$ is said to have initial distribution $\iota:\mathcal{V}\to [0,1]$ if $\mathbb{P}(Z_0 = v) = \iota(v)$ for all $v\in \mathcal{V}$. 
Assume that $Z$ is irreducible and acyclic so that it has an equilibrium distribution $\Pi_Z$. 
Then, $Z$ is said to \emph{start in equilibrium} if it has initial distribution $\Pi_Z$.

Let $\hat{D}_X$ denote the $n\times n$ diagonal matrix whose $i$th diagonal value is the sum of the values on the $i$th row of $\hat{N}_X$: 
\begin{align}
    \hat{D}_{X,ii} := \sum_{j=1}^n \hat{N}_{X,ij}. \label{eq: defDx}
\end{align}
Observe that we can write the definition of the empirical transition matrix in \cref{eq: EmpTransition} as $\hat{P}_X = \hat{D}_X^{-1} \hat{N}_X$. 

The following lemma, whose proof is provided in \Cref{sec: MXQX} based on perturbative arguments, allows us to make the following two reductions. First, we may recenter $\hat{N}_X$ and pretend as if $\hat{D}_X$ is a deterministic matrix. Second, we may assume that $X$ starts in equilibrium. 

\begin{lemma}
    \label{lem: MXQX}
    Let $X:= (X_t)_{t=0}^{\ell}$ and $Y:= (Y_t)_{t=0}^\ell$ be sample paths from the block Markov chain where $X$ starts in equilibrium and $Y$ has an arbitrary initial distribution. 
    Denote $M_X := \hat{N}_X - \mathbb{E}[\hat{N}_X]$ and $Q_X := \operatorname{diag}((\ell+1)\Pi_X)^{-1} M_X$. 
    \begin{enumerate}[label = \rm{(\roman*)}]
        \item Assume that $\nu_{M_X/\sqrt{n}}$ converges weakly in probability to a probability measure $\nu$. Then, $\nu_{\hat{N}_Y/\sqrt{n}}$ converges weakly in probability to $\nu$. 
        \item Assume that $\nu_{\sqrt{n}Q_X}$ converges weakly in probability to a probability measure $\nu$. Then, $\nu_{\sqrt{n}\hat{P}_Y}$ converges weakly in probability to $\nu$. 
    \end{enumerate}  
\end{lemma}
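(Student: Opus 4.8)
The plan is to prove both statements by a perturbation argument that isolates the genuinely random fluctuations of $\widehat{N}_X$ from the deterministic/low-rank structure, and then handles the change of initial distribution separately. The key analytic input throughout will be the standard fact that the bounded-Lipschitz (or Lévy) distance between the empirical singular value distributions of two $n \times n$ matrices $M$ and $M'$ is controlled by rank-type perturbations and by $\Vert M - M'\Vert_{\mathrm{op}}$, together with the Hoffman--Wielandt inequality $\sum_i (s_i(M) - s_i(M'))^2 \le \Vert M - M'\Vert_F^2$ applied to the Hermitian dilations. Concretely, I would first record the deterministic decomposition $\mathbb{E}[\widehat{N}_X] = (\ell+1)\operatorname{diag}(\Pi_X) P + (\text{lower order})$, where $P$ is the transition matrix and $\Pi_X$ the equilibrium distribution; the point is that $\mathbb{E}[\widehat{N}_X]$ has rank at most $K$ (it is constant on cluster blocks up to the $1/\#\mathcal V_{\sigma(j)}$ scaling), so its contribution to $\nu_{\widehat N_X/\sqrt n}$ is $O(K/n) \to 0$ in bounded-Lipschitz distance. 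This already reduces $\widehat{N}_X$ to $M_X$ at the level of singular value distributions, giving (i) when $Y = X$; the work is to also replace $Y$ by $X$.

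For part (ii), the plan is to write $\widehat{P}_X = \widehat{D}_X^{-1}\widehat{N}_X$ and compare it to $\operatorname{diag}((\ell+1)\Pi_X)^{-1}\widehat{N}_X$, and then to $Q_X = \operatorname{diag}((\ell+1)\Pi_X)^{-1} M_X$. The first comparison requires showing that $\widehat{D}_{X,ii}$ concentrates around its mean $(\ell+1)\Pi_X(i)$: since $\widehat{D}_{X,ii} = \sum_j N_{X,ij}$ is the number of visits to state $i$ among the first $\ell$ steps, a Markov-chain concentration bound (e.g.\ via the coupling/regeneration structure already present in the paper, or a crude Chebyshev estimate using that $\Pi_X(i) = \Theta(1/n)$ and $\ell = \Theta(n^2)$, so $(\ell+1)\Pi_X(i) = \Theta(n)$) gives $\widehat D_{X,ii} = (\ell+1)\Pi_X(i)(1 + o(1))$ uniformly in $i$, asymptotically almost surely; in particular this yields \Cref{cor: Pi1Exists}, i.e.\ no division by zero. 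Writing $\widehat{D}_X^{-1} = \operatorname{diag}((\ell+1)\Pi_X)^{-1}(I + E)$ with $E$ a diagonal matrix of small entries, the difference $\sqrt n(\widehat{P}_X - \operatorname{diag}((\ell+1)\Pi_X)^{-1}\widehat N_X) = \sqrt n\,\operatorname{diag}((\ell+1)\Pi_X)^{-1} E\,\widehat N_X$ is estimated in operator norm by $\Vert E\Vert_{\mathrm{op}}$ times $\sqrt n\Vert\operatorname{diag}((\ell+1)\Pi_X)^{-1}\widehat N_X\Vert_{\mathrm{op}}$; the latter is bounded a.a.s.\ because $\nu_{\sqrt n Q_X}$ (hence $\sqrt n Q_X$ up to the rank-$K$ mean) has bounded support, so the difference vanishes in bounded-Lipschitz distance. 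Then subtracting the rank-$K$ matrix $\operatorname{diag}((\ell+1)\Pi_X)^{-1}\mathbb{E}[\widehat N_X]$ reduces $\operatorname{diag}((\ell+1)\Pi_X)^{-1}\widehat N_X$ to $Q_X$, again at cost $O(K/n)$.

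The remaining point, common to both (i) and (ii), is the reduction from an arbitrary initial distribution for $Y$ to the equilibrium start for $X$. Here I would couple the two sample paths: run $X$ from equilibrium and $Y$ from its arbitrary law on the same probability space so that they coalesce at some a.a.s.-finite coupling time $\tau$ (for a block Markov chain with fixed cluster transition matrix $p$, mixing happens at the cluster level in $O(1)$ steps, and once the cluster chains agree one can couple the within-cluster uniform choices, so $\tau$ has exponential tails uniformly in $n$). After coalescence $X_t = Y_t$ for all $t \ge \tau$, so $\widehat N_X$ and $\widehat N_Y$ differ only in the edges traversed before time $\tau$; hence $\widehat N_X - \widehat N_Y$ has at most $2\tau$ nonzero entries, so its rank is $O(\tau)$, and $\operatorname{rank}(\widehat N_X - \widehat N_Y)/n = O(\tau/n) \to 0$ a.a.s. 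By the rank inequality for empirical spectral distributions this shows $\nu_{\widehat N_Y/\sqrt n}$ and $\nu_{\widehat N_X/\sqrt n}$ have the same weak-in-probability limit, proving (i); for (ii) one combines this with the $\widehat D$-concentration (which also transfers to $Y$ after coalescence) in the same manner. I expect the main obstacle to be making the coupling-time control fully uniform in $n$ and clean enough to feed into the rank bound --- in particular checking that the ``pre-$\tau$'' edges really only perturb a rank-$o(n)$ piece and that no division-by-zero issue arises before equilibrium is reached; the operator-norm estimates and the rank bounds for the deterministic mean are comparatively routine.
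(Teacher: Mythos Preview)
Your plan for part (i) is essentially the paper's: couple $X$ and $Y$ through their cluster chains so they agree after an $O(1)$ coupling time, then remove the deterministic rank-$K$ mean $\mathbb{E}[\widehat N_X]$. The only cosmetic difference is that you bound $\widehat N_X-\widehat N_Y$ by its rank ($\leq 2\tau$) whereas the paper uses the Frobenius bound $\Vert \widehat N_X - \widehat N_Y\Vert_F^2 \le (2\tau)^2$; either feeds into the standard perturbation lemma.

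Part (ii), however, has a genuine gap. You write
\[
\sqrt n\bigl(\widehat P_X - \operatorname{diag}((\ell+1)\Pi_X)^{-1}\widehat N_X\bigr)
=\sqrt n\,\operatorname{diag}((\ell+1)\Pi_X)^{-1} E\,\widehat N_X
\]
and then try to control this in operator norm by $\Vert E\Vert_{\mathrm{op}}\cdot \bigl\Vert \sqrt n\,\operatorname{diag}((\ell+1)\Pi_X)^{-1}\widehat N_X\bigr\Vert_{\mathrm{op}}$, asserting the second factor is bounded because the limiting ESD has compact support. Two problems. First, weak convergence of $\nu_{\sqrt n Q_X}$ to a compactly supported law says nothing about the largest singular value; outliers of size $\to\infty$ are perfectly compatible with weak convergence of the ESD. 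Second, and fatally, the second factor is \emph{not} bounded: the rank-$K$ mean $\mathbb{E}[\widehat N_X]$ has entries of order $1$ and hence operator norm $\Theta(n)$, so $\sqrt n\,\operatorname{diag}((\ell+1)\Pi_X)^{-1}\mathbb{E}[\widehat N_X]$ has operator norm $\Theta(\sqrt n)$. Since the concentration for $\widehat D_{X,ii}$ only gives $\Vert E\Vert_{\mathrm{op}}=O(n^{-1/2})$ up to logs, the product is $\Theta(1)$, not $o(1)$, and the argument stalls.

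The paper sidesteps this by proving a separate multiplicative perturbation lemma: if $\Vert D_n - \operatorname{Id}\Vert_{\mathrm{op}}\to 0$ in probability then $\nu_{D_nA_n}$ and $\nu_{A_n}$ have the same weak limit, with \emph{no} assumption on $\Vert A_n\Vert_{\mathrm{op}}$. The proof uses $|\sigma_i(D_nA_n)-\sigma_i(A_n)|\le \Vert D_n-\operatorname{Id}\Vert_{\mathrm{op}}\,\sigma_i(A_n)$ together with a truncation at the edge of the (compactly supported) test function, so that large $\sigma_i(A_n)$ simply don't contribute. Applying this with $D_n=\widehat D_Y^{-1}\operatorname{diag}((\ell+1)\Pi_X)$ reduces $\sqrt n\,\widehat P_Y$ to $\sqrt n\,\operatorname{diag}((\ell+1)\Pi_X)^{-1}\widehat N_Y$ in one clean step, after which the Frobenius and rank perturbations (for the coupling difference and the mean, respectively) finish as in part (i). You could also repair your route by stripping the rank-$K$ mean \emph{before} comparing $\widehat D_X^{-1}$ to $\operatorname{diag}((\ell+1)\Pi_X)^{-1}$ and then using a Frobenius (not operator) bound, since $\tfrac1n\Vert \sqrt n\,Q_X\Vert_F^2$ can be checked to be $O_{\mathbb P}(1)$ directly from $\operatorname{Var}[N_{X,ij}]=O(1)$; but the multiplicative lemma is the cleaner fix.
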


The strategy is now to apply \Cref{cor: Stieltjes} with $A_n= \sqrt{2} H(M_X)$ or $A_n = \sqrt{2}H(nQ_X)$ where $X$ is a block Markov chain which starts in equilibrium.

\subsubsection{Determination of the variance profile}
\label{sec: Poisson}

The limiting variance profile of $\hat{N}_{X}$ can be established by a direct calculation which shows that the covariance between the different terms of $\hat{N}_{X,e} = \sum_{t = 1}^\ell \1_{E_{X,t} = e}$ is asymptotically negligible.
We instead take a different route: one that yields the stronger claim that $\hat{N}_{X,e}$ satisfies a Poisson limit theorem and is conceptually more satisfying.

\begin{theorem}
    \label{thm: Poisson} 
    Assume that $X$ starts in equilibrium. 
    Fix some $k_1, k_2 \in \{1,\ldots,K\}$ with $p_{k_1,k_2} > 0$ and let $(e_n)_{n=1}^\infty$ be a sequence of directed edges with $e_n \in \mathcal{V}_{k_1}\times \mathcal{V}_{k_2}$ for all $n$. 
    Then $\hat{N}_{X,e_n}$ converges in distribution to a Poisson distribution with rate $\lambda \pi(k_1)  \alpha_{k_1}^{-1} \alpha_{k_2}^{-1} p_{k_1,k_2}$. 
\end{theorem}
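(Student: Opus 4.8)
The plan is to apply the Chen--Stein method for Poisson approximation of a sum of weakly dependent indicators, in the nonasymptotic form of \cite{arratia1989two}, and then pass to the limit. Write $e_n = (a_n,b_n)$ with $a_n \in \mathcal{V}_{k_1}$, $b_n \in \mathcal{V}_{k_2}$, and set $I_t := \1\{X_t = a_n,\, X_{t+1} = b_n\}$, so that $N_{X,e_n} = \sum_{t=0}^{\ell-1} I_t$. Since $X$ starts in equilibrium and the block structure forces $\Pi_X(v) = \pi(\sigma(v))/\#\mathcal{V}_{\sigma(v)}$ together with $P_{v,w} = p_{\sigma(v),\sigma(w)}/\#\mathcal{V}_{\sigma(w)}$, the marginal $p_t := \mathbb{E}[I_t] = \pi(k_1)p_{k_1,k_2}/(\#\mathcal{V}_{k_1}\#\mathcal{V}_{k_2})$ is independent of $t$, and the hypotheses $\ell = \lambda n^2 + o(n^2)$ and $\#\mathcal{V}_k = \alpha_k n + o(n)$ give $\lambda_n := \sum_t p_t = \ell p_t \to \gamma := \lambda \pi(k_1)\alpha_{k_1}^{-1}\alpha_{k_2}^{-1}p_{k_1,k_2}$. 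As $\operatorname{Poisson}(\lambda_n)$ converges in distribution to $\operatorname{Poisson}(\gamma)$, it then suffices to prove that $d_{\mathrm{TV}}\bigl(\mathcal{L}(N_{X,e_n}), \operatorname{Poisson}(\lambda_n)\bigr) \to 0$.

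For the dependency neighbourhoods I would take the time windows $B_t := \{s : |s-t| \le m_n\}$ with $m_n := \lceil \log n \rceil$ (any $m_n \to \infty$ with $m_n = o(n^2)$ works), so the Chen--Stein inequality gives $d_{\mathrm{TV}} \le b_1 + b_2 + b_3$ with $b_1 = \sum_t \sum_{s \in B_t} p_t p_s$, $b_2 = \sum_t \sum_{s \in B_t \setminus \{t\}} \mathbb{E}[I_t I_s]$, and $b_3 = \sum_t \mathbb{E}\bigl| \mathbb{E}[I_t - p_t \mid \sigma(I_s : s \notin B_t)] \bigr|$. The first two are handled by direct computation using the block structure: since $p_t = \Theta(n^{-2})$ one has $b_1 = O(m_n \ell p_t^2) = O(m_n n^{-2})$; and for $b_2$ the Markov property gives $\mathbb{E}[I_t I_s] = \Pi_X(a_n)P_{a_n,b_n}P^{|s-t|-1}(b_n,a_n)P_{a_n,b_n} = O(n^{-4})$ whenever $|s-t|\ge 2$ (four transitions, each of probability $O(n^{-1})$ uniformly, using $\#\mathcal{V}_k > \alpha_{min} n$), while $|s-t|=1$ contributes $0$ if $a_n\ne b_n$ and $O(n^{-3})$ if $a_n=b_n$; hence $b_2 = O(m_n n^{-2} + n^{-1})$. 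Both vanish as $n\to\infty$.

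The term $b_3$ is the crux. Since $I_s$ is measurable with respect to $(X_s,X_{s+1})$, one has $\sigma(I_s : s\notin B_t) \subseteq \sigma(X_r : r \le t-m_n) \vee \sigma(X_r : r \ge t+m_n+1)$, and the Markov property (a Markov-bridge computation, using stationarity) identifies $\mathbb{E}[I_t \mid \sigma(X_r : r\le t-m_n)\vee \sigma(X_r : r\ge t+m_n+1)]$ with $g_n(X_{t-m_n},X_{t+m_n+1})$, where $g_n(u,w) = P_{a_n,b_n}\,P^{m_n}(u,a_n)\,P^{m_n}(b_n,w)/P^{2m_n+1}(u,w)$ and $P^m$ denotes the $m$-step transition. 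Here the block structure is essential: for $m_n \ge 1$ each multistep transition factors through the clusters, e.g.\ $P^{m_n}(u,a_n) = p^{m_n}(\sigma(u),k_1)/\#\mathcal{V}_{k_1}$, and one computes $g_n(u,w) - p_t = \tfrac{P_{a_n,b_n}}{\#\mathcal{V}_{k_1}}\bigl( p^{m_n}(\sigma(u),k_1)\,p^{m_n}(k_2,\sigma(w))/p^{2m_n+1}(\sigma(u),\sigma(w)) - \pi(k_1)\bigr)$. Because $p$ is irreducible and acyclic, $p^m(i,j)\to\pi(j)$ at a fixed geometric rate $\rho\in(0,1)$, so the bracket is $O(\rho^{m_n})$ uniformly once $m_n$ is large, giving $|g_n(u,w)-p_t| = O(\rho^{m_n}n^{-2})$ uniformly in $u,w$. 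Summing over the $t$ whose endpoints $t-m_n$, $t+m_n+1$ both lie in $\{0,\dots,\ell\}$, and bounding each of the $O(m_n)$ boundary terms crudely by $2p_t = O(n^{-2})$, yields $b_3 = O(\rho^{m_n}) + O(m_n n^{-2}) \to 0$. Combining the three bounds finishes the proof.

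The main obstacle is this $b_3$ estimate: reducing the awkward $\sigma$-algebra $\sigma(I_s : s\notin B_t)$ — generated by indicators of rare events at many times — to a conditioning on the two chain values $X_{t-m_n}$ and $X_{t+m_n+1}$, and then the Markov-bridge identity for $g_n$, after which everything collapses to the geometric ergodicity of the fixed finite cluster chain $p$. By contrast, the rate computation and the bounds on $b_1,b_2$ are routine. The equilibrium-start hypothesis enters only to make $p_t$ constant in $t$ and to put $g_n$ into the clean stationary form above; relaxing it would need an extra mixing argument near the left boundary, but the $\ell = \Theta(n^2)$ scaling renders any such boundary correction negligible.
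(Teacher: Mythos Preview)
Your proposal is correct and follows essentially the same route as the paper: both apply the Chen--Stein bound of \cite{arratia1989two} with the same $b_1+b_2+b_3$ decomposition over a growing time window, bound $b_1,b_2$ by direct computation of the block transition probabilities, and handle $b_3$ by reducing the conditioning to the boundary of the window and then invoking geometric ergodicity of the finite cluster chain $p$. The only cosmetic difference is that the paper conditions on the cluster labels $\Sigma_{X,t-r_0-1},\Sigma_{X,t+r_0}$ directly (exploiting that, given $\Sigma_X$, the states are independent uniforms in their clusters), whereas you condition on the state values $X_{t-m_n},X_{t+m_n+1}$ and then observe that the bridge probability $g_n(u,w)$ depends only on $\sigma(u),\sigma(w)$; these are equivalent, and the paper additionally packages the argument as a nonasymptotic total-variation bound (its \Cref{thm: PoissonNonAss}) before passing to the limit.
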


A proof is provided in \Cref{sec: PoissonLimit} where one can also find a nonasymptotic \Cref{thm: PoissonNonAss} which gives a precise upper bound on the total variation distance of $\hat{N}_{X,e_n}$ to a Poisson distribution.   
The proof relies on a reduction to a general Poisson approximation theorem for sums of dependent random variables from \cite{arratia1989two}.

Let us remark that the proof of \Cref{thm: Poisson} may also be used to derive a Poisson limit theorem in different scaling regimes than the running assumption $\ell = \Theta(n^2)$ and $\# \mathcal{V}_{k} = \Theta(n)$.
More precisely, a Poisson limit theorem holds whenever $\ell$ and $\#\mathcal{V}_{k_1}\times \#\mathcal{V}_{k_2}$ tend to infinity in such a fashion that $(\# \mathcal{V}_{k_1} \# \mathcal{V}_{k_2})^{-1} \ell$ converges to a nonzero constant. 
For example, $\hat{N}_{X,e_n}$ also satisfies a Poisson limit theorem in a block Markov chain with two clusters of size $\# \mathcal{V}_1 = \Theta(1)$ and $\# \mathcal{V}_2 = \ell =\omega(1)$ respectively.     

The variance profile of $\hat{N}_X$ now follows by a tightness argument which is provided in \Cref{sec: VarProfile}. 

\begin{corollary}
    \label{cor: VarianceProfile}
    Let $e_n$ be as in \Cref{thm: Poisson} and assume that $X$ starts in equilibrium. 
    Then, as $n$ tends to infinity, it holds that $\operatorname{Var}[\hat{N}_{X,e_n}] = \lambda \pi(k_1) \alpha_{k_1}^{-1}\alpha_{k_2}^{-1} p_{k_1,k_2} \allowbreak + o_{k_1,k_2}(1)$. 
\end{corollary}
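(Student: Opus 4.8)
The plan is to upgrade the distributional convergence of \Cref{thm: Poisson} to convergence of the first two moments by means of a uniform third-moment bound. Write $\mu := \lambda\pi(k_1)\alpha_{k_1}^{-1}\alpha_{k_2}^{-1}p_{k_1,k_2}$ for the target rate and let $Y\sim\mathrm{Poisson}(\mu)$, so that $\mathbb{E}[Y]=\mu$ and $\mathbb{E}[Y^2]=\mu+\mu^2$. Since $\operatorname{Var}[N_{X,e_n}]=\mathbb{E}[N_{X,e_n}^2]-\mathbb{E}[N_{X,e_n}]^2$, it suffices to show $\mathbb{E}[N_{X,e_n}]\to\mu$ and $\mathbb{E}[N_{X,e_n}^2]\to\mu+\mu^2$. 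By \Cref{thm: Poisson} we have $N_{X,e_n}\xrightarrow{d}Y$, and a uniformly bounded moment of order $2+\varepsilon$ (here $\varepsilon=1$ suffices) makes the family $\{N_{X,e_n}^2\}_{n\geq1}$ uniformly integrable, which is exactly what is needed to turn distributional convergence into convergence of the first two moments. Hence everything reduces to
\begin{align}
    \sup_{n\geq1}\mathbb{E}\bigl[N_{X,e_n}^3\bigr]<\infty.
    \label{eq: VarProf3rdMoment}
\end{align}

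To prove \cref{eq: VarProf3rdMoment} I would use two structural facts about the block Markov chain, with $e_n=(i,j)$, $i\in\mathcal{V}_{k_1}$, $j\in\mathcal{V}_{k_2}$, and $\mathcal{F}_t:=\sigma(X_0,\dots,X_t)$. First, because within a cluster the chain is uniformly distributed one step ahead and $X$ starts in equilibrium, one has $\Pi_X(v)=\pi(\sigma(v))/\#\mathcal{V}_{\sigma(v)}$ and, for every $u>t$, $\mathbb{P}(X_u=v\mid\mathcal{F}_t)=(p^{\,u-t})_{\sigma(X_t),\sigma(v)}/\#\mathcal{V}_{\sigma(v)}$; in particular every single-step transition probability and $\Pi_X(v)$ is $O(1/n)$. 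Second, the cluster chain $p$ is a \emph{fixed} irreducible aperiodic chain on $K$ states, hence geometrically ergodic, so $\Gamma:=\sup_{k,k'}\sum_{m\geq0}\lvert(p^m)_{k,k'}-\pi(k')\rvert<\infty$ independently of $n$; together with $\ell=\lambda n^2+o(n^2)$ and $\#\mathcal{V}_k\geq\alpha_{min}n$ this gives the ``expected number of returns'' bound
\begin{align}
    \sum_{u=t+1}^{\ell}\mathbb{P}(X_u=v\mid\mathcal{F}_t)\leq\frac{\ell\,\pi(\sigma(v))+\Gamma}{\#\mathcal{V}_{\sigma(v)}}=O(n),
    \label{eq: VarProfReturnBound}
\end{align}
uniformly in $t$, $v$ and the conditioning.

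With these facts, \cref{eq: VarProf3rdMoment} follows by expanding $\mathbb{E}[N_{X,e_n}^3]=\sum_{t_1,t_2,t_3=1}^{\ell}\mathbb{P}(E_{X,t_1}=E_{X,t_2}=E_{X,t_3}=e_n)\leq 6\sum_{1\leq t_1\leq t_2\leq t_3\leq\ell}\mathbb{P}(\cdots)$ and splitting according to which of the $t_a$ coincide. In the dominant range $t_1+1<t_2$ and $t_2+1<t_3$, the Markov property factors the probability into $\Pi_X(i)$, three one-step transitions $P_{i,j}$, and two return transitions $\mathbb{P}(X_{t_2-1}=i\mid X_{t_1}=j)$ and $\mathbb{P}(X_{t_3-1}=i\mid X_{t_2}=j)$; summing the latter two over $t_3$ and then over $t_2$ costs $O(n)$ each by \cref{eq: VarProfReturnBound}, the remaining sum over $t_1$ costs $\ell=O(n^2)$, and $\Pi_X(i)P_{i,j}^3=O(n^{-4})$, so this range contributes $O(n^2\cdot n^{-4}\cdot n^2)=O(1)$. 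In all remaining ranges at least one gap is $0$ or $1$: there the constraint $E_{X,t_a}=E_{X,t_{a+1}}=e_n$ either forces $i=j$ or forces a coincidence of the pinned vertices, so compared with the dominant range one fixes a summation index (losing a factor $\ell$ or a factor $O(n)$) while gaining at most a constant, and each such contribution is $O(1)$ or $o(1)$; summing the finitely many cases yields \cref{eq: VarProf3rdMoment}.

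The main obstacle I anticipate is the return bound \cref{eq: VarProfReturnBound}: one must verify that the geometric ergodicity of the fixed cluster chain $p$ really transfers, through the identity $\mathbb{P}(X_u=v\mid\mathcal{F}_t)=(p^{\,u-t})_{\sigma(X_t),\sigma(v)}/\#\mathcal{V}_{\sigma(v)}$, into a bound on the expected number of visits to a single vertex that is $O(n)$ uniformly in $n$ over the horizon $\ell=\Theta(n^2)$; the bookkeeping of the boundary cases is routine but must be carried out. An alternative that sidesteps the uniform-integrability machinery is to write $\operatorname{Var}[N_{X,e_n}]=\sum_s\operatorname{Var}(\1_{E_{X,s}=e_n})+\sum_{s\neq s'}\operatorname{Cov}(\1_{E_{X,s}=e_n},\1_{E_{X,s'}=e_n})$ and estimate both sums directly: the diagonal sum is $\ell\,\Pi_X(i)P_{i,j}+o(1)\to\mu$, while \cref{eq: VarProfReturnBound}-type computations show $\sum_{s<s'}\mathbb{P}(E_{X,s}=E_{X,s'}=e_n)$ agrees with $\tfrac12(\mathbb{E}[N_{X,e_n}])^2$ up to $o(1)$, so the off-diagonal covariance sum is $o(1)$; the delicate point in that route is the cancellation of the leading $\mu^2$ term.
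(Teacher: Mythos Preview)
Your proposal is correct and follows the same high-level strategy as the paper: upgrade the distributional convergence of \Cref{thm: Poisson} to moment convergence via uniform integrability. The difference lies in how the moment bound is obtained.

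The paper (in \Cref{lem: GeneralMomentP}) establishes uniform integrability of $N_{X,e_n}^{m+1}$ for every $m$ by applying a Bernstein-type concentration inequality of Paulin (\Cref{prop: PaulinPseudo}) to the edge chain $E_X$. This gives an exponential tail bound $\mathbb{P}(N_{X,e_n}>r)\leq c\exp(-c'r)$ uniformly in $n$, from which boundedness of all moments is immediate. Your route is more elementary: you bound $\mathbb{E}[N_{X,e_n}^3]$ directly by expanding the triple sum, factoring via the Markov property, and using the return-time estimate \cref{eq: VarProfReturnBound}, which in turn rests only on the geometric ergodicity of the \emph{fixed} cluster chain $p$ and the block structure. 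Your computation of the dominant range and the boundary cases is correct; the identity $\mathbb{P}(X_u=v\mid\mathcal{F}_t)=(p^{u-t})_{\sigma(X_t),\sigma(v)}/\#\mathcal{V}_{\sigma(v)}$ you rely on is exactly the block-Markov structure, and the bound $\sum_{m\geq 0}\lvert(p^m)_{k,k'}-\pi(k')\rvert<\infty$ is standard for an irreducible aperiodic finite chain. What your approach buys is self-containment: no external concentration machinery is needed. What the paper's approach buys is brevity and generality (all moments at once), and it reuses a tool already invoked elsewhere (for \Cref{cor: Pi1Exists}). Your alternative covariance-expansion route is also mentioned by the paper just before \Cref{thm: Poisson} as a viable but less conceptually satisfying option.
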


Graphon limits for the variance profiles of $\sqrt{2}H(M_X)$ and $\sqrt{2}H(n Q_X)$ are immediate from \Cref{cor: VarianceProfile}. 
To be precise, by \cref{eq: WMd} and \cref{eq: Hdil} the variance profile of $\sqrt{2}H(M_X)$ converges to the graphon
\begin{align}
    W_{M}(x,y) 
    = 
    \begin{cases}
        2\lambda \pi(i)\alpha_{i}^{-1}\alpha_j^{-1}  p_{i,j} & \text{if } (2x,2y-1)\in [c_i,c_{i+1})\times [c_j,c_{j+1}), \\      
        2\lambda \pi(j)\alpha_{i}^{-1}\alpha_j^{-1}  p_{j,i} & \text{if } (2x-1,2y)\in [c_i,c_{i+1})\times [c_j,c_{j+1}), \\ 
        0 & \text{otherwise}
    \end{cases}   
    \label{eq: WM}
\end{align} 
with respect to the cut metric \cref{eq: CutMet}. 
Here, $c_i := \sum_{k=1}^{i-1} \alpha_k$ and $i,j \in \{1,\ldots,K \}$.
Similarly, note that $\Pi_X(v) = \pi(\sigma(v)) /\allowbreak(n\alpha_{\sigma(v)}) + o(1)$ so that the variance profile of $\sqrt{2}H(nQ_X)$ converges to the graphon $W_Q$ specified by 
\begin{align}
    W_{Q}(x,y) 
    &
    = 
    \begin{cases}
        2\lambda^{-1} \pi(i)^{-1}\alpha_i \alpha_j^{-1}  p_{i,j} & \text{if } (2x,2y-1)\in [c_i,c_{i+1})\times [c_j,c_{j+1}),\\      
        2\lambda^{-1} \pi(j)^{-1}  \alpha_{i}^{-1}\alpha_{j} p_{j,i} & \text{if } (2x-1,2y)\in [c_i,c_{i+1})\times [c_j,c_{j+1}),\\ 
        0 & \text{otherwise}.
    \end{cases} 
    \label{eq: WQ}
\end{align}

\subsubsection{Approximately uncorrelated}
\label{sec: ApproxU}

It remains to show that $H(M_X)$ and $H(nQ_X)$ are approximately uncorrelated with variance profiles. 
In fact, since $nQ_X$ is derived from $M_X$ by rescaling with $n\operatorname{diag}((\ell + 1)\Pi_X)^{-1}$, which is a deterministic diagonal matrix with entries of size $\Theta(1)$, it is sufficient to establish that $H(M_X)$ is approximately uncorrelated with variance profile.

\begin{proposition}
    \label{prop: ApproxUncorM}
    Assume that $X$ starts in equilibrium. Then the sequence of self-adjoint random matrices $H(M_X)$ is approximately uncorrelated with variance profile. 
\end{proposition}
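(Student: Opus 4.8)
The plan is to verify the two conditions of Definition~\ref{def: ApproxUncorr} for the Hermitian dilation $A_n := H(M_X)$, recalling that $M_{X,e} = N_{X,e} - \mathbb{E}[N_{X,e}]$ is a centered quantity. Because of the block structure of $A_n$, an entry $A_{n,ab}$ with $a,b \in \{1,\dots,2n\}$ is (up to the symmetry $a \leftrightarrow b$) either zero or equal to $M_{X,e}$ for a single directed edge $e = ij$, so it suffices to control mixed moments of the form $\mathbb{E}\bigl[\prod_{k=1}^R M_{X,e_k}^{m_k}\bigr]$ where the \emph{undirected} edges underlying $e_1,\dots,e_R$ are pairwise distinct. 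The key object is the coupling argument referred to in the paper: I would show that if $e$ and $e'$ are edges that are ``far apart'' in the sense that traversals of $e$ and $e'$ are separated by a long stretch of the sample path, then the joint law of $(N_{X,e}, N_{X,e'})$ is close in total variation to a product law, with an error decaying in the separation. Concretely, one couples the chain $X$ with a modified chain that is resampled afresh after the relevant excursions, using the fact that from any state the block Markov chain mixes to equilibrium in $O(1)$ steps (it is irreducible and aperiodic on a \emph{fixed} $K$-cluster skeleton, and the uniform-within-cluster structure makes the lumped chain $\Sigma_X$ drive everything); the cost of decoupling one edge from the rest is $O(n^{-1})$ per unit of ``time'' because a fixed edge $ij$ is visited only $\Theta(1)$ times out of $\ell = \Theta(n^2)$ steps, and with high probability there are no two designated edges visited within $O(\log n)$ of each other.

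For the moment bound \cref{eq: defAUGenMom}, the point is that each factor with exponent $m_k = 1$ contributes a centered variable $M_{X,e_k}$; after decoupling $e_k$ from the remaining edges at the cost of a total-variation error that is a negative power of $n$, the expectation of that isolated factor is $\mathbb{E}[M_{X,e_k}] = 0$, so each of the $r$ linear factors forces an extra gain. One has to be careful to decouple all $r$ linear edges simultaneously and to bound the error terms uniformly; the natural way is to expand $\mathbb{E}[\prod_k M_{X,e_k}^{m_k}]$ against the coupling, bound $|M_{X,e}| \le N_{X,e} + \mathbb{E}[N_{X,e}]$ and note that all moments of $N_{X,e}$ are $O_{m,R}(1)$ by the Poisson approximation of \Cref{thm: Poisson} together with a uniform moment bound (a dominating-Poisson argument or a direct combinatorial count of return excursions to a fixed edge). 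Summing the per-edge decoupling errors and using that each isolated linear factor integrates to zero gives the claimed $O_{m,R}(n^{-r/2})$; in fact the coupling typically yields a gain like $n^{-r}$ or better, which is more than enough. For \cref{eq: defAUVar} one runs the same argument with all $m_k = 2$: decoupling all $R$ edges pairwise shows $\mathbb{E}[\prod_k M_{X,e_k}^2] = \prod_k \mathbb{E}[M_{X,e_k}^2] + o_R(1)$, and $\mathbb{E}[M_{X,e_k}^2] = \operatorname{Var}[N_{X,e_k}]$ by definition, so the difference in \cref{eq: defAUVar} is $o_R(1)$.

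The main obstacle, as the paper itself flags, is making the coupling argument precise and uniform: one must (i) set up a coupling of the block Markov chain with an ``edge-resampled'' version that genuinely decouples a \emph{prescribed set} of $R$ edges at once, not just two, and track that the errors add rather than compound badly; (ii) argue that the relevant excursions around the designated edges are, with overwhelming probability, short and non-overlapping, so that the separation needed for decoupling is available — this uses that each fixed edge is traversed only $O_{}(1)$ times and that two fixed edges are unlikely to be traversed within a short window; and (iii) control the moments of $N_{X,e}$ uniformly over $e$ and $n$, which is where \Cref{thm: Poisson} (or its nonasymptotic companion) enters. Once the decoupling lemma is in hand with an $n^{-\Omega(1)}$ rate, both \cref{eq: defAUGenMom} and \cref{eq: defAUVar} follow by the bookkeeping sketched above, and the rescaling remark already in the text extends the conclusion from $M_X$ to $nQ_X$.
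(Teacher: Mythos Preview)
Your treatment of \cref{eq: defAUVar} is essentially the paper's \Cref{prop: HigherMoments}: couple $X$ with a chain $Y$ so that $M_{Y,e_1}$ is independent of $X$ while $M_{Y,e_j}\approx M_{X,e_j}$ for $j\ge 2$, peel off one edge, and induct on $R$. That part is sound.

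The gap is in \cref{eq: defAUGenMom}. The decouple-then-use-$\mathbb{E}[M_{X,e_k}]=0$ scheme you describe only delivers $\mathbb{E}\bigl[\prod_k M_{X,e_k}^{m_k}\bigr]=\prod_k\mathbb{E}[M_{X,e_k}^{m_k}]+o_R(1)=o_R(1)$, which is not $O(n^{-r/2})$; to accumulate a quantitative rate the decoupling \emph{error} must itself retain the remaining $r-1$ linear factors so that one can recurse, and your sketch never sets up that recursion. Your parenthetical that the coupling gives ``$n^{-r}$ or better'' is in fact false when edges concatenate: for $e_1=(i,j)$, $e_2=(j,k)$ the adjacent-time contribution $\sum_t\bigl(\mathbb{P}(E_{X,t}=e_1,E_{X,t+1}=e_2)-\mathbb{P}(E_{X,t}=e_1)\mathbb{P}(E_{X,t+1}=e_2)\bigr)$ is already $\Theta(n^{-1})$, so $\mathbb{E}[M_{X,e_1}M_{X,e_2}]$ is not $O(n^{-2})$. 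The paper's argument for \cref{eq: defAUGenMom} (\Cref{prop: MomentsPower1Sharper}, proved via \Cref{prop: PowerSharperSharper}) is a different induction, on $r$: expand $M_{X,e_1}=\sum_{t_1}\bigl(\1_{E_{X,t_1}=e_1}-\mathbb{P}(E_{X,t_1}=e_1)\bigr)$, couple the chain conditioned on $E_{X,t_1}=e_1$ to the unconditioned chain near $t_1$, and split the resulting error by \emph{conditional independence}---Cauchy--Schwarz is explicitly avoided because squaring would erase the linear exponents needed for the induction---into a local piece of controlled size and a long-range piece with strictly fewer linear factors to which the inductive hypothesis applies. The combinatorial bookkeeping showing these two contributions combine to $O(n^{-\lceil r/2\rceil})$ is precisely what produces the rate, and it has no analogue in your outline.
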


Recall that \Cref{def: ApproxUncorr} of approximately uncorrelated random matrices with a variance profile consists of two properties, namely \cref{eq: defAUGenMom} and \cref{eq: defAUVar}. 
The proof for \Cref{prop: ApproxUncorM} given in \Cref{sec: propApproxUncorM} thus comes down to a verification of these two properties: \cref{eq: defAUGenMom} is verified in \Cref{prop: MomentsPower1Sharper} and \cref{eq: defAUVar} is verified in \Cref{prop: HigherMoments}.

The proof of \Cref{thm: SingValN} and \Cref{thm: SingValP} is then complete. 
Indeed, by using the Hermitian dilation in \cref{eq: Hdil} and the preliminary reduction from \Cref{lem: MXQX}, it is sufficient to establish limiting laws for the eigenvalues of $\sqrt{2}H(M_X)/\sqrt{2n}$ and $\sqrt{2}H(nQ_X)/\sqrt{2n}$ when $X$ starts in equilibrium. 
This case follows from \Cref{cor: Stieltjes} with the limiting variance profiles in \cref{eq: WM} and \cref{eq: WQ}.

\subsubsection{Demonstration of the coupling argument}
\label{sec: Examp}

\Cref{prop: ApproxUncorM} is the most important ingredient for our results. 
Let us provide an example for the method of proof by establishing a special case of \cref{eq: defAUGenMom}: the covariance between two entries decays at an appropriate rate.

\begin{proposition}
    \label{prop: MomentSketch}
    Assume that $X$ starts in equilibrium. Then, 
    \begin{align}
        \max_{e_1 \neq e_2} \lvert \mathbb{E}[M_{X,e_1} M_{X,e_2}]\rvert = O(n^{-1})
    \end{align}
    where the maximum runs over all pairs of distinct edges $e_1,e_2 \in \vec{E}_n$.
\end{proposition}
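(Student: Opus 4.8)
The plan is to observe that $\mathbb{E}[M_{X,e_1}M_{X,e_2}] = \operatorname{Cov}(N_{X,e_1},N_{X,e_2})$ and to expand this as a sum of covariances of indicators over pairs of time indices. Writing $I_t(e) := \1\{(X_t,X_{t+1}) = e\}$ so that $N_{X,e} = \sum_{t=0}^{\ell-1} I_t(e)$, one has $\mathbb{E}[M_{X,e_1}M_{X,e_2}] = \sum_{s,t=0}^{\ell-1}\operatorname{Cov}(I_s(e_1),I_t(e_2))$, and I would split the $\ell^2 = \Theta(n^4)$ terms into three groups: $s=t$, $\lvert s-t\rvert = 1$, and $\lvert s-t\rvert \geq 2$. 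Since $X$ starts in equilibrium it is stationary, so every single-edge probability $\mathbb{P}((X_t,X_{t+1})=e)=\Pi_X(a)P_{a,b}$ (for $e=(a,b)$) is $O(n^{-2})$ thanks to $\#\mathcal{V}_k > \alpha_{min}n$. The first group is then trivial: when $e_1\neq e_2$ the edges traversed at a common time cannot coincide, so $\operatorname{Cov}(I_s(e_1),I_s(e_2)) = -\mathbb{E}[I_s(e_1)]\mathbb{E}[I_s(e_2)] = O(n^{-4})$ and the $\Theta(n^2)$ such terms contribute $O(n^{-2})$. For the second group, $(X_s,X_{s+1})=e_1$ and $(X_{s+1},X_{s+2})=e_2$ is possible only if the head of $e_1$ equals the tail of $e_2$, in which case stationarity makes this a two-step path probability $\Pi_X(a)P_{a,b}P_{b,c} = O(n^{-3})$; the $\Theta(n^2)$ such terms therefore contribute $O(n^{-1})$, which turns out to be the dominant contribution.

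The heart of the argument is the third group, the ``far'' terms, and this is where the structure of the block Markov chain enters. The key point is that the transition kernel $P_{x,y} = p_{\sigma(x),\sigma(y)}/\#\mathcal{V}_{\sigma(y)}$ factors through the cluster map: conditionally on $\mathcal{F}_{s+1} := \sigma(X_0,\ldots,X_{s+1})$ the law of $(X_u)_{u\geq s+1}$ depends only on $\sigma(X_{s+1})$, and for every $u\geq s+2$ the state $X_u$ is, conditionally on its cluster $\sigma(X_u)$, uniform in $\mathcal{V}_{\sigma(X_u)}$. Hence, for $t\geq s+2$ and $e_2=(a_2,b_2)$,
\begin{align}
    \mathbb{E}[I_t(e_2)\mid \mathcal{F}_{s+1}]
    =
    (p^{\,t-s-1})_{\sigma(X_{s+1}),\sigma(a_2)}\,\frac{P_{a_2,b_2}}{\#\mathcal{V}_{\sigma(a_2)}},
    \qquad
    \mathbb{E}[I_t(e_2)]
    =
    \pi(\sigma(a_2))\,\frac{P_{a_2,b_2}}{\#\mathcal{V}_{\sigma(a_2)}} .
\end{align}
Since $p$ is irreducible and acyclic on the $K$-element cluster space it is exponentially ergodic, $\lvert (p^{m})_{k,k'}-\pi(k')\rvert \leq c\rho^{m}$ for constants $c,\rho$ depending only on $p$, so $\lvert \mathbb{E}[I_t(e_2)\mid \mathcal{F}_{s+1}] - \mathbb{E}[I_t(e_2)]\rvert = O(\rho^{\,t-s-1}n^{-2})$. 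As $I_s(e_1)$ is $\mathcal{F}_{s+1}$-measurable with expectation $O(n^{-2})$, the tower property gives $\lvert \operatorname{Cov}(I_s(e_1),I_t(e_2))\rvert = O(\rho^{\,t-s-1}n^{-4})$; summing the geometric series over $t$ and then over the $\Theta(n^2)$ values of $s$ (and symmetrically over $s\geq t+2$, conditioning instead on $\mathcal{F}_{t+1}$) bounds this group by $O(\ell n^{-4}) = O(n^{-2})$. Combining the three groups yields $\lvert\mathbb{E}[M_{X,e_1}M_{X,e_2}]\rvert = O(n^{-1})$, and since all implied constants depend only on $p$, $\pi$, $\lambda$ and $\alpha_{min}$, and not on $n$ or on the chosen edges, the estimate is uniform over $e_1\neq e_2$.

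I expect the main obstacle to be a careful justification of the ``regeneration after one step'' reduction used for the far terms: that conditioning on $\mathcal{F}_{s+1}$ collapses to conditioning on $\sigma(X_{s+1})$, and that $X_u$ remains uniform within $\mathcal{V}_{\sigma(X_u)}$ for $u\geq s+2$ regardless of the rest of the trajectory. This is precisely the elementary form of the coupling philosophy underlying \Cref{prop: ApproxUncorM}: the block Markov chain forgets everything except its current cluster in a single step, and only the cluster-level dependence survives, decaying at the mixing rate $\rho$ of $p$. For the present two-edge bound this conditioning is enough; the genuine coupling becomes necessary for \Cref{prop: ApproxUncorM} itself, where a product of many factors $M_{X,e_k}$ requires decoupling several edge visits at once and the one-step regeneration can no longer simply be read off.
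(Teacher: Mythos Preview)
Your proof is correct and takes a genuinely different route from the paper's. The paper argues via an explicit coupling: it writes
\[
\mathbb{E}[M_{X,e_1}M_{X,e_2}]=\sum_{t_1}\mathbb{P}(E_{X,t_1}=e_1)\bigl(\mathbb{E}[N_{X,e_2}\mid E_{X,t_1}=e_1]-\mathbb{E}[N_{X,e_2}]\bigr),
\]
constructs a pair $(X,Y)$ with $Y$ conditioned on $E_{Y,t_1}=e_1$ that agrees with $X$ outside a random window $[L^-,L^+]$ of expected width $O(1)$, and bounds $\mathbb{E}[N_{Y,e_2}-N_{X,e_2}]$ by the expected number of $e_2$-traversals in that window. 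You instead expand $\operatorname{Cov}(N_{X,e_1},N_{X,e_2})$ directly into $\sum_{s,t}\operatorname{Cov}(I_s(e_1),I_t(e_2))$ and use the block structure to compute $\mathbb{E}[I_t(e_2)\mid\mathcal{F}_{s+1}]$ in closed form, so that the far covariances inherit geometric decay from the cluster chain $p$. Your approach is more elementary here: the identity $P^{m}_{x,y}=(p^{m})_{\sigma(x),\sigma(y)}/\#\mathcal{V}_{\sigma(y)}$ for $m\ge 1$ replaces the entire coupling construction, and the three-group split makes the $O(n^{-1})$ bottleneck (adjacent times with a shared vertex) transparent. One small imprecision: the conditional law of $(X_u)_{u\ge s+1}$ given $\mathcal{F}_{s+1}$ depends on $X_{s+1}$, not only on $\sigma(X_{s+1})$; what you actually use, and what is true, is that the law of $(X_u)_{u\ge s+2}$ given $X_{s+1}$ depends only on $\sigma(X_{s+1})$. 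The paper's coupling, by contrast, is deliberately chosen as a didactic warm-up for \Cref{prop: ApproxUncorM}, where one must control mixed moments $\mathbb{E}[\prod_j M_{X,e_j}^{m_j}]$; there your closed-form conditioning no longer isolates a single factor, and the paper's construction of glued chains is what carries the induction. You already recognise this in your final paragraph.
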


\begin{proof}    
    The proof is split into parts. 
    The main ideas are contained in Part \ref{prt: RewriteM} and Part \ref{prt: Couple}. 
    In Part \ref{prt: RewriteM} we observe that it is sufficient to understand how much the expectation of $\hat{N}_{X,e_2}$ changes when it is conditioned on a traversal of $e_1$ at some predetermined time. 
    This effect of conditioning on a traversal is then understood by a coupling argument in Part \ref{prt: Couple}. 
    \proofpart{Preliminary reduction to $K\geq 5$\label{prt: ReductionK5}} 

    \noindent
    We claim that there is no loss in generality in assuming that $K \geq 5$.
    The idea is to split a cluster into pieces. 

    Since $\# \mathcal{V}_K = \alpha_K n + o(n)$ it may be assumed that $\#\mathcal{V}_K \geq 5$. 
    Define a new partition of the state space $\mathcal{V}$ into nonempty sets $(\mathcal{V}_i')_{i=1}^{K+4}$ by $\mathcal{V}_i' := \mathcal{V}_i$ for $i<K$ and by taking $(\mathcal{V}_{i}')_{i=K}^{K+4}$ to be a partition of $\mathcal{V}_K$ into nonempty sets. 
    It can here be ensured that we remain in the asymptotic regime where the clusters have size $\Theta(n)$. 
    Indeed, this for instance follows if the subdivision of $\mathcal{V}_K$ is taken to be into clusters of roughly equal size so that the ratio $\# \mathcal{V}_{i}' / \#\mathcal{V}_{K}$ tends to $1/5$ for every $i\in \{K,\ldots, K+4 \}$. 
    Further define a $(K+4)\times (K+4)$ stochastic matrix $p'$ by $p_{i,j}' := (\#\mathcal{V}_{j}'/\# \mathcal{V}_{\min\{K,j\}}) p_{\min\{K,i\},\min\{K,j\}}$ for all $i,j = 1,\ldots,K+4$. 
    
    The reduction to $K\geq 5$ now follows by observing that the clusters $(\mathcal{V}_{i}')_{i=1}^{K+4}$ and cluster transition matrix $p'$ define exactly the same block Markov chain as we started with.
    Indeed, if $P_{i,j}'$ denotes the transition probabilities of the `new' block Markov chain then for any $i \in \mathcal{V}_x'$ and $j \in \mathcal{V}_{y}'$ it holds that 
    \begin{align}
        P_{i,j}' = \frac{1}{\# \mathcal{V}_{y}'}p_{x,y}' =   \frac{1}{\# \mathcal{V}_{y}'} \frac{\# \mathcal{V}_{y}'}{\#\mathcal{V}_{\min\{K,y\}}} p_{\min\{K,x\},\min\{K,y\}} = P_{i,j}. 
    \end{align} 
    
    \proofpart{Rewriting $\mathbb{E}[M_{X,e_1}M_{X,e_2}]$\label{prt: RewriteM}}
    
    \noindent
    Pick two distinct edges $e_1,e_2 \in \vec{E}_n$.
    Recall from \Cref{sec: NotBMC} that $E_{X}= (E_{X,t})_{t=1}^\ell$ denotes the induced Markov chain of edges $E_{X,t} = (X_{t-1},X_t)$. 
    It may be assumed that $e_1$ is such that $\mathbb{P}(E_{X,1} = e_1) \neq 0$, otherwise $M_{X,e_1} = 0$ and there is nothing to prove.

    Recall that $M_X = \hat{N}_X - \mathbb{E}[\hat{N}_X]$ and write $\hat{N}_{X,e_1} = \sum_{t_1 = 1}^{\ell} \1_{E_{X,t_1} = e_1}$ to find that  
    \begin{align}
        \mathbb{E}[M_{X,e_1} M_{X,e_2}] &= \mathbb{E}[\hat{N}_{X,e_1}M_{X,e_2}] - \mathbb{E}[\hat{N}_{X,e_1}]\mathbb{E}[M_{X,e_2}]\\ 
        &= \sum_{t_1= 1}^{\ell} \mathbb{P}(E_{X,t_1} = e_1) \big(\mathbb{E}[\hat{N}_{X,e_2}\mid E_{X,t_1} = e_1] - \mathbb{E}[\hat{N}_{X,e_2}]\big).
        \label{eq: SketchSum}
    \end{align}
    Note that all edges whose starting point and ending point have the same clusters as the starting point and ending point of $e_1$ respectively are equally likely to be traversed at time $t_1$. 
    There are at least $\alpha_{min}^2n^2$ such edges. 
    Hence, $\mathbb{P}(E_{X,t_1} = e_1) \leq \alpha_{min}^{-2}n^{-2}$. 
    Considering that there are $\ell = \Theta(n^2)$ terms on the right-hand side of \cref{eq: SketchSum}, it remains to be shown that $\mathbb{E}[\hat{N}_{X,e_2}\mid E_{X,t_1} = e_1] - \mathbb{E}[\hat{N}_{X,e_2}] = O(n^{-1})$ uniformly in $t_1$.
    
    \proofpart{Construction of coupled chains $(X,Y)$\label{prt: Couple}}

    \noindent
    Recall that we ensured that $K\geq 5$. 
    In particular there exists some $k\in \{1,\ldots,K \}$ such that $\mathcal{V}_k$ does not contain any endpoint of $e_1$ and $e_2$. 
    To study the difference $\mathbb{E}[\hat{N}_{X,e_2}\mid E_{X,t_1} = e_1] - \mathbb{E}[\hat{N}_{X,e_2}]$ we construct a pair of chains $(X,Y)$; see \Cref{fig: Sketch} for a visualization.

    \begin{enumerate}[label = \rm{(\roman*)}]

        \item 
        Sample an infinitely long path $\widetilde{X}:= (\widetilde{X}_t)_{t=-\infty}^\infty$ from the block Markov chain and independently sample an infinitely long path $\widetilde{Y}:= (\widetilde{Y}_t)_{t=-\infty}^\infty$ from the block Markov chain conditioned on $E_{\widetilde{Y}, t_1} = e_1$.
        Note that it is possible to sample at negative times by means of a time reversal of the Markov chain. 
        Such time reversal exists by the assumption that the Markov chain associated with $p$ is irreducible and acyclic. 
        
        \item 
        Define 
        \begin{align}
            T^- 
            &
            := 
            t_1 - \sup\{t \in \mathbb{Z}_{<t_1}: \widetilde{X}_t \in \mathcal{V}_k, \widetilde{Y}_t \in \mathcal{V}_k\}, 
            \\ 
            T^+ 
            &
            := 
            \inf\{t\in \mathbb{Z}_{>t_1}:\widetilde{X}_t\in \mathcal{V}_k, \widetilde{Y}_t \in \mathcal{V}_k\} - t_1 
        \end{align}
        and note that $T^-$ and $T^+$ are finite with probability one due to the assumption that the Markov chain associated with $p$ is irreducible and acyclic. 
        Let $L^- := \max\{0, t_1 - T^-\}$ and $L^+ := \min\{\ell , t_1 + T^+\}$.
        
        \item 
        Let $X:= (\widetilde{X}_t)_{t=0}^\ell$. Define $Y_t:= (Y_t)_{t=0}^\ell$ by $Y_t = \widetilde{Y_t}$ for $t\in \{L^-,\ldots,L^+\}$ and $Y_t = \widetilde{X}_t$ otherwise.    

    \end{enumerate}

    \begin{figure}[tb]
        \centering
        \includegraphics[width= 0.7 \textwidth]{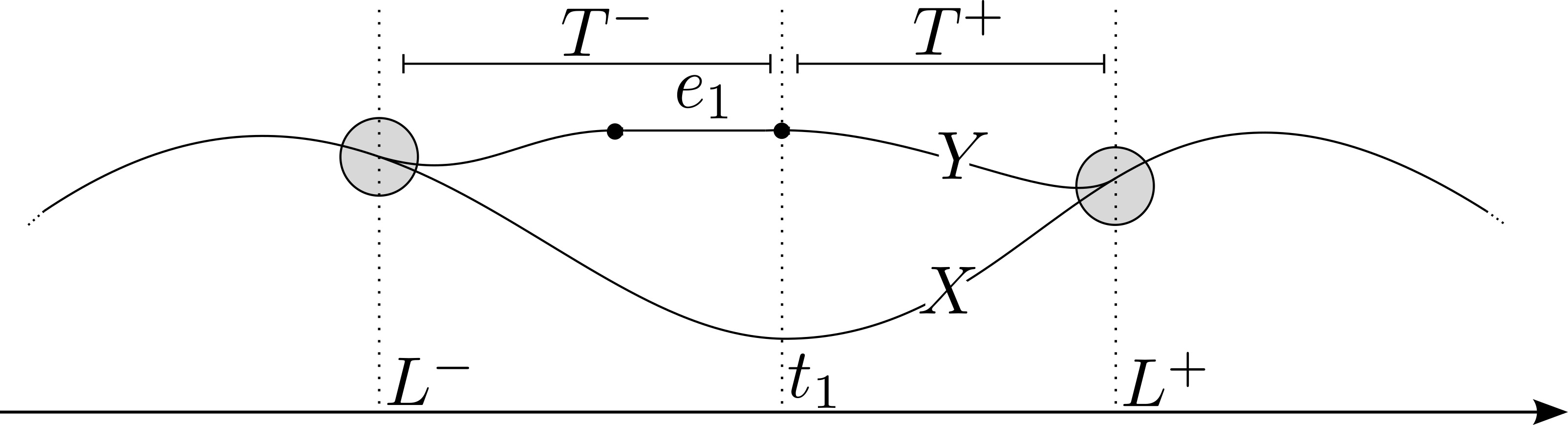}
        \caption{Visualization of the construction of the chain $Y$ in the proof of \Cref{prop: MomentSketch} by gluing an edge $e_1$ onto $X$ at time $t_1$. 
        The clusters of the block Markov chain allow us to ensure that $\mathbb{E}[L^+ - L^-]$ is $O(1)$ which causes the expected difference $\mathbb{E}[\hat{N}_{Y,e_2} - \hat{N}_{X,e_2}]$ to be small because a traversal of $e_2$ in this small period of time is unlikely.   
        }
        \label{fig: Sketch}
    \end{figure}

    By construction, $X$ is a sample path from the block Markov chain whereas $Y$ is a sample path of the block Markov chain conditioned on the event $E_{Y,t_1} = e_1$. 
    Now, by the law of total expectation 
    \begin{align}
        &
        \mathbb{E}[\hat{N}_{X,e_2}\mid E_{X,t_1} = e_1] - \mathbb{E}[\hat{N}_{X,e_2}] 
        = \mathbb{E}[ \hat{N}_{Y,e_2} - \hat{N}_{X,e_2}] 
        \\ &
        = 
        \sum_{\ell^-, \ell^+ =1}^\ell 
        \mathbb{P}(
            L^- = \ell^-,
            L^+ = \ell^+
        ) 
        \mathbb{E}[  
            \hat{N}_{Y,e_2} - \hat{N}_{X,e_2} 
            \mid 
            L^- = \ell^-,  L^+ = \ell^+
        ]
        . 
        \label{eq: DiffConditioned}
    \end{align} 

    \proofpart{$\lvert\mathbb{E}[  \hat{N}_{Y,e_2} - \hat{N}_{X,e_2}\mid L^{-} = \ell^-, L^+ = \ell^+]\rvert \leq 2\alpha_{min}^{-1}n^{-1}(\ell^+ - \ell^-)$}

    \noindent
    Let $\Delta_X := \sum_{t = L^{-}+1}^{L^+} \1_{E_{X,t} = e_2}$  denote the number of times $e_2$ was traversed by $(X_t)_{t = L^-}^{L^+}$. 
    Similarly, let $\Delta_{Y}:= \sum_{t = L^{-}+1}^{L^+} \1_{E_{Y,t} = e_2}$. 
    Since $\mathcal{V}_k$ does not contain any endpoint of $e_2$, it holds that $\hat{N}_{Y,e_2} - \hat{N}_{X,e_2} = \Delta_Y - \Delta_X$. 
    Therefore  
    \begin{align}
        \lvert\mathbb{E}[\hat{N}_{Y,e_2}& - \hat{N}_{X,e_2}\mid
            L^- = \ell^-,  L^+ = \ell^+
        ]\rvert 
        \label{eq: DiffNYNX}
        \\ 
        &
        \leq  \mathbb{E}[ \Delta_X \mid L^- = \ell^-,  L^+ = \ell^+] 
        + \mathbb{E}[ \Delta_Y \mid
            L^- = \ell^-,  L^+ = \ell^+
        ]. 
        \nonumber
    \end{align}
    We will establish a bound on the conditional expectation of $\Delta_Y$ by using its definition in terms of $\1_{E_{Y,t} = e_2}$. 
    To this end we claim that 
    \begin{align}
        \mathbb{P}(E_{Y,t}= e_2\mid L^- = \ell^-,L^+ = \ell^+) \leq \alpha_{min}^{-1}n^{-1}
        \label{eq: Pe2}
    \end{align}
    for any $t\in \{1,\ldots,\ell \}$.
    In case $t = t_1$, the left-hand side of \cref{eq: Pe2} is zero and there is nothing to prove. 
    Now consider the case where $t \neq t_1$. 
    For any edge $e$ whose starting point is equal to the starting point of $e_2$ and whose ending point is in the same cluster as the ending point of $e_2$ it holds that $\mathbb{P}(E_{Y,t} = e\mid L^- = \ell^- , L^+ = \ell^+) = \mathbb{P}(E_{Y,t} = e_2\mid L^- = \ell^- , L^+ = \ell^+)$.  
    This implies \cref{eq: Pe2} whenever $t>t_1$ since there are at least $\alpha_{min}n$ such edges $e$.
    The case $t<t_1$ may be deduced similarly by reversing the roles of the ending point and the starting point of $e$. 
    
    Combine \cref{eq: Pe2} with the fact that $\Delta_Y = \sum_{t= L^-+1}^{L^+}\1_{E_{Y,t}= e_2}$ to conclude that 
    \begin{align}
        \mathbb{E}[\Delta_{Y}\mid L^- = \ell^-, L^+ = \ell^+]
        &= \sum_{t= \ell^-+1}^{\ell^+}\mathbb{P}(E_{Y,t} = e_2\mid L^- = \ell^- , L^+ = \ell^+)\\ 
        &\leq \alpha_{min}^{-1}n^{-1}(\ell^+ - \ell^-).
    \end{align}
    The same conclusion applies to $\Delta_X$. 
    Combine finally with \cref{eq: DiffConditioned} and \cref{eq: DiffNYNX} to deduce that  
    \begin{align}
        \lvert  \mathbb{E}[\hat{N}_{X,e_2}\mid E_{X,t_1} &= e_1] - \mathbb{E}[\hat{N}_{X,e_2}] \rvert \leq 2\alpha_{min}^{-1}n^{-1} \mathbb{E}[L^+ - L^-].
        \label{eq: DiffConditioned2}
    \end{align}

    \proofpart{$\mathbb{E}[L^+ - L^-] = O(1)$}

    \noindent
    Note that $\mathbb{E}[L^+ - L^-] \leq \mathbb{E}[T^+] + \mathbb{E}[T^-]$. 
    Here, $\mathbb{E}[T^+]$ and $\mathbb{E}[T^-]$ are finite constants which do not depend on $n$. 
    Indeed, consider the product Markov chain $\Sigma_{(X,Y)}^+ := (\sigma(X_{t_1 + t}), \sigma(Y_{t_1 + t}))_{t=0}^{\infty}$ on the space $\{1,\ldots,K \}\times \{1,\ldots,K \}$. 
    Then $T^+$ is the first strictly positive time $\Sigma_{(X,Y)}^+$ is in $(k,k)$. 
    Recall that the transition dynamics $p$ for $\sigma(X_t)$ and $\sigma(Y_t)$ are assumed to be acyclic and irreducible. 
    It follows that $\mathbb{P}(T^+>t)$ shows exponential decay in $t$. 
    Because there are only $K^2$ possibilities for the initial state $\Sigma_{(X,Y),0}^+$ it follows that $\mathbb{E}[T^+] \leq  B^+ $ for some constant $B^+\in \mathbb{R}_{>0}$ which does not depend on $e_1,e_2$ or $n$. 
    A similar argument shows that $\mathbb{E}[T^-]\leq B^-$ for some $B^- \in \mathbb{R}_{>0}$.
    
    From \Cref{eq: SketchSum} and \Cref{eq: DiffConditioned2} it now follows that 
    \begin{align}
        \mathbb{E}[M_{X,e_1}M_{X,e_2}] \leq 2\ell (B^+ + B^-)\alpha_{min}^{-3} n^{-3}.
        \label{eqn:Upper_bound_to_the_expectation_of_product_MXe1MXe2}
    \end{align}
    Observe that the right-hand side of \Cref{eqn:Upper_bound_to_the_expectation_of_product_MXe1MXe2} is independent of $e_1, e_2$. 
    Since $\ell = \Theta(n^2)$ this concludes the proof.  
\end{proof}

\section{Numerical experiment on Manhattan taxi trips}
\label{sec: Taxi}

We will now demonstrate that \Cref{thm: SingValN} and \Cref{thm: SingValP} can give nontrivial predictions for the singular value distributions on an actual dataset. 
Specifically, we will analyze the first six months of 2016 in the New York City yellow cab dataset \cite{taxidata_2016}.
Each datapoint contains the pick-up and drop-off location of one trip. 
Here, pick-up locations are typically close to drop-off locations so that the dataset may be modelled as a fragmented sample path of a Markov chain.  

Spectral clusterings using the Markovian structure of this dataset have previously been analyzed in \cite{zhang2020spectral}.
Our preprocessing is similar to what was done in \cite{zhang2020spectral}, and is as follows. 
The map is subdivided into a fine grid and we trim all states which have been visited fewer than 200 times. 
We further remove all self-transitions. 
This results in a state space of size $n = 4486$ with $\ell = 55\times 10^6$ transitions.

Note that $\ell / n^2 \approx 2.7$.
This empirical observation allows us to make a relevant remark concerning our theoretical assumption $\ell = \Theta(n^2)$.
Some of our readers may namely be familiar with the literature on random graphs. 
It is an empirical observation that most real-world graphs are sparse. 
This sparsity is correspondingly a key difficulty which one should interact with in the setting of random graphs. 
Sparsity is not irrelevant in the setting of sequential data but it has a different meaning; it relates to the amount of time that the process was observed. 
As opposed to random graphs it is not unusual to encounter dense sequential data in the real world.   
The key novel difficulty is rather that sequential data has dependence. 
This is precisely the difficulty which our proofs interact with; recall the coupling argument in the proof of \Cref{prop: MomentSketch}.

A clustering $(\hat{\mathcal{V}}_k)_{k=1}^K$ is found by applying both steps from the algorithm in \cite{sanders2020clustering} with $K=4$ clusters; the result is displayed on the left-hand side of \Cref{fig: Taxi}.
Having obtained these clusters we may estimate the parameters of the block Markov chain as   
\begin{align*}
    \hat{\lambda} =\frac{\ell}{n^2},\ \hat{\alpha}_k = \frac{\# \hat{\mathcal{V}}_k}{n}, \ \hat{\pi}_k = \frac{\sum_{i \in \mathcal{V}}\sum_{j\in \hat{\mathcal{V}}_k}\hat{N}_{X,ij}}{\sum_{i \in \mathcal{V}}\sum_{j\in \mathcal{V}} \hat{N}_{X,ij}},\ \hat{p}_{k_1, k_2} = \frac{\sum_{i \in \hat{\mathcal{V}}_{k_1}}\sum_{j\in \hat{\mathcal{V}}_{k_2}} \hat{N}_{X,ij}}{\sum_{i \in \hat{\mathcal{V}}_{k_1}} \sum_{j\in \mathcal{V}}\hat{N}_{X,ij}}.  
\end{align*}
These parameters may be substituted in \Cref{thm: SingValN} and \Cref{thm: SingValP} to yield predictions for the singular value distributions of $\hat{N}_{X}$ and $\hat{P}_X$. 
The theoretical predictions and the empirical observations are displayed in \Cref{fig: Taxi}.
For comparison, we have also displayed the quarter circle law which is the universal law for the singular values of a random matrix with independent entries and identical variance. 
In other words, the quarter circle law is the prediction corresponding to $K = 1$.  

Taking into account the fact that we used just $K = 4$ clusters, we conclude that the predictions match the shape of the singular value distributions fairly well. 
Observe that the quarter circle law does not even match the general shape of the distributions: the quarter circle law has a concave density whereas the observed empirical distributions have convex densities.  

\begin{figure}[tb]
    \centering
    \begin{subfigure}[b]{0.22\textwidth}
        \centering
        \resizebox{\linewidth}{!}{
            \includegraphics{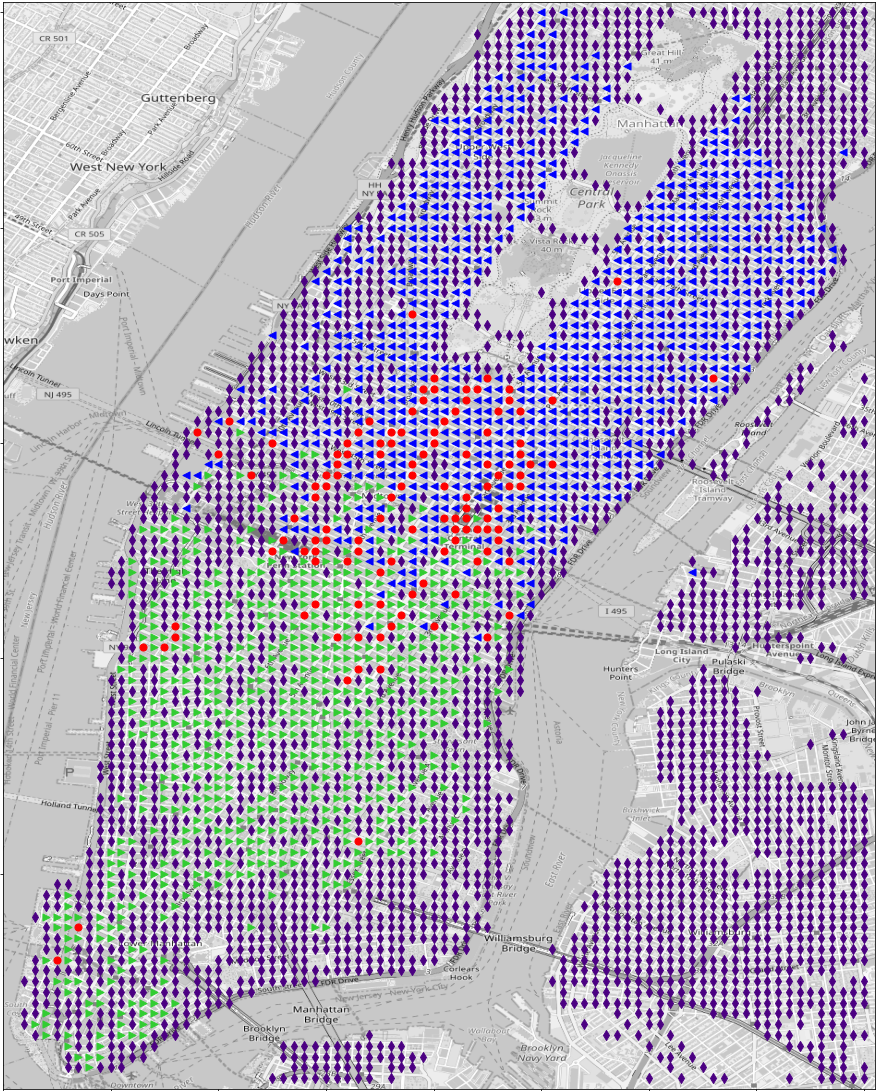}    
        } 
    \end{subfigure}
    \hfill 
    \begin{subfigure}[b]{0.38\textwidth}
        \centering
        \resizebox{.95\linewidth}{!}{\input{figures/N_Taxi.tex}}  
    \end{subfigure}
    \hfill 
    \begin{subfigure}[b]{0.38\textwidth}
        \centering
        \resizebox{.95\linewidth}{!}{\input{figures/P_Taxi.tex}}  
    \end{subfigure}
    \caption{
    On the left: Manhattan and the four clusters in purple, blue, green and red. 
    In the middle: $\hat{N}_X/\sqrt{n}$ and a frequency-based histogram of the singular values 
    compared to our theoretical predictions 
    and the quarter-circle law with density $(\pi \hat{\lambda})^{-1}(4\hat{\lambda} - x^2)^{1/2}\1_{x\in [0, 2\sqrt{\lambda}]}$. 
    On the right: $\sqrt{n}\hat{P}_X$ and its singular values compared to our theoretical predictions and the quarter-circle law with density $\pi^{-1}\hat{\lambda}(4/\hat{\lambda} - x^2)^{1/2}\1_{x\in [0,2/\sqrt{\lambda}]}$. 
    The systems of equations in \Cref{thm: SingValN} and \Cref{thm: SingValP} were solved using the algorithm in \cite[Proposition 4.1]{helton2007operator} after which we used the Stieltjes inversion forumula \cref{eq: StieltjesInv} to recover the measures. 
    }
    \label{fig: Taxi}
\end{figure}

\section{Proofs}
\label{sec: RemainingProofs}

\subsection{{Proof of Theorem \ref{thm: WeakProbAlUncor}}}
\label{sec: ProofProbAlUncor}

We work under the assumptions of \Cref{thm: WeakProbAlUncor}.
This is to say that $(A_n)_{n=1}^\infty$ is a family of symmetric $n\times n$ random matrices which are approximately uncorrelated with variance profile. 
Recall that for any fixed ordered tree $T\in \mathcal{T}_k$ it is assumed that $t(T,S_n)$ has a limit as $n\to \infty$ where $S_n= (\operatorname{Var}(A_{n,ij}))_{i,j=1}^n$ denotes the variance profile of $A_n$. 

The proof of \Cref{thm: WeakProbAlUncor} comes down to a modification of the proof in \cite{hochstattler2016semicircle} to include the variance profile.
Let us start by including some background on the moment method. 
These results are well-known and included for the reader's convenience. 

The following lemma is implicit in the proof of the Wigner semicircle law in \cite[Section 2.1]{anderson2010introduction}. 
See particularly the remarks following Lemma 2.1.7 in \cite[Section 2.1.2]{anderson2010introduction} and the application of Chebyshev's inequality in the first sentence of \cite[Section 2.1.4]{anderson2010introduction}.
\begin{lemma}
    \label{lem: ConvergenceMomentsSuffices}
    Let $(\mu_n)_{n=1}^\infty$ be a sequence of random probability measures on $\mathbb{R}$ and let $\mu$ be a deterministic and compactly supported probability measure on $\mathbb{R}$. If, for every $k\in \mathbb{Z}_{\geq 0}$, 
    \begin{align}
        \mathbb{E}[\mathfrak{m}_k(\mu_n)] = \mathfrak{m}_k(\mu) + o_k(1); 
        \quad 
        \operatorname{Var}[\mathfrak{m}_k(\mu_n)] 
        = 
        o_k(1)
        ,
    \end{align}
    then $\mu_n$ converges weakly in probability to $\mu$. 
\end{lemma}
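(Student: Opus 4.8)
\textbf{Proof plan for \Cref{lem: ConvergenceMomentsSuffices}.}
The statement is the standard ``method of moments'' criterion, and the plan is to deduce it from the deterministic version (convergence of moments to those of a measure that is determined by its moments implies weak convergence) combined with a second-moment argument to upgrade from expectation to probability. First I would recall the deterministic fact: if $(\nu_n)$ is a sequence of probability measures on $\mathbb{R}$ with $\mathfrak{m}_k(\nu_n)\to\mathfrak{m}_k(\mu)$ for every $k\in\mathbb{Z}_{\geq 0}$, and $\mu$ is uniquely determined by its moments, then $\nu_n\Rightarrow\mu$; this is exactly the content cited from \cite[Section 2.1]{anderson2010introduction} and I would invoke it as a black box. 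The subtlety here is that the $\mu_n$ are \emph{random} measures, so ``$\mathfrak{m}_k(\mu_n)$'' is a random variable, and the hypothesis only controls its mean and variance.

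The key step is to convert the two moment hypotheses into convergence in probability of each moment. From $\mathbb{E}[\mathfrak{m}_k(\mu_n)] = \mathfrak{m}_k(\mu) + o_k(1)$ and $\operatorname{Var}[\mathfrak{m}_k(\mu_n)] = o_k(1)$, an application of Chebyshev's inequality gives, for each fixed $k$ and each $\varepsilon>0$,
\begin{align}
    \mathbb{P}\big( \lvert \mathfrak{m}_k(\mu_n) - \mathfrak{m}_k(\mu)\rvert > \varepsilon \big)
    \leq
    \frac{2\operatorname{Var}[\mathfrak{m}_k(\mu_n)] + 2(\mathbb{E}[\mathfrak{m}_k(\mu_n)] - \mathfrak{m}_k(\mu))^2}{\varepsilon^2}
    \xrightarrow[n\to\infty]{} 0,
\end{align}
so $\mathfrak{m}_k(\mu_n)\to\mathfrak{m}_k(\mu)$ in probability for every $k$. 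Next I would pass to almost-sure convergence along a subsequence: given any subsequence, a diagonal extraction over $k\in\mathbb{Z}_{\geq 0}$ produces a further subsequence along which $\mathfrak{m}_k(\mu_n)\to\mathfrak{m}_k(\mu)$ almost surely, \emph{simultaneously} for all $k$. On the almost sure event where this holds, the deterministic method-of-moments fact applies pathwise and yields $\mu_n\Rightarrow\mu$ along that subsequence, i.e. $\int f\,d\mu_n\to\int f\,d\mu$ for every $f\in\mathcal{C}_b(\mathbb{R})$.

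Finally I would assemble these pieces into the claimed mode of convergence. Fix $f\in\mathcal{C}_b(\mathbb{R})$; we have shown that every subsequence of $\big(\int f\,d\mu_n\big)_n$ has a further subsequence converging almost surely to the constant $\int f\,d\mu$. The standard subsequence principle (a sequence of random variables converges in probability to a constant iff every subsequence has an a.s.-convergent further subsequence with that limit) then gives $\int f\,d\mu_n\to\int f\,d\mu$ in probability, which is precisely the definition of weak convergence in probability recorded in \Cref{sec: SpecM}. The only mild obstacle is the diagonal/subsequence bookkeeping needed to obtain \emph{joint} a.s. convergence of all moments so that the deterministic theorem can be applied on a single almost sure event; once that is set up, everything else is routine. (One should also note that the hypotheses force all $\mathfrak{m}_k(\mu_n)$ to be finite for $n$ large, so the moments are genuinely well defined, and that $\mu$ having all moments finite is part of the assumption that it is determined by its moments.)
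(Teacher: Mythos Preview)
Your proposal is correct and follows the standard argument. Note, however, that the paper does not actually prove this lemma: it is stated with a citation to \cite[Section 2.1]{anderson2010introduction} and used as a black box, so there is no paper proof to compare against. Your sketch is essentially the proof one finds in that reference.
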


The following result is moreover a direct consequence of the Stone--Weierstrass theorem and the Riesz representation theorem \cite[Theorem 2.14]{rudin1987real}. 

\begin{lemma}
    \label{lem: MomentProblem}

    For any compactly supported probability measure $\mu$ it holds that the sequence of moments $(\mathfrak{m}_{k}(\mu))_{k=0}^\infty$ satisfies the following properties: 
    \begin{enumerate}[label = \rm{(\roman*)}]
        \item It holds that $\mathfrak{m}_0(\mu) = 1$. 
        \item For any $k\in \mathbb{Z}_{\geq 0}$ the Hankel matrix $(\mathfrak{m}_{i+j}(\mu))_{i,j=0}^k$ is positive semi-definite.
        \item There exists a constant $c\in \mathbb{R}_{>0}$ such that $\lvert \mathfrak{m}_k(\mu) \rvert \leq c^k$ for all $k \in \mathbb{Z}_{\geq 0}$.    
    \end{enumerate}  
    Moreover, for any sequence of real number $(m_k)_{k=0}^\infty$ satisfying these properties there exists a unique probability measure $\mu$ with $(\mathfrak{m}_k(\mu))_{k=0}^\infty = (m_k)_{k=0}^\infty$ and this measure $\mu$ is compactly supported. 
\end{lemma}

\begin{lemma}\label{lem: UniqueMu}
    There exists a unique probability measure $\mu$ whose moments are given by 
    \begin{align}
        \mathfrak{m}_{2m}(\mu) = \sum_{T\in \mathcal{T}_m} \lim_{n\to\infty} t(T,S_n)
        ;
        \quad 
        \mathfrak{m}_{2m + 1}(\mu) 
        = 
        0
        ,
    \end{align}
    for every $m\in \mathbb{Z}_{\geq 0}$. Moreover, $\mu$ is compactly supported.
\end{lemma}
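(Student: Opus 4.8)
The plan is to verify that the proposed moment sequence $(m_k)_{k=0}^\infty$ given by $m_{2m} = \sum_{T\in\mathcal{T}_m}\lim_{n\to\infty}t(T,S_n)$ and $m_{2m+1}=0$ satisfies the three conditions of \Cref{lem: MomentProblem}, so that the existence and uniqueness of a compactly supported $\mu$ with these moments follows immediately. Condition (i), namely $m_0 = 1$, is immediate since $\mathcal{T}_0$ consists of the single one-vertex tree and $t(T_0,S_n) = 1$ for that tree (the empty product, averaged over a single index). So the real content is conditions (ii) and (iii), together with the fact that the limits defining $m_{2m}$ actually exist.

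First I would observe that the limits exist by hypothesis: for each fixed simple tree $T$, $t(T,S_n)$ converges as $n\to\infty$, and $\mathcal{T}_m$ is a finite set (of ordered trees on $m+1$ vertices; each ordered tree has an underlying simple tree to which $t(\cdot,S_n)$ is applied), so the finite sum $\sum_{T\in\mathcal{T}_m}\lim_n t(T,S_n)$ is well-defined. For condition (iii), I would note that $0 \le S_{n,ij} \le \max_{ij}\operatorname{Var}[A_{n,ij}]$, and this maximum is $O(1)$: indeed, \cref{eq: defAUGenMom} applied with $R = 1$, $r = 0$, $m_1 = 2$ gives $\max_{ij}\mathbb{E}[A_{n,ij}^2] = O(1)$, hence $\max_{ij}S_{n,ij}\le C$ for some constant $C$ and all $n$. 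Then $|t(T,S_n)| \le C^m$ for every $T \in \mathcal{T}_m$ (a product of $m$ entries of $S_n$, averaged), so $|m_{2m}| \le |\mathcal{T}_m| C^m = C_m^{*} C^m$ where $|\mathcal{T}_m|$ is the $m$th Catalan number, which is $\le 4^m$. Therefore $|m_k| \le (2\sqrt{C})^k$ for all $k$ (handling odd $k$ trivially since $m_k = 0$ there, and using $|m_{2m}| \le 4^m C^m = (4C)^m = (2\sqrt{C})^{2m}$), giving condition (iii) with $c = \max(1, 2\sqrt{C})$.

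The main obstacle is condition (ii): positive semi-definiteness of the Hankel matrices $(m_{i+j})_{i,j=0}^k$. The cleanest route is \emph{not} to argue about Hankel matrices directly but to exhibit a genuine probability measure realizing these moments and then invoke the converse; but that is circular here. Instead I would exploit the structure: the quantities $\sum_{T\in\mathcal{T}_m}t(T,S_n)$ are themselves the even moments of an honest probability measure. Concretely, I would recall (this is exactly the content established in the proof sketch of \Cref{thm: WeakProbAlUncor}, or alternatively the classical identity underlying the Wigner-type computation) that for \emph{any} symmetric matrix $S_n$ with non-negative entries one has, for the corresponding ``expected empirical spectral measure,'' that $\sum_{T\in\mathcal{T}_m}t(T,S_n)$ equals $\lim$ or exact value of a moment expression — more robustly, I would appeal to \cite[Theorem 3.2]{zhu2020graphon} (cited in the excerpt), which for independent random matrices with variance profile $S_n$ identifies the limiting spectral distribution $\mu_n^{*}$ as having precisely the moments $\mathfrak{m}_{2m}(\mu_n^{*}) = \sum_{T\in\mathcal{T}_m}t(T,S_n)$ and $\mathfrak{m}_{2m+1}(\mu_n^{*}) = 0$, and moreover $\mu_n^{*}$ is a bona fide compactly supported probability measure. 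Since moment sequences of probability measures always yield positive semi-definite Hankel matrices, $(\mathfrak{m}_{i+j}(\mu_n^{*}))_{i,j=0}^k \succeq 0$ for each $n$; passing to the limit $n\to\infty$ (entrywise convergence of a sequence of PSD matrices preserves PSD-ness, as PSD-ness is a closed condition), we conclude $(m_{i+j})_{i,j=0}^k \succeq 0$, which is condition (ii). Having verified (i), (ii), (iii), the second half of \Cref{lem: MomentProblem} delivers the unique compactly supported probability measure $\mu$ with $(\mathfrak{m}_k(\mu))_{k=0}^\infty = (m_k)_{k=0}^\infty$, completing the proof. One point to be careful about: I must make sure the appeal to \cite{zhu2020graphon} only uses its moment formula and the fact that the limit there is a probability measure — not any statement we are trying to prove — and that the hypothesis $\max_{ij}S_{n,ij} = O(1)$ needed there is the same $O(1)$ bound derived above.
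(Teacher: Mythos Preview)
Your proposal is correct and uses the same ingredients as the paper—the reference to \cite[Theorem 3.2]{zhu2020graphon} for the existence/positivity input, and the Catalan-number bound on $\sum_{T\in\mathcal{T}_m}t(T,S_n)$ for condition (iii)—but the logical organization differs. The paper first invokes \cite{zhu2020graphon} once to obtain the limiting probability measure $\mu$ directly; with $\mu$ already in hand, conditions (i) and (ii) of \Cref{lem: MomentProblem} are immediate (positive semi-definiteness of the Hankel matrices is equivalent to $\int |p|^2\,d\mu\ge 0$, which is trivial for any measure), and only (iii) requires the Catalan argument, yielding compact support and uniqueness. You instead verify (i)--(iii) for the abstract moment sequence and extract everything from \Cref{lem: MomentProblem} at the end; this forces you to manufacture, for condition (ii), an auxiliary measure $\mu_n^*$ at each finite $n$ (effectively applying \cite{zhu2020graphon} to the constant sequence $S_n$) and then pass to the limit in the PSD cone. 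That works, but it is more circuitous than the paper's route, which avoids the finite-$n$ detour entirely. The trade-off: your organization is more self-contained in that it never assumes $\mu$ exists before invoking \Cref{lem: MomentProblem}, while the paper's is shorter because it front-loads the existence and lets (ii) come for free.
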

\begin{proof}
    The existence of such a probability measure $\mu$ is known \cite[Theorem 3.2]{zhu2020graphon}. 
    However, it is not explicitly stated in \cite{zhu2020graphon} that $\mu$ is compactly supported so, for the sake of completeness, let us provide an argument based on \Cref{lem: MomentProblem}. 

    The condition that $\mathfrak{m}_0(\mu)=1$ is satisfied since $\mu$ is a probability measure. 
    Further, the positive semi-definiteness of the Hankel matrix $(\mathfrak{m}_{i+j}(\mu))_{i,j=0}^k$ for every $k \in \mathbb{Z}_{\geq 0}$ is equivalent to the trivial statement that $\int p(x) \overline{p(x)}d\mu(x) \geq 0$ for every polynomial $p(x)\in \mathbb{C}[x]$.
    It remains to show that the moments of $\mu$ are exponentially bounded. 

    It follows from \cref{eq: defAUGenMom} in the definition of an approximately uncorrelated random matrix with a variance profile that $\max_{i,j = 1,\ldots,n}S_{n,ij} \leq \newconstant{bound}$.
    Therefore, using the definition of homomorphism densities in \cref{eq: HomDens} it holds that for every $m\in \mathbb{Z}_{\geq 0}$ and any ordered tree $T \in \mathcal{T}_m$,
    \begin{align}
        t(T,S_n) \leq  \cte{bound}^{m}.
    \end{align}
    It holds that $\# \mathcal{T}_m = C_m$ where $C_m$ is the $m$th Catalan number. 
    It is known that there exists some $\newconstant{Catalan}\in \mathbb{R}_{>0}$ such that $C_m \leq \cte{Catalan}^m$ for all $m\in \mathbb{Z}_{\geq 0}$.  
    Hence, 
    \begin{align}
        \sum_{T\in \mathcal{T}_m} t(T,S_n) \leq \cte{Catalan}^m \cte{bound}^{m}\leq \cte{CS}^{2m}
        \label{eq: CSeven}
    \end{align} 
    for some constant $\newconstant{CS}\in \mathbb{R}_{>0}$ and all $m \geq 1$.
    Equation \cref{eq: CSeven} provides an exponential bound on the rate of growth of the even moments $\mathfrak{m}_{2m}(\mu)$. 
    Further, recall that we already know that $\mathfrak{m}_0(\mu) = 1$ and $\mathfrak{m}_{2m + 1} =0$.  
    It follows that $\mathfrak{m}_k(\mu) \leq \cte{CS}^k$ for all $k\in \mathbb{Z}_{\geq 0}$.  
    Conclude by \Cref{lem: MomentProblem} that $\mu$ is compactly supported and unique. 
\end{proof}

We adapt the same notation as in the sketch of \Cref{thm: WeakProbAlUncor}. 
This is to say that given integers $i := (i_1,\ldots,i_k, i_1)$ with $i_j \in \{1,\ldots,n\}$ for every $j=1,\ldots,k$ we denote the induced undirected graph with vertex set $V(i):=\{i_1,\ldots,i_k\}$ and edge set $E(i):= \{\{i_1,i_2 \},\{i_2,i_3 \},\ldots,\{i_k,i_1 \} \}$ by $G_i = (V(i), E(i))$. 
Viewing $i$ as a cycle on $G_i$ we let $r_1(i)$ be the number of edges which are traversed exactly once and $r_2(i)$ be the number of edges which are traversed exactly twice.
Finally, we define $P(i):= \mathbb{E}[A_{n,i_1i_2} \cdots A_{n,i_k i_1}]$.

The following combinatorial result will be essential. 

\begin{lemma}[{\cite[Corollary 10]{hochstattler2016semicircle}}]
    \label{lem: Combinatorics}
    With $i$ as above it holds that $\# V(i) \leq (k + r_1(i))/2 + 1$ with strict inequality whenever $r_1(i) > 0$. 
\end{lemma}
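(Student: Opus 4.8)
The plan is to combine the observation that the multigraph $G_i$ traced out by the closed walk $i$ is connected with a count of edge multiplicities, and then to strengthen the resulting inequality via a bridge/parity argument.

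First I would record two elementary facts. Since consecutive vertices $i_j,i_{j+1}$ are adjacent in $G_i$ and $V(i)$ is exactly the set of vertices visited, $G_i$ is connected, so $\#E(i) \ge \#V(i) - 1$. Next, write $m_j$ for the number of edges of $G_i$ that the walk traverses exactly $j$ times, so that $\#E(i) = \sum_{j\ge 1} m_j$ and $k = \sum_{j\ge 1} j m_j$, with $m_1 = r_1(i)$. Since each edge counted by some $m_j$ with $j\ge 2$ contributes at least $2$ to the sum for $k$,
\[
    k + r_1(i) \;=\; 2 m_1 + \sum_{j\ge 2} j m_j \;\ge\; 2\Bigl(m_1 + \sum_{j\ge 2} m_j\Bigr) \;=\; 2\,\#E(i),
\]
hence $\#E(i) \le (k + r_1(i))/2$. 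Together with connectedness this gives $\#V(i) - 1 \le \#E(i) \le (k+r_1(i))/2$, which is the claimed bound.

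For the strict inequality when $r_1(i) > 0$, the key point is that an edge traversed exactly once by a closed walk cannot be a bridge of $G_i$. Indeed, if deleting an edge $e$ disconnected $G_i$, then since the walk begins and ends at the same vertex it must cross $e$ back and forth an even number of times, so $e$ is traversed an even number of times (in particular, since $e\in E(i)$, at least twice). Consequently, if $r_1(i) > 0$ then some edge of $G_i$ lies on a cycle; thus $G_i$ is not a forest and, being connected, satisfies $\#E(i) \ge \#V(i)$. Therefore $\#V(i) \le \#E(i) \le (k + r_1(i))/2 < (k+r_1(i))/2 + 1$.

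The argument is short and essentially combinatorial; the only step requiring a moment of care is the bridge/parity observation used for strictness, together with the handling of degenerate traversals (walks using a self-loop $i_j = i_{j+1}$), which fit the same scheme since a self-loop is itself a cycle and so again forces $\#E(i) \ge \#V(i)$ whenever it is present. As this statement is precisely \cite[Corollary 10]{hochstattler2016semicircle}, one may alternatively just invoke it; the sketch above is recorded only to keep the exposition self-contained.
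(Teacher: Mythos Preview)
Your argument is correct. The paper does not actually prove this lemma: it is stated with a direct citation to \cite[Corollary 10]{hochstattler2016semicircle} and used as a black box, so there is no proof in the paper to compare against. Your self-contained derivation---the edge-count inequality $\#E(i)\le (k+r_1(i))/2$ combined with connectedness for the bound, and the bridge/parity observation that a once-traversed edge of a closed walk cannot be a bridge for the strictness---is exactly the standard route to this statement and matches what one finds in the cited source. The remark on self-loops is a nice touch for completeness but is not strictly needed here, since the paper's application only requires the inequality and its strictness, not any finer structural information.
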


\begin{lemma}
    \label{lem: EA}
    For any $m\in \mathbb{Z}_{\geq 0}$ it holds that
    \begin{align}
        \mathbb{E}[\mathfrak{m}_{2m + 1}(\mu_{A_n/\sqrt{n}})]&=   
            o_{m}(1),\\
        \mathbb{E}[\mathfrak{m}_{2m}(\mu_{A_n/\sqrt{n}})]&= \sum_{T\in \mathcal{T}_m} t(T,S_n) + o_{m}(1).    
    \end{align}   
\end{lemma}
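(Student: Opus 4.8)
The plan is to compute $\mathbb{E}[\mathfrak{m}_k(\mu_{A_n/\sqrt{n}})]$ directly from the trace expansion \cref{eq: MomentExpansion}, grouping the $n^k$ index sequences $i = (i_1,\ldots,i_k,i_1)$ according to the isomorphism type of the induced graph $G_i$ together with the multiplicities with which edges are traversed. First I would split the sum over $i$ according to the value of $r_1(i)$, the number of singly-traversed edges. For the terms with $r_1(i) > 0$, I will bound $|P(i)|$ using \cref{eq: defAUGenMom}: since $r_1(i)$ edges occur with multiplicity exactly one, the corresponding moment is $O_{k}(n^{-r_1(i)/2})$. The number of sequences $i$ with a given value $r_1(i) = r$ is at most (number of choices of the underlying multigraph structure, which is $O_k(1)$) times $n^{\#V(i)}$, and \Cref{lem: Combinatorics} gives $\#V(i) \le (k + r)/2 + 1$ with \emph{strict} inequality when $r > 0$, hence $\#V(i) \le (k+r)/2 + 1/2$ in that case (vertex counts being integers). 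Combining, the total contribution of terms with $r_1(i) = r > 0$ is $O_k\bigl( n^{-1-k/2} \cdot n^{(k+r)/2 + 1/2} \cdot n^{-r/2}\bigr) = O_k(n^{-1/2}) = o_k(1)$, uniformly over the finitely many values of $r$.

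Next I would handle the terms with $r_1(i) = 0$, i.e.\ every edge of $G_i$ is traversed at least twice, so the closed walk of length $k$ has $\#E(i) \le k/2$ and thus $\#V(i) \le k/2 + 1$. When $k = 2m+1$ is odd this forces $\#V(i) \le m+1$, but in fact no closed walk of odd length can traverse every edge an even number of times unless... more carefully: if $r_1(i) = 0$ then every edge is used with multiplicity $\ge 2$, so $k \ge 2\#E(i)$, and since $G_i$ is connected $\#V(i) \le \#E(i) + 1 \le k/2 + 1$. For odd $k$ this gives $\#V(i) \le (k-1)/2 + 1 = m + 1$, so the count of such $i$ is $O_m(n^{m+1})$, which against the normalization $n^{-1-k/2} = n^{-3/2 - m}$ yields $O_m(n^{-1/2}) = o_m(1)$; this proves the first display. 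For even $k = 2m$, the dominant contribution comes from $i$ with $\#V(i) = m+1$, which (since $\#V(i) \le \#E(i)+1 \le m+1$ with $G_i$ connected) forces $\#E(i) = m$ and $G_i$ a tree, with each of its $m$ edges traversed exactly twice, i.e.\ $r_2(i) = m$ and $r_j(i) = 0$ for $j \neq 2$. All other terms ($\#V(i) \le m$, or a tree with some edge traversed $\ge 4$ times) contribute $O_m(n^{-1})$ by the same counting, and one must also discard the negligible difference coming from sequences where two indices coincide (so $G_i$ has fewer than $\#V(i)$ distinct labels) — the number of such degenerate assignments is $O(n^{\#V(i) - 1})$, hence negligible.

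It then remains to evaluate the main term: the sum of $n^{-1-m}P(i)$ over sequences $i$ whose induced graph $G_i$ is a tree on $m+1$ distinct vertices with every edge doubled. Such closed walks are exactly the depth-first traversals of plane trees, so they are indexed by pairs $(T, \phi)$ with $T \in \mathcal{T}_m$ an ordered tree and $\phi$ an injection $V(T) \hookrightarrow \{1,\ldots,n\}$ assigning distinct labels to the vertices. For each such $i$, since every edge appears with multiplicity exactly $2$ and distinct edges $\{i_k,j_k\} \ne \{i_l,j_l\}$ correspond to distinct unordered pairs, \cref{eq: defAUVar} gives $P(i) = \prod_{\{v,w\} \in E(T)} \operatorname{Var}[A_{n,\phi(v)\phi(w)}] + o_m(1)$, uniformly in $i$. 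Summing over the $n^{m+1} + O(n^m)$ choices of $\phi$ (replacing injective by arbitrary assignments at cost $O(n^m)$) and recognizing the result as $n^{m+1}\, t(T, S_n)$ by the definition \cref{eq: HomDens} of the homomorphism density, dividing by $n^{1+m}$, and summing over $T \in \mathcal{T}_m$, yields $\sum_{T\in\mathcal{T}_m} t(T,S_n) + o_m(1)$, as claimed. The main obstacle is the bookkeeping in this last step — correctly matching the combinatorial enumeration of doubled-edge closed walks with ordered trees, controlling the transition between injective and arbitrary label assignments, and invoking \cref{eq: defAUVar} with the right uniformity so that the $o_m(1)$ error (summed over $O_m(n^{m+1})$ terms each scaled by $n^{-1-m}$) genuinely stays $o_m(1)$; everything else is a routine refinement of the classical Wigner moment computation combined with \Cref{lem: Combinatorics}.
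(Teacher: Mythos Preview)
Your proposal is correct and follows essentially the same route as the paper's proof: split the trace expansion according to $r_1(i)$, use \cref{eq: defAUGenMom} together with the strict inequality in \Cref{lem: Combinatorics} to kill the $r_1(i)>0$ contributions, reduce the $r_1(i)=0$ case to doubled-edge trees via the vertex--edge count, and then identify the main term with $\sum_{T\in\mathcal{T}_m} t(T,S_n)$ by matching closed walks to depth-first traversals of ordered trees and invoking \cref{eq: defAUVar}. The only cosmetic differences are that you extract an explicit $O_k(n^{-1/2})$ rate where the paper writes $o_k(1)$, and that you spell out the injective-versus-arbitrary labeling passage a bit more carefully; neither changes the argument.
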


\begin{proof}
    Let $k\in \mathbb{Z}_{\geq 0}$ be a positive integer and recall the expansion for $\mathbb{E}[\mathfrak{m}_k(\allowbreak \mu_{A_n/\sqrt{n}})]$ in \cref{eq: MomentExpansion}.
    Let $i:= (i_1,\ldots,i_k,i_1)$ be a sequence of integers as occurs in the right-hand side of \cref{eq: MomentExpansion}.  
    By property \Cref{eq: defAUGenMom} in the definition of an approximately uncorrelated random matrix it holds that $P(i) = O_{k}(n^{-r/2})$ whenever $r_1(i) = r$.
    By the part concerning the strict inequality in \Cref{lem: Combinatorics} we have that for any $r\in \mathbb{Z}_{>0}$ there are $o_k(n^{(k + r)/2 + 1})$ terms in the right-hand side of \cref{eq: MomentExpansion} with $r_1(i)=r$. 
    This means that only the terms with $r_1(i)=0$ survive the normalization by $n^{-1-k/2}$:
    \begin{align}
        \mathbb{E}[\mathfrak{m}_k(\mu_{A_n/\sqrt{n}})] 
        = 
        n^{-1 - k/2}\sum_{i: r_1(i)=0}^n P(i) + o_k(1)
        .
        \label{eq: MER1}
    \end{align}
    Note that $G_i$ is connected. 
    Hence, for any $i$ with $r_1(i) = 0$ it has to hold that $\# V(i) \leq k/2 + 1$ with equality if and only if $k$ is even, $G_i$ is a tree and $r_2(i) = k/2$.    
    In particular, for $k$ odd there are $o_k(n^{1 + k/2})$ terms on the right-hand side of \cref{eq: MER1}. 
    Hence, for any $m\in \mathbb{Z}_{\geq 0}$ taking $k = 2m+1$ yields that 
    \begin{align}
        \mathbb{E}[\mathfrak{m}_{2m + 1}(\mu_{A_n/\sqrt{n}})]&= o_m(1).
        \label{eq: OddM}
    \end{align} 
    Now let $k = 2m$ be even.
    As above, the contribution of the terms for which $G_i$ is not a tree or $r_2(i) \neq m$ is asymptotically negligible. 
    Consider some sequence of indices $i= (i_1,\ldots,i_{k},i_1)$ for which $G_i$ is a tree and $r_2(i) = m$. 
    An example of such a sequence $i$ is depicted in \Cref{fig: Gi}.     
    Equip $G_i$ with the unique order such that $i$ is the path traversed by depth-first search.

    \begin{figure}[tb]
        \centering
        \includegraphics[width = 0.7\textwidth]{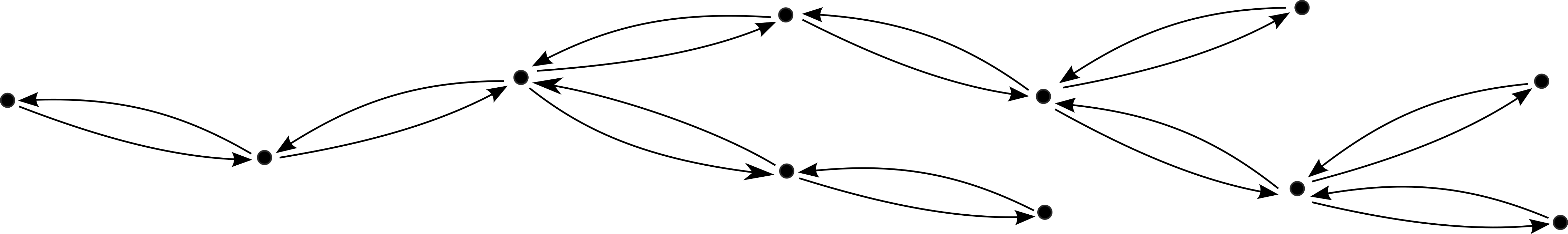}
        \caption{Visualization of a sequence of integers $i$ for which $P(i)$ has an asymptotically relevant contribution to \cref{eq: MER1}. 
        The corresponding undirected graph $G_i$ is the tree found by identifying the doubled edges.   
         } 
         \label{fig: Gi}
    \end{figure}
    
    Now, 
    \begin{align}
        \mathbb{E}[\mathfrak{m}_{2m}(\mu_{A_n/\sqrt{n}})] &= n^{-1 - (2m)/2}\sum_{T\in \mathcal{T}_m} \sum_{i: G_i \cong T} P(i)  + o_m(1)\label{eq: OddMM}
    \end{align}
    where the isomorphism condition is to be considered in the space of ordered trees.
    
    The ordering on any $T\in \mathcal{T}_m$ induces a canonical numbering of the vertex set corresponding to the order of visits in depth-first search. 
    This is to say that it can be assumed that $V(T) = \{1,\ldots,m+1\}$. 
    Denote $E(T)$ for the set of edges.
    Note that the collection of sequences of indices $i$ with $G_i \cong T$ is in bijection with the collection of injective labelings $l^i: V(T) \to \{1,\ldots,n\}$. 
    The assumption that $A_n$ is approximately uncorrelated with variance profile $S_n$ then implies that $P(i) = \prod_{\{v,w \} \in E(T)} S_{n,l_v^i l_w^i} + o_m(1)$. 
    Note that there are $O_m(n^{m+1})$ injective labelings of $V(T)$ and only $O_m(n^{m})$ labelings which are not injective. 
    Hence, the sum over all $i$ with $G_i \cong T$ is asymptotically equivalent to a sum over all (not necessarily injective) labelings $l:V(T) \to \{1,\ldots,n \}$:
    \begin{align}
        n^{-1 - (2m)/2} \sum_{i:G_i \cong T} P(i) 
        &
        = 
        n^{-1-(2m)/2}\sum_{l_1,\ldots,l_{m+1} = 1}^n \prod_{\{v,w \}\in E(T)} S_{n,l_v l_w} + o_m(1)
        \\ 
        &
        = 
        t(T, S_n) + o_m(1) 
        .
        \label{eq: TrToHomd}
    \end{align} 
    Note that \cref{eq: HomDens} was used in the second step. 
    By \cref{eq: OddMM} it now follows that for every $m\in \mathbb{Z}_{\geq 0}$
    \begin{align}
        \mathbb{E}[\mathfrak{m}_{2m}(\mu_{A_n/\sqrt{n}})] &= \sum_{T\in \mathcal{T}_m} t(T,S_n) + o_m(1).
        \label{eq: Even}
    \end{align}
    Combine \cref{eq: OddM} and \cref{eq: Even} to conclude the proof.
\end{proof}

\begin{lemma}
    \label{lem: Var}
    For any $k\in \mathbb{Z}_{\geq 0}$ it holds that
    $\operatorname{Var}[\mathfrak{m}_k(\mu_{A_n/\sqrt{n}})]= o_{k}(1).$
\end{lemma}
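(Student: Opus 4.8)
The plan is to show that $\operatorname{Var}[\mathfrak{m}_k(\mu_{A_n/\sqrt{n}})] = o_k(1)$ by expanding the variance as a sum over pairs of index cycles and arguing that the leading-order contributions cancel. Write $\mathfrak{m}_k(\mu_{A_n/\sqrt{n}}) = n^{-1-k/2}\operatorname{Tr}(A_n^k)$, so that
\begin{align}
    \operatorname{Var}[\mathfrak{m}_k(\mu_{A_n/\sqrt{n}})]
    =
    n^{-2-k}\sum_{i,i'} \big(\mathbb{E}[\Pi(i)\Pi(i')] - \mathbb{E}[\Pi(i)]\mathbb{E}[\Pi(i')]\big),
\end{align}
where $i = (i_1,\ldots,i_k,i_1)$ and $i' = (i_1',\ldots,i_k',i_1')$ range over cyclic index sequences and $\Pi(i) := A_{n,i_1i_2}\cdots A_{n,i_ki_1}$. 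Let $G_{i,i'}$ denote the undirected multigraph on vertex set $V(i)\cup V(i')$ with edge multiset $E(i)\uplus E(i')$, and let $r_1(i,i')$ be the number of distinct unordered pairs $\{a,b\}$ that appear exactly once among the $2k$ edges of $i$ and $i'$ combined.

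First I would split the double sum into the contribution from \emph{disconnected} pairs --- those where the vertex sets of $G_i$ and $G_{i'}$ share no edge, i.e.\ $E(i)\cap E(i') = \emptyset$ as edge \emph{sets} --- and the contribution from \emph{connected} pairs. For a disconnected pair, the defining inequalities \cref{eq: defAUGenMom} and \cref{eq: defAUVar} of an approximately uncorrelated random matrix, applied to the concatenated list of $2R$ edges (all of whose underlying unordered pairs are distinct across the two cycles when they are distinct within each), give that $\mathbb{E}[\Pi(i)\Pi(i')] - \mathbb{E}[\Pi(i)]\mathbb{E}[\Pi(i')] = o_k(1)$ whenever $r_1(i)=r_1(i')=0$; the case where some $r_1 > 0$ is handled exactly as in Lemma \ref{lem: EA} by the counting bound of Lemma \ref{lem: Combinatorics}, showing the number of such terms is of strictly smaller order than $n^{2+k}$. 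The point here is that \cref{eq: defAUVar} controls precisely the product-of-squares moment against the product of variances with an $o_R(1)$ error, which is exactly what is needed for the leading disconnected terms to cancel after subtracting $\mathbb{E}[\Pi(i)]\mathbb{E}[\Pi(i')]$ (each of which also factorizes over its edges up to $o_k(1)$). The counting here: the dominant disconnected pairs are those where $G_i$ and $G_{i'}$ are each trees with all edges doubled, contributing $O_k(n^{k/2+1})\cdot O_k(n^{k/2+1}) = O_k(n^{k+2})$ terms, each of whose covariance is $o_k(1)$, so this whole block is $o_k(1)$.

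Second, for connected pairs --- where $G_i$ and $G_{i'}$ share at least one edge --- I would use Lemma \ref{lem: Combinatorics} (or a straightforward connectivity argument: a connected multigraph on $v$ vertices arising from two closed walks of combined length $2k$ with $r_1$ simple edges satisfies $v \le (2k + r_1)/2 + 1 - 1 = k + r_1/2$, the extra $-1$ coming from the sharing) to bound the number of such $(i,i')$ by $O_k(n^{k})$ when $r_1 = 0$ and by $o_k(n^{k + r_1/2})$ when $r_1>0$, while $|\mathbb{E}[\Pi(i)\Pi(i')]|$ and $|\mathbb{E}[\Pi(i)]\mathbb{E}[\Pi(i')]|$ are each $O_k(n^{-r_1/2})$ by \cref{eq: defAUGenMom} (using that $\max_{ij}\mathbb{E}[A_{n,ij}^2] = O(1)$ for the doubled edges). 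Multiplying, the connected contribution is $O_k(n^{-2-k})\cdot O_k(n^k) = O_k(n^{-2}) = o_k(1)$. Combining the two blocks gives the claim.

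The main obstacle I anticipate is the bookkeeping in the first block: one must verify that subtracting $\mathbb{E}[\Pi(i)]\mathbb{E}[\Pi(i')]$ genuinely cancels the leading term rather than merely bounding it, which requires that for a disconnected pair with both cycles being doubled trees, $\mathbb{E}[\Pi(i)\Pi(i')] = \prod_{e} \operatorname{Var}[A_{n,e}]\prod_{e'}\operatorname{Var}[A_{n,e'}] + o_k(1)$ \emph{and} $\mathbb{E}[\Pi(i)] = \prod_e\operatorname{Var}[A_{n,e}] + o_k(1)$, $\mathbb{E}[\Pi(i')] = \prod_{e'}\operatorname{Var}[A_{n,e'}] + o_k(1)$, all simultaneously --- so that the difference is $o_k(1)$ uniformly. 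This is exactly where \cref{eq: defAUVar} is used and is the only genuinely new ingredient beyond the argument already carried out in Lemma \ref{lem: EA}; everything else is a repetition of the moment-method counting with $k$ replaced by $2k$.
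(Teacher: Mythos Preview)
Your plan is sound and would yield the lemma, but it is organized differently from the paper's proof and has one small slip worth flagging.

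\textbf{Differences from the paper.} The paper does not work with the covariance term-by-term. Instead it writes $\operatorname{Var}[\mathfrak{m}_k] = \mathbb{E}[\mathfrak{m}_k^2] - \mathbb{E}[\mathfrak{m}_k]^2$, recalls from Lemma~\ref{lem: EA} the limit of $\mathbb{E}[\mathfrak{m}_k]^2$, and then computes $\mathbb{E}[\mathfrak{m}_k^2]$ directly. The double sum is split according to whether $V(i)\cap V(j)$ is empty or not (a \emph{vertex} split, not an edge split). When the vertex sets meet, the two closed walks are merged into a single closed walk $\ell_{i,j}$ of length $2k$ by splicing at a common vertex; Lemma~\ref{lem: Combinatorics} applied to $\ell_{i,j}$ gives $\#(V(i)\cup V(j)) \le k + r_1(\ell_{i,j})/2 + 1$, and the whole block contributes $O_k(n^{-1})$. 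When the vertex sets are disjoint, the argument of Lemma~\ref{lem: EA} is repeated verbatim to obtain $(\sum_{T\in\mathcal{T}_{k/2}} t(T,S_n))^2 + o_k(1)$, which matches the known limit of $\mathbb{E}[\mathfrak{m}_k]^2$. No explicit cancellation argument inside the covariance is needed.

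\textbf{Your edge split versus the paper's vertex split.} Your ``connected'' case (shared edge) is a strict subset of the paper's shared-vertex case, so the merging trick still applies---but note that it only gives $v \le k + r_1/2 + 1$, not $v \le k + r_1/2$ as you wrote. Your ``extra $-1$ from the sharing'' is not justified by Lemma~\ref{lem: Combinatorics} and is in fact false in general (take $i=i'$ a doubled tree on $k/2+1$ vertices: then $r_1=0$ and $v = k/2+1 > k/2$ when $k\ge 2$... wait, this is $\le k+1$, consistent with the correct bound). The slip is harmless for the conclusion since $O_k(n^{-1})$ is already $o_k(1)$, but the stated inequality and its justification should be corrected. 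Conversely, your ``disconnected'' case (edge-disjoint) contains pairs that share a vertex but no edge; these fall outside your explicit doubled-tree analysis, but they are only $O_k(n^{k+1})$ in number with bounded covariance, hence $O_k(n^{-1})$---you should say this explicitly rather than leave it implicit.

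\textbf{What each approach buys.} The paper's route avoids having to argue cancellation inside $\mathbb{E}[\Pi(i)\Pi(i')] - \mathbb{E}[\Pi(i)]\mathbb{E}[\Pi(i')]$: it simply computes each piece to leading order and subtracts. Your route makes the role of \cref{eq: defAUVar} more visible (it is exactly the hypothesis that forces the leading covariances to vanish), at the cost of slightly more bookkeeping in the non-dominant cases. Both are valid; the paper's is shorter because it recycles Lemma~\ref{lem: EA} wholesale.
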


\begin{proof}
    Recall that
    \begin{align}
        \operatorname{Var}[\mathfrak{m}_k(\mu_{A_n/\sqrt{n}})] &= \mathbb{E}[\mathfrak{m}_k(\mu_{A_n/\sqrt{n}})^2] - \mathbb{E}[\mathfrak{m}_k(\mu_{A_n/\sqrt{n}})]^2. 
    \end{align}
    The limit of $\mathbb{E}[\mathfrak{m}_k(\mu_{A_n/\sqrt{n}})]^2$ is known by \Cref{lem: EA}. Hence, it suffices to show that $\mathbb{E}[\mathfrak{m}_k(\mu_{A_n/\sqrt{n}})^2]$ converges to the same limit.
    For any $k\in \mathbb{Z}_{\geq 0}$, 
    \begin{align}
        &
        \mathbb{E}[\mathfrak{m}_k(\mu_{A_n/\sqrt{n}})^2] 
        =
        n^{-2 - k}\mathbb{E}[(\operatorname{Tr} A_n^k)^2] 
        \\ &
        = n^{-2 - k}\hspace{-0.8em}\sum_{i_1,\ldots,i_k=1}^n \sum_{j_1,\ldots,j_k = 1}^n\hspace{-0.3em}\mathbb{E}[A_{n,i_{1}i_2}A_{n,i_2i_3}\cdots A_{n,i_k i_1} A_{n,j_1 j_2} A_{n,j_2,j_3}\cdots A_{n,j_k j_1}].
        \label{eq: ij}
    \end{align}  
    Let $i:= (i_1,\ldots,i_k,i_1)$ and $j:= (j_1,\ldots,j_k,j_1)$ be sequences of integers which occur in \cref{eq: ij}.
    Define $G_{i,j} := (V(i) \cup V(j), E(i) \cup E(j))$. Let $P(i,j):=\mathbb{E}[A_{n,i_1i_2}\cdots A_{n,j_k j_1}]$ be the term occurring in the right-hand side of \cref{eq: ij}.

    First, consider those sequences of indices $i,j$ for which $V(i) \cap V(j) \neq \emptyset$.
    Then $i,j$ can be merged: let $l_1 \in \{1,\ldots,k\}$ be the minimal index such that $i_{l_1} \in \{ j_1,\ldots,j_r\}$ and let $l_2\in \{1,\ldots,k\}$ be the minimal index such that $j_{l_2}  = i_{l_1}$ and set  
    \begin{equation}
        \ell_{i,j} 
        := 
        (i_1,i_2,\ldots,i_{l_1},j_{l_2 + 1},\ldots,j_{k},j_1,j_2,\ldots,j_{l_2}, i_{l_1 + 1},\ldots,i_k,i_1)
        .
    \end{equation}
    A resulting sequence is schematically depicted in \Cref{fig: lij}.

    \begin{figure}[tb]
        \centering
        \includegraphics[width = 0.7\textwidth]{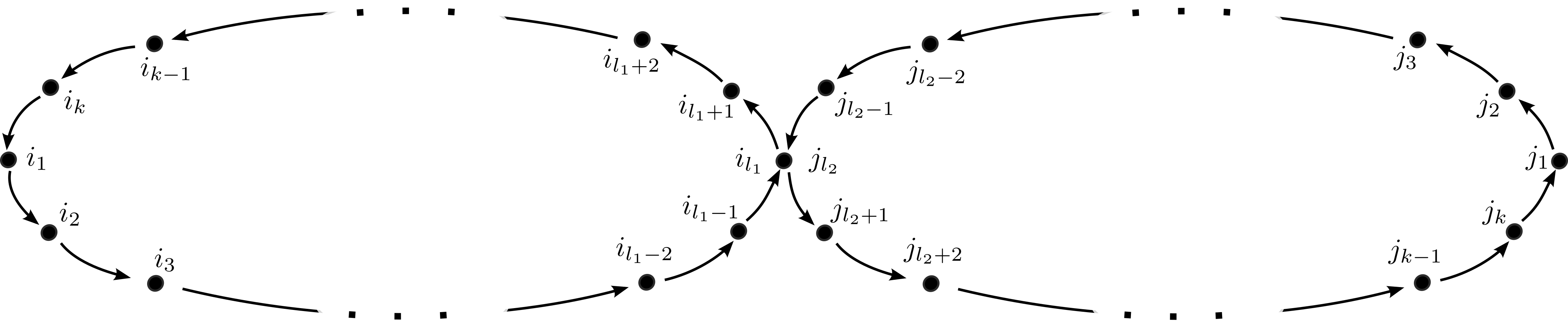}
        \caption{Visualization of a pair of sequences of integers $i,j$ with $V(i) \cap V(j)\neq \emptyset$ as occur in the proof of \Cref{lem: Var}.
        While not depicted it is possible that $i$ and $j$ are also equal at other vertices than $i_{\ell_1} = j_{\ell_2}$. 
         } 
         \label{fig: lij}
    \end{figure}
    
    By definition of approximately uncorrelated it holds that $P(i,j) = O_k(n^{-r/2})$ whenever $r_1(\ell_{i,j}) = r$. 
    Further, it follows from \Cref{lem: Combinatorics} that $\# (V(i) \cup V(j)) \leq (2k + r_1(\ell_{i,j}))/2 + 1$.  
    Hence, for any $r\in \mathbb{Z}_{\geq 0}$ there are $O_k((2k + r)/2 + 1)$ terms with $V(i) \cap V(j) \neq \emptyset$ and $r_1(\ell_{i,j}) = r$. 
    This implies that the contribution is asymptotically negligible: 
    \begin{align}
        n^{-2 - k}\sum_{i,j:V(i)\cap V(j) \neq \emptyset} P(i,j) = O_k(n^{-1}). \label{eq: ViVjnotempty}
    \end{align}

    The argument to deal with the terms with $V(i) \cap V(j)=\emptyset$ are identical to those used in the proof of \Cref{lem: EA}. 
    In particular, it can be established that 
    \begin{align}
        n^{-2 - k}\sum_{i,j : V(i) \cap V(j) = \emptyset} P(i,j) =  
        \begin{cases}
            o_k(1) \qquad & \text{ if }k \text{ is odd},\\ 
            \big(\sum_{T\in \mathcal{T}_{k/2}} t(T,S_n)\big)^2 + o_k(1)\qquad &\text{ if }k\text{ is even},
        \end{cases}
        \label{eq: ViVjempt}
    \end{align}
    where it is used that the contribution of those pairs of trees $G_i, G_j$ which were removed by restricting our attention to the case of $V(i) \cap V(j) = \emptyset$ is asymptotically negligible due to the fact that there are only $O_k(n^{k + 1})$ values of $i,j$ with $V(i) \cap V(j) \neq \emptyset$ and $r_2(i) = r_2(j) = k/2$. 

    Combine \cref{eq: ViVjnotempty}, \cref{eq: ViVjempt}, and the limit established in \Cref{lem: EA} for $\mathbb{E}[\mathfrak{m}_k(\mu_{A_n/\sqrt{n}})]^2$ to deduce that $\operatorname{Var}[\mathfrak{m}_k(\mu_{A_n/\sqrt{n}})] = o_k(1)$. 
\end{proof}
\begin{proof}[{Proof of \Cref{thm: WeakProbAlUncor}}]
    By \Cref{lem: UniqueMu} there exists a compactly supported probability measure $\mu$ with the specified moments. 
    The result is now immediate by \Cref{lem: ConvergenceMomentsSuffices} whose assumptions were verified in \Cref{lem: EA} and \Cref{lem: Var}. 
\end{proof}

\subsection{{Proof of Corollary \ref{cor: Stieltjes}}}\label{sec: ProofCorSystemA}
\begin{proof}
    Recall that it is assumed that $\delta_{\square}(W^{S_n},W)\to 0$. 
    Then, by \cite[Theorem 11.5]{lovasz2012large}, it holds that $t(F,W^{S_n})$ converges as $n\to \infty$ for every fixed tree. 
    Consequently, by \Cref{thm: WeakProbAlUncor}, it holds that $\mu_{A_n/\sqrt{n}}$ converges weakly in probability to some limit $\mu$ and it remains to show that the Stieltjes transform of $\mu$ is given by \cref{eq: aselfconsistent}. 
    
    To this end remark that \Cref{thm: WeakProbAlUncor} also applies to random matrices with independent entries.
    Consequently, if we consider a sequence of random matrices $B_n$ with independent entries and the same variance profile as $A_n$, then also $\mu_{B_n/\sqrt{n}}$ converges weakly in probability to $\mu$. 
    The desired shape for the Stieltjes transform now follows from \Cref{thm: WeakProbAlUncor} by \cite[Theorem 3.4]{zhu2020graphon} which provides the same description for the Stieltjes transform in the case of a random matrix with independent entries.
    The assumption that $\max_{ij= 1,\ldots,n}S_{n,ij} = O(1)$ in \cite{zhu2020graphon} is satisfied since \cref{eq: defAUGenMom} in the definition of an approximately uncorrelated random matrix implies that $\max_{i,j = 1,\ldots,n}\mathbb{E}[A_{n,ij}^2] = O(1)$.

    Let us remark that it may not be clear from the statement of \cite[Theorem 3.4.]{zhu2020graphon} that the codomain of $a(z,x)$ should be $\mathbb{C}^-$. 
    This specification of codomain is however important to guarantee uniqueness of the solution to the self-consistent equation \cref{eq: aselfconsistent}; see \cite[Theorem 2.1 and subsequent remarks]{ajanki2019quadratic}. 
    The fact that the codomain should be $\mathbb{C}^-$ is implicit in the final sentence of the proof of \cite[Theorem 3.4]{zhu2020graphon} which makes reference to \cite[Theorem 2.1]{ajanki2019quadratic}.
    Let us provide some additional details for this final sentence of the proof in \cite{zhu2020graphon} so as to make the codomain explicit.

    The proof of \cite[Theorem 3.4]{zhu2020graphon} begins by specifying the function by means of a Laurent series, namely $a(z,x):=\sum_{k=0}^\infty \beta_{2k}(x)z^{-2k-1}$ for certain explicit coefficients $\beta_{2k}(x)$, in \cite[Equation (3.4)]{zhu2020graphon} after which it is deduced that \cref{eq: stiela} and \cref{eq: aselfconsistent} are satisfied by the provided Laurent series.
    On the other hand, by using a geometric series at $z=\infty$ in \cite[(2.8)]{ajanki2019quadratic}, one can see that the unique solution $\widetilde{a}(z,x)$ to \cref{eq: aselfconsistent} with codomain $\mathbb{C}^-$ also admits a Laurent series, namely $\widetilde{a}(z,x) = \sum_{k=1}^{\infty} b_{k}(x)z^{-k}$.  
    Let us warn here that the convention for the Stieltjes transform which was used in \cite{ajanki2019quadratic} differs by a minus sign from the convention which was used here and in \cite{zhu2020graphon}. 
    One should correspondingly take $m_x(z) = -a(z,x)$ when applying \cite[Theorem 2.1]{ajanki2019quadratic}. 
    This difference by a sign also explains the codomain in \cite{ajanki2019quadratic} is $\mathbb{C}^+$ whereas we claim that the appropriate codomain is $\mathbb{C}^-$. 
    
    Now observe that a Laurent series of the form $\sum_{k=1}^\infty c_{k}(x) z^{-k}$ satisfies \cref{eq: aselfconsistent} if and only if $c_1(x)= 1$, $c_2(x) = 0$, and 
    \begin{align}
        c_k(x) = \int_{0}^1 W(x,y) \sum_{j=1}^{k-2} c_{k-j-1}(x)c_{j}(y) {\mathop{}\!\mathrm{d}}y\ \text{ for all }\ k\geq 3.
    \end{align}     
    This implies that the coefficients of the Laurent series of $a(z,x)$ and $\widetilde{a}(z,x)$ have to be equal since both functions satisfy \cref{eq: aselfconsistent}. 
    It follows that $a(z,x)$ and $\widetilde{a}(z,x)$ are equal in a neighborhood of $z=\infty$ and consequently everywhere on $\mathbb{C}^+$ by the identity theorem for analytic functions. 
    This shows that the function $a(z,x)$ provided in \cite[Equation (3.4)]{zhu2020graphon} indeed has codomain $\mathbb{C}^-$. 
\end{proof}

\subsection{Proofs for {Theorems \ref{thm: SingValN} and \ref{thm: SingValP}}}

It remains to establish the claims which were announced in the proof outline in \Cref{sec: SketchBMC}. 
This means that we have to prove the following statements: (i) The preliminary reduction to centered random matrices starting in equilibrium in \Cref{lem: MXQX}. (ii) The Poisson limit \Cref{thm: Poisson} and it's \Cref{cor: VarianceProfile} concerning the variance profile of $\hat{N}_X$. (iii) The statement that $H(M_X)$ is approximately uncorrelated with variance profile from \Cref{prop: ApproxUncorM}.

\subsubsection{Proof of preliminary reductions in Lemma {\ref{lem: MXQX}}}
\label{sec: MXQX} 

Recall from \Cref{sec: SpecM} that $\nu_{A}$ denotes the empirical singular value distribution of a matrix $A$. 
\begin{lemma}
    \label{lem: perturbative}
    Let $A_n$ be a sequence of random $n\times n$ matrices such that $\nu_{A_n}$ converges weakly in probability to some probability measure $\nu$. 
    \begin{enumerate}[label = \rm{(\roman*)}]
        \item Let $B_n$ be a sequence of random $n\times n$ matrices such that $\frac{1}{n}\Vert B_n \Vert_\mathrm{F}^2$ converges to $0$ in probability. Then $\nu_{A_n + B_n}$ converges weakly in probability to $\nu$. 
        \item Let $C_n$ be a sequence of random symmetric $n\times n$ matrices such that $\frac{1}{n}\operatorname{rank}(C_n)$ converges to $0$ in probability. Then $\nu_{A_n + C_n}$ converges weakly in probability to $\nu$. 
        \item Let $D_n$ be a sequence of random diagonal $n\times n$ matrices such that $\Vert D_n - \operatorname{Id}\Vert_{\mathrm{op}}$ converges to $0$ in probability. Then $\nu_{D_nA_n}$ converges weakly in probability to $\nu$. \label{item: D}
    \end{enumerate}
\end{lemma}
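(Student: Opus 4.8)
The plan is to establish, for each of the three items, a purely deterministic perturbation statement — in which the matrices are non-random and the hypothesis on $B_n$, $C_n$, respectively $D_n$ is replaced by ordinary convergence — and then to upgrade it to the stated ``in probability'' conclusion by the standard subsequence argument. Concretely, convergence in probability holds if and only if every subsequence contains a further subsequence along which convergence is almost sure, so, fixing a countable family $\mathcal{G}\subset\mathcal{C}_b(\mathbb{R})$ that determines weak convergence, I would, given an arbitrary subsequence, extract by a diagonal argument a further subsequence along which simultaneously $\int g\,\mathrm{d}\nu_{A_n}\to\int g\,\mathrm{d}\nu$ for all $g\in\mathcal{G}$ and the relevant Frobenius-norm, rank, or operator-norm quantity tends to $0$, almost surely; on that almost sure event $\nu_{A_n}\to\nu$ weakly, and the deterministic lemma applied pathwise yields $\int f\,\mathrm{d}\nu_{A_n'}\to\int f\,\mathrm{d}\nu$ for every $f\in\mathcal{C}_b(\mathbb{R})$, with $A_n'$ the perturbed matrix. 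Since the subsequence was arbitrary this is the required weak convergence in probability in the sense of \Cref{sec: SpecM}.

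For item (i) the deterministic ingredient is the Hoffman--Wielandt inequality for singular values of square matrices, $\sum_{i=1}^n(s_i(A)-s_i(A+B))^2\le\Vert B\Vert_{\mathrm{F}}^2$, which one obtains by applying the Hermitian Hoffman--Wielandt inequality to the dilations $H(A)$ and $H(A+B)$ (recall \cref{eq: Hdil}), whose eigenvalues are $\pm$ the singular values. Cauchy--Schwarz then gives $\tfrac{1}{n}\sum_i|s_i(A)-s_i(A+B)|\le(\tfrac{1}{n}\Vert B\Vert_{\mathrm{F}}^2)^{1/2}$, whence $|\int f\,\mathrm{d}\nu_{A+B}-\int f\,\mathrm{d}\nu_A|\le L(\tfrac{1}{n}\Vert B\Vert_{\mathrm{F}}^2)^{1/2}$ for every $L$-Lipschitz $f$; as bounded Lipschitz functions are convergence-determining, $\tfrac{1}{n}\Vert B_n\Vert_{\mathrm{F}}^2\to 0$ together with $\nu_{A_n}\to\nu$ forces $\nu_{A_n+B_n}\to\nu$ weakly.

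For item (ii) the ingredient is the rank inequality for empirical singular value distribution functions: if $\operatorname{rank}(C)\le k$ then $\sup_x|F_{\nu_{A+C}}(x)-F_{\nu_A}(x)|\le k/n$, which follows from the interlacing bounds $s_{i+k}(A+C)\le s_i(A)$ and $s_{i+k}(A)\le s_i(A+C)$ valid for any rank-$\le k$ additive perturbation (these in turn come from the min--max description of singular values after restricting test subspaces to the kernel of $C$). Consequently $\tfrac{1}{n}\operatorname{rank}(C_n)\to 0$ forces $F_{\nu_{A_n+C_n}}$ and $F_{\nu_{A_n}}$ to have the same limit at every continuity point of $F_\nu$, i.e.\ $\nu_{A_n+C_n}\to\nu$ weakly; the symmetry of $C_n$ is not actually used. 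For item (iii), on the event $\{\Vert D_n-\operatorname{Id}\Vert_{\mathrm{op}}<1\}$, whose probability tends to $1$, the matrix $D_n$ is invertible, and writing $\varepsilon_n:=\Vert D_n-\operatorname{Id}\Vert_{\mathrm{op}}$ the submultiplicativity $s_i(PQ)\le\Vert P\Vert_{\mathrm{op}}s_i(Q)$ (applied to $D_nA_n$ and to $D_n^{-1}(D_nA_n)$, with $\Vert D_n\Vert_{\mathrm{op}}\le 1+\varepsilon_n$ and $\Vert D_n^{-1}\Vert_{\mathrm{op}}\le(1-\varepsilon_n)^{-1}$) gives $(1-\varepsilon_n)s_i(A_n)\le s_i(D_nA_n)\le(1+\varepsilon_n)s_i(A_n)$ for all $i$. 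Comparing counting functions this yields $F_{\nu_{A_n}}(x/(1+\varepsilon_n))\le F_{\nu_{D_nA_n}}(x)\le F_{\nu_{A_n}}(x/(1-\varepsilon_n))$ for $x>0$ (with equality of the atoms at $0$, since $D_n$ is invertible), and a sandwich at each continuity point $x$ of $F_\nu$ — using nearby continuity points $x'<x<x''$ and $\varepsilon_n\to 0$ — gives $F_{\nu_{D_nA_n}}(x)\to F_\nu(x)$, hence $\nu_{D_nA_n}\to\nu$ weakly.

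The step I expect to need the most care is item (iii): one must resist the temptation to bound $\tfrac{1}{n}\Vert(D_n-\operatorname{Id})A_n\Vert_{\mathrm{F}}^2\le\Vert D_n-\operatorname{Id}\Vert_{\mathrm{op}}^2\,\tfrac{1}{n}\Vert A_n\Vert_{\mathrm{F}}^2$ and invoke item (i), because $\tfrac{1}{n}\Vert A_n\Vert_{\mathrm{F}}^2=\mathfrak{m}_2(\nu_{A_n})$ is not controlled by the weak convergence of $\nu_{A_n}$; the multiplicative interlacing $(1-\varepsilon_n)s_i\le s_i'\le(1+\varepsilon_n)s_i$ together with the distribution-function sandwich is what circumvents this. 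Otherwise the three matrix inequalities are classical (see, e.g., the appendix of \cite{bai2010spectral}), and the only remaining nuance is the deterministic-to-``in probability'' passage, which the subsequence-plus-countable-determining-family argument above handles cleanly.
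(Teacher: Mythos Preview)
Your proposal is correct. For parts (i) and (ii) the paper simply defers to \cite[Exercises 2.4.3--2.4.4]{tao2011topics} via Hermitian dilation, so your explicit Hoffman--Wielandt and rank-interlacing arguments are just the standard fill-ins of those references. For part (iii) both you and the paper rest on the same multiplicative inequality $(1-\varepsilon_n)\,s_i(A_n)\le s_i(D_nA_n)\le(1+\varepsilon_n)\,s_i(A_n)$ (the paper writes it as $|\sigma_i(D_nA_n)-\sigma_i(A_n)|\le\Vert D_n-\mathrm{Id}\Vert_{\mathrm{op}}\,\sigma_i(A_n)$, which is the same thing); the only divergence is in the endgame. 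The paper works directly in probability with a fixed $f\in\mathcal{C}_b(\mathbb{R})$, reduces to compactly supported $f$ by a bump-function trick, and then exploits uniform continuity on the event $\{\Vert D_n-\mathrm{Id}\Vert_{\mathrm{op}}<\min(\delta/2c_f,1/2)\}$ to compare $\int f\,\mathrm{d}\nu_{D_nA_n}$ and $\int f\,\mathrm{d}\nu_{A_n}$ term by term. You instead pass to almost-sure subsequences and run a CDF sandwich at continuity points of $F_\nu$. Both routes handle the same obstacle you correctly flagged---that $\tfrac{1}{n}\Vert A_n\Vert_{\mathrm{F}}^2$ is not controlled, so one cannot reduce to (i)---and neither is materially simpler than the other; the paper's version avoids the subsequence bookkeeping at the cost of the compact-support reduction, while yours is arguably more modular since it cleanly separates the deterministic lemma from the probabilistic upgrade.
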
 

\begin{proof}
    Statement (i) follows after a Hermitian dilation \cref{eq: Hdil} from \cite[Corollary A.41]{bai2010spectral}, see also \cite[Exercise 2.4.3]{tao2011topics}. 
    Statement (ii) follows after a Hermitian dilation from \cite[Theorem A.43]{bai2010spectral}, see also \cite[Exercise 2.4.4]{tao2011topics}. 
    Let us now provide a proof for the statement (iii).

    Pick some arbitrarily small $\varepsilon>0$.
    It has to be shown that 
    \begin{align}
        \mathbb{P}\big(\lvert {\textstyle \int} f(x){\mathop{}\!\mathrm{d}}\nu_{D_nA_n}(x) - {\textstyle \int} f(x){\mathop{}\!\mathrm{d}}\nu(x) \rvert > \varepsilon\big) 
        =
        o(1)
        \label{eq: desiref}
    \end{align} 
    for every continuous bounded function $f\in \mathcal{C}_b(\mathbb{R})$.
    Since $\nu$ and $\nu_{D_nA_n}$ are probability measures we may further restrict ourselves to the case where $f$ is compactly supported. 
    This follows by consideration of $fg$ with $g$ a bump function; see e.g.\ \cite[Proof of Lemma 6.21]{kallenberg1997foundations}.
    Let $c_f \in \mathbb{R}_{>0}$ be a sufficiently large constant so that $\operatorname{supp}(f) \subseteq [-c_f,c_f]$. 
    By $f$ being compactly supported and continuous it follows that $f$ is uniformly continuous. 
    There therefore exists some $\delta >0$ such that $\lvert f(x) - f(y) \rvert < \varepsilon/2$ whenever $\lvert x-y \rvert < \delta$.
    
    Observe that $\Vert D_n \Vert_{\mathrm{op}} \leq 1 + \Vert D_n - \operatorname{Id} \Vert_{\mathrm{op}}$ and $\Vert D_n^{-1} \Vert_{\mathrm{op}}^{-1} \geq 1 - \Vert D_n - \operatorname{Id} \Vert_{\mathrm{op}}$. 
    Further, it follows from \cite[Theorem 3.3.16(d)]{roger1994topics} that 
    $
        \sigma_i(D_nA_n)
        \leq 
        \Vert D_n \Vert_{\mathrm{op}} \sigma_i(A_n)
    $
    and 
    $
        \sigma_i(D_nA_n)
        \geq 
        \Vert D_n^{-1} \Vert_{\mathrm{op}}^{-1} 
        \sigma_i(A_n) 
    $.
    When combined this yields that 
    $ 
        \sigma_i(D_nA_n) - \sigma_i(A_n)
        \leq 
        \Vert D_n - \operatorname{Id} \Vert_{\mathrm{op}} \sigma_i(A_n)
    $ 
    and 
    $ 
        \sigma_i(D_nA_n) - \sigma_i(A_n)
        \geq 
        -
        \Vert D_n - \operatorname{Id} \Vert_{\mathrm{op}} \sigma_i(A_n)
    $,
    respectively.
    This is equivalent to saying that
    \begin{align}
        \lvert \sigma_i(D_nA_n)-\sigma_i(A_n) \rvert 
        \leq 
        \Vert D_n - \operatorname{Id} \Vert_{\mathrm{op}} \sigma_i(A_n)
        .
        \label{eq: approx} 
    \end{align} 

    Since $\Vert D_n - \operatorname{Id} \Vert_{\mathrm{op}}$ converges to zero in probability it holds that 
    \begin{align}
        \mathbb{P}\big(\Vert D_n - \operatorname{Id} \Vert_{\mathrm{op}} < \min\{\delta/2c_f, 1/2\}\big) 
        =
        1 - o(1)
        . 
        \label{eq: sig}
    \end{align}
    Deduce from \cref{eq: approx} that whenever the event contained in the left-hand side of \cref{eq: sig} holds, it follows that $\sigma_i(D_nA_n) > c_f$ for any $i$ with $\sigma_i(A_n) > 2c_f$ and that $\lvert \sigma_i(D_nA_n) - \sigma_i(A_n)\rvert < \delta$ for any $i$ with $\sigma_i(A_n) \leq 2c_f$.    
    Therefore, whenever the event contained in the left-hand side of \cref{eq: sig} holds, it follows that 
    \begin{align}
        \big\lvert {\textstyle \int} f(x) {\mathop{}\!\mathrm{d}}\nu_{D_nA_n}(x) - {\textstyle \int} f(x) {\mathop{}\!\mathrm{d}}\nu_{A_n}(x)\big\rvert &\leq \frac{1}{n}\sum_{i: \sigma_i(A_n)\leq 2c_f}\lvert f(\sigma_i(D_nA_n)) - f(\sigma_i(A_n))  \rvert \nonumber \\ 
        &
        < 
        \frac{\varepsilon}{2}
        . 
        \label{eq: eps}
    \end{align}
    We used here that $\operatorname{supp}(f)\subseteq [-c_f,c_f]$. 
    Combine \cref{eq: sig} with \cref{eq: eps} and the assumption that $\nu_{A_n}$ converges to $\nu$ weakly in probability to conclude that \cref{eq: desiref} holds.
    This concludes the proof.  
\end{proof}

We will ultimately use \Cref{lem: perturbative}\ref{item: D} with $D_n := \hat{D}_X^{-1} \operatorname{diag}((\ell + 1)\Pi_X)$ to replace $\hat{D}_X$ by a deterministic matrix. 
This requires that $\hat{D}_X \approx \operatorname{diag}((\ell + 1)\Pi_X)$, which will be shown by means of a concentration inequality that follows from the short mixing times of block Markov chains.

Let $Z$ be a Markov chain on the state space $\mathcal{V}$ which is irreducible and acyclic. 
For any $\varepsilon \in [0,1)$ the \emph{$\varepsilon$-mixing time} of $Z$ is defined as $t_{\mathrm{mix}}^Z(\varepsilon) := \min\{t\in \mathbb{Z}_{\geq 0}: d(t) \leq \varepsilon\}$ where  
\begin{align}
    d(t) := \sup_{z\in \mathcal{V}}d_{\mathrm{TV}}(\mathbb{P}(Z_t = -\mid Z_0 = z),\Pi_Z)\label{eq: ddef}
\end{align}
and $d_{\mathrm{TV}}$ denotes the total variation distance defined in \cref{eq: defTV}. 
Set $t_{\mathrm{mix}}^Z:= t_{\mathrm{mix}}^Z(1/4)$. 
Observe that since $X$ is a block Markov chain it holds that $t_{\mathrm{mix}}^X \leq \max\{t_{\mathrm{mix}}^{\Sigma_X},1\}$ where $\Sigma_{X,t}:= \sigma(X_t)$ is the induced Markov chain on the clusters $\{1,\ldots,K\}$. 
Observe furthermore that the dynamics of $\Sigma_X$ are independent of $n$ by definition of a block Markov chain, so that $t_{\mathrm{mix}}^{\Sigma_X}$ is a constant.
Thus $t_{\mathrm{mix}}^X = O(1)$.

We will rely on a concentration inequality from \cite{paulin2015concentration} which we reproduce here for the reader's convenience. 
Similar proofs for concentration in block Markov chains using this concentration inequality may be found in \cite{sanders2020clustering,sanders2021spectral}.

The concentration inequality is provided in terms of an invariant $\gamma_{ps}^Z$ called the \emph{pseudo-spectral gap}: 
\begin{align}
    \gamma_{ps}^Z:= \max_{i\in \mathbb{Z}_{\geq 1}} \frac{1 - \lambda_{2}((P^*_Z)^i P_Z^i)}{i}\text{ where } P_Z^*  := \operatorname{diag}(\Pi_Z)^{-1} P_Z^{\mathrm{T}} \operatorname{diag}(\Pi_Z). 
\end{align}
Here $P_Z$ denotes the transition matrix of the Markov chain $Z$ and $P_Z^{\rm{T}}$ is the transpose of this matrix. 
The pseudo-spectral gap is closely related to the mixing time:  
\begin{align}
    \frac{1}{2t_{\mathrm{mix}}^Z}\leq \gamma_{ps}^Z \leq \frac{1 + 2\ln(2) + \ln(1/\min_{i\in \mathcal{V}} \Pi_{Z}(i))}{t_{\mathrm{mix}}^Z}
    \label{eq: PsTmix}
\end{align}
by \cite[Proposition 3.4]{paulin2015concentration} whose assumptions are satisfied because $Z$ is trivially uniformly ergodic as an irreducible and acyclic chain on a finite state space. 

For any function $f:\mathcal{V} \to \mathbb{R}$ denote $\mathbb{E}_{\Pi_Z}[f]$ and $\operatorname{Var}_{\Pi_Z}[f]$ for the expectation and variance of the random variable $f(Z_0)$, respectively, where $Z_0$ has distribution $\Pi_Z$.
The following proposition occurs in \cite{paulin2015concentration} in a more general setting with possibly infinite state spaces. 
We state the result here for the reader's convenience since it will be used multiple times. 
\begin{proposition}[{\cite[Theorem 3.4]{paulin2015concentration}}]
    \label{prop: PaulinPseudo}
    Let $Z$ be an irreducible and acyclic Markov chain on $\mathcal{V}$ starting in equilibrium. Consider a function $f:\mathcal{V} \to \mathbb{R}$ on the state space such that $\lvert f(z) - \mathbb{E}_{\Pi_Z}[f] \rvert \leq C$ for all $z\in \mathcal{V}$. 
    Then, with $S := \sum_{t = 0}^\ell f(Z_t)$, it holds that    
    \begin{align}
        \mathbb{P}(\lvert S  - \mathbb{E}[S]\rvert>r) \leq 2 \exp\left(-\frac{r^2\gamma_{\rm{ps}}^Z}{8(\ell + 1 + 1/\gamma_{\rm{ps}}^Z) \operatorname{Var}_{\Pi_Z}[f] + 20rC}\right)
    \end{align}
    for all $r\in \mathbb{R}_{\geq 0}$. 
\end{proposition}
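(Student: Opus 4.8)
The plan is to read this off directly from \cite[Theorem 3.4]{paulin2015concentration}, whose conclusion is exactly a Bernstein-type tail bound of the displayed form for a sum of functions of a stationary Markov chain, expressed in terms of the pseudo spectral gap. The only work is to check that the hypotheses of that general theorem are met in the present finite-state setting and to match parameters; there is no genuine computation.

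First I would note that $Z$, being irreducible and acyclic on the finite state space $\mathcal{V}$, is uniformly ergodic --- a finite irreducible aperiodic chain converges to $\Pi_Z$ geometrically fast and uniformly in the initial state --- so the standing regularity hypotheses of \cite[Theorem 3.4]{paulin2015concentration} hold, and the pseudo spectral gap $\gamma_{ps}^Z$ appearing there is a well-defined strictly positive real number. This positivity is already implicit in \cref{eq: PsTmix}, which gives $\gamma_{ps}^Z \geq 1/(2 t_{\mathrm{mix}}^Z) > 0$. Since $\mathcal{V}$ is finite the function $f$ is automatically measurable, and the assumption $\lvert f(z) - \mathbb{E}_{\Pi_Z}[f]\rvert \leq C$ for all $z\in\mathcal{V}$ provides exactly the uniform bound on $f - \mathbb{E}_{\Pi_Z}[f]$ required there, with $\operatorname{Var}_{\Pi_Z}[f]$ serving as the variance proxy.

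Next I would apply \cite[Theorem 3.4]{paulin2015concentration} to the stationary chain $(Z_t)_{t=0}^\ell$. The sum $S = \sum_{t=0}^\ell f(Z_t)$ has $\ell + 1$ terms, and by stationarity $(Z_0,\ldots,Z_\ell)$ has the same law as $(Z_1,\ldots,Z_{\ell+1})$, so one invokes the cited theorem with $N = \ell + 1$; note also that $\mathbb{E}[S] = (\ell+1)\mathbb{E}_{\Pi_Z}[f]$ by stationarity, so the centering in the cited statement coincides with $\mathbb{E}[S]$. Reading off the resulting inequality yields
\begin{align}
    \mathbb{P}(\lvert S - \mathbb{E}[S]\rvert > r) \leq 2\exp\left(-\frac{r^2 \gamma_{ps}^Z}{8(\ell + 1 + 1/\gamma_{ps}^Z)\operatorname{Var}_{\Pi_Z}[f] + 20\, r C}\right)
\end{align}
for every $r \geq 0$, which is the claim.

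The only points requiring care --- and the closest thing to an obstacle here --- are bookkeeping: ensuring that the inequality being quoted is the \emph{pseudo spectral gap} version of \cite[Theorem 3.4]{paulin2015concentration} rather than one of its spectral-gap or Marton-coupling counterparts, which carry different constants, and correctly tracking the off-by-one in the number of summands so that the factor $\ell + 1$ (and not $\ell$) appears in the denominator.
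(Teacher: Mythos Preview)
Your proposal is correct and matches the paper's treatment: the proposition is stated without proof as a direct specification of \cite[Theorem 3.4]{paulin2015concentration}, so the only content is checking hypotheses and matching parameters, exactly as you do. The paper does not give any argument beyond the citation itself.
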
 

\begin{lemma}

    Let $X := (X_t)_{t=0}^\ell$ be a sample path of the block Markov chain with an arbitrary initial distribution. 
    Then, there exist constants $\newconstant{subexp}, \newconstant{subexp2} \in \mathbb{R}_{>0}$ not depending on the initial distribution such that 
    \begin{align}
        \mathbb{P}(\lvert \hat{D}_{X,ii} - (\ell+1)\Pi_{X}(i) \rvert > \sqrt{n}t) \leq \cte{subexp}\exp(-\cte{subexp2}t)
    \end{align}
    for all $i=1,\ldots,n$. 
\end{lemma}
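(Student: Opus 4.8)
The plan is to prove the statement first in the case where $X$ starts in equilibrium, by a direct application of Proposition~\ref{prop: PaulinPseudo}, and then to reduce an arbitrary initial distribution to the equilibrium case by a coupling argument, exploiting the fact that a block Markov chain carries its initial-distribution dependence entirely through the $K$-state cluster chain, which equilibrates in $O(1)$ steps.

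\emph{The equilibrium case.} Note that $\widehat D_{X,ii} = \sum_{t=0}^{\ell-1}\1\{X_t=i\}$ is an additive functional of the sample path with bounded summand $f:=\1_{\{i\}}$, for which $\lvert f(z)-\mathbb{E}_{\Pi_X}[f]\rvert\le 1$. I would feed this $f$ into Proposition~\ref{prop: PaulinPseudo} (applied to $\widehat D_{X,ii}=\sum_{t=0}^{\ell-1}f(X_t)$). The quantities entering the exponent are all controlled uniformly in $n$: $\operatorname{Var}_{\Pi_X}[f]=\Pi_X(i)(1-\Pi_X(i))\le\Pi_X(i)\le(\alpha_{min}n)^{-1}$; $\gamma_{ps}^X\ge(2t_{\mathrm{mix}}^X)^{-1}=\Omega(1)$ by \cref{eq: PsTmix} together with the bound $t_{\mathrm{mix}}^X=O(1)$ recorded above; and $\ell=\lambda n^2+o(n^2)=O(n^2)$. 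Hence the ``variance term'' $(\ell+1+1/\gamma_{ps}^X)\operatorname{Var}_{\Pi_X}[f]$ in the denominator of the exponent is $O(n)$, while the constant $C$ there is $O(1)$. Plugging $r=\sqrt n\, t$ and distinguishing the regimes $t\le\sqrt n$ (where the variance term dominates, so the exponent is $\lesssim-t^2\le-t$ for $t\ge1$, the bound being trivial for $t<1$) and $t>\sqrt n$ (where the $20rC$ term dominates, so the exponent is $\lesssim-\sqrt n\, t\le-t$) yields $\mathbb{P}(\lvert\widehat D_{X,ii}-\mathbb{E}[\widehat D_{X,ii}]\rvert>\sqrt n\, t)\le C_1 e^{-C_2 t}$ for constants $C_1,C_2$ depending only on the model parameters. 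Since $\lvert\mathbb{E}[\widehat D_{X,ii}]-(\ell+1)\Pi_X(i)\rvert=\Pi_X(i)\le1$, re-centering at $(\ell+1)\Pi_X(i)$ only changes the constants; this settles the case where $X$ starts in equilibrium.

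\emph{Reduction to equilibrium.} For a general initial distribution, let $\widetilde X$ be a block Markov chain started in equilibrium (so that its cluster chain $\Sigma_{\widetilde X}$ is stationary) and construct a Markovian coupling of $X$ and $\widetilde X$ as follows. First run the standard ``couple-when-equal'' coupling of the cluster chains $\Sigma_X,\Sigma_{\widetilde X}$, both Markov chains with kernel $p$ on $\{1,\ldots,K\}$; since $p$ is irreducible and acyclic, the coalescence time $T_c$ of this coupling satisfies $\mathbb{P}(T_c>s)\le C_3 e^{-C_4 s}$ for constants $C_3,C_4$ depending only on $p$, hence not on $n$. Once $\sigma(X_t)=\sigma(\widetilde X_t)$, the one-step laws of $X_{t+1}$ and $\widetilde X_{t+1}$ coincide — because $P_{x,y}=p_{\sigma(x),\sigma(y)}/\#\mathcal{V}_{\sigma(y)}$ depends on $x$ only through $\sigma(x)$ — so for $t\ge T_c$ we may couple $X_{t+1}=\widetilde X_{t+1}$, whence $X_t=\widetilde X_t$ for all $t\ge T:=T_c+1$. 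Therefore $\lvert\widehat D_{X,ii}-\widehat D_{\widetilde X,ii}\rvert\le T$, and splitting on the event $\{T\le\tfrac12\sqrt n\, t\}$ gives
\begin{align*}
    \mathbb{P}\bigl(\lvert\widehat D_{X,ii}-(\ell+1)\Pi_X(i)\rvert>\sqrt n\, t\bigr)
    &\le \mathbb{P}\bigl(\lvert\widehat D_{\widetilde X,ii}-(\ell+1)\Pi_X(i)\rvert>\tfrac12\sqrt n\, t\bigr) + \mathbb{P}\bigl(T>\tfrac12\sqrt n\, t\bigr) \\
    &\le C_1 e^{-C_2 t/2} + C_3 e^{C_4} e^{-C_4 t/2},
\end{align*}
where the first term invokes the equilibrium bound with $t/2$ in place of $t$ and the second uses $\sqrt n\ge1$. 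Collecting constants yields the claim with $\cte{subexp}:=C_1+C_3 e^{C_4}$ and $\cte{subexp2}:=\tfrac12\min(C_2,C_4)$, say, which depend only on the model parameters and not on the initial distribution.

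The main obstacle is precisely this last reduction. The naive route — bounding $\mathbb{P}_\iota$ by $\lVert\mathrm d\iota/\mathrm d\Pi_X\rVert_{L^2(\Pi_X)}\sqrt{\mathbb{P}_{\Pi_X}}$ — loses a factor of order $\sqrt n$, since a point mass has density of order $n$ with respect to $\Pi_X$; this would still suffice for the subsequent union bound over $i$ but would not deliver constants independent of $n$. The coupling above avoids the loss because the only $\iota$-dependence in a block Markov chain is funneled through the $K$-state cluster chain, whose coalescence time is $O(1)$ uniformly in $n$; the one point that genuinely uses the block structure is that the \emph{states} (not just the clusters) can be coupled immediately after cluster coalescence, and this follows directly from the form of the transition kernel.
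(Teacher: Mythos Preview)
Your proof is correct. Both your argument and the paper's follow the same two-stage plan—apply Proposition~\ref{prop: PaulinPseudo} to a stationary chain, then absorb the non-stationary start via the $O(1)$ mixing of the cluster chain—but the mechanisms for the second stage differ. You couple $X$ with an auxiliary stationary chain $\widetilde X$ and use that the block-Markov transition law depends on the current state only through its cluster, so the two chains can be made to coincide from time $T_c+1$ onward once their cluster projections meet; this gives $|\widehat D_{X,ii}-\widehat D_{\widetilde X,ii}|\le T_c+1$. The paper instead samples an independent target cluster $\kappa\sim\pi$, sets $T:=\inf\{t\ge1:\sigma(X_t)=\kappa\}$, and (implicitly) extends the chain to time $\ell+T$: since $\sigma(X_T)=\kappa\sim\pi$ marginally and, given the cluster chain, $X_T$ is uniform in its cluster, one has $X_T\sim\Pi_X$, so $S_2:=\sum_{t=T}^{\ell+T}f(X_t)$ is a stationary functional to which the concentration inequality applies, while the boundary correction $S_1:=\sum_{t=0}^{T-1}f(X_t)-\sum_{t=\ell+1}^{\ell+T}f(X_t)$ satisfies $|S_1|\le 2T$. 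Your coupling route avoids extending the chain past $\ell$ and makes the role of the block structure more explicit; the paper's regeneration is more compact but leaves the verification that $X_T\sim\Pi_X$ implicit. Either way the constants depend only on $p$, $\alpha_{min}$, and $\lambda$.
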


\begin{proof}
    Let $\kappa\in \{1,\ldots,K\}$ be a random variable following $\pi$ which is independent of $X$. 
    Define 
    \begin{equation}
        T
        := 
        \inf
        \{
            t\in \mathbb{Z}_{\geq 1}
            : 
            \sigma(X_t) = \kappa
        \}
        .
    \end{equation}
    Observe that since the Markov chain associated with $p$ is assumed to be irreducible and acyclic, it follows that $\mathbb{P}(T > t) \leq \newconstant{ec1}\exp(-\newconstant{ec} t) $ for some $\cte{ec1},\cte{ec}\in \mathbb{R}_{>0}$ which are independent of $n$ and the initial state $X_0$. 
    
    Write $S_1 := \sum_{t=0}^{T-1} f(X_t) - \sum_{t= \ell +1}^{\ell + T} f(X_t)$ and $S_2 := \sum_{t=T}^{\ell + T} f(X_t)$ where $f(x) = \1_{x = i}$. 
    We may apply \Cref{prop: PaulinPseudo} to derive a concentration inequality for $S_2$. 
    Observe that 
    \begin{align}
        \lvert f(x) - \mathbb{E}_{\Pi_X}[f]\rvert
        \leq 
        1
        \quad
        \textnormal{and}
        \quad 
        \operatorname{Var}_{\Pi_X}[f] 
        = 
        \Pi_X(i)(1-\Pi_X(i)) \leq \cte{cpi}n^{-1}
    \end{align} 
    for some $\newconstant{cpi}\in \mathbb{R}_{>0}$. 
    Recall that $t_{\mathrm{mix}}^X = O(1)$. Correspondingly, by \cref{eq: PsTmix}, it follows that there exists some constant $\newconstant{ps}\in \mathbb{R}_{>0}$ such that $\gamma_{\rm{ps}}^X\geq \cte{ps}$. 
    By \Cref{prop: PaulinPseudo} and the fact that $\ell = \Theta (n^2)$, we find that for any $r\in \mathbb{R}_{>0}$
    \begin{align}
        \mathbb{P}(\lvert S_2 - (\ell+1)&\Pi_X(i) \rvert > \sqrt{n}r)\nonumber 
        \\ &
        \leq 
        2\exp\left(-\frac{n r^2 \gamma_{\rm{ps}}^X}{8(\ell + 1 + 1/\gamma_{\rm{ps}}^X) \operatorname{Var}_{\Pi_X}[f] + 20 \sqrt{n}r}\right)
        \\ 
        &
        \leq 
        \cte{newr2} \exp(-\cte{newr}r).
    \end{align}  
    The constant $\cte{newr}\in \mathbb{R}_{> 0}$ is chosen so that $\cte{newr} \leq n\gamma_{\rm{ps}}^X/(8 (\ell + 1 + 1/\gamma_{\rm{ps}}^X)\operatorname{Var}_{\Pi_X}[f] + 20 \sqrt{n})$ for all $n$ and the constant $\newconstant{newr2}\geq 2$ is chosen so that $\cte{newr2} \exp(-\newconstant{newr}r) \geq 1$ whenever $r\leq 1$. 
    
    Further, since $T$ has exponential decay and $f\leq 1$, there exist constants $\newconstant{newT}, \newconstant{newT2}\in \mathbb{R}_{>0}$ such that 
    \begin{align}
        \mathbb{P}(S_1 >\sqrt{n} r) &\leq \mathbb{P}(T \geq \sqrt{n}r/2)\leq\cte{newT}\exp(-\cte{newT2}\sqrt{n}r).
    \end{align}        
    The desired result now follows by the triangle inequality and the fact that $\lvert \hat{D}_{X,ii} - \sum_{i=0}^\ell f(X_t) \rvert \leq 1$.  
\end{proof}

The following corollary is immediate by the union bound and the fact that $(\ell + 1)\Pi_X$ has entries of size $\Theta(n)$. 

\begin{corollary}
    \label{cor: Pi1Exists}

    Let $X := (X_t)_{t=0}^\ell$ be a sample path from the block Markov chain with an arbitrary initial distribution. 
    Then $\Vert \operatorname{diag}((\ell+1)\Pi_X)^{-1}\hat{D}_X  - \operatorname{Id}\Vert_{\mathrm{op}}$ 
    converges to zero almost surely as $n$ tends to infinity. 
    In particular, asymptotically, $\hat{D}_{X}$ is invertible almost surely and $\hat{P}_X$ thus well-defined.  
\end{corollary}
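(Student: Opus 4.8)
The plan is to reduce the operator norm to a maximum over diagonal entries, and then feed the subexponential tail bound established in the preceding lemma into a union bound and a Borel--Cantelli argument. Since both $\widehat{D}_X$ and $\operatorname{diag}((\ell+1)\Pi_X)$ are diagonal, the matrix $M := \operatorname{diag}((\ell+1)\Pi_X)^{-1}\widehat{D}_X - \operatorname{Id}$ is diagonal with entries $M_{ii} = (\widehat{D}_{X,ii} - (\ell+1)\Pi_X(i))/((\ell+1)\Pi_X(i))$, so $\Vert M\Vert_{\mathrm{op}} = \max_{i=1,\ldots,n}\lvert M_{ii}\rvert$. The only quantitative input I need beyond the lemma is a uniform lower bound on the denominator: using that $\Pi_X(v) = \pi(\sigma(v))/\#\mathcal{V}_{\sigma(v)} \geq \pi_{\min}/n$ with $\pi_{\min} := \min_k \pi(k) > 0$ and $\#\mathcal{V}_k \leq n$, together with $\ell = \lambda n^2 + o(n^2)$ and $\lambda > 0$, there is a constant $c > 0$ independent of $n$ such that $(\ell+1)\Pi_X(i) \geq cn$ for all $i$ and all sufficiently large $n$.

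With this in hand, I would fix $\varepsilon > 0$ and estimate, using the lemma with $t = \varepsilon c\sqrt{n}$ and the union bound over $i$,
\[
\mathbb{P}\big(\Vert M\Vert_{\mathrm{op}} > \varepsilon\big) \leq \sum_{i=1}^n \mathbb{P}\big(\lvert \widehat{D}_{X,ii} - (\ell+1)\Pi_X(i)\rvert > \varepsilon c n\big) \leq n\,\cte{subexp}\exp\big(-\cte{subexp2}\,\varepsilon c\,\sqrt{n}\big).
\]
The right-hand side is summable in $n$, so by the Borel--Cantelli lemma (applied on the coupling realizing all the sample paths $X$ simultaneously) the event $\{\Vert M\Vert_{\mathrm{op}} > \varepsilon\}$ occurs for only finitely many $n$ almost surely; letting $\varepsilon$ range over $\{1/m : m\in\mathbb{Z}_{\geq 1}\}$ yields $\Vert M\Vert_{\mathrm{op}} \to 0$ almost surely. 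For the final assertion, once $\Vert M\Vert_{\mathrm{op}} < 1$ — which holds for all large $n$ almost surely — the diagonal matrix $\operatorname{diag}((\ell+1)\Pi_X)^{-1}\widehat{D}_X = \operatorname{Id} + M$ has all diagonal entries strictly positive, hence is invertible; since $\operatorname{diag}((\ell+1)\Pi_X)$ is invertible as $\Pi_X > 0$, the matrix $\widehat{D}_X$ is invertible, i.e.\ $\sum_k N_{X,ik} > 0$ for every $i$, so there is no division by zero in \cref{eq: EmpTransition} and $\widehat{P}_X$ is well-defined.

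As the authors note, this is genuinely a corollary of the preceding concentration lemma, so I do not expect a serious obstacle; the only points requiring any care are the uniform lower bound $(\ell+1)\Pi_X(i) = \Theta(n)$ and making the Borel--Cantelli summability explicit (and, implicitly, fixing the coupling under which "almost surely as $n\to\infty$" is to be read).
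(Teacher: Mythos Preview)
Your proof is correct and matches the paper's approach exactly: the paper simply states that the corollary is ``immediate by the union bound and the fact that $(\ell + 1)\Pi_X$ has entries of size $\Theta(n)$,'' which is precisely what you have fleshed out with the diagonal reduction, the uniform lower bound $(\ell+1)\Pi_X(i)\geq cn$, and the Borel--Cantelli step.
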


Let us now proceed to the proof of \Cref{lem: MXQX}. Recall that \Cref{lem: MXQX} concerns a reduction to centered random matrices when starting the chain from equilibrium. 

\begin{proof}[Proof of \Cref{lem: MXQX}]
    Let $\widetilde{X}:= (\widetilde{X}_t)_{t=0}^\infty$ and $\widetilde{Y}:= (\widetilde{Y}_t)_{t=0}^\infty$ denote infinitely long sample paths of the block Markov chain where $\widetilde{X}$ starts in equilibrium and $\widetilde{Y}$ has some arbitrary initial distribution.
    Define 
    \begin{align}
        T
        := 
        \inf
        \{
            t\in \mathbb{Z}_{\geq 1}
            : 
            \sigma(\widetilde{X}_t) = \sigma(\widetilde{Y}_t) 
        \}
    \end{align}
    and observe that since the Markov chain associated with $p$ is irreducible and acyclic, it holds that $\mathbb{P}(T>t)\leq \newconstant{gam}\exp(-\newconstant{ex}t)$ for some constants $\cte{gam},\cte{ex}\in \mathbb{R}_{>0}$ which are independent of $n$.  
    Let $X := (X_t)_{t=0}^\ell$ be the path defined by $X_t = \widetilde{X}_t$ for all $t < T$ and $X_t = \widetilde{Y}_t$ for all $t \geq T$.
    Further, let $Y := (\widetilde{Y}_t)_{t=0}^\ell$ be the truncation of $\widetilde{Y}$ to length $\ell + 1$.         
    Observe that $X$ is a sample path from the block Markov chain starting in equilibrium whereas $Y$ is a sample path from the block Markov chain with an arbitrary initial distribution.
    The desired results concern the singular value distributions associated with $Y$. 

    Set 
    \begin{align}
        B_n 
        := 
        \frac{ \hat{N}_Y - \hat{N}_X }{\sqrt{n}}
        \quad 
        \textnormal{and}
        \quad
        C_{n}
        := 
        \frac{ \mathbb{E}[\hat{N}_{X}] }{\sqrt{n}}
        ,
        \label{eq: BC}
    \end{align}
    and observe that $\hat{N}_{Y}/\sqrt{n} = M_{X}/\sqrt{n} + B_n + C_n$ almost surely.
    The first claim now follows by \Cref{lem: perturbative} since $\Vert B_n \Vert_\mathrm{F}^2/n\leq (2T/\sqrt{n})^2/n$ converges to zero in probability and $\operatorname{rank}(C_n) \leq K$.   

    For the second claim set 
    $D_n := \hat{D}_{Y}^{-1} \operatorname{diag}((\ell+1) \Pi_X)$
    and observe that $\sqrt{n}\hat{P}_{Y} = D_n\sqrt{n} \operatorname{diag}((\ell+1)\Pi_X)^{-1} \hat{N}_{Y}$.
    By \Cref{cor: Pi1Exists} and the continuity of $M\mapsto M^{-1}$ in the neighborhood of $\operatorname{Id}$ it holds that $\Vert D_n - \operatorname{Id} \Vert_{\mathrm{op}}$ converges to zero in probability.  
    Hence, it is sufficient to establish that the singular value distribution of $\sqrt{n} \operatorname{diag}((\ell+1)\Pi_X)^{-1} \hat{N}_{Y}$ has the desired weak limit in probability by \Cref{lem: perturbative}\ref{item: D}.
    
    In this regard observe that, with notation as in \cref{eq: BC},  
    \begin{align}
        \sqrt{n} \operatorname{diag}((&\ell+1)\Pi_X)^{-1} \hat{N}_{Y}
        \label{eq: eq} \\ 
        &=\sqrt{n}Q_{X} + n \operatorname{diag}((\ell+1)\Pi_X)^{-1}B_n +  n\operatorname{diag}((\ell+1)\Pi_X)^{-1}C_n.   \nonumber
    \end{align}
    Here, it holds that 
    \begin{align}
        \Vert n\operatorname{diag}((\ell+1)\Pi_X)^{-1}B_n\Vert_\mathrm{F}^2/n \leq \Vert n\operatorname{diag}((\ell+1)\Pi_X)^{-1} \Vert_{\mathrm{op}}^2 \Vert B_n \Vert_{\mathrm{F}}^2/n.
        \label{eq: b}
    \end{align}
    Hence, since we already know that $\Vert B_n \Vert_{\mathrm{F}}^2/n$ converges to zero in probability and since $\Vert n\operatorname{diag}((\ell+1)\Pi_X)^{-1} \Vert_{\mathrm{op}} = \Theta(1)$ it follows that $\Vert n\operatorname{diag}((\ell+1)\Pi_X)^{-1}B_n\Vert_\mathrm{F}^2/n$ converges to zero in probability. 
    Furthermore, 
    \begin{align}
        \operatorname{rank}(n\operatorname{diag}((\ell+1)\Pi_X)^{-1}C_n) \leq \operatorname{rank}(C_n) \leq K.
        \label{eq: c}
    \end{align} 
    Apply \Cref{lem: perturbative} to \cref{eq: eq} to conclude the proof.
\end{proof}

\subsubsection{Proof of the Poisson limit Theorem {\ref{thm: Poisson}}}
\label{sec: PoissonLimit}

We will extract \Cref{thm: Poisson} from a nonasymptotic result. 
For such a nonasymptotic result one has to precisely quantify the mixing behavior of the chain of clusters $\Sigma_{X}= (\sigma(X_t))_{t=0}^\ell$. 
For our purposes this is most naturally done in terms of the relative pointwise distance. 

Let $Z$ be a Markov chain on the state space $\mathcal{V}$ which is irreducible and acyclic. 
The \emph{relative pointwise distance} $\Delta_{Z}(r)$ after $r\in \mathbb{Z}_{\geq 1}$ steps is given by  
\begin{align}
    \Delta_{Z}(r)
    := 
    \max_{x,y \in \mathcal{V}} \frac{\lvert \mathbb{P}(Z_{r} = y \mid Z_0 = x) - \Pi_Z(y) \rvert}{\Pi_Z(y)}.
    \label{eq: defRelPointwise}
\end{align}
Note that $\Delta_{Z}(r)$ is related to the quantity $d(r)$ from \cref{eq: ddef} which was used to define the mixing time. 
Indeed, a direct calculation with the definitions shows that 
\begin{align}
    0 
    \leq 
    \Delta_{Z}(r) 
    \leq 
    \frac{\max_{x,y \in \mathcal{V}} \lvert \mathbb{P}(Z_{r} = y \mid Z_0 = x) - \Pi_Z(y) \rvert}{\min_{y\in \mathcal{V}}\Pi_Z(y)} 
    = 
    (\min_{y\in \mathcal{V}}\Pi_Z(y))^{-1} d(r)
    .
    \label{eq: relativevsd}
\end{align} 

\begin{theorem}
    \label{thm: PoissonNonAss}  
    Let $X = (X_{t})_{t=0}^\ell$ be a sample path from a block Markov chain which starts in equilibrium. 
    Pick some $\varepsilon \in [0,1/2]$ and $r_0\in \mathbb{Z}_{\geq 1}$ such that $\Delta_{\Sigma_X}(r) \leq \varepsilon $ for all $r \geq r_0$.  
    Then, for any $k_1, k_2 \in \{1,\ldots,K \}$ and $e \in \mathcal{V}_{k_1} \times \mathcal{V}_{k_2}$  which is not a self-loop 
    \begin{align}
         d_{\mathrm{TV}}\bigg(\mathbb{P}(\hat{N}_{X,e} = -),& \operatorname{Poisson}\Big(\frac{\ell \pi(k_1)  p_{k_1,k_2}}{\# \mathcal{V}_{k_1} \# \mathcal{V}_{k_2}}\Big)\bigg)\\ 
         &\leq \ell \pi(k_1)p_{k_1,k_2} \bigg(\frac{4r_0 - 1}{(\#\mathcal{V}_{k_1})^2(\#\mathcal{V}_{k_2})^2} +  \frac{12\varepsilon}{\# \mathcal{V}_{k_1} \#\mathcal{V}_{k_2}}\bigg),\nonumber
    \end{align}
    and for any self-loop $e\in \mathcal{V}_{k_1} \times \mathcal{V}_{k_1}$
    \begin{align}
        d_{\mathrm{TV}}\bigg(\mathbb{P}(\hat{N}_{X,e} = -),& \operatorname{Poisson}\Big(\frac{\ell \pi(k_1)  p_{k_1,k_1}}{(\# \mathcal{V}_{k_1})^2}\Big)\bigg)\\ 
        &\leq \ell \pi(k_1)p_{k_1,k_1} \bigg(\frac{4r_0 - 1}{(\#\mathcal{V}_{k_1})^4} +  \frac{2}{(\# \mathcal{V}_{k_1})^3}+  \frac{12\varepsilon}{(\# \mathcal{V}_{k_1})^2}\bigg).
        \nonumber
    \end{align}
\end{theorem}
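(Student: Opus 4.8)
The plan is to deduce the bound from the local (Chen--Stein) Poisson approximation theorem for sums of dependent indicators in \cite{arratia1989two}. Write $N_{X,e} = \sum_{t=1}^{\ell} Y_t$ with $Y_t := \1\{E_{X,t} = e\}$, and take as dependency neighbourhoods the windows $B_t := \{s\in\{1,\dots,\ell\} : \lvert s - t\rvert \leq r_0\}$, so that $\#B_t \leq 2r_0 + 1$. Since $X$ starts in equilibrium, every $X_{t-1}$ has law $\Pi_X$, and as $\Pi_X(v) = \pi(\sigma(v))/\#\mathcal{V}_{\sigma(v)}$ this gives $p_t := \mathbb{E}[Y_t] = \pi(k_1)p_{k_1,k_2}/(\#\mathcal{V}_{k_1}\#\mathcal{V}_{k_2})$ for every $t$, hence $\lambda := \sum_t p_t$ is exactly the asserted Poisson rate. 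The theorem of \cite{arratia1989two} then yields
\[
    d_{\mathrm{TV}}\bigl(\mathbb{P}(N_{X,e} = -), \operatorname{Poisson}(\lambda)\bigr) \leq b_1 + b_2 + b_3,
\]
where $b_1 = \sum_t\sum_{s\in B_t} p_t p_s$, $b_2 = \sum_t\sum_{s\in B_t\setminus\{t\}} \mathbb{E}[Y_t Y_s]$ and $b_3 = \sum_t \mathbb{E}\bigl\lvert \mathbb{E}[Y_t - p_t \mid \sigma(Y_s : s\notin B_t)]\bigr\rvert$, and the proof reduces to estimating these three quantities.

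For $b_1$ one uses $p_t \equiv p := \pi(k_1)p_{k_1,k_2}/(\#\mathcal{V}_{k_1}\#\mathcal{V}_{k_2}) \leq (\#\mathcal{V}_{k_1}\#\mathcal{V}_{k_2})^{-1}$, so $b_1 \leq \ell(2r_0+1)\pi(k_1)p_{k_1,k_2}/\bigl((\#\mathcal{V}_{k_1})^2(\#\mathcal{V}_{k_2})^2\bigr)$. For $b_2$ with a non-self-loop $e = (i,j)$, $i\neq j$: the indicators at adjacent times are incompatible, $\mathbb{E}[Y_tY_s]=0$ when $\lvert s-t\rvert = 1$ (a shared coordinate would have to equal both $i$ and $j$), while for $\lvert s - t\rvert\geq 2$ a one-step Markov decomposition together with the $\Pi_X$-marginals bounds $\mathbb{E}[Y_tY_s]$ by $\pi(k_1)p_{k_1,k_2}/\bigl((\#\mathcal{V}_{k_1})^2(\#\mathcal{V}_{k_2})^2\bigr)$; there are at most $2r_0 - 2$ such $s$ per $t$, so $b_1 + b_2 \leq \ell(4r_0-1)\pi(k_1)p_{k_1,k_2}/\bigl((\#\mathcal{V}_{k_1})^2(\#\mathcal{V}_{k_2})^2\bigr)$, which is the first term of the non-self-loop bound. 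For a self-loop $e = (i,i)$ the pairs at distance one are no longer null, each contributing at most $\pi(k_1)p_{k_1,k_1}/(\#\mathcal{V}_{k_1})^3$, and there are at most $2\ell$ of them, producing precisely the additional $2/(\#\mathcal{V}_{k_1})^3$ term, the remaining pieces being handled as before.

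For $b_3$, the Markov property gives $\mathbb{E}[Y_t \mid \sigma(Y_s : s\notin B_t)] = \mathbb{E}\bigl[\mathbb{E}[Y_t \mid X_{t-r_0-1}, X_{t+r_0}] \bigm| \sigma(Y_s : s\notin B_t)\bigr]$ — with the obvious one-sided modification near $t=1$ and $t=\ell$, and $b_3 = 0$ when $2r_0 + 1 \geq \ell$ — so by Jensen it suffices to control $\mathbb{E}\lvert\mathbb{E}[Y_t \mid X_{t-r_0-1}, X_{t+r_0}] - p_t\rvert$. Expanding this two-sided conditional probability through the block structure, every factor becomes an $r_0$-step (or one $(2r_0+1)$-step) cluster transition probability, each lying in $[1-\varepsilon, 1+\varepsilon]$ times its equilibrium value because $\Delta_{\Sigma_X}(r)\leq\varepsilon$ for $r\geq r_0$; the normalising factors cancel and one is left with $\lvert\mathbb{E}[Y_t\mid X_{t-r_0-1},X_{t+r_0}] - p_t\rvert \leq p_t\max\bigl\{\tfrac{(1+\varepsilon)^2}{1-\varepsilon}-1,\, 1-\tfrac{(1-\varepsilon)^2}{1+\varepsilon}\bigr\} \leq 12\varepsilon\, p_t$ for $\varepsilon\leq 1/2$. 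Summing over $t$ gives $b_3 \leq 12\varepsilon\lambda$, the second term in each bound, which completes the proof; \Cref{thm: Poisson} then follows by letting $n\to\infty$ with $r_0$ fixed and $\varepsilon\to 0$, which is legitimate since $t_{\mathrm{mix}}^X = O(1)$ forces $\Delta_{\Sigma_X}(r)\to 0$ uniformly (via \cref{eq: relativevsd} and the constant pseudo-spectral gap). I expect the $b_3$ estimate to be the main obstacle: one has to choose the window so that every lag entering the conditional law of $Y_t$ is at least $r_0$, and then carry the block structure through a two-sided conditioning, which is where the delicate $(1\pm\varepsilon)$ bookkeeping and the endpoint corrections reside.
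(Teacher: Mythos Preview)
Your proposal is correct and follows essentially the same route as the paper: both apply the Arratia--Goldstein--Gordon Chen--Stein bound with the window $B_t=\{s:|s-t|\le r_0\}$, obtain the identical $b_1$ and $b_2$ estimates (including the $|s-t|=1$ case distinction between self-loops and non-self-loops), and handle $b_3$ by reducing to a two-sided conditional probability whose cluster-transition factors are each within $(1\pm\varepsilon)$ of equilibrium, yielding the $12\varepsilon$ constant. The only cosmetic differences are that the paper conditions on the cluster labels $\Sigma_{X,t-r_0-1},\Sigma_{X,t+r_0}$ rather than the states $X_{t-r_0-1},X_{t+r_0}$ (equivalent by the block structure) and treats the boundary $t-r_0-1<0$ via a time reversal of $\Sigma_X$ rather than a one-sided conditioning.
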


\begin{proof} 
    The proof consists of the following parts. 
    Part \ref{prt: notation} introduces notation which is used in \cite{arratia1989two} to quantify local and long-range dependence. The parameters quantifying the local dependence are estimated in Part \ref{prt: paramslocal}.
    Finally, the parameter quantifying long-range dependence is estimated in Part \ref{prt: paramslong}.    
    \proofpart{Notation for local and long-range dependence\label{prt: notation}}

    \noindent
    For every $t \in \{ 1,\ldots, \ell\}$ let $B_{r_0}(t):= \{t'\in \{1,\ldots, \ell \}: \lvert t' - t \rvert \leq  r_0 \}$ and consider the following parameters which quantify local and long-range dependencies:
    \begin{align}
        \mathfrak{p}_t 
        &
        := 
        \mathbb{P}(E_{X,t} = e)
        ,
        \nonumber \\ 
        \mathfrak{p}_{t,t'}
        &
        := 
        \mathbb{P}(E_{X,t} = e ,E_{X,t'} = e)
        ,
        \nonumber \\ 
        \mathfrak{s}_t
        &
        := 
        \mathbb{E}\big[\big\lvert \mathbb{P}\big(E_{X,t} = e\mid (\1_{E_{X,t' = e}})_{t' \in \{1,\ldots,\ell \}\setminus B_{r_0}(t)}\big) - \mathfrak{p}_t \big\rvert\big]
        .
    \end{align}
    Applying \cite[Theorem 1]{arratia1989two} to the sum of dependent random variables $\hat{N}_{X,e} = \sum_{t= 1}^\ell\1_{E_{X,t} = e}$ yields that 
    \begin{align}
        d_{\mathrm{TV}}&\bigg(\mathbb{P}(\hat{N}_{X,e} = -), \operatorname{Poisson}\Big(\frac{\ell \pi(k_1)  p_{k_1,k_2}}{\# \mathcal{V}_{k_1} \# \mathcal{V}_{k_2}}\Big)\bigg) \leq b_1 + b_2 + b_3.
        \label{eq: TVbs}
    \end{align} 
    where
    \begin{align}
        b_1 
        := 
        \sum_{t=1}^\ell \sum_{t'\in B_{r_0}(t)} \mathfrak{p}_t \mathfrak{p}_{t'}
        ,
        \quad
        b_2 
        := 
        \sum_{t=1}^\ell \sum_{t' \in B_{r_0}(t) \setminus \{t\}} \mathfrak{p}_{t,t'}
        ,
        \quad
        b_3 
        := 
        \sum_{t=1}^\ell \mathfrak{s}_t
        .
    \end{align}
    Here we used that the definition in \cite{arratia1989two} for total variation distance differs from our definition \cref{eq: defTV} by a factor two. 
    Indeed, observe that $\sup_{A\subseteq \mathbb{Z}_{\geq 0}} \lvert \mu(A) - \nu(A) \rvert = \frac{1}{2}\sum_{x\in \mathbb{Z}_{\geq 0}} \lvert \mu(x) - \nu(x)  \rvert$ for any two probability measures $\mu$ and $\nu$ using that the supremum is realized by $A = \{x\in \mathbb{Z}_{\geq 0}:\mu(x)\geq \nu(x) \}$.

    \proofpart{Bounding the local dependence contributions ($b_1$ and $b_2$)\label{prt: paramslocal}}

    \noindent
    By definition of $X$ as a block Markov chain it holds that for any $t\in \{1,\ldots,\ell\}$
    \begin{align}
        \mathfrak{p}_t &= \frac{\mathbb{P}(\Sigma_{X,t-1} = k_1, \Sigma_{X,t} = k_2)}{\# \mathcal{V}_{k_1}\#\mathcal{V}_{k_2}}=   \frac{\pi(k_1) p_{k_1,k_2}}{\# \mathcal{V}_{k_1} \# \mathcal{V}_{k_2}} 
        .
        \label{eq: explicitpt}
    \end{align} 
    From \cref{eq: explicitpt} it follows that 
    \begin{align}
        b_1 \leq \ell (2r_0 + 1)  \frac{\pi(k_1)^2 p_{k_1,k_2}^2 }{(\# \mathcal{V}_{k_1})^2 (\# \mathcal{V}_{k_2})^2} \leq  \ell \pi(k_1)p_{k_1,k_2} \frac{2r_0 + 1}{(\# \mathcal{V}_{k_1})^2 (\# \mathcal{V}_{k_2})^2}
        \label{eq: b1}
    \end{align}
    where it was used that $\pi(k_1) p_{k_1,k_2} \leq 1$.
    It holds for any $t,t' \in \{1,\ldots,\ell \}$ with $\lvert t - t' \rvert > 1$ that 
    \begin{align}
        \mathfrak{p}_{t,t'} &= \frac{\mathbb{P}(\Sigma_{X,t-1} = k_1, \Sigma_{X,t} = k_2, \Sigma_{X,t' - 1} = k_1, \Sigma_{X,t'} = k_2)}{(\# \mathcal{V}_{k_1})^2(\#\mathcal{V}_{k_2})^2}\\ 
        &\leq  \frac{\pi(k_1) p_{k_1,k_2}}{(\# \mathcal{V}_{k_1})^2(\#\mathcal{V}_{k_2})^2}.
    \end{align} 
    For $t' \in  \{t +1, t-1\}$ it may similarly be deduced that if $e$ is a self-loop then $\mathfrak{p}_{t,t'}  \leq \pi(k_1)p_{k_1,k_1} (\# \mathcal{V}_{k_1})^{-3}$ and if $e$ is not a self-loop then $\mathfrak{p}_{t,t'} = 0$.
    Hence, 
    \begin{align}
        b_2 \leq 
        \begin{cases}
            \ell\pi(k_1)p_{k_1,k_1} \bigl( \frac{2}{(\# \mathcal{V}_{k_1})^{3}} + \frac{2r_0 - 2}{(\# \mathcal{V}_{k_1})^4}\bigr) \quad &\text{ if }e\text{ is a self-loop,}\\ 
            \ell\pi(k_1)p_{k_1,k_2}\frac{2r_0 - 2}{(\# \mathcal{V}_{k_1})^2( \# \mathcal{V}_{k_2})^2}   &\text{ otherwise.}
        \end{cases}
        \label{eq: b2}
    \end{align}

    \proofpart{Bounding the long-range dependence contribution ($b_3$)\label{prt: paramslong}} 
    
    \noindent
    The first step in this part will be to control $\mathfrak{s}_t$ in terms of a simpler quantity; this step is achieved in \cref{eq: st}.
    Thereafter, control over \cref{eq: st} is achieved in \cref{eq: delshapeestimate} and we deduce the desired bound for $b_3$ in \cref{eq: b3}. 
    
    Recall that $X$ is uniformly distributed in $\prod_{t = 0}^\ell \mathcal{V}_{s_{X,t}}$ given that $\Sigma_{X} = s_X$. 
    It follows that $\1_{E_{X,t}= e}$ is conditionally independent of $(\1_{E_{X,t} = e})_{t'\in \{1,\ldots,\ell \}\setminus B_{r_0}(t)}$ given $\Sigma_{X,t - r_0-1}$ and $\Sigma_{X,t + r_0}$. 
    (If $t-r_0-1<0$ one can use a time reversal to make sense of $\Sigma_{X,t - r_0-1}$; a time reversal exists since $\Sigma_X$ is assumed to be irreducible and acyclic.)
    Hence, by the law of total probability 
    \begin{align}
        &
        \mathbb{P}\bigl(E_{X,t} = e\mid(\1_{E_{X,t} = e})_{t'\in \{1,\ldots,\ell \}\setminus B_{r_0}(t)}\bigr)\label{eq: totalpb}\\ 
        &
        =  \sum_{s_1,s_2 \in \{1,\ldots,K\}} \mathbb{P}\bigl(\Sigma_{X,t-r_0 -1} = s_1,\Sigma_{X,t + r_0} =s_2 \mid (\1_{E_{X,t} = e})_{t'\in \{1,\ldots,\ell \}\setminus B_{r_0}(t)}\bigr)\nonumber \\ 
        &
        \hphantom{\sum_{s_1,s_2 \in \{1,\ldots,K\}}\mathbb{P} } \times\mathbb{P}(E_{X,t} = e\mid \Sigma_{X,t - r_0-1} = s_1, \Sigma_{X,t + r_0} =s_2 ).
        \nonumber   
    \end{align}
    On the other hand, we may write 
    \begin{align}
        &\mathfrak{p}_t 
        \\ 
        &= 
        \mathfrak{p}_t \sum_{s_1,s_2 \in \{1,\ldots,K\}} \mathbb{P}\bigl(\Sigma_{X,t-r_0 -1} = s_1,\Sigma_{X,t + r_0} =s_2 \mid (\1_{E_{X,t} = e})_{t'\in \{1,\ldots,\ell \}\setminus B_{r_0}(t)}\bigr).\nonumber 
    \end{align}
    Observe that 
    \begin{align}
        \mathbb{E}\bigl[\mathbb{P}(\Sigma_{X,t-r_0 -1} = s_1,\Sigma_{X,t + r_0} =s_2 \mid &(\1_{E_{X,t} = e})_{t'\in \{1,\ldots,\ell \}\setminus B_{r_0}(t)})\bigr]\label{eq: e}    \\ 
        &= \mathbb{P}(\Sigma_{X,t-r_0 -1} = s_1,\Sigma_{X,t + r_0} =s_2). \nonumber
    \end{align}
    Therefore, by using the triangle inequality on $\mathfrak{s}_t$'s definition together with \cref{eq: totalpb}--\cref{eq: e}, it follows that  
    \begin{align}
        \mathfrak{s}_t\leq \sum_{s_1,s_2 \in \{1,\ldots,K\}}&\mathbb{P}(\Sigma_{X,t-r_0 -1} = s_1,\Sigma_{X,t + r_0} =s_2)\label{eq: st}\\ 
        &\times \bigl\lvert \mathbb{P}(E_{X,t} = e\mid \Sigma_{X,t - r_0-1} = s_1, \Sigma_{X,t + r_0} =s_2 ) -\mathfrak{p}_t\bigr\rvert.
        \nonumber 
    \end{align}  
    Observe that for any $s_1,s_2 \in \{1,\ldots,K \}$ with $\mathbb{P}( \Sigma_{X,t + r_0 } = s_2 \mid \Sigma_{X,t - r_0-1} = s_1)> 0$, the definition of conditional probability yields that 
    \begin{align}
        \mathbb{P}(E_{X,t} = e\mid \Sigma_{X,t - r_0-1} = s_1,& \Sigma_{X,t + r_0 } = s_2 )\label{eq: rr} \\ 
        &= \frac{\mathbb{P}(E_{X,t} = e, \Sigma_{X,t + r_0 
        } = s_2\mid \Sigma_{X,t - r_0-1} = s_1)}{\mathbb{P}( \Sigma_{X,t + r_0 } = s_2 \mid \Sigma_{X,t - r_0-1} = s_1)}.
        \nonumber
    \end{align} 
    Here, by definition of a block Markov chain  
    \begin{align}
        \mathbb{P}(E_{X,t} = e,& \Sigma_{X,t + r_0 } = s_2\mid \Sigma_{X,t - r_0-1} = s_1)\label{eq: 2rr}\\ 
        &=\frac{\mathbb{P}(\Sigma_{X,t-1} = k_1, \Sigma_{X,t} = k_2, \Sigma_{X,t+r_0} = s_2 \mid \Sigma_{X,t-r_0-1} = s_1)}{\#\mathcal{V}_{k_1}\# \mathcal{V}_{k_2}}.
        \nonumber
    \end{align}
    Moreover, the Markov property applied to $\Sigma_{X}$ yields that 
    \begin{align}
        \mathbb{P}(\Sigma_{X,t-1}& = k_1, \Sigma_{X,t} = k_2, \Sigma_{X,t+r_0} = s_2 \mid \Sigma_{X,t-r_0-1} = s_1)\label{eq: MarkovProp}\\ 
        &= \mathbb{P}(\Sigma_{X_{t-1} }= k_1 \mid \Sigma_{X,t-r_0-1}=s_1)p_{k_1,k_2}\mathbb{P}(\Sigma_{X,t+r_0}=s_2\mid \Sigma_{X,t} = k_2).
        \nonumber 
    \end{align}
    Combining \cref{eq: rr}, \cref{eq: 2rr} and \cref{eq: MarkovProp} yields that 
    \begin{align}
        &
        \mathbb{P}(
            E_{X,t} = e\mid \Sigma_{X,t - r_0-1} = s_1,\Sigma_{X,t + r_0 } = s_2 
        )
        \label{eq: predelta}
        \\ &
        = \frac{p_{k_1,k_2}}{\# \mathcal{V}_{k_1}\# \mathcal{V}_{k_2}} \frac{\mathbb{P}(\Sigma_{X,t-1} = k_1 \mid \Sigma_{X,t-r_0-1}=s_1)\mathbb{P}(\Sigma_{X,t+r_0}=s_2\mid \Sigma_{X,t} = k_2)}{\mathbb{P}( \Sigma_{X,t + r_0 } = s_2 \mid \Sigma_{X,t - r_0-1} = s_1)}.
        \nonumber
    \end{align}

    Let us introduce the following notation: 
    \begin{align}
        \delta_{s_1, s_2}^{(a)} 
        &
        := 
        \frac{\mathbb{P}(\Sigma_{X,t + r_0 } = s_2\mid \Sigma_{X,t - r_0-1} = s_1) - \pi(s_2)
        }{\pi(s_2)},
        \label{eqn:delta_a}
        \\ 
        \delta_{s_1,k_1}^{(b)} 
        &
        := 
        \frac{\mathbb{P}(\Sigma_{X,t-1} = k_1 \mid \Sigma_{X,t-r_0-1} = s_1) - \pi(k_1)}{\pi(k_1)},
        \label{eqn:delta_b}
        \\ 
        \delta_{k_2,s_2}^{(c)} 
        &
        := 
        \frac{\mathbb{P}(\Sigma_{X,t+r_0} = s_2 \mid \Sigma_{X,t} = k_2) - \pi(s_2)}{\pi(s_2)}
        \label{eqn:delta_c}
        .  
    \end{align}
    Substituting \cref{eqn:delta_a}--\cref{eqn:delta_c} into \cref{eq: predelta} yields 
    \begin{align}
        \mathbb{P}(E_{X,t} = e\mid \Sigma_{X,t - r_0-1} =& s_1, \Sigma_{X,t + r_0 } = s_2 ) \\ 
        &= 
       \frac{\pi(k_1)p_{k_1,k_2}}{\# \mathcal{V}_{k_1} \# \mathcal{V}_{k_2}}\frac{(1 + \delta_{s_1,k_1}^{(b)}) (1 + \delta_{k_2,s_2}^{(c)} )}{1 + \delta_{s_1, s_2}^{(a)}}.  \label{eq: delshapeestimate}
    \end{align}
    Note that by our assumption and the definition of relative pointwise distance, $\lvert \delta_{s_1,s_2}^{(a)} \rvert \leq \varepsilon \leq 1/2$.
    Hence by Taylor's theorem 
    \begin{align}
        \bigg\lvert 1 - \frac{1}{1 + \delta_{s_1, s_2}^{(a)}}  \bigg\rvert \leq \varepsilon \max_{x \in [-1/2,1/2]} \bigg(\frac{1}{1 + x}\bigg)^2 \leq 4 \varepsilon.
        \label{eq: es1}
    \end{align}
    Similarly, it holds that $\lvert \delta_{s_1,k_1}^{(b)} \rvert \leq \varepsilon \leq 1/2$ and $\lvert \delta_{k_2,s_2}^{(c)}  \rvert\leq \varepsilon \leq 1/2$. 
    In particular $\lvert (1 + \delta_{s_1,k_1}^{(b)}) (1 + \delta_{k_2,s_2}^{(c)})\rvert\leq 9/4$ and 
    \begin{align}
        \lvert 1 - (1 + \delta_{s_1,k_1}^{(b)})(1 + \delta_{k_2,s_2}^{(c)}) \rvert \leq 3 \varepsilon. \label{eq: es2} 
    \end{align} 
    Bound \cref{eq: delshapeestimate} using \cref{eq: explicitpt}, \cref{eq: es1}, \cref{eq: es2} to find that 
    \begin{align}
        \lvert \mathbb{P}(E_{X,t} = e\mid \Sigma_{X,t - r_0-1} = s_1, \Sigma_{X,t + r_0} = s_2 ) -\mathfrak{p}_t\rvert \leq  \frac{\pi(k_1)p_{k_1,k_2}}{\#\mathcal{V}_{k_1} \# \mathcal{V}_{k_2}} \cdot 12 \varepsilon.
    \end{align}
    Hence by \cref{eq: st} and the definition of $b_3$,
    \begin{align}
        b_3 \leq \ell\pi(k_1)p_{k_1,k_2}\frac{12  \varepsilon}{\#\mathcal{V}_{k_1} \# \mathcal{V}_{k_2}}.
        \label{eq: b3}
    \end{align}
    This concludes the proof: the estimates for $b_1, b_2$ and $b_3$ from \cref{eq: b1}, \cref{eq: b2} and \cref{eq: b3} can be substituted in \cref{eq: TVbs} to yield the desired result. 
\end{proof}

\begin{proof}[Proof of Theorem \ref{thm: Poisson}]
    Note that the total variation distance in \cref{eq: defTV} metrizes convergence in distribution. 
    This is to say that $\hat{N}_{X,e_n}$ converges in distribution if and only if $\mathbb{P}(\hat{N}_{X,e_n} = -)$ converges with respect to the metric $\operatorname{d}_{\mathrm{TV}}$.

    Recall that $\Sigma_{X}$ is assumed to be irreducible and acyclic. 
    Hence, by \cref{eq: relativevsd} it holds that $\Delta_{\Sigma_X}(r)$ decays exponentially in $r$. 
    Recall that $\ell = \lambda n^2 + o(n^2)$ and $\# \mathcal{V}_{k}  = \alpha_kn + o(n)$. 
    In particular $(\# \mathcal{V}_{k_1} \# \mathcal{V}_{k_2})^{-1}\ell$ converges to $\alpha_{k_1}^{-1} \alpha_{k_2}^{-1}\lambda$ as $n$ tends to infinity.   
    The result now follows by taking $r_0 = \log(\# \mathcal{V}_{k_1} \# \mathcal{V}_{k_2})$ in \Cref{thm: PoissonNonAss}. 
\end{proof}

\subsubsection{Proof of {Corollary \ref{cor: VarianceProfile}} on the variance profile}
\label{sec: VarProfile}  

Due to the decomposition $\operatorname{Var}[\hat{N}_{X,e_n}] = \mathbb{E}[\hat{N}_{X,e_n}^2] - \mathbb{E}[\hat{N}_{X,e_n}]^2$ the following result implies \Cref{cor: VarianceProfile}. 
\begin{lemma}
    \label{lem: GeneralMomentP}
    Assume that $X$ starts in equilibrium. Let $e_n$ be as in \Cref{thm: Poisson}, and $Y$ a Poisson distributed random variable with rate $\lambda \pi(k_1)\alpha_{k_1}^{-1} \alpha_{k_2}^{-1} p_{k_1,k_2}$. 
    Then for every $m\in \mathbb{Z}_{\geq 0}$ it holds that, as $n$ tends to infinity, 
    \begin{align}
        \mathbb{E}\left[\hat{N}_{X,e_n}^m\right] = \mathbb{E}\left[Y^m\right] + o_m(1).
    \end{align}
\end{lemma}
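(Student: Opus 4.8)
The plan is to deduce the convergence of moments from the convergence in distribution already established in \Cref{thm: Poisson}, combined with a uniform bound on all moments of $N_{X,e_n}$. Recall the standard fact that if a sequence of nonnegative random variables converges in distribution and the family of its $m$th powers is uniformly integrable, then the $m$th moments converge; and uniform integrability of $(N_{X,e_n}^m)_n$ follows at once once we know that $\sup_n \mathbb{E}[N_{X,e_n}^{m'}]<\infty$ for every $m'\in\mathbb{Z}_{\geq 0}$, since then $\mathbb{E}[N_{X,e_n}^m\1_{N_{X,e_n}^m>K}]\leq K^{-1}\mathbb{E}[N_{X,e_n}^{2m}]$. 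So the heart of the matter is the uniform moment bound $\sup_n\mathbb{E}[N_{X,e_n}^m]<\infty$ for each fixed $m$.

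To establish this, I would expand $N_{X,e_n}=\sum_{t=1}^\ell \1_{E_{X,t}=e_n}$ to get $\mathbb{E}[N_{X,e_n}^m]=\sum_{t_1,\ldots,t_m=1}^\ell \mathbb{P}(E_{X,t_i}=e_n\text{ for all }i)$, and group the tuples $(t_1,\ldots,t_m)$ according to the ordered set of distinct values $u_1<\cdots<u_s$ they take, and the number $p$ of maximal runs of consecutive integers among $u_1,\ldots,u_s$. The key probabilistic estimate I would prove is
\[
\mathbb{P}\bigl(E_{X,u_i}=e_n\text{ for }i=1,\ldots,s\bigr)\leq(\alpha_{min}n)^{-(s+p)}.
\]
This I would derive by the chain rule $\mathbb{P}(\bigcap_i\{E_{X,u_i}=e_n\})=\mathbb{P}(E_{X,u_1}=e_n)\prod_{i\geq 2}\mathbb{P}(E_{X,u_i}=e_n\mid E_{X,u_1}=e_n,\ldots,E_{X,u_{i-1}}=e_n)$, using the Markov property to replace each conditioning by conditioning only on $X_{u_{i-1}}$ (which on the relevant event equals the second endpoint of $e_n$), and the block-Markov property that, given the cluster path, $X$ visits independent uniformly random vertices inside the prescribed clusters of size $\geq\alpha_{min}n$. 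The point is that $\mathbb{P}(E_{X,u_1}=e_n)=\Pi_X(a)P_{a,b}\leq(\alpha_{min}n)^{-2}$ at equilibrium, that a transition with $u_i\geq u_{i-1}+2$ contributes a conditional factor of order $(\alpha_{min}n)^{-2}$ (a vertex in $\mathcal{V}_{k_1}$ is hit with probability $\leq(\alpha_{min}n)^{-1}$, then the edge with probability $\leq(\alpha_{min}n)^{-1}$), while an adjacent pair $u_i=u_{i-1}+1$ — possible only when $e_n$ is a self-loop — contributes only order $(\alpha_{min}n)^{-1}$; since there are exactly $s-p$ such adjacent pairs, the total exponent is $2s-(s-p)=s+p$. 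One then counts: the number of $m$-tuples whose distinct values have a run-structure with parameters $s$ and $p$ is at most $\binom{s-1}{p-1}\ell^p s^m$ (choose run lengths, then $p$ run positions, then a surjection $[m]\twoheadrightarrow\{u_1,\ldots,u_s\}$). Because $\ell=\Theta(n^2)$ and $p\leq s$, we have $\ell^p n^{-(s+p)}=O(n^{p-s})=O(1)$, so
\[
\mathbb{E}[N_{X,e_n}^m]\leq\sum_{s=1}^m\sum_{p=1}^s\binom{s-1}{p-1}s^m(\alpha_{min}n)^{-(s+p)}\ell^p=O_m(1)
\]
uniformly in $n$, as desired.

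With the uniform moment bounds in hand the proof concludes immediately: $(N_{X,e_n}^m)_n$ is uniformly integrable for every $m$, $N_{X,e_n}$ converges in distribution to the Poisson variable $Y$ by \Cref{thm: Poisson}, and therefore $\mathbb{E}[N_{X,e_n}^m]\to\mathbb{E}[Y^m]$, which gives $\mathbb{E}[N_{X,e_n}^m]=\mathbb{E}[Y^m]+o_m(1)$. The step I expect to be the main obstacle is the probabilistic estimate $\mathbb{P}(E_{X,u_i}=e_n,\ i\leq s)\leq(\alpha_{min}n)^{-(s+p)}$, namely organizing the conditioning cleanly, invoking the Markov property correctly at each step, and carefully handling the self-loop/adjacency case so that the exponent bookkeeping comes out to $s+p$ rather than merely $s+1$; the combinatorial count and the final uniform-integrability argument are routine.
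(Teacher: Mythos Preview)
Your overall strategy is the same as the paper's: invoke \Cref{thm: Poisson} for convergence in distribution and then upgrade to convergence of moments via uniform integrability, which in turn reduces to a uniform bound on all moments of $N_{X,e_n}$. The difference lies entirely in how the uniform moment bound is obtained. The paper does not expand $N_{X,e_n}^m$ combinatorially; instead it applies a Bernstein-type concentration inequality for Markov chains (Paulin's pseudo-spectral-gap inequality, \Cref{prop: PaulinPseudo}) to the edge chain $E_X$ with $f(e)=\1_{e=e_n}$, obtains a sub-exponential tail $\mathbb{P}(N_{X,e_n}>r)\leq c\exp(-c'r)$ uniformly in $n$, and reads off $\sup_n\mathbb{E}[N_{X,e_n}^{m+1}]<\infty$ directly from the tail bound.

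Your combinatorial route is correct and genuinely different. The key estimate $\mathbb{P}(E_{X,u_i}=e_n,\ i\leq s)\leq(\alpha_{min}n)^{-(s+p)}$ is valid: the first event costs $(\alpha_{min}n)^{-2}$; each of the $p-1$ gaps between runs costs $(\alpha_{min}n)^{-2}$ (one factor for hitting the first endpoint after a gap, one for the transition); each of the $s-p$ within-run steps costs $(\alpha_{min}n)^{-1}$ (and is only nonzero in the self-loop case, which you correctly note). The counting bound $\binom{s-1}{p-1}\ell^p s^m$ and the balance $\ell^p n^{-(s+p)}=O(n^{p-s})=O(1)$ are fine. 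Compared to the paper's argument, yours is more elementary and self-contained---it avoids importing a black-box concentration inequality and the pseudo-spectral-gap machinery---at the price of a longer combinatorial bookkeeping step. The paper's argument is shorter once \Cref{prop: PaulinPseudo} is available, and it gives slightly more (an exponential tail rather than just polynomial moment bounds), though that extra strength is not needed here.
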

\begin{proof}
    We have already established in \Cref{thm: Poisson} that $\hat{N}_{X,e_n}$ converges in distribution to $Y$. 
    Hence, to derive convergence of the moments it suffices to verify that for every $m\in \mathbb{Z}_{\geq 0}$ it holds that the sequence of random variables $\hat{N}_{X,e_n}^{m+1}$ is uniformly integrable \cite[7.10. (15)]{grimmett2020probability}. 
     
    We will apply \Cref{prop: PaulinPseudo} to the Markov chain of edges $E_X := (E_{X,t})_{t=1}^\ell$ with the function $f(e) = \1_{e = e_n}$.
    Observe that 
    \begin{align}
        \lvert f - \mathbb{E}_{\Pi_{E_X}}[f] \rvert\leq 1; \qquad \operatorname{Var}_{\Pi_{E_X}}[f]= \Pi_{E_X}(e_n)(1 - \Pi_{E_X}(e_n)) \leq \alpha_{min}^{-2}n^{-2}.
    \end{align}
    Note that $E_X$ induces a chain $\Sigma_{E_X}$ on the reduced space $\{1,\ldots,K \}\times \{1,\ldots,K \}$ and that $t_{\mathrm{mix}}^{E_X} \leq \max\{2, t_{\mathrm{mix}}^{\Sigma_{E_X}} \}$ by $X$ being a block Markov chain.
    Since $X$ is a block Markov chain it follows that the dynamics of $\Sigma_{E_X}$ do not depend on $n$. 
    It follows that $t_{\mathrm{mix}}^{\Sigma_{E_X}}$ is a constant. 
    Correspondingly, by \cref{eq: PsTmix} it holds that $1/\gamma_{\rm{ps}}^{E_X} = O(1)$. 
    Hence, since $\ell = \Theta(n^2)$, \Cref{prop: PaulinPseudo} yields that
    \begin{align}
        \mathbb{P}(\lvert \hat{N}_{X,e_n} -  \ell \pi(k_1)&(\#\mathcal{V}_{k_1})^{-1} (\# \mathcal{V}_{k_2})^{-1} p_{k_1,k_2}  \rvert > r)\nonumber \\ 
        & \leq 2\exp\left(-\frac{r^2 \gamma_{\rm{ps}}^{E_X}}{8(\ell  + 1/\gamma_{\rm{ps}}^{E_X})\operatorname{Var}_{\Pi_{E_X}}[f] + 20r}\right)\\ 
        & \leq \cte{r2} \exp(-\cte{r}r)
    \end{align}
    for some $\newconstant{r}, \newconstant{r2}\in \mathbb{R}_{>0}$ which do not depend on $n$. 
    The constant $\cte{r}\in \mathbb{R}_{> 0}$ is chosen so that $\cte{r} \leq \gamma_{\rm{ps}}^{E_X}/(8 (\ell + 1/\gamma_{\rm{ps}}^{E_X})\operatorname{Var}_{\Pi_{E_X}}[f] + 20 )$ for all $n$ and the constant $\cte{r2}\geq 2$ is chosen so that $\cte{r2} \exp(-\cte{r}r) \geq 1$ whenever $r\leq 1$. 
    Note that $\ell \pi(k_1)(\#\mathcal{V}_{k_1})^{-1} (\# \mathcal{V}_{k_2})^{-1} p_{k_1,k_2}$ converges to a finite constant as $n$ tends to infinity. 
    Hence, for some sufficiently large constant $\newconstant{sube}\in \mathbb{R}_{>0}$
    \begin{align}
        \mathbb{P}(\hat{N}_{X,e_n} > r) \leq \cte{sube} \exp(-\cte{r}r).
    \end{align}
    In particular, 
    \begin{align}
        \mathbb{E}[\hat{N}_{X,e_n}^{m+1}] &= \sum_{r=1}^\infty r^{m+1}\mathbb{P}(\hat{N}_{X,e_n} = r)\\ 
        &\leq \cte{sube}\sum_{r=1}^\infty r^{m+1}\exp(-\cte{r}(r-1))\label{eq: unint} 
    \end{align}    
    The right-hand side of \cref{eq: unint} is finite and independent of $n$.  
    This shows that $\hat{N}_{X,e_n}^{m+1}$ is uniformly integrable and concludes the proof. 
\end{proof}
\begin{corollary}
    \label{cor: SingleHighMoment}
    Assume that $X$ starts in equilibrium. Then, for all $m\in \mathbb{Z}_{\geq 0}$ it holds that as $n$ tends to infinity $\max_{e\in\vec{E}_n}\vert \mathbb{E}[\hat{N}_{X,e}^m] \vert = O_{m}(1)$.
\end{corollary}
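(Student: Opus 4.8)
The plan is to observe that the sub-exponential tail bound established inside the proof of \Cref{lem: GeneralMomentP} is in fact uniform over the edge, and then to integrate it. First I would dispose of the trivial edges: if $e = ij$ satisfies $p_{\sigma(i),\sigma(j)} = 0$, then the transition probability $P_{i,j} = p_{\sigma(i),\sigma(j)}/\#\mathcal{V}_{\sigma(j)}$ vanishes, so the edge $ij$ is never traversed, $N_{X,e} = 0$ almost surely, and $\mathbb{E}[N_{X,e}^m] = 0$ for every $m$. It therefore remains to bound $\mathbb{E}[N_{X,e}^m]$ uniformly over the edges $e \in \mathcal{V}_{k_1}\times\mathcal{V}_{k_2}$ with $p_{k_1,k_2} > 0$.

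For such an edge, I would apply \Cref{prop: PaulinPseudo} to the Markov chain of edges $E_X = (E_{X,t})_{t=1}^\ell$, which starts in equilibrium because $X$ does, with the function $f(\cdot) = \1_{\{\cdot = e\}}$, exactly as in the proof of \Cref{lem: GeneralMomentP}. The key point is that every quantity entering that inequality is bounded independently of $e$: one has $\lvert f - \mathbb{E}_{\Pi_{E_X}}[f]\rvert \leq 1$, $\operatorname{Var}_{\Pi_{E_X}}[f] = \Pi_{E_X}(e)(1-\Pi_{E_X}(e)) \leq \alpha_{min}^{-2}n^{-2}$, and $1/\gamma_{\mathrm{ps}}^{E_X} = O(1)$ since $t_{\mathrm{mix}}^{\Sigma_{E_X}}$ is a constant; moreover the mean $\mathbb{E}[N_{X,e}] = \ell\pi(k_1)(\#\mathcal{V}_{k_1}\#\mathcal{V}_{k_2})^{-1}p_{k_1,k_2}$ is at most $\ell\alpha_{min}^{-2}n^{-2} = O(1)$ uniformly in $k_1,k_2$. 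Hence the same computation yields constants $c, c' \in \mathbb{R}_{>0}$, independent of both $n$ and $e$, such that $\mathbb{P}(N_{X,e} > r) \leq c\exp(-c'r)$ for all $r \in \mathbb{R}_{\geq 0}$.

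Finally I would conclude by writing $\mathbb{E}[N_{X,e}^m] = \sum_{r=1}^\infty r^m\mathbb{P}(N_{X,e} = r) \leq c\sum_{r=1}^\infty r^m\exp(-c'(r-1))$ and noting that the right-hand side is finite and independent of $n$ and $e$; taking the maximum over $e \in \vec{E}_n$ then gives the claim. I do not foresee a genuine obstacle here: the corollary is essentially the remark that the tail estimate obtained en route to \Cref{lem: GeneralMomentP} did not use $p_{k_1,k_2} > 0$ and had constants free of the particular edge. The only point requiring mild care is checking that the bounds on $\operatorname{Var}_{\Pi_{E_X}}[f]$ and on $\gamma_{\mathrm{ps}}^{E_X}$ really are uniform in $e$, which follows respectively from $\#\mathcal{V}_k > \alpha_{min} n$ for all clusters and from the dynamics of $\Sigma_{E_X}$ not depending on $n$.
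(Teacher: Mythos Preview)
Your argument is correct, but the paper takes a shorter route. You re-run the concentration inequality from the proof of \Cref{lem: GeneralMomentP} and check that every constant is uniform in the edge. The paper instead observes that, because $X$ is a block Markov chain starting in equilibrium, the distribution of $N_{X,e}$ depends on $e$ only through the cluster pair $(\sigma(i),\sigma(j))$; hence $\mathbb{E}[N_{X,e}^m]$ can take at most $K^2$ distinct values for each $n$. For each of the finitely many pairs $(k_1,k_2)$ with $p_{k_1,k_2}>0$, \Cref{lem: GeneralMomentP} already gives convergence of $\mathbb{E}[N_{X,e_n}^m]$ to a finite Poisson moment, so the sequence is bounded; the pairs with $p_{k_1,k_2}=0$ are trivial as you note. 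Taking the maximum over $K^2$ bounded sequences finishes the proof in one line. Your approach has the merit of being self-contained and making the uniformity explicit, while the paper's approach exploits the block symmetry to avoid reopening the tail estimate at all.
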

\begin{proof}
    Observe that $\mathbb{E}[\hat{N}_{X,e}^m]$ can only take $K^2$ possible values depending on the clusters of the endpoints of $e$. 
    Hence, the result follows from \Cref{lem: GeneralMomentP}. 
\end{proof}
\subsubsection{Proof of Proposition {\ref{prop: ApproxUncorM}} on $H(M_X)$ being approximately uncorrelated}
\label{sec: propApproxUncorM}
Recall \Cref{def: ApproxUncorr} of an approximately uncorrelated random matrix with a variance profile. 
The definition consists of two statements: \cref{eq: defAUGenMom} and \cref{eq: defAUVar}.  
These two parts are established in \Cref{prop: MomentsPower1Sharper} and \Cref{prop: HigherMoments} respectively. 
\begin{proposition}
    \label{prop: HigherMoments}
    Assume that $X$ starts in equilibrium and let $R\in \mathbb{Z}_{\geq 1}$. 
    Then, for all $m\in \mathbb{Z}_{\geq 0}^R$ it holds that as $n$ tends to infinity    
    \begin{align}
        \max_{e_1,\ldots,e_R \in \vec{E}_n} \left\lvert \mathbb{E}\left[M_{X,e_{1}}^{m_1} \cdots M_{X,e_{R}}^{m_R}\right] - \mathbb{E}\left[M_{X,e_{1}}^{m_1}\right]\cdots\mathbb{E}\left[M_{X,e_{R}}^{m_R}\right] \right\rvert =   o_{m,R}(1).
    \end{align}
\end{proposition}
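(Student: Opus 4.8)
The plan is to extend the single-edge coupling argument of \Cref{prop: MomentSketch} to handle a product over $R$ edges $e_1,\ldots,e_R$. First I would reduce to bounded moments: by \Cref{cor: SingleHighMoment} each factor $\mathbb{E}[M_{X,e_i}^{m_i}]$ is $O_{m_i}(1)$, and similarly all mixed moments $\mathbb{E}[M_{X,e_1}^{m_1}\cdots M_{X,e_R}^{m_R}]$ are $O_{m,R}(1)$ (the latter because $M_{X,e}$ differs from $N_{X,e}$ by a deterministic $O(1)$ shift, and one can expand the product and apply Hölder together with \Cref{cor: SingleHighMoment}). Consequently it suffices to prove the decorrelation statement by induction on $R$, peeling off one edge at a time. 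The inductive step amounts to showing that
\begin{align}
    \max_{e_1,\ldots,e_R}\bigl\lvert \mathbb{E}[M_{X,e_1}^{m_1}\cdots M_{X,e_R}^{m_R}] - \mathbb{E}[M_{X,e_1}^{m_1}]\,\mathbb{E}[M_{X,e_2}^{m_2}\cdots M_{X,e_R}^{m_R}]\bigr\rvert = o_{m,R}(1),
\end{align}
after which one recenters the trailing factor (absorbing the $O(1)$ correction from $\mathbb{E}[M_{X,e_2}^{m_2}\cdots]$ versus $\prod_{i\ge 2}\mathbb{E}[M_{X,e_i}^{m_i}]$, which is $o_{m,R}(1)$ by the induction hypothesis) and iterates.

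The heart of the argument is a coupling that generalizes \Cref{prop: MomentSketch}. As there, I would split clusters so that $K$ is large enough that some cluster $\mathcal{V}_k$ contains no endpoint of any $e_1,\ldots,e_R$; this costs nothing since the statement is a maximum over all edge tuples. To estimate $\mathbb{E}[M_{X,e_1}^{m_1} M_{X,e_2}^{m_2}\cdots M_{X,e_R}^{m_R}]$ I would write $M_{X,e_1}^{m_1}$ as a signed combination of products $\prod_{a=1}^{m_1}\1_{E_{X,t_a}=e_1}$ over time-tuples $t_1<\cdots<t_{m_1}$ (using $N_{X,e_1}=\sum_t \1_{E_{X,t}=e_1}$ and the binomial expansion of $(N_{X,e_1}-\mathbb{E} N_{X,e_1})^{m_1}$), conditioning on the event that $e_1$ is traversed exactly at those times. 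Because $\mathbb{P}(E_{X,t_a}=e_1)=O(n^{-2})$ for each $a$ and there are $\ell^{m_1}=\Theta(n^{2m_1})$ time-tuples, the prefactor is $O(1)$, so it remains to show that conditioning on traversals of $e_1$ at a fixed time-set $S=\{t_1,\ldots,t_{m_1}\}$ perturbs $\mathbb{E}[M_{X,e_2}^{m_2}\cdots M_{X,e_R}^{m_R}]$ only by $o(1)$, uniformly over $S$. For this I would build a pair $(X,Y)$ where $Y$ is the chain conditioned on $\{E_{Y,t_a}=e_1:a\}$, glued to $X$ outside a finite union of excursions: for each $t_a$, let $T_a^{\pm}$ be the first times on either side of $t_a$ at which $X$ and $Y$ are simultaneously in $\mathcal{V}_k$, and let $Y$ agree with $X$ outside $\bigcup_a [L_a^-,L_a^+]$. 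Since $\mathcal{V}_k$ avoids all edge endpoints, $M_{Y,e_i}-M_{X,e_i}$ only counts traversals of $e_i$ inside $\bigcup_a[L_a^-,L_a^+]$, a random set whose total expected length is $O_{m_1}(1)$ by the same product-chain/geometric-tail estimate as in \Cref{prop: MomentSketch}. Then $\mathbb{E}[\prod_{i\ge2}M_{Y,e_i}^{m_i}]-\mathbb{E}[\prod_{i\ge2}M_{X,e_i}^{m_i}]$ is controlled by a telescoping bound: the difference of products is a sum of terms each containing at least one factor $(M_{Y,e_i}^{m_i}-M_{X,e_i}^{m_i})=(\Delta_{Y,e_i}^{m_i}-\Delta_{X,e_i}^{m_i})$ (difference of traversal counts on the short set, hence small in $L^p$ for every $p$ via the bound $\mathbb{P}(E_{\cdot,t}=e_i\mid\text{glue data})\le\alpha_{min}^{-1}n^{-1}$ together with the bound on the excursion length), times bounded factors $M_{\cdot,e_j}^{m_j}$ estimated by \Cref{cor: SingleHighMoment} and Hölder. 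This gives the claimed $o_{m,R}(1)$.

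The main obstacle I anticipate is the uniform-in-$S$ control of the error when $m_1\ge 2$ and the time-set $S$ has some of its points very close together, or close to the boundary $\{0,\ell\}$: the excursions $[L_a^-,L_a^+]$ associated with different $t_a$ may overlap, the conditioning event $\{E_{Y,t_a}=e_1\ \forall a\}$ interacts with the coupling, and one must be careful that the exponential tail estimate for the product Markov chain $(\sigma(X_{\cdot}),\sigma(Y_{\cdot}))$ still yields a bound on $\mathbb{E}[\text{total excursion length}]$ that is uniform over $S$ and over the edge tuple. I expect this to be handled by taking a union bound over the $m_1$ excursions (so the expected total length is at most $m_1(B^++B^-)$ regardless of how the $t_a$ are placed) and by treating the boundary truncation exactly as the $L^\pm:=\max\{0,\cdot\},\min\{\ell,\cdot\}$ device already does in \Cref{prop: MomentSketch}; but making the telescoping $L^p$ estimates genuinely uniform will be the part requiring the most care. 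The remaining bookkeeping — the binomial expansions, the recentering of trailing factors, the Hölder applications — is routine given \Cref{cor: SingleHighMoment}.
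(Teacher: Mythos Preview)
Your approach is correct but takes a genuinely different route from the paper. Both proofs proceed by induction on $R$ and both exploit a cluster $\mathcal{V}_k$ avoiding all endpoints, but the couplings differ. You expand $M_{X,e_1}^{m_1}$ in terms of indicators $\prod_a \1_{E_{X,t_a}=e_1}$, sum over time-tuples $S$, and for each $S$ build a multi-point version of the \Cref{prop: MomentSketch} coupling by gluing the conditioned chain $Y$ to the unconditioned chain $X$ outside short excursions around every $t_a\in S$; the inductive step then follows from a telescoping $L^p$/H\"older argument, and the obstacle you flag (uniformity over $S$, overlapping excursions) is real but surmountable using the exponential tails of the cluster product chain. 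The paper instead constructs a single global coupling: it breaks the whole sample path into independent fragments delimited by returns to $\mathcal{V}_k$, and sets the $i$th fragment of $Y$ equal to that of $X$ unless either $X^i$ or an independent copy $Z^i$ traverses $e_1$, in which case $Y^i:=Z^i$. The point of this construction is that it makes $X$ \emph{exactly independent} of $M_{Y,e_1}$, so the factorization $\mathbb{E}[M_{Y,e_1}^{m_1}\prod_{j\ge 2}M_{X,e_j}^{m_j-m_j'}]=\mathbb{E}[M_{Y,e_1}^{m_1}]\,\mathbb{E}[\prod_{j\ge 2}M_{X,e_j}^{m_j-m_j'}]$ holds on the nose; one then only has to show $\mathbb{E}[\prod_j\Delta_j^{2m_j'}]=o(1)$, which reduces to bounding $\mathbb{E}[(\sum_i\1_{\neg\mathcal{M}_i}N_{X^i,e_j})^q]$ via independence of the fragments. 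What the paper's route buys is that it sidesteps the sum over time-tuples and the uniform-in-$S$ issue entirely, at the cost of a less familiar fragment-and-merge construction; what your route buys is that it is a direct amplification of the single-edge argument and requires no new coupling idea beyond iterating \Cref{prop: MomentSketch}.
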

\begin{proof}
    We proceed by induction on $R$. The base-case $R=1$ is trivial. Now assume that $R>1$ and that the claim is known to hold for any product with strictly less than $R$ edges $e_{j}$.

    The proof is subdivided in three main steps. 
    First, we construct a pair of Markov chains $X,Y$ which are equal most of the time but are sometimes allowed to diverge. 
    The goal of this process is to ensure two properties: (i) It holds that $M_{X,e_j}\approx M_{Y,e_j}$ for every $j \in \{ 2,\ldots,R\}$. (ii) It holds that $X$ is independent of $M_{Y,e_1}$.
    The approximation from the first item may be used to show that  
    \begin{align}
        \mathbb{E}[M_{Y,e_1}^{m_1} M_{Y,e_2}^{m_2}\cdots M_{Y,e_R}^{m_R}] \approx \mathbb{E}[M_{Y,e_1}^{m_1} M_{X,e_2}^{m_2}\cdots M_{X,e_R}^{m_R}].
        \label{eq: sk1}    
    \end{align}
    The independence from the second item allows us to factorize 
    \begin{align}
        \mathbb{E}[M_{Y,e_1}^{m_1} M_{X,e_2}^{m_2}\cdots M_{X,e_R}^{m_R}] = \mathbb{E}[M_{Y,e_1}^{m_1}]\mathbb{E}[M_{X,e_2}^{m_2}\cdots M_{X,e_R}^{m_R}].
        \label{eq: sk2}    
    \end{align}
    Combined, \cref{eq: sk1} and \cref{eq: sk2} yield that $\mathbb{E}[M_{Y,e_1}^{m_1} \cdots M_{Y,e_R}^{m_R}]$ equals $\mathbb{E}[M_{Y,e_1}^{m_1}]\mathbb{E}[M_{X,e_2}^{m_2}\allowbreak \cdots M_{X,e_R}^{m_R}]$ up to some error term. 
    The induction hypothesis then yields the desired result provided we show that the error term is small. 
    A bound on the size of the error terms is established in the third step.\\

    Some preliminary reductions are applicable.
    Firstly, precisely as in Part \ref{prt: ReductionK5} of the proof of \Cref{prop: MomentSketch}, it can be assumed that $K\geq 2R + 1$ by splitting a cluster into pieces of asymptotically equal proportions.
    Further, the value of $\mathbb{E}[M_{X,e_{1}}^{m_1} \cdots M_{X,e_{R}}^{m_R}] - \mathbb{E}[M_{X,e_{1}}^{m_1}]\cdots\mathbb{E}[M_{X,e_{R}}^{m_R}]$ can only depend on the isomorphism type of the labeled directed subgraph $G$ induced by $\{e_1,\ldots,e_R\}$ with vertex-labels given by the clusters and edge-labels by the index $j$ of $e_j$. 
    As $n$ tends to infinity the number of possible isomorphism types for $G$ remains bounded. 
    Hence, we can fix some isomorphism type for $G$. 
    Informally, this allows us to pretend that the edges $e_1,\ldots, e_R\in \vec{E}_n$ stay fixed as $n$ tends to infinity.
    Due to the induction hypothesis it may also be assumed that the $e_j$ are distinct. 
    \proofpart{Construction of Markov chains $(X,Y)$}
 
    \noindent
    Recall that it was ensured that $K\geq 2R+ 1$.
    Hence, there exists some $k \in \{1,\ldots,K \}$ such that $\mathcal{V}_k$ does not contain any endpoint of $e_j$ for $j=1,\ldots,R$.

    \begin{figure}[tb]
        \centering
        \includegraphics[width = 0.7\textwidth]{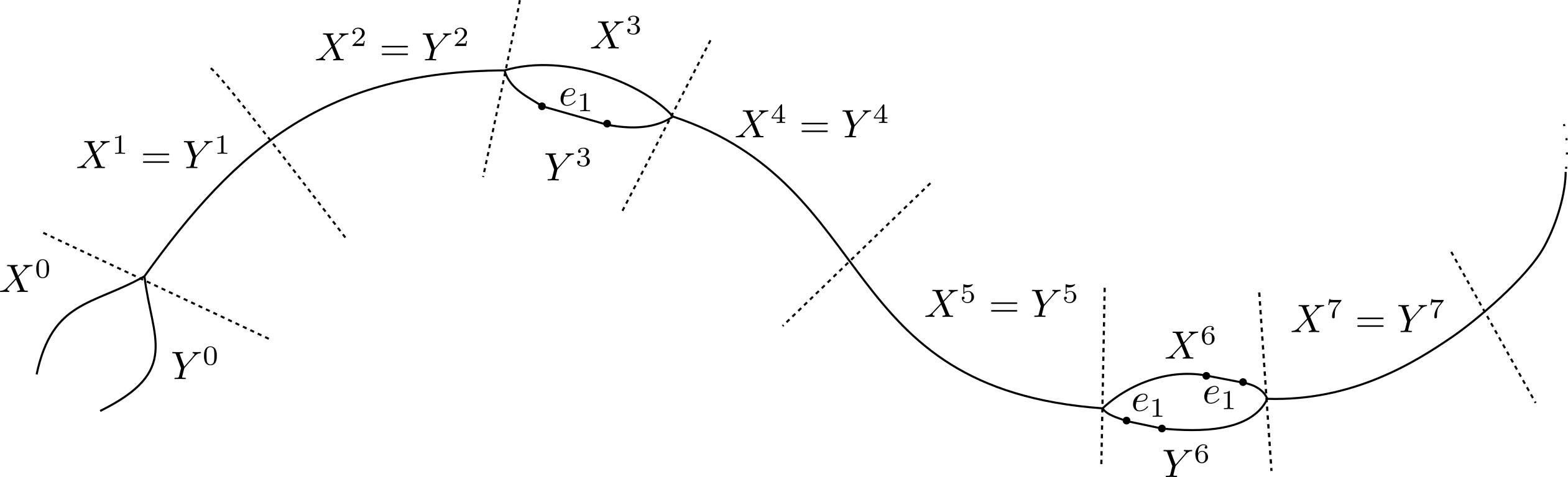}
        \caption{Visualization of the merging process in the construction of $X$ and $Y$ during the proof of \Cref{prop: HigherMoments}.  
        The fragments $X^i$ and $Y^i$ are allowed to diverge whenever either one uses $e_1$.  
        This ensures that all information about $Y$ using $e_1$ is erased from $X$.
        Otherwise, the fragments are merged by taking $X^i = Y^i$. 
        This ensures that the two chains are often equal because using $e_1$ is a rare event. 
        The cluster structure is exploited to ensure that the endpoints of the short fragments $X^i$ and $Y^i$ can be glued after diverging. 
        } 
        \label{fig: Squigle}
    \end{figure}
    
    Use the following procedure to construct a triple of sample paths $(\widetilde{X},\widetilde{Y},\widetilde{Z})$ with random length $\geq \ell+1$. 
    These will be trimmed to paths $X,Y$ and $Z$ of length exactly equal to $\ell+1$ afterwards. 
    See \Cref{fig: Squigle} for a visualization of the construction. 
    \begin{enumerate}[label = \rm{(\roman*)}]
        \item
        Sample two independent infinite paths $(\widetilde{X}_t^{0})_{t=0}^{\infty}$ and $(\widetilde{Z}_t^{0})_{t=0}^\infty$ from the block Markov chain starting in equilibrium. 
        Due to the assumption that the Markov chain associated with $p$ is irreducible and acyclic it holds that $T_0:= \inf\{t\geq 1: \widetilde{X}^{0}_t\in \mathcal{V}_k ,\widetilde{Z}^{0}_t \in \mathcal{V}_k\}$ is finite with probability one.\\ 

        \noindent
        Set $(\widetilde{X}_t)_{t=0}^{T_0-1} := (\widetilde{X}^{0}_{t})_{t=0}^{T_0-1}$ and  $(\widetilde{Y}_t)_{t=0}^{T_0-1} = (\widetilde{Z}_t)_{t=0}^{T_0-1} := (\widetilde{Z}_t^0)_{t=0}^{T_0-1}$. 
        \item For $i=1,\ldots,\ell$ extend $\widetilde{X},\widetilde{Y}$ and $\widetilde{Z}$ by using the following procedure.
        \begin{enumerate}
            \item Sample two independent infinite paths $(\widetilde{X}^{i}_t)_{t=0}^{\infty}$ and $(\widetilde{Z}^{i}_t)_{t=0}^{\infty}$ from the block Markov chain with $\widetilde{X}^{i}_0 = \widetilde{Z}^{i}_0$ chosen uniformly at random in $\mathcal{V}_k$. 
            \item Let $T_i:= \inf\{t\in \mathbb{Z}_{\geq 1}: \widetilde{X}^{i}_{t}\in \mathcal{V}_k, \widetilde{Z}^{i}_{t} \in \mathcal{V}_k\}$. 
            Due to the assumption that the Markov chain associated with $p$ is irreducible and acyclic it will hold that $T_i$ is finite with probability one. 
            \item \label{item: iii} If $e_1$ was traversed by $(\widetilde{X}^{i}_t)_{t=0}^{T_i-1}$ or $(\widetilde{Z}^{i}_t)_{t=0}^{T_i-1}$ then let $(\widetilde{Y}^{i}_t)_{t=0}^{T_i-1} := (\widetilde{Z}^{i}_t)_{t=0}^{T_i-1}$. 
            Else, let $(\widetilde{Y}^{i}_t)_{t=0}^{T_i-1} := (\widetilde{X}^{i}_t)_{t=0}^{T_i-1}$.
            \item \label{item: iv} Append $(\widetilde{X}^{i}_t)_{t=0}^{T_i-1}$, $(\widetilde{Y}^{i}_{t})_{t=0}^{T_i-1}$ and $(\widetilde{Z}^i_t)_{t=0}^{T_{i}-1}$ to the previously constructed parts of $\widetilde{X},\widetilde{Y}$ and $\widetilde{Z}$ respectively. This is to say that we define $\widetilde{X}_{t+\sum_{j =0}^{i-1} T_j} := \widetilde{X}^{i}_t$, $\widetilde{Y}_{t+\sum_{j =0}^{i-1} T_j} := \widetilde{Y}^{i}_t$ and $\widetilde{Z}_{t+\sum_{j =0}^{i-1} T_j} := \widetilde{Z}^{i}_t$ for $t=0,\ldots,T_i-1$. 
        \end{enumerate}
    \end{enumerate}
    
    The sampled paths $X^i := (\widetilde{X}^{i}_t)_{t=0}^{T_i-1}$, $Y^i := (\widetilde{Y}^{i}_t)_{t=0}^{T_i-1}$ and $Z^i := (\widetilde{Z}^{i}_t)_{t=0}^{T_i-1}$ used in the construction of $\widetilde{X},\widetilde{Y}$ and $\widetilde{Z}$ will be called fragments. 
    For any $i = 1,\ldots,\ell$ the $i$th fragments are said to be merged if $(\widetilde{X}_t^i)_{t=0}^{T_i-1}$ and $(\widetilde{Z}_t^i)_{t=0}^{T_i-1}$ did not traverse $e_1$. 
    Denote $X := (\widetilde{X}_t)_{t=0}^\ell, Y := (\widetilde{Y}_t)_{t=0}^\ell$ and $Z :=(\widetilde{Z}_t)_{t=0}^\ell$  for the truncations to length $\ell+1$. 
    Note that these are block Markov chains from the specified model. 

    We further claim that $X$ is independent of $M_{Y,e_1}$.
    Let us remark that this is due to the special role of $e_1$ in the above construction; it will typically not be true that $X$ is independent of $M_{Y,e'}$ for any other edge $e' \neq e_1$. 
    Indeed, let $\widetilde{W}$ be the path found from $\widetilde{Z}$ by replacing $\widetilde{Z}_{\sum_{j=0}^i T_i}$ with a uniformly random node in $\mathcal{V}_k$ for every $i=0,\ldots,\ell-1$.
    Then $X$ is independent of $W:= (\widetilde{W}_t)_{t=0}^\ell$.
    In particular $X$ will be independent of $M_{W,e_1}$ since this is a function of $W$. 
    Now note that $M_{Y,e_1}$ is equal to $M_{W,e_1}$ by definition of $Y$ and the fact that $\mathcal{V}_k$ was chosen not to contain any of the endpoints of $e_1$. 
    This establishes that $X$ is independent of $M_{Y,e_1}$ as desired. 
    \proofpart{Approximate factorization of $\mathbb{E}[M_{Y,e_1}^{m_1}\cdots M_{Y,e_R}^{m_R}]$}

    \noindent
    For any $j=2,\ldots,R$ define $\Delta_j$ to be the difference between the number of times $e_j$ was used in $Y$ and $X$, that is, 
    \begin{align}
        \Delta_j := \hat{N}_{Y,e_j} - \hat{N}_{X,e_j}.
    \end{align} 
    Observe that $M_{Y,e_j} = M_{X,e_j} + \Delta_j$ for every $j\geq 2$ since $\mathbb{E}[\hat{N}_{Y,e_j}] = \mathbb{E}[\hat{N}_{X,e_j}]$; after all $X$ and $Y$ have the same distribution. 
    Substitute the binomial expansion for every factor $M_{Y,e_j}^{m_{j}} = (M_{X,e_j} + \Delta_j)^{m_j}$ in $M_{Y,e_2}^{m_2}\cdots M_{Y,e_R}^{m_R}$ and use the fact that $M_{Y,e_1}$ is independent of the $M_{X,e_j}$ with $j\geq 2$ to find that  
    \begin{align}
        \mathbb{E}[M_{Y,e_1}^{m_1} M_{Y,e_2}^{m_2} \cdots M_{Y,e_R}^{m_R}] &- \mathbb{E}[M_{Y,e_1}^{m_1}] \mathbb{E}[M_{X,e_2}^{m_2}\cdots M_{X,e_R}^{m_R}] \\ 
        &= \sum_{0\leq m' \leq m}  c_{m'} \mathbb{E}\bigg[ M_{Y,e_1}^{m_1} \prod_{j=2}^R \Delta_{j}^{m_j'} \prod_{j=2}^R M_{X,e_j}^{m_j - m_j'}\bigg]\nonumber
    \end{align}    
    where the summation runs over vectors of integers of length $R$ and the $c_{m'}$ are absolute constants with $c_{m'}= 0$ whenever $m_j'=0$ for all $j\geq 2$. 
    Note that $M_{X,e_2}^{m_2}\cdots M_{X,e_R}^{m_R}$ is a product with $R-1$ factors.  
    It follows that the induction hypothesis is applicable to $\mathbb{E}[M_{X,e_2}^{m_2}\cdots M_{X,e_R}^{m_R}]$ so that it remains to show that $\mathbb{E}[ M_{Y,e_1}^{m_1} \prod_{j=2}^R \Delta_{j}^{m_j'} \prod_{j=2}^R M_{X,e_j}^{m_j - m_j'}] =o_{m,R}(1)$ whenever $m_j'\geq 1$ for some $j\geq 2$. 

    By the Cauchy--Schwarz inequality it holds that 
    \begin{align}
        \bigg\vert \mathbb{E}\bigg[ M_{Y,e_1}^{m_1}\prod_{j=2}^R \Delta_{j}^{m_j'}  \prod_{j=2}^R M_{X,e_j}^{m_j - m_j'}\bigg] \bigg\vert
        \leq
        \sqrt{
        \mathbb{E}\bigg[\prod_{j=2}^R \Delta_j^{2m_j'}\bigg] \mathbb{E}\bigg[M_{Y,e_1}^{2m_1}\prod_{j=2}^R M_{X,e_j}^{2(m_j - m_j')}\bigg]}.
        \label{eq: CauchySchwarz}
    \end{align}
    Due to the independence of $X$ on $M_{Y,e_1}$ we can write 
    \begin{equation}
        \mathbb{E}\bigg[M_{Y,e_1}^{2m_1}\prod_{j=2}^R M_{X,e_j}^{2(m_j - m_j')}\bigg] = \mathbb{E}\bigg[M_{Y,e_1}^{2m_1}\bigg] \mathbb{E}\bigg[\prod_{j=2}^R M_{X,e_j}^{2(m_j - m_j')}\bigg]     
    \end{equation}
    which is $O_{m}(1)$ by \Cref{cor: SingleHighMoment} and the induction hypothesis.
    It remains to show that $\mathbb{E}[\prod_{j=2}^R \Delta_j^{2m_j'}]=o_{m'}(1)$. 

    Define $\mathcal{M}_{i}$ to be the event where $X^i$ and $Y^i$ were merged and denote $\neg \mathcal{M}_i$ for the complement of this event. 
    For any fixed $j\geq 2$ let $\Delta_{X,j} := \hat{N}_{X^0,e_j} + \sum_{i=1}^\ell \1_{\neg \mathcal{M}_i}\hat{N}_{X^i,e_j}$ denote the number of times when $\widetilde{X}$ used edge $e_j$ in a fragment where $X$ and $Y$ were not merged and define $\Delta_{Y,j}$ similarly. 
    Then
    \begin{align}
        \lvert \Delta_j \rvert &= \lvert \hat{N}_{Y,e_j} - \hat{N}_{X,e_j} \rvert
        \leq \Delta_{Y,j} + \Delta_{X,j}.    
    \end{align}
    Substitute the definitions of $\Delta_{X,j}$ and $\Delta_{Y,j}$ in the resulting bound $\prod_{j=2}^R\Delta_{j}^{2m_j'} \leq \prod_{j=2}^R(\Delta_{Y,j} + \Delta_{X,j})^{2m_j'}$ and expand the monomial expression.
    Then, the Cauchy--Schwarz inequality reduces us to the statement that $\mathbb{E}[\hat{N}_{X^0, e_j}^q] = o_{q}(1)$ and $\mathbb{E}[( \sum_{i=1}^\ell\allowbreak \1_{\neg\mathcal{M}_i}\hat{N}_{X^i,e_j})^q] = o_{q}(1)$ for any $q\in \mathbb{Z}_{\geq 1}$ where it was used that $Y$ has the same distribution as $X$. 
    These claims are established in the next part of the proof. In fact, we will establish the stronger claims that $\mathbb{E}[\hat{N}_{X^i,e_j}^{q}]=O_{q}(n^{-2})$ for any $(i,j)\in \{ 0,1\}\times \{1,\ldots,R\}$ and $\mathbb{E}[( \sum_{i=1}^\ell \1_{\neg \mathcal{M}_i}\hat{N}_{X^i,e_j})^q] = O_q(n^{-1})$ for any $i\geq 1$.

    \proofpart{Bounding $\mathbb{E}[N^{q}_{X^i,e_j}]$ and $\mathbb{E}[( \sum_{i=1}^\ell \1_{\neg \mathcal{M}_i}\hat{N}_{X^i,e_j})^q]$}
     \subproofpart{$\mathbb{E}[N^{q}_{X^i,e_j}] = O_{q}(n^{-2})$\label{prt: Nq}}

     \noindent
     Fix some $(i,j) \in \{0,1\}\times \{1,\ldots,R\}$ and $q\in \mathbb{Z}_{\geq 1}$.
     By the law of total expectation
     \begin{align}
        \mathbb{E}[\hat{N}_{X^i,e_j}^{q}] = \sum_{t =1}^\infty \mathbb{P}(T_i = t) \mathbb{E}[\hat{N}_{X^i,e_j}^{q}\mid T_i =t] .
        \label{eq: NiTotalExpectationA}
     \end{align}
     Use $\hat{N}_{X^i, e_j} = \sum_{t=1}^{T_i} \1_{E_{X^i,t} = e_j}$ to derive the following bound
     \begin{align}
        \mathbb{E}[\hat{N}_{X^i,e_j}^{q}\mid T_i = t ] &=  \sum_{t_1,\ldots,t_{q} = 1}^t \mathbb{P}(E_{X^i,t_1}= e_j,\ldots, E_{X^i,t_{q}}=e_j\mid T_i = t )\\ 
        &\leq  \sum_{t_1,\ldots,t_{q}=1}^{t}  \mathbb{P}( E_{X^i,t_1} = e_j \mid T_i = t).
        \label{eq: NiTotalExpectation}
     \end{align}
     For any edge $e$ whose starting and ending point are in the same clusters as the starting and ending point of $e_j$ it holds that $\mathbb{P}(E_{X^i,t_1} = e \mid T_i = t) =\mathbb{P}(E_{X^i,t_1} = e_j \mid T_i = t)$. 
     There are at least $\alpha_{min}^2 n^2$ such edges so it follows that $\mathbb{P}( E_{X^i,t_1} = e_j \mid T_i = t) \leq \alpha_{min}^{-2}n^{-2}$. 

     Equation \cref{eq: NiTotalExpectation} now yields that
     \begin{align}
        \mathbb{E}[\hat{N}_{X^i,e_j}^{q}\mid T_i =t] &\leq \alpha^{-2}_{min}n^{-2}t^{q}. \label{eq: TauBoundConditionalExNi}
     \end{align}
     By \cref{eq: NiTotalExpectationA} and \cref{eq: TauBoundConditionalExNi} it follows that 
    \begin{align}
        \mathbb{E}[\hat{N}_{X^i,e_j}^{q}] & \leq \alpha_{min}^{-2} n^{-2} \mathbb{E}[T_i^{q}].
    \end{align}
    Here, $\mathbb{E}[T_i^{q}]$ is a finite constant which does not depend on $n$.
    Indeed, consider the product chain $\Sigma_{(\widetilde{X}^i, \widetilde{Z}^i)} := (\sigma(\widetilde{X}_t^i), \sigma(\widetilde{Z}_t^i))_{t=0}^{\infty}$ on the space of clusters $\{1,\ldots,K \}\times \{1,\ldots,K\}$.
    Then $T_i$ is the first strictly positive time $\Sigma_{(\widetilde{X}^i,\widetilde{Z}^i)}$ is in $(k,k)$.
    In particular, $\mathbb{E}[T_i^{q}]$ is independent of $n$. 
    The fact that $\mathbb{E}[T_i^{q}]$ is finite is immediate from the fact that $\mathbb{P}(T_i >t)$ shows exponential decay in $t$ since the Markov chain associated with $p$ is assumed to be irreducible and acyclic. 
    This concludes the proof of the statement that $\mathbb{E}[N^{q}_{X^i,e_j}] = O_{q}(n^{-2})$.
    \subproofpart{ $\mathbb{E}[( \sum_{i=1}^\ell \1_{\neg \mathcal{M}_i}\hat{N}_{X^i,e_j})^q] = O_{q}(n^{-1})$}

    \noindent
    Observe that for any $q\in \mathbb{Z}_{\geq 1}$ 
    \begin{align}
        \mathbb{E}\bigg[\bigg(\sum_{i=1}^\ell \1_{\neg \mathcal{M}_i}\hat{N}_{X^i,e_j}\bigg)^q\bigg] &= \sum_{i_1,\ldots,i_q = 1}^\ell \mathbb{E}\bigg[ \prod_{l=1}^{q} \1_{\neg \mathcal{M}_{i_l}} \hat{N}_{X^{i_l},e_j}\bigg]\\ 
        &= \sum_{d = 1}^{q} \sum_{\# \{i_1,\ldots,i_q \} = d} \mathbb{E}\bigg[ \prod_{l=1}^{q} \1_{\neg \mathcal{M}_{i_l}} \hat{N}_{X^{i_l},e_j}\bigg] 
        \label{eq: SumReduction}
    \end{align}
    where the second sum in \cref{eq: SumReduction} runs over all values of $i_1,\ldots,i_q \in \{1,\ldots,\ell\}$ with $\#\{i_1,\ldots,i_q \} = d$.  

    Note that the random variables $\1_{\neg\mathcal{M}_{i}}\hat{N}_{X^{i},e_j}$ with $i\geq 1$ are independent and identically distributed due to the fact that $\mathcal{V}_k$ was chosen not to contain any endpoint of $e_1$ or $e_j$. 
    Correspondingly, a term with $d$ distinct $i_{l}$ values in the right-hand side of \cref{eq: SumReduction} may be factorized as $\prod_{l=1}^d \mathbb{E}[ \1_{\neg \mathcal{M}_1}\hat{N}_{X^1,e_j}^{q_l}]$ for some $q_1,\ldots,q_l\in \{1,\ldots,q\}$.
    Since $\hat{N}_{X^1,e_j}\geq 1$ it holds that $\mathbb{E}[ \1_{\neg \mathcal{M}_1}\hat{N}_{X^1,e_j}^{q_l}] \leq \mathbb{E}[ \1_{\neg \mathcal{M}_1}\hat{N}_{X^1,e_j}^{q}]$. 
    Consequently, we may bound \cref{eq: SumReduction} as 
    \begin{align}
        \mathbb{E}\bigg[\bigg(\sum_{i=1}^\ell \1_{\neg \mathcal{M}_i}\hat{N}_{X^i,e_j}\bigg)^q\bigg] \leq \sum_{d = 1}^{q} c_{q,d} \ell^d  \mathbb{E}\bigg[\1_{\neg \mathcal{M}_{1}} \hat{N}_{X^{1},e_j}^q\bigg]^d  \label{eq: SumRed2}
    \end{align}
    for some absolute constants $c_{q,d}\in\mathbb{R}_{>0}$.
    Since $\ell = \Theta(n^2)$ it now suffices to show that $\mathbb{E}[\1_{\neg \mathcal{M}_{1}} \hat{N}_{X^{1},e_j}^q] = O_q(n^{-3})$. 
    This will be shown by an argument resembling Part \ref{prt: Nq}.

    By the law of total expectation 
    \begin{align}
        \mathbb{E}&[\1_{\neg\mathcal{M}_1}\hat{N}_{X^1,e_j}^q]\label{eq: TotalExpec}\\ &= \sum_{t = 1}^\infty \mathbb{P}(T_1 = t) \mathbb{P}(\neg \mathcal{M}_1\mid T_1 = t)\mathbb{E}[\hat{N}_{X^1,e_j}^q \mid T_1 = t,\neg \mathcal{M}_1].
        \nonumber
    \end{align}
    The union bound implies that $\mathbb{P}(\mathcal{A}\mid \cup_i\mathcal{B}_i) \leq \sum_i \mathbb{P}(\mathcal{A}\mid \mathcal{B}_i)$ for any countable collection of events $\mathcal{A},\mathcal{B}_i$ with $\mathbb{P}(\mathcal{B}_i)\neq 0$. 
    Observe that 
    \begin{align}
        \neg\mathcal{M}_1 = \cup_{t_0\in \mathbb{Z}_{\geq 1}}\{E_{X^1, t_0}  = e_1 \text{ and } t_0 \leq T_1\}\cup \{E_{Z^1,t_0} = e_1 \text{ and } t_0 \leq T_1\}.    
    \end{align}
    Therefore, since $\hat{N}_{X^1,e_j}^q = \sum_{t_1 = 1}^{T_1} \1_{E_{X^1, t_1} = e_j}$ 
    \begin{align}
        \mathbb{E}&[\hat{N}_{X^1,e_j}^q \mid T_1= t, \neg \mathcal{M}_1]\nonumber \\ 
        &= \sum_{t_1, \ldots,t_q=1}^t \mathbb{P}(E_{X^1,t_1} = e_j, \ldots, E_{X^1, t_q} = e_j\mid T_1 = t, \neg \mathcal{M}_1)\\ 
         &\leq \sum_{t_1, \ldots,t_q=1}^t \big(\sum_{t_0}\mathbb{P}(E_{X^1,t_1} = e_j\mid T_1 = t, E_{X^1,t_0}= e_1) \label{eq: UnionBoundN1}\\ 
         &\hphantom{\leq \sum_{t_1, \ldots,t_q=1}^t} \qquad + \sum_{t_0}\mathbb{P}(E_{X^1,t_1} = e_j\mid T_1 = t, E_{Z^1,t_0}= e_1)\big)\label{eq: sumZ}
    \end{align}
    where the second sum in \cref{eq: UnionBoundN1} runs over all $t_0\in \{1,\ldots,t \}$ with $\mathbb{P}(T_1 = t,E_{X^1,t_0} = e_1 ) \neq 0$ and the sum in \cref{eq: sumZ} runs over all $t_0 \in \{1,\ldots,t \}$ with $\mathbb{P}(T_1 = t,E_{Z^1,t_0} = e_1)\neq 0$. 
    First consider the terms with $E_{Z^1,t_0} = e_1$. 
    Note that there are at least $\alpha_{min}^{2}n^2$ edges $e$ whose starting point and ending point have the same clusters as the starting point and ending point of $e_j$ respectively. 
    Conditional on $T_1 = t$ and $E_{Z^1,t_0}= e_1$ any such edge $e$ is equally likely to be traversed at time $t_1$ by $X^1$.
    It follows that $\mathbb{P}(E_{X^1,t_1} = e_j\mid T_1 = t, E_{Z^1,t_0}= e_1) \leq \alpha_{min}^{-2}n^{-2}$.  

    Let us now consider the terms with $E_{X^1,t_0}= e_1$.  
    When $\vert t_0 -t_1 \vert >1$ the foregoing argument applies word--for--word and yields that $\mathbb{P}(E_{X^1,t_1} = e_j\mid T_1 = t, E_{X^1,t_0}= e_1) \leq \alpha_{min}^{-2}n^{-2}$. 
    The cases $t_0 = t_1-1$ and $t_0 = t_1 +1$ require a modification. 

    When $t_0 = t_1 - 1$ note that there at least $\alpha_{min}n$ edges $e$ whose ending point is in the same cluster as the ending point of $e_j$ and whose starting point is equal to the starting point of $e_j$. 
    Conditional on $T_1 = t$ and $E_{X^1,t_0}= e_1$ any such edge $e$ is equally likely to be traversed at time $t_1$. 
    Hence, $\mathbb{P}(E_{X^1,t_1} = e_j \mid T_1 = t, E_{X^1,t_0} = e_1) \leq  \alpha_{min}^{-1} n^{-1}$ for all $t_0 = t_1 - 1$.
    
    Similarly, when $t_0 =t_1 + 1$ note that there are at least $\alpha_{min}n$ edges $e$ whose starting point is in the same cluster as the starting point of $e_j$ and whose ending point is equal to the ending point of $e_j$.
    Conditional on $T_1 = t$ and $E_{X^1,t_0}= e_1$ any such edge $e$ is equally likely to be traversed at time $t_1$. Hence,  $\mathbb{P}(E_{X^1,t_1} = e_j \mid T_1 = t, E_{X^1,t_0} = e_1) \leq  \alpha_{min}^{-1} n^{-1}$ for all $t_0 = t_1 + 1$.

    Using these bounds in \cref{eq: UnionBoundN1} yields that 
    \begin{align}
        \mathbb{E}[\hat{N}_{X^1,e_j}^q \mid T_1= t, \neg \mathcal{M}_1 ]&\leq 2\alpha_{min}^{-2} n^{-1}t^{q+1}. \label{eq: cn2Ni}
    \end{align}
    Observe that $\mathbb{P}(\neg \mathcal{M}_1\mid T_1 = t) \leq \mathbb{E}[\hat{N}_{X^1,e_1} + \hat{N}_{Z^1,e_1}\mid T_1 = t]$.
    Note that $X^1$ and $Z^1$ follow the same distribution and apply \cref{eq: TauBoundConditionalExNi} with $q=1$ to deduce that 
    \begin{align}
        \mathbb{P}(\neg \mathcal{M}_1\mid T_1 = t)  \leq  2\mathbb{E}[\hat{N}_{X^1,e_1} \mid T_1 = t] \leq 2\alpha_{min}^{-2}n^{-2}t\label{eq: 2T1}
    \end{align} 
    Hence, \cref{eq: TotalExpec}, \cref{eq: cn2Ni} and \cref{eq: 2T1} yield that
    \begin{align}
        \mathbb{E}[\1_{\neg\mathcal{M}_1}\hat{N}_{X^1,e_j}^q] &\leq 4 \alpha_{min}^{-4} n^{-3}\mathbb{E}[T_1^{q+2}]  
    \end{align} 
    where, as in Part \ref{prt: Nq}, it holds that $\mathbb{E}[T_1^{q+2}]$ is a finite constant which does not depend on $n$. 
    Hence, $\mathbb{E}[\1_{\neg\mathcal{M}_1}\hat{N}_{X^1,e_j}^q] = \Theta_q(n^{-3})$ which may be combined with \cref{eq: SumRed2} and the fact that $\ell = \Theta(n^{-2})$ to deduce the desired result:
    \begin{align}
        \mathbb{E}\bigg[\bigg(\sum_{i=1}^\ell \1_{\neg \mathcal{M}_i}\hat{N}_{X^i,e_j}\bigg)^q\bigg] &= O_q(n^{-1}).
    \end{align}
    This concludes the proof. 
\end{proof} 
\begin{proposition}
    \label{prop: MomentsPower1Sharper}
    Assume that $X$ starts in equilibrium and let $0\leq r \leq R$ be positive integers. Then, for any $m \in \mathbb{Z}_{\geq 0}^{R}$ with $m_{i}=1$ for $i=1,\ldots,r$, it holds that as $n$ tends to infinity  
    \begin{align}
        \max_{\forall i\neq j:e_i\neq e_j}\left\lvert \mathbb{E}\left[ M_{X,e_{1}}^{m_1} \cdots M_{X,e_{R}}^{m_{R}}\right] \right\rvert = O_{m,R}(n^{-\lceil r/2 \rceil })
    \end{align}
    where the maximum runs over all sequences of distinct edges $e_1,\ldots,e_R\in \vec{E}_n$.   
\end{proposition}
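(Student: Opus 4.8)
The plan is to prove the bound by induction on $R$, peeling off one power-$1$ edge at a time with the gluing coupling from the proof of \Cref{prop: MomentSketch}, and feeding the resulting error terms back into the induction.

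Before the induction I would make the two reductions that recur throughout this paper. By subdividing a cluster into pieces of asymptotically equal size one may assume $K$ is as large as needed, so that there always exists a cluster $\mathcal{V}_k$ containing no endpoint of any $e_j$; and since $\mathbb{E}[M_{X,e_1}^{m_1}\cdots M_{X,e_R}^{m_R}]$ depends only on the isomorphism type of the vertex- and edge-labelled directed multigraph induced by $\{e_1,\ldots,e_R\}$, of which there are boundedly many, one may treat the edge configuration as fixed while $n\to\infty$. The base case $R=0$ is the empty product; more to the point, the case $r=0$ for any $R$ is immediate from \Cref{prop: HigherMoments} and \Cref{cor: SingleHighMoment}, the product factorising asymptotically into single-edge moments, each $O(1)$.

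For the inductive step, fix $r\ge 1$ and put $H:=\prod_{j=2}^R M_{X,e_j}^{m_j}$. As in \Cref{prop: MomentSketch}, using $M_{X,e_1}=N_{X,e_1}-\mathbb{E}[N_{X,e_1}]$ and $N_{X,e_1}=\sum_{t_1}\1_{E_{X,t_1}=e_1}$,
\[
\mathbb{E}[M_{X,e_1}H]=\sum_{t_1=1}^{\ell}\mathbb{P}(E_{X,t_1}=e_1)\bigl(\mathbb{E}[H(Y^{t_1})]-\mathbb{E}[H(X)]\bigr),
\]
where $Y^{t_1}$ is obtained by gluing a fresh copy of $e_1$ onto $X$ at time $t_1$, so that $X$ and $Y^{t_1}$ coincide off a window $[L^-,L^+]$ of expected length $O(1)$. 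As $\mathbb{P}(E_{X,t_1}=e_1)=O(n^{-2})$ and the sum has $\Theta(n^2)$ terms, it suffices to show $\mathbb{E}[H(Y^{t_1})]-\mathbb{E}[H(X)]=O(n^{-\lceil r/2\rceil})$ uniformly in $t_1$. Writing $\Delta_j:=N_{Y^{t_1},e_j}-N_{X,e_j}$, which is supported on $[L^-,L^+]$ since $\mathcal{V}_k$ avoids all endpoints of $e_j$, and inserting the binomial expansion $M_{Y^{t_1},e_j}^{m_j}=(M_{X,e_j}+\Delta_j)^{m_j}$, the difference becomes a bounded sum of terms $\mathbb{E}\bigl[\prod_{j\in S}\Delta_j^{m_j'}\prod_{j=2}^R M_{X,e_j}^{\,m_j-m_j'}\bigr]$ over nonempty $S\subseteq\{2,\ldots,R\}$. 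The two ingredients are: (i) a joint-moment bound $\mathbb{E}\bigl[\prod_{j\in S}\lvert\Delta_j\rvert^{q_j}\bigr]=O_q(n^{-\lvert S\rvert})$, following because forcing $Y^{t_1}$ to traverse an extra prescribed edge in the short window is rare --- only the two positions $t_1\pm1$ are ``cheap'' (probability $O(n^{-1})$, being forced adjacent to the glued copy of $e_1$), all others costing $O(n^{-2})$, which one proves by conditioning on $(L^-,L^+)$ and the cluster path exactly as in Part~3 of \Cref{prop: MomentSketch}; and (ii) for each term the surviving factor $\prod_{j=2}^R M_{X,e_j}^{\,m_j-m_j'}$ has strictly fewer than $R$ distinct edges and at least $(r-1)-\lvert S\cap\{2,\ldots,r\}\rvert$ power-$1$ factors, so the induction hypothesis bounds it by $O(n^{-\lceil((r-1)-\lvert S\cap\{2,\ldots,r\}\rvert)/2\rceil})$. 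Granting a decorrelation between the localised factor $\prod_{j\in S}\Delta_j^{m_j'}$ and the global factor $\prod_{j}M_{X,e_j}^{\,m_j-m_j'}$, each term is then $O(n^{-\lvert S\rvert})\cdot O(n^{-\lceil((r-1)-\lvert S\cap\{2,\ldots,r\}\rvert)/2\rceil})$, and an elementary parity check (using $\lvert S\rvert\ge\lvert S\cap\{2,\ldots,r\}\rvert$ and $\lvert S\rvert\ge 1$) shows the total exponent is always at least $\lceil r/2\rceil$; for the terms with $\lvert S\rvert$ large a crude Cauchy--Schwarz splitting already gives $O(n^{-\lvert S\rvert/2})=O(n^{-\lceil r/2\rceil})$.

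The main obstacle is the decorrelation needed in (ii): bounding $\mathbb{E}[\prod_{j\in S}\Delta_j^{m_j'}\prod_j M_{X,e_j}^{\,m_j-m_j'}]$ by essentially the product of $\mathbb{E}[\prod_{j\in S}\lvert\Delta_j\rvert^{m_j'}]$ and the inductive bound on the $M$-factor. Conditioning on the window $[L^-,L^+]$ and its endpoints $X_{L^-},X_{L^+}$ makes $\prod_{j\in S}\Delta_j^{m_j'}$ measurable and leaves only a conditional expectation of $\prod_j M_{X,e_j}^{\,m_j-m_j'}$ given a bounded amount of boundary data; but showing that this conditional expectation differs from the unconditional one by a term of the right (small) order is itself a statement of the same type as the one being proved, so in practice the induction hypothesis must be strengthened to a ``conditional'' version that also controls the effect of pinning down $O(1)$ vertices of the path. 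Carrying this through --- and organising the binomial expansion, the cheap-versus-expensive positions $t_1\pm1$, and the adjacency pattern of the $e_j$ to $e_1$ so that every surviving error term matches the correct instance of the strengthened hypothesis --- is the delicate, case-heavy part of the argument, in the spirit of the proof of \Cref{prop: HigherMoments}.
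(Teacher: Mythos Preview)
Your proposal is correct and follows essentially the same approach as the paper: the paper likewise peels off a power-$1$ edge via the gluing coupling, explicitly notes that Cauchy--Schwarz would destroy the power-$1$ structure, and replaces it by conditional independence together with a strengthened inductive hypothesis for chains conditioned to lie in $\mathcal{V}_k$ at $d$ prescribed times (the paper inducts on $r$ rather than $R$, and actually proves a sharper bound with an extra parameter $r'$ counting edges whose endpoints are isolated, recovering the proposition at $r'=0$). The one refinement worth flagging is that conditioning on $(L^-,L^+,X_{L^-},X_{L^+})$ does \emph{not} make $\prod_j\Delta_j^{m_j'}$ measurable as you state; the paper instead introduces the \emph{cut-out} chain $Y'$ obtained by deleting the window and works with the recentered $\mathring{M}_{Y',e_j}:=N_{Y',e_j}-\mathbb{E}[N_{Y',e_j}\mid L^-,L^+]$, so that the window contribution $\mathcal{E}_{\cdot,j}$ and the outside contribution $\mathring{M}_{Y',e_j}$ are genuinely conditionally independent given $(L^-,L^+)$, and the induction hypothesis is applied to $Y'$ (a chain of shorter length with one additional cluster-pinning constraint).
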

We will prove a better bound than \Cref{prop: MomentsPower1Sharper}.
Recall from \Cref{sec: Dependence} that \cite{fleermann2021almost} improved the convergence in probability of \cite{hochstattler2016semicircle} to convergence almost surely provided additional assumptions.
It turns out that the most immediate generalization of their results would be too restrictive to include block Markov chains because the correlations in \Cref{prop: MomentsPower1Sharper} do not decay sufficiently quickly.
  
The correlation between edges $e_i$ and $e_j$ is maximal when the ending point of $e_i$ is equal to the starting point of $e_j$. 
Correspondingly, \Cref{prop: MomentsPower1Sharper} can be improved when there are many edges whose ending point is not the starting point of some other edge. 
This improvement could be a point of departure to strengthen \Cref{thm: SingValN} and \Cref{thm: SingValP} to convergence almost surely.

For any positive integers $0\leq r' \leq R$ let $\vec{E}_{n,r'}^R$ denote the collection of sequences of distinct edges $e_1,\ldots, e_R \in \vec{E}_{n}$ such that for every $i\in \{1,\ldots,r' \}$ it holds that the ending point of $e_i$ is not the starting point of any edge $e_j$ with $j\in \{1,\ldots,R \}\setminus \{i \}$ and the starting point of $e_i$ is not the ending point of any edge $e_j$ with $j\in \{1,\ldots,R \}\setminus\{i \}$. 

\begin{figure}[tb]
    \centering
    \includegraphics[width= 0.6 \textwidth]{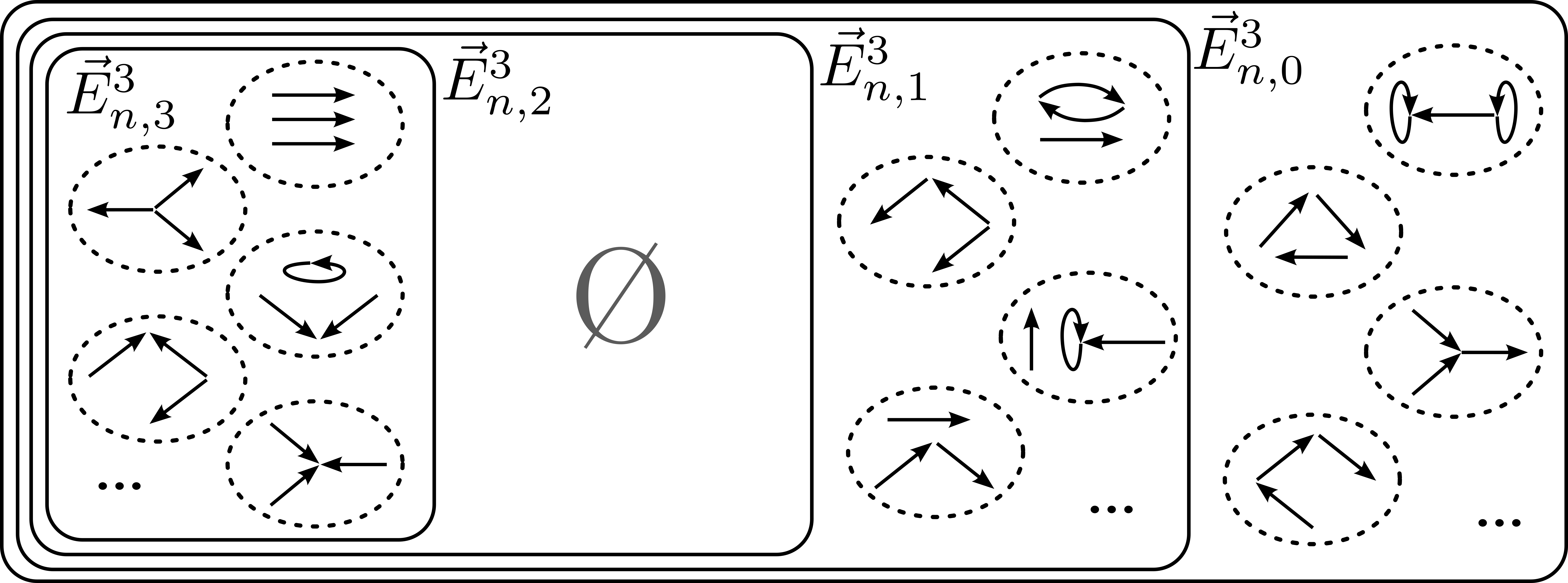}
    \caption{Visualization of some configurations of edges occurring in $\vec{E}_{n,r'}^R$ for varying values of $r'$ when $R = 3$. 
    The visualized configurations are not exhaustive. 
    Observe that $\vec{E}_{n,R}^R = \vec{E}_{n,R-1}^R \subseteq \vec{E}_{n,R-2}^R \subseteq\cdots \subseteq  \vec{E}_{n,0}^R$ as is immediate from the definition.    
    }
    \label{fig: Es}
\end{figure}

See \Cref{fig: Es} for an example with $R=3$.
The following proposition includes \Cref{prop: MomentsPower1Sharper} as the special case with $r' = 0$. 
\begin{proposition}
    \label{prop: PowerSharperSharper}
    Assume that $X$ starts in equilibrium and let $0\leq r' \leq r \leq R$ be positive integers. Then, for any $m \in \mathbb{Z}_{\geq 0}^{R}$ with $m_{i}=1$ for $i=1,\ldots,r$, it holds that as $n$ tends to infinity  
    \begin{align}
        \max_{(e_1,\ldots,e_R) \in \vec{E}_{n,r'}^R}\left\lvert \mathbb{E}\left[ M_{X,e_{1}}^{m_1} \cdots M_{X,e_{R}}^{m_{R}}\right] \right\rvert = O_{m,R}(n^{-\lceil r/2 \rceil - \lceil r'/2 \rceil}).
    \end{align}
\end{proposition}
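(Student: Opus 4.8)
The plan is to prove the proposition by induction on $R$, using an edge-gluing coupling as the engine, in the spirit of the proof of \Cref{prop: MomentSketch}, and keeping careful track of how the available factors of $n^{-1}$ are allocated. As in the proof of \Cref{prop: HigherMoments} I would begin with two standing reductions: first, $\mathbb{E}[M_{X,e_1}^{m_1}\cdots M_{X,e_R}^{m_R}]$ depends only on the isomorphism type of the cluster-and-index-labelled directed multigraph induced by $(e_1,\ldots,e_R)$ together with the record of which of the first $r'$ edges is isolated, and there are boundedly many such types, so one may fix one and pretend the $e_j$ stay put as $n\to\infty$; second, by splitting a cluster into pieces of asymptotically equal size one may assume $K\geq 2R+1$, which guarantees a cluster $\mathcal{V}_k$ containing no endpoint of any $e_j$.

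The base of the induction and the case $r=0$ are routine: for $R\leq 1$ one uses $\mathbb{E}[M_{X,e_1}]=0$ or \Cref{cor: SingleHighMoment}, and for $r=0$ (hence $r'=0$) Hölder's inequality together with \Cref{cor: SingleHighMoment} gives $|\mathbb{E}[\prod_j M_{X,e_j}^{m_j}]|=O_{m,R}(1)$, matching $\lceil 0/2\rceil+\lceil 0/2\rceil=0$. For the inductive step with $r\geq 1$ I would peel off the power-one edge $e_1$, chosen among the isolated edges whenever $r'\geq 1$. Writing $Q:=\prod_{j=2}^R M_{X,e_j}^{m_j}$ and $M_{X,e_1}=\sum_{t_1}(\1_{E_{X,t_1}=e_1}-\mathbb{P}(E_{X,t_1}=e_1))$ gives
\begin{align}
    \mathbb{E}\big[M_{X,e_1}^{m_1}\cdots M_{X,e_R}^{m_R}\big] = \sum_{t_1=1}^\ell \mathbb{P}(E_{X,t_1}=e_1)\big(\mathbb{E}[Q\mid E_{X,t_1}=e_1]-\mathbb{E}[Q]\big).
\end{align}
Since all edges sharing the cluster-endpoints of $e_1$ are equally likely at time $t_1$ and there are at least $\alpha_{min}^2 n^2$ of them, $\mathbb{P}(E_{X,t_1}=e_1)\leq\alpha_{min}^{-2}n^{-2}$, so with $\ell=\Theta(n^2)$ the prefactor is $\Theta(1)$ and it remains to bound $|\mathbb{E}[Q\mid E_{X,t_1}=e_1]-\mathbb{E}[Q]|$, uniformly in $t_1$, by $O_{m,R}(n^{-\lceil r/2\rceil-\lceil r'/2\rceil})$.

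For this I would reuse the coupling from the proof of \Cref{prop: MomentSketch}: construct $X$ and a copy $Y$ conditioned on $E_{Y,t_1}=e_1$, coupled so that they agree outside a window $[L^-,L^+]\ni t_1$ whose endpoints lie in $\mathcal{V}_k$ and with $\mathbb{E}[(L^+-L^-)^q]=O_q(1)$ for every $q$. Then $\mathbb{E}[Q\mid E_{X,t_1}=e_1]-\mathbb{E}[Q]=\mathbb{E}[Q_Y-Q_X]$ with $M_{Y,e_j}=M_{X,e_j}+\Delta_j$, where $\Delta_j=N_{Y,e_j}-N_{X,e_j}$ counts only window-traversals of $e_j$ because $\mathcal{V}_k$ avoids all endpoints. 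Expanding binomially cancels the $\Delta$-free term, leaving a bounded sum of terms $\mathbb{E}[\prod_{j\in S}\Delta_j^{m_j'}\prod_{j=2}^R M_{X,e_j}^{m_j-m_j'}]$ with $S\neq\emptyset$ and $m_j-m_j'=0$ for $j\in S\cap\{2,\ldots,r\}$. Each such term I would estimate by conditioning on $(L^-,L^+)$ and on $X$ outside the window: up to an event of probability $O(n^{-1})$ the factor $\prod_{j\geq 2}M_{X,e_j}^{m_j-m_j'}$ becomes a function of the conditioning, with all $L^q$-norms $O_{m,R}(1)$ by \Cref{cor: SingleHighMoment}, so one reduces to the conditional moments of the window-traversal counts. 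The crucial geometric observation is that isolation of $e_1$ forbids head-to-tail adjacency of $e_1$ with every other $e_j$ (its head is no edge's tail and its tail is no edge's head), so conditioning on $E_{Y,t_1}=e_1$ pins no endpoint relevant to traversing any $e_j$ within the window; hence every window-traversal of any $e_j$ is a free event of probability $O(n^{-2})$ rather than $O(n^{-1})$, and this is the source of the extra $\lceil r'/2\rceil$ powers. Combining these traversal estimates — using first moments of the $\Delta_j$ wherever possible, since a first moment beats Cauchy--Schwarz precisely in the adjacent regime — with the induction hypothesis for the shorter product $\prod_{j\geq 2}M_{X,e_j}^{m_j-m_j'}$, under the bookkeeping $(R,r,r')\mapsto(R-1,r-1,r'-1)$, the individual rates multiply to the claimed exponent.

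The main obstacle is exactly this last multiplication of rates. Terms such as $\mathbb{E}[\Delta_j\prod_{l\neq j}M_{X,e_l}^{m_l}]$ are themselves covariances of a window-localized variable with a product of centered variables, so controlling them requires a nested application of the same coupling rather than a single Hölder or Cauchy--Schwarz step, and one must check that the resulting cascade terminates with the correct total power; the ceiling functions moreover force a short case split on the parities of $r$ and $r'$ when deciding how many power-one (respectively isolated) edges a given coupling step should consume. A secondary but essential point is the careful audit of which endpoints of which $e_j$ get pinned by the conditioning on $E_{Y,t_1}=e_1$, since that is what turns isolation into the improved $n^{-2}$ estimates.
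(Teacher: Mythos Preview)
Your skeleton matches the paper's: peel off the power-one edge $e_1$, use the gluing coupling to compare $\mathbb{E}[Q\mid E_{X,t_1}=e_1]$ with $\mathbb{E}[Q]$, expand the window differences binomially, and close by induction. But two steps in your plan do not work as written, and the paper's proof is built precisely to circumvent them.

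First, the conditional-independence step. After writing $M_{Y,e_j}=M_{X,e_j}+\Delta_j$ and expanding, you propose to condition on $(L^-,L^+)$ and on $X$ outside the window so that $\prod_{j\ge 2}M_{X,e_j}^{m_j-m_j'}$ ``becomes a function of the conditioning''. It does not: $M_{X,e_j}$ still has contributions from inside the window, so it is neither measurable with respect to your conditioning nor conditionally independent of $\prod_j\Delta_j^{m_j'}$. The paper's remedy is to introduce a third chain $Y'$ obtained by \emph{cutting out} the window, to work with $\mathring{M}_{Y',e_j}:=N_{Y',e_j}-\mathbb{E}[N_{Y',e_j}\mid L^-,L^+]$ instead of $M_{X,e_j}$, and to absorb the discrepancy into error terms $\mathcal{E}_{Y,j},\mathcal{E}_{Z,j}$. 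Only then does the factorisation \cref{eq: ConditionalIndependence} hold.

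Second, even after that fix the inductive hypothesis you invoke is too weak. The surviving factor $\prod_{j\ge 2}\mathring{M}_{Y',e_j}^{m_j-m_j'}$, given $L^\pm=\ell^\pm$, is the moment for a block Markov chain of length $\ell'-(\ell^+-\ell^-)$ \emph{conditioned on being in $\mathcal{V}_k$ at a finite set of times} $\tau'$. This is not covered by the proposition as stated, so the paper strengthens the inductive claim to \cref{eq: d-induct}, which allows an arbitrary conditioning of the form $\mathcal{V}_{Y,\tau}$ with $\tau\in\{0,\ldots,\ell'\}^d$; your ``nested application of the same coupling'' is the right intuition, but without this strengthened hypothesis the recursion does not close. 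Relatedly, the bookkeeping is not simply $(R,r,r')\mapsto(R-1,r-1,r'-1)$: edges $j\in\{2,\ldots,r\}$ with $m_j'=1$ lose their power-one status, and the paper tracks this via the functions $f_1,f_2,g_1,g_2$ in \cref{eq: JInductionHypothesis}--\cref{eq: f2g2}. Your observation that isolation of $e_1$ upgrades window traversals from $O(n^{-1})$ to $O(n^{-2})$ is correct and is one ingredient of the $g_2$ gain, but the full $\lceil r'/2\rceil$ also uses that the isolated edges among $e_2,\ldots,e_{r'}$ cannot share endpoints with one another; see \cref{eq: unionrewrite}--\cref{eq: conunion}.
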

\begin{proof}
    This proof combines the proof of \Cref{prop: MomentSketch} with an inductive argument as was used in the proof of \Cref{prop: HigherMoments}. 
    The main technical difference is that we can no longer use the Cauchy--Schwarz inequality during the inductive step as in \cref{eq: CauchySchwarz}: the resulting squares would reduce $r$ to zero which weakens the conclusion of the induction hypothesis.  
    Instead, the step analogous to \cref{eq: CauchySchwarz} will employ conditional independence. 
    The price we pay for this argument is that it necessitates a stronger induction hypothesis to account for the added conditioning.\\ 

    The same preliminary reductions as in the proof of \Cref{prop: HigherMoments} are applicable.
    Firstly, precisely as in Part \ref{prt: ReductionK5} of the proof of \Cref{prop: MomentSketch}, it can be assumed that $K\geq 2R + 1$ by splitting a cluster into pieces of asymptotically equal proportions.
    Further, by fixing an isomorphism type for the labeled directed graph $G$ induced by $\{e_1,\ldots,e_R \}$ we may again pretend that the edges $e_1,\ldots, e_R$ stay fixed as $n$ tends to infinity. 
    \proofpart{Set-up of the inductive argument}

    \noindent
    Recall that it is ensured that $K\geq 2R + 1$.
    Hence, there exists some $k \in \{1,\ldots,K \}$ such that $\mathcal{V}_k$ does not contain any endpoint of $e_j$ for $j=1,\ldots,R$.

    For any $d\in\mathbb{Z}_{\geq 0}$, $\ell' \leq \ell$ and $\tau\in \{0,\ldots,\ell'\}^d$ denote $\mathcal{V}_{X,\tau}$ for the event where $X_{\tau_i}\in \mathcal{V}_k$ for every $i=1,\ldots,d$.
    When $d=0$ it is to be understood that $\mathcal{V}_{X,\tau}$ refers to the universal event. 
    In particular, $\mathbb{P}(\mathcal{V}_{X,\tau}) = 1$ in this case.  
    Fix some $d, \ell'$ and $\tau$ with $\mathbb{P}(\mathcal{V}_{X,\tau}) > 0$ and let $Y:= (Y_t)_{t=0}^{\ell'}$ be a sample path from the block Markov chain conditioned on the event $\mathcal{V}_{Y,\tau}$.
    We will show that there exist a constant $\newconstant{O1}\in \mathbb{R}_{\geq 0}$, depending on $d$ but not on $\ell'$ or $\tau$, such that 
    \begin{align}
        \mathbb{E}[M_{Y,e_1}^{m_1}\cdots M_{Y,e_R}^{m_R}] \leq \cte{O1}n^{-\lceil r/2 \rceil - \lceil r'/ 2\rceil}.
        \label{eq: d-induct}
    \end{align} 
    Taking $d = 0$ and $\ell' = \ell$ then recovers the proposition.

    The proof of the claim proceeds by induction on $r$. 
    This is why we require \cref{eq: d-induct} to hold for any $d\in \mathbb{Z}_{\geq 0}$ and $\ell' \leq \ell$ even though the proposition only concerns the case with $d=0$ and $\ell' = \ell$: we will modify $d$ and $\ell'$ when reducing $r$ in the inductive step. 
    The argument for the base case $r=0$ is provided in Part \ref{prt: basecase}. 
    Now let $r>1$ and assume that \cref{eq: d-induct} is known to hold for any smaller value of $r$. 

    Recall that $m_1 = 1$ and $\hat{N}_{Y,e_1} = \sum_{t_1 = 1}^{\ell'} \1_{E_{Y,t_1} = e_1}$. 
    Therefore, 
    \begin{align}
        &\mathbb{E}[ M_{Y,e_{1}}^{m_1} \cdots M_{Y,e_{R}}^{m_{R}}]  = \mathbb{E}[\hat{N}_{Y,e_1}M_{Y,e_{2}}^{m_2} \cdots M_{Y,e_{R}}^{m_{R}} ] - \mathbb{E}[\hat{N}_{Y,e_1}]\mathbb{E}[M_{Y,e_{2}}^{m_2} \cdots M_{Y,e_{R}}^{m_{R}}]\nonumber \\ 
        &=\hspace{-0.05em}\sum_{t_1=1}^{\ell'}\bigl(\mathbb{E}[\1_{E_{Y,t_1}=e_1}M_{Y,e_{2}}^{m_2} \cdots M_{Y,e_{R}}^{m_{R}}] - \mathbb{E}[\1_{E_{Y,t_1}=e_1}]\mathbb{E}[M_{Y,e_{2}}^{m_2} \cdots M_{Y,e_{R}}^{m_{R}}]\bigr) \nonumber \\
        &=\hspace{-0.05em}\sum_{t_1=1}^{\ell'} \mathbb{P}(E_{Y,t_1}= e_1) \bigl(\mathbb{E}[M_{Y,e_{2}}^{m_2} \cdots M_{Y,e_{R}}^{m_{R}} \mid E_{Y,t_1}=e_1] - \mathbb{E}[M_{Y,e_{2}}^{m_2} \cdots M_{Y,e_{R}}^{m_{R}}] \bigr).
        \label{eq: ConditionSum}
    \end{align}
    Note that there are at least $\alpha_{min}^2n^2$ edges $e$ for which the starting and ending point are in the same clusters as the starting and ending point of $e_1$ respectively. 
    These edges $e$ are equally likely to be traversed at time $t_1$ by $Y$. 
    It follows that  
    \begin{align}
        \mathbb{P}(E_{Y,t_1}= e_1) \leq \alpha_{min}^{-2}n^{-2}. \label{eq: PEdgeConditioned}    
    \end{align}
    
    It remains to show that the effect of conditioning on $E_{Y,t_1} = e_1$ in \cref{eq: ConditionSum} has a small effect on $M_{Y,e_{2}}^{m_2} \cdots M_{Y,e_{R}}^{m_{R}}$.
    That is, it has to be shown that $\mathbb{E}[M_{Y,e_{2}}^{m_2} \cdots M_{Y,e_{R}}^{m_{R}}\mid E_{Y,t_1} = e_1] \approx \mathbb{E}[M_{Y,e_{2}}^{m_2} \cdots M_{Y,e_{R}}^{m_{R}}]$ with approximation error uniform over all $t_1\in \{1,\ldots,\ell' \}$ with $\mathbb{P}(E_{Y,t_1}= e_1)>0$.
    Fix such a value of $t_1$.  
    \proofpart{Construction of chains $(Y,Y',Z)$\label{prt: construct}}

    \begin{figure}[tb]
        \centering
        \includegraphics[width= 0.7 \textwidth]{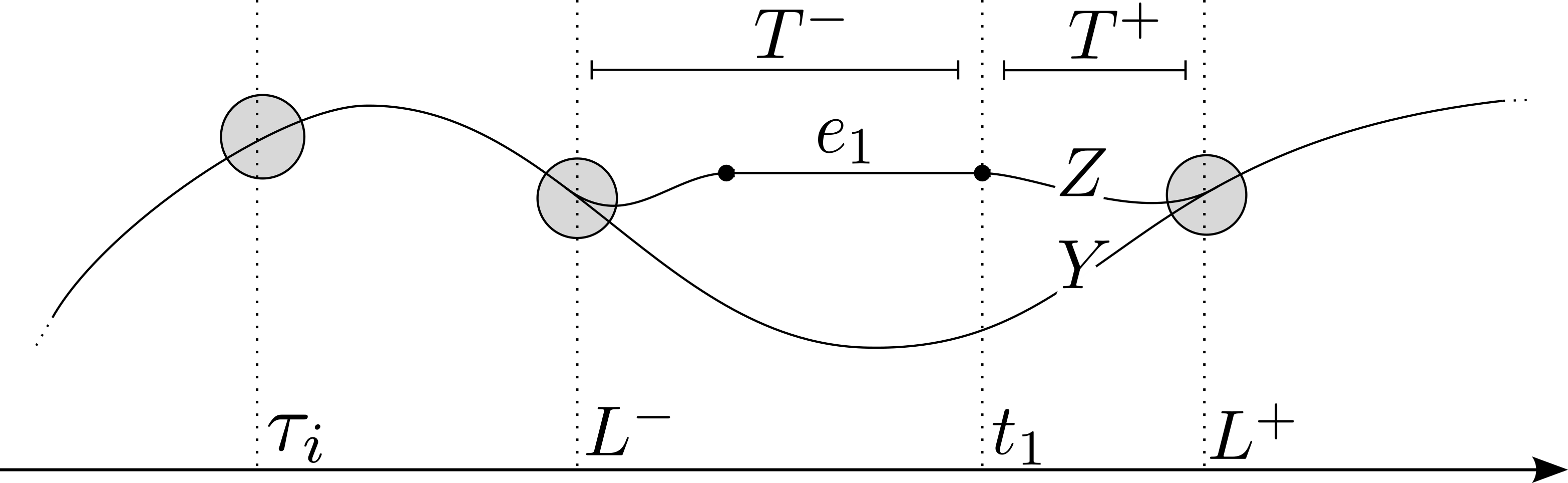}
        \caption{Visualization of the construction of the chains $Y$ and $Z$ in the proof of \Cref{prop: PowerSharperSharper}.
        The chain $Y'$ is found from $Y$ by cutting out the piece between $L^-$ and $L^+$.  
        Both chains are conditioned to be in cluster $\mathcal{V}_k$ at times $\tau_i$ but $Z$ has the additional condition of using edge $e_1$ at time $t_1$. 
        The visualized process of gluing $e_1$ onto $Y$ exploits the cluster structure to ensure that $L^+ - L^-$ is small.  
        }
        \label{fig: YZCond}
    \end{figure}

    \noindent
    Use the following procedure to construct a triple of chains $(Y,Y',Z)$ with $Y,Z$ of length $\ell'+1$ and $Y'$ of a random length at most $\ell'+1$. 
    See also \Cref{fig: YZCond} for a visualization of the construction. 
    \begin{enumerate}[label = \rm{(\roman*)}]
        \item Sample an infinitely long path $(\widetilde{Y}_t)_{t=-\infty}^\infty$ from the block Markov chain conditioned on $\mathcal{V}_{\widetilde{Y},\tau}$. 
        Sample $(\widetilde{W}_{t})_{t=-\infty}^\infty$ as an infinitely long path from the block Markov chain conditioned on $E_{\widetilde{W},t_1} = e_1$ and $\mathcal{V}_{\widetilde{W},\tau}$. 
        Note that it is possible to sample at negative times by use of a time reversal which exists by the assumption that the Markov chain associated with $p$ is acyclic and irreducible. 
        \item Define
        \begin{align}
            T^- &:= t_1 - \sup\{t\in \mathbb{Z}_{< t_1}: \widetilde{Y}_t \in \mathcal{V}_k, \widetilde{W}_t \in \mathcal{V}_k\}\\ 
            T^+ &:= \inf\{t \in \mathbb{Z}_{>t_1}: \widetilde{Y}_t\in \mathcal{V}_k, \widetilde{W}_t \in \mathcal{V}_k \} -t_1\label{eq: T+Original}
        \end{align}
        and note that these values are finite with probability one by the assumption that the Markov chain associated with $p$ is irreducible and acyclic. 
        Let $L^- := \max\{0, t_1 - T^-\}$ and $L^+ := \min\{\ell', t_1 + T^+ \}$. 
        \item Let $Y := (\widetilde{Y}_t)_{t=0}^{\ell'}$ and define $Z := (Z_t)_{t=0}^{\ell'}$ by $Z_t = \widetilde{W}_{t}$ for $t\in \{L^-,\ldots,L^+\}$ and $Z_t = \widetilde{Y}_t$ otherwise. 
        Define $Y'$ to be the path of length $\ell'-(L^+-L^-) + 1\leq \ell + 1$ found by concatenating $(\widetilde{Y}_{t})_{t=0}^{L^-}$ and $(\widetilde{Y}_{t})_{t=L^++1}^{\ell'}$. Thus, $Y'$ is equal to $Y$ except for the fact that a piece was cut out. 
    \end{enumerate}
    Observe that $Y$ and $Z$ are paths of length $\ell'+1$ from the block Markov chain conditioned on $\mathcal{V}_{Y,\tau}$ and the block Markov chain conditioned on $\mathcal{V}_{Z,\tau}$ and $E_{Z,t_1} = e_1$, respectively.
    Correspondingly, $\hat{N}_{Z,e_j}$ has the same distribution as $\hat{N}_{Y,e_j}$ conditioned on the event $E_{Y,t_1} = e_1$.

    \proofpart{Bounding $\mathbb{E}[M_{Y,e_{2}}^{m_2} \cdots M_{Y,e_{R}}^{m_{R}} \mid E_{Y,t_1}=e_1] - \mathbb{E}[M_{Y,e_{2}}^{m_2} \cdots M_{Y,e_{R}}^{m_{R}}]$} 

    \noindent
    We continue with the notation of the previous part, in particular $t_1$ and $\tau$ are still considered to be fixed.
    
    The distribution of $Y'$ is obscure unless one conditions on $L^-$ and $L^+$: even the length of $Y'$ is random without this conditioning. 
    In particular, the usual notation $M_{X}= \hat{N}_X - \mathbb{E}[\hat{N}_{X}]$ is not so useful for $Y'$ because $\mathbb{E}[\hat{N}_{Y'}]$ averages out the critical dependence on $L^+$ and $L^-$. 
    For this reason it is convenient to define a variant of $M_{Y'}$ which takes $L^-$ and $L^+$ into account: 
    \begin{align}
        \mathring{M}_{Y'} &:= \hat{N}_{Y'} - \mathbb{E}[\hat{N}_{Y'}\mid L^+, L']. \label{eq: defMY'}
    \end{align}
    
    For $j=2,\ldots,R$ define $\Delta_{Y,j}:= \hat{N}_{Y,e_j} - \hat{N}_{Y',e_j}$, $\Delta_{Z,j}:= \hat{N}_{Z,e_{j}} - \hat{N}_{Y',{e_j}}$ and set 
    \begin{align}
        \mathcal{E}_{Y,j} &:= \Delta_{Y,j} - ( \mathbb{E}[\hat{N}_{Y,e_j}] - \mathbb{E}[\hat{N}_{Y',e_j}\mid L^-, L^+ ]),\\
        \mathcal{E}_{Z,j} &:= \Delta_{Z,j} -( \mathbb{E}[\hat{N}_{Y,e_j}] - \mathbb{E}[\hat{N}_{Y',e_j}\mid L^-, L^+ ]).
        \label{eq: decompE}
    \end{align} 
    Observe that $M_{Y,e_j} = \mathring{M}_{Y',e_j} + \mathcal{E}_{Y,j}$. 
    Moreover, $M_{Y,e_j}$ conditioned on $E_{Y,t_1} = e_1$ has the same distribution as $\hat{N}_{Z,e_j} - \mathbb{E}[\hat{N}_{Y,e_j}] = \mathring{M}_{Y',e_j} + \mathcal{E}_{Z,j}$.
    Correspondingly,
    \begin{align}
        \mathbb{E}[M_{Y,e_{2}}^{m_2} \cdots M_{Y,e_{R}}^{m_{R}}\mid&  E_{Y,t_1}=e_1] - \mathbb{E}[M_{Y,e_{2}}^{m_2} \cdots M_{Y,e_{R}}^{m_{R}}]\label{eq: extras}\\   
        &= \mathbb{E}\bigg[\prod_{j = 2}^R (\mathring{M}_{Y',e_j} + \mathcal{E}_{Z,j})^{m_j} - \prod_{j=2}^R (\mathring{M}_{Y',e_j} + \mathcal{E}_{Y,j})^{m_j}\bigg].
        \nonumber 
    \end{align}
    The key observation is that the leading-order terms in the expansion of the right-hand side of \cref{eq: extras} cancel out. 
    This is to say that 
    \begin{align}
        \mathbb{E}[M_{Y,e_{2}}^{m_2} \cdots& M_{Y,e_{R}}^{m_{R}} \mid  E_{Y,t_1}=e_1] - \mathbb{E}[M_{Y,e_{2}}^{m_2} \cdots M_{Y,e_{R}}^{m_{R}}] \label{eq: PZPY}\\
        &=  \sum_{0\leq m' \leq m} c_{m'}\bigg(\mathbb{E}\bigg[\prod_{j=2}^R \mathcal{E}_{Z,j}^{m_j'}\mathring{M}_{Y',e_j}^{m_j - m_j'} \bigg]  - \mathbb{E}\bigg[\prod_{j=2}^R \mathcal{E}_{Y,j}^{m_j'} \mathring{M}_{Y',e_j}^{m_j - m_j'} \bigg]\bigg)\nonumber 
    \end{align}
    where the summation runs over vectors of positive integers $m'\in \mathbb{Z}_{\geq 0}^r$ and the $c_{m'}$ are absolute constants with $c_{m'} = 0$ if $m_j' = 0$ for all $j\in \{2,\ldots,R \}$.
          
    Fix some $m'\in \mathbb{Z}^r$ with $m_j' \neq 0$ for some $j\in \{2,\ldots,R \}$. 
    We will consider the terms with $\mathcal{E}_{Z,j}$ in \cref{eq: PZPY}; those with $\mathcal{E}_{Y,j}$ may be treated identically. 
    By the law of total expectation, 
    \begin{align}
        \mathbb{E}\bigg[\prod_{j=2}^R \mathcal{E}_{Z,j}^{m_j'}&\mathring{M}_{Y',e_j}^{m_j - m_j'} \bigg]  = \sum_{\ell^+> \ell^-}  \mathbb{P}\bigg( \begin{aligned}
            L^- = \ell^-\\  L^+ = \ell^+
        \end{aligned}
        \bigg) \mathbb{E}\bigg[\prod_{j=2}^R \mathcal{E}_{Z,j}^{m_j'}\mathring{M}_{Y',e_j}^{m_j - m_j'}\ \bigg\vert \ \begin{aligned}
            L^- = \ell^-\\  L^+ = \ell^+
        \end{aligned}
        \bigg].
        \label{eq: TotalE}
    \end{align}
    The compressed notation for conditional expectation employed in \Cref{eq: TotalE} was formally introduced in \Cref{sec: Notation_TwoLineABCD}. 
    Recall that $\mathcal{V}_k$ was chosen not to contain any endpoint of $e_1,\ldots,e_R$. 
    In particular $\Delta_{Z,j}$ is a function of the values of $\widetilde{Z}_{t}$ for $t\in \{L^-+1 ,\ldots,L^{+}-1\}$ whereas $\mathring{M}_{Y',e_j}$ is a function of the values of $\widetilde{Y}_t$ with $t\in \{0,\ldots,L^{-}-1,L^{+}+ 1,\ldots,\ell'\}$. 
    Further, conditional on $L^- = \ell^-$ and $L^+ = \ell^+$ it holds that $\mathbb{E}[\hat{N}_{Y',e_j}\mid L^-,L^+]$ is a deterministic scalar. 
    Recall the definition for $\mathcal{E}_{Z,j}$ in \cref{eq: decompE} and conclude that $\prod_{j=2}^R \mathcal{E}_{Z,j}^{m_j'}$ is conditionally independent of $\prod_{j=2}^R \mathring{M}_{Y',e_j}^{m_j - m_j'}$ given $L^- = \ell^-$ and $L^+ = \ell^+$.   
    Hence,  
    \begin{align}
        \mathbb{E}\bigg[\prod_{j=2}^R \mathcal{E}_{Z,j}^{m_j'}\mathring{M}_{Y',e_j}^{m_j - m_j'}\  \bigg\vert &\ \begin{aligned}
            L^- = \ell^-\\  L^+ = \ell^+
        \end{aligned}
        \bigg] \label{eq: ConditionalIndependence}\\
        &= \mathbb{E}\bigg[\prod_{ j=2}^R \mathcal{E}_{Z,j}^{m_j'} \ \bigg\vert \ \begin{aligned}
            L^- = \ell^-\\  L^+ = \ell^+
        \end{aligned}\bigg] \mathbb{E}\bigg[ \prod_{j=2}^R \mathring{M}_{Y',e_j}^{m_j - m_j'}\ \bigg\vert \ \begin{aligned}
            L^- = \ell^-\\  L^+ = \ell^+
        \end{aligned}\bigg].
        \nonumber
    \end{align}
    We will use the induction hypothesis \cref{eq: d-induct} to deal with $\prod_{j=2}^R \mathring{M}_{Y',e_j}^{m_j - m_j'}$. 
    Let $Y''$ denote a random path which is distributed as $Y'$ conditioned on $L^- = \ell^-$ and $L^+ = \ell^+$. 
    Recall the notation for $\mathring{M}_{Y',e_j}$ in \cref{eq: defMY'} and conclude that $\prod_{j=2}^R \mathring{M}_{Y',e_j}^{m_j - m_j'}$ conditioned on $L^- = \ell^-$ and $L^+ = \ell^+$ has the same distribution as $\prod_{j=2}^R M_{Y'',e_j}^{m_j - m_j'}$. 
    Observe that $Y''$ is a path of length $\ell' - (\ell^+ - \ell^-)+1$ from the block Markov chain conditioned on $\mathcal{V}_{Y'',\tau'}$ where $\tau'$ is the vector of length $d+1$ such that $\tau_{d+1}' = \ell^-$ and for $i=1,\ldots,d$ it holds that $\tau_i' = \tau_i$ if $\tau_i\in \{0,\ldots,\ell^-\}$ and $\tau_i' = \max\{\ell^- + \tau_i - \ell^+,\ell^- \}$ otherwise. 
    The induction hypothesis \cref{eq: d-induct} is applicable and, by inspection of the exponents $m_i - m_{i}'$ with $i\in \{1,\ldots,r' \}$, we may conclude that 
    \begin{align}
        \mathbb{E}\bigg[ \prod_{j=2}^R \mathring{M}_{Y',e_j}^{m_j - m_j'}\ \bigg\vert \ \begin{aligned}
            L^- = \ell^-\\  L^+ = \ell^+
        \end{aligned}\bigg]
        &= 
            O_{m,R,d}\left(n^{-f_1(r,m') - f_2(r,r',m')}\right)\label{eq: JInductionHypothesis}
    \end{align}
    with $f_1(r,m') := \lceil (r -\# \{j\in \{2,\ldots,r\}:m_j' = 1\} - 1)/{2}\rceil$ and $f_2(r,r',m') :=  \lceil (r' - \# \{j\in \{2,\ldots,r' \}:m_j' = 1\}-1)/2\rceil$. 
    
    In Part \ref{prt: ErrBound} we will show that there exists some constant $\newconstant{O2}\in \mathbb{R}_{>0}$, which does not depend on $t_1$ or $\tau$, such that 
    \begin{align}
        \bigg\lvert\mathbb{E}\bigg[\prod_{j=2}^R\mathcal{E}_{Z,j}^{m_j'} \ \bigg\vert \ \begin{aligned}
            L^- = \ell^-\\  L^+ = \ell^+
        \end{aligned}\bigg]\bigg\rvert \leq \cte{O2} n^{-g_1(m') - g_2(m',r')} (\ell^+ - \ell^-)^{3\Vert m' \Vert_1} \label{eq: Ebound}    
    \end{align}
    where $g_1(m'):= \#\{j\in\{2,\ldots,R\}: m_j' \neq 0 \}$ and 
    \begin{align}
        g_2(m',r') = \begin{dcases}
            0 \qquad &\text{ if } g_1(m') = 0 \text{ or }r' = 0,\\
            g_3(m',r')+1 \qquad &\text{ if }  0\leq g_3(m',r') < g_1(m') \text{ and } r'>0, \\
            g_3(m',r')\qquad &\text{ if } 0< g_3(m',r') = g_1(m') \text{ and }  r'>0,
        \end{dcases}
        \label{eq: defg2}
    \end{align}
    where $g_3(m',r'):= \#\{j\in \{2,\ldots,r' \}: m_j' \neq 0 \}$. 
    It will then follow from \cref{eq: TotalE}--\cref{eq: Ebound} that there exists a constant $\newconstant{O3}\in \mathbb{R}_{>0}$ such that
    \begin{align}
        &\bigg\lvert\mathbb{E}\bigg[\prod_{j=2}^R \mathcal{E}_{Z,j}^{m_j'}\mathring{M}_{Y',e_j}^{m_j - m_j'}\bigg]\bigg\rvert\label{eq: BoundDYPY}
        \\ 
        &\leq \cte{O3} n^{-(f_1(r,m') + g_1(m'))-(f_2(r,r',m') + g_2(m',r'))} \mathbb{E}[(L^+ - L^-)^{3\Vert m' \Vert_1}]. \nonumber
    \end{align} 
    Observe that for any positive integers $q_1,q_2\in \mathbb{Z}_{\geq 0}$ it holds that $\lceil (q_1 - q_2 - 1)/ 2\rceil + \max\{1,q_2 \} \geq \lceil q_1/2 \rceil$.
    The condition below \cref{eq: PZPY} stating that $c_{m'} = 0$ when $m_j' = 0$ for all $j\in\{2,\ldots,R \}$ ensures that $g_1(m')\geq \max\{1,\#\{j\in \{2,\ldots,r\}:m_j' = 1 \} \}$ for all terms with $c_{m'}\neq 0$.
    Therefore, using the definition of $f_1$,
    \begin{align}
        f_1(r,m') + g_1(m')  
        &\geq   \lceil r / 2\rceil \label{eq: f1g1}
    \end{align}
    for all terms with $c_{m'}\neq 0$ in \cref{eq: PZPY}.
    Note that $f_2(r,r',m') + g_2(m',r') = 0$ when $r' = 0$. 
    If $r' >0$, note that $g_2(m',r') \geq \max\{1, \# \{j\in \{2,\ldots,r' \}: m_j' =1 \}\}$ for all terms in \cref{eq: PZPY} with $c_{m'}\neq 0$. 
    Hence, using the definition of $f_2$,
    \begin{align}
        f_2(r,r',m') + g_2(m',r') & \geq \lceil r'/2 \rceil \label{eq: f2g2} 
    \end{align}
    for all terms with $c_{m'}\neq 0$ in \cref{eq: PZPY}.

    It will be shown in Part \ref{prt: T+bound} that $\mathbb{E}[(T^+)^{q}]=O_{q,d}(1)$ for any $q\in \mathbb{Z}_{\geq 0}$ and a similar conclusion holds for $T^-$. 
    Note that it is here also claimed that the bound is uniform in $t_1$ and $\tau$ provided that $d$ is fixed. 
    Now observe that $(L^+ - L^-)^{3\Vert m' \Vert_1} \leq (T^+ + T^-)^{3\Vert m' \Vert_1}$.   
    Expand $(T^+ + T^-)^{3\Vert m' \Vert_1}$ and apply the Cauchy--Schwarz inequality to the resulting monomial terms to derive that $\mathbb{E}[(L^+ - L^-)^{3\Vert m' \Vert_1}] = O_{m',d}(1)$.

    It now follows by \cref{eq: PZPY} and \cref{eq: BoundDYPY}--\cref{eq: f2g2} that there exists a constant $\newconstant{O4}\in \mathbb{R}_{>0}$ such that
    \begin{align}
        \mathbb{E}[M_{Y,e_{2}}^{m_2} \cdots M_{Y,e_{R}}^{m_{R}} \mid E_{Y,t_1}=e_1] - \mathbb{E}[M_{Y,e_{2}}^{m_2} \cdots M_{Y,e_{R}}^{m_{R}}] &\leq \cte{O4} n^{-\lceil r/2\rceil - \lceil r'/2 \rceil}.
    \end{align}  
    Given that there are $\ell' \leq \ell = \Theta(n^2)$ terms in \cref{eq: ConditionSum} and that $\mathbb{P}(E_{Y,e_1} = e_1) = O(n^{-2})$ by \cref{eq: PEdgeConditioned} it follows that there exists a constant $\newconstant{O5}\in \mathbb{R}_{>0}$ such that 
    \begin{align}
        \lvert \mathbb{E}[ M_{Y,e_{1}}^{m_1} \cdots M_{Y,e_{R}}^{m_{R}} ] \rvert \leq \cte{O5}n^{-\lceil r/2\rceil - \lceil r'/2 \rceil}
    \end{align}
    which is the desired result. 
    \proofpart{Remaining Bounds}
    \subproofpart{Base case: $r=0$\label{prt: basecase}}

    \noindent
    Recall that we still have to prove that \cref{eq: d-induct} holds for the base case $r=0$. 
    By repeated application of the Cauchy-Schwarz inequality the claim is reduced to the statement that $\mathbb{E}[M_{Y,e_j}^{q}] = O_{q,d}(1)$ for any $j\in \{2,\ldots,R \}$ and $q\in \mathbb{Z}_{\geq 0}$. 
    By definition $M_{Y,e_j} = \hat{N}_{Y,e_j} - \mathbb{E}[\hat{N}_{Y,e_j}]$ so it suffices to show that $\mathbb{E}[\hat{N}_{Y,e_j}^{q'}] = O_{q',d}(1)$ for any $q'\in \mathbb{Z}_{\geq 0}$. 
    Further, since $\hat{N}_{Y,e_j}$ can only increase when one extends $Y$ to a longer path it may be assumed that $\ell' = \ell$. 
    In this case $Y$ follows the same distribution as $X$ conditioned on the event $\mathcal{V}_{X,\tau}$.

    We claim that for any fixed $d$ there exists a constant $\newconstant{Om}\in \mathbb{R}_{>0}$ independent of $n$ such that for any $\tau\in \{0,\ldots,\ell' \}^d$ with $\mathbb{P}(\mathcal{V}_{X,\tau})> 0$ it holds that 
    \begin{align}
        \mathbb{P}(\mathcal{V}_{X,\tau}) \geq \cte{Om}.
        \label{eq: lowerboundCtilde}
    \end{align}
    Indeed, assume without loss of generality that the $\tau_i$ are nondecreasing in $i$.
    Since $X$ starts in equilibrium it then holds that 
    \begin{align}
        \mathbb{P}(\mathcal{V}_{X,\tau}) &=
        \mathbb{P}(X_{\tau_1}\in \mathcal{V}_{k})\prod_{i=1}^{d-1} \mathbb{P}(X_{\tau_{i+1}} \in \mathcal{V}_k \mid X_{\tau_{i}\in \mathcal{V}_k})
        \\ 
        &=  \pi(k)\prod_{i=1}^{d-1} p^{\tau_{i+1}- \tau_i}(k,k).      
    \end{align}
    This implies \cref{eq: lowerboundCtilde} since, by the Markov chain associated with $p$ being irreducible and acyclic, it holds that $p^{t}(k,k)$ tends to $\pi(k)$ as $t$ tends to infinity and it holds that $\pi(k) >0$. 
    
    By the law of total expectation it follows that 
    \begin{align}
        \mathbb{E}[\hat{N}_{Y,e_{j}}^{q'}]  &= \mathbb{E}[\hat{N}_{X,e_j}^{q'}\mid \mathcal{V}_{X,\tau}]\\
        &\leq \cte{Om}^{-1}\mathbb{E}[\hat{N}_{X,e_{j}}^{q'}]
    \end{align}
    which establishes the desired result since $\mathbb{E}[\hat{N}_{X,e_{j}}^{q'}] = O_{q'}(1)$ by \Cref{cor: SingleHighMoment}.
    \subproofpart{$\mathbb{E}[(T^+)^{q} ]= O_{q,d}(1)$\label{prt: T+bound}}

    \noindent
    Let $q\in \mathbb{Z}_{\geq 0}$ and recall the definition of $T^+$ in \cref{eq: T+Original}. 
    Consider the product chain $\Sigma_{(\widetilde{Y},\widetilde{W})}^{+} =(\sigma(\widetilde{Y}_{t_1 + t}), \sigma(\widetilde{W}_{t_1+t}))_{t=0}^\infty$ on the space of clusters $\{1,\ldots,K \}\times \{1,\ldots,K \}$. 
    Then $T^+$ is the first strictly positive time $\Sigma_{(\widetilde{Y},\widetilde{W})}^+$ is in $(k,k)$. 
    Sample infinitely long sample paths $V :=(V_t)_{t=0}^{\infty}$ and $V' := (V_{t}')_{t=0}^{\infty}$ from the block Markov chain with $V'$ conditioned on $V'_{t_1}$ being the ending point of $e_1$. 
    Consider the product chain $\Sigma_{(V,V')}^{+}=(\sigma(V_{t_1 + t}), \sigma(V'_{t_1+ t}))_{t=0}^\infty$ on the space of clusters $\{1,\ldots,K \}\times \{1,\ldots,K \}$. 
    Denote $T_{V,V'}^+$ for the first strictly positive time $\Sigma_{(V,V')}^+$ is in $(k,k)$. 
    By the Markov chain associated with $p$ being irreducible and acyclic it holds that $\mathbb{P}(T_{V,V'}^+>t)$ shows exponential decay in $t$. 
    In particular, $\mathbb{E}[(T_{V,V'}^+)^{q}]$ is finite and independent of $n$.
    
    Now observe that $\Sigma_{(\widetilde{Y},\widetilde{W})}^+$ has the same distribution as $\Sigma_{(V,V')}^+$ conditioned on the events $\mathcal{V}_{V,\tau}$ and $\mathcal{V}_{V',\tau}$. 
    By \cref{eq: lowerboundCtilde} there exists a constant $\cte{Om}\in \mathbb{R}_{>0}$ independent of $n$ and $t_1$ such that $\mathbb{P}(\mathcal{V}_{V,\tau})\geq \cte{Om}$ for all $\tau\in \{0,\ldots,\ell' \}^d$ with $\mathbb{P}(\mathcal{V}_{V,\tau})> 0$.
    It can similarly be deduced that there exists a constant $\newconstant{Omprime}\in \mathbb{R}_{>0}$ such that $\mathbb{P}(\mathcal{V}_{V',\tau}) \geq \cte{Omprime}$  for all $\tau\in \{0,\ldots,\ell' \}^d$ with $\mathbb{P}(\mathcal{V}_{V',\tau})> 0$. 
    Now, by the law of total expectation and the fact that $\mathcal{V}_{V',\tau}$ is independent of $\mathcal{V}_{V,\tau}$ 
    \begin{align}
        \mathbb{E}[(T^+)^{q}] &= \mathbb{E}[(T^+_{V,V'})^{q}\mid \mathcal{V}_{V,\tau}, \mathcal{V}_{V',\tau}]\\ 
        &\leq (\cte{Om}\cte{Omprime})^{-1}\mathbb{E}[(T^+_{V,V'})^{q} ] \label{eq: TBound}
    \end{align}
    which establishes the desired result.
    \subproofpart{$\lvert\mathbb{E}[\prod_{j=2}^R \mathcal{E}_{Z,j}^{q_j} \mid L^- = \ell^-,  L^+ = \ell^+]\rvert
    \leq \cte{O2}n^{-g_1(q) - g_2(q,r')}(\ell^+ - \ell^-)^{3\Vert q \Vert_1}$ \label{prt: ErrBound}}

    \noindent
    Consider some fixed $q\in \mathbb{Z}_{\geq 0}^R$ and $\ell^-, \ell^+ \in \mathbb{Z}_{\geq 0}$ with $\mathbb{P}(L^- = \ell^-, L^+ = \ell^+)> 0$. 
    Let us remark that the following arguments also apply when $Z$ is replaced by $Y$. 
    In fact, the process $Y$ is easier to deal with since $Z$ is conditioned to be at $e_1$ at time $t_1$. 

    Recall the definition for $\mathcal{E}_{Z,j}$ in \cref{eq: decompE}. 
    Note that conditional on $L^+= \ell^+$ and $L^-= \ell^-$, it holds that $\mathbb{E}[\hat{N}_{Y,e_j}] - \mathbb{E}[\hat{N}_{Y',e_j}\mid L^-, L^+]$ is a deterministic scalar. 
    Therefore, after substituting the definition of $\mathcal{E}_{Z,j}$ in $\prod_{j=2}^R\mathcal{E}_{Z,j}^{q_j}$ and by then expanding in terms of $\Delta_{Z,j}$ and $\mathbb{E}[\hat{N}_{Y,e_j}] - \mathbb{E}[\hat{N}_{Y',e_j}\mid L^-, L^+]$ it follows that 
    \begin{align}
        &\mathbb{E}\bigg[\prod_{j=2}^R \mathcal{E}_{Z,j}^{q_j}  \ \bigg\vert \ \begin{aligned}
            L^- = \ell^-\\  L^+ = \ell^+
        \end{aligned}\bigg] \label{eq: deltaZEExpansion} \\
        &= \sum_{0\leq q' \leq q } c_{q'}\mathbb{E}\bigg[\prod_{j=2}^R \Delta_{Z,j}^{q'_j}  \ \bigg\vert \ \begin{aligned}
            L^- = \ell^-\\  L^+ = \ell^+
        \end{aligned}\bigg] \prod_{j=2}^R \bigg(\mathbb{E}\bigg[\hat{N}_{Y,e_j}\bigg] - \mathbb{E}\bigg[\hat{N}_{Y',e_j} \ \bigg\vert \  \begin{aligned}
            L^- = \ell^-\\  L^+ = \ell^+
        \end{aligned}\bigg]\bigg)^{q_j -q'_j}  \nonumber
    \end{align}
    for certain absolute constants $c_{q'}$. The sum runs here over vectors of integers of length $R$.

    Fix some $0\leq q' \leq q$ and let us establish a bound on $\mathbb{E}[\prod_{j=2}^R\Delta_{Z,j}^{q'_j}\mid L^- = \ell^-, L^+ = \ell^+]$. 
    Observe that the product $\prod_{j=2}^R \Delta_{Z,j}^{q'_j}$ can only be nonzero if $\Delta_{Z,j} \neq 0$ for every $j$ with $q'_j\neq 0$.
    Recall from \cref{eq: defg2} that $g_1(q')$ is the number of nonzero $q_j'$ with $j\in \{2,\ldots,R\}$ and $g_3(q',r')$ is the number of nonzero $q_j'$ with $j\in \{2,\ldots,r'\}$.
    Hence the product $\prod_{j=2}^R \Delta_{Z,j}^{q'_j}$ can only be nonzero if $\sum_{j=2}^{R}\1_{\Delta_{Z,j} >0} \geq g_1(q')$ and $\sum_{j=2}^{r'}\1_{\Delta_{Z,j} >0} \geq g_3(q',r')$. 
    Therefore, by the law of total expectation 
    \begin{align}
        \mathbb{E}\bigg[\prod_{j=2}^R \Delta_{Z,j}^{q'_j} \bigg\vert \ \begin{aligned}
            L^- = \ell^-\\  L^+ = \ell^+
        \end{aligned}\bigg]&=\mathbb{P}\Bigg(\begin{aligned}
            {\textstyle\sum}_{j=2}^R \1_{\Delta_{Z,j} >0} \geq g_1(q') \\ {\textstyle\sum}_{j=2}^{r'}\1_{\Delta_{Z,j} >0}  \geq g_3(q',r')
         \end{aligned}\  \bigg\vert\  \begin{aligned}
             L^- = \ell^-\\  L^+ = \ell^+
         \end{aligned}\Bigg)\label{eq: Delta}\\ 
         &\hphantom{=}\times \mathbb{E}\bigg[\prod_{j=2}^R \Delta_{Z,j}^{q'_j} \bigg\vert \ \begin{aligned}
            L^- = \ell^-\\  L^+ = \ell^+
        \end{aligned},\begin{aligned}
            {\textstyle\sum}_{j=2}^R \1_{\Delta_{Z,j} >0} \geq g_1(q') \\ {\textstyle\sum}_{j=2}^{r'}\1_{\Delta_{Z,j} >0}  \geq g_3(q',r')
         \end{aligned}\bigg]
         \nonumber
         .
    \end{align}
    It follows from $\prod_{j=2}^R \Delta_{Z,j}^{q_j'} \leq (L^+ - L^- )^{\Vert q' \Vert_1}$ that 
    \begin{align}
        \mathbb{E}\bigg[\prod_{j=2}^R \Delta_{Z,j}^{q'_j} \bigg\vert \ 
        \begin{aligned}
            L^- = \ell^-\\  L^+ = \ell^+
        \end{aligned},\begin{aligned}
            {\textstyle\sum}_{j=2}^R \1_{\Delta_{Z,j} >0} \geq g_1(q') \\ {\textstyle\sum}_{j=2}^{r'}\1_{\Delta_{Z,j} >0}  \geq g_3(q',r')
         \end{aligned}\bigg]\leq (\ell^+ - \ell^-)^{\Vert q' \Vert_1}
         .
         \label{eq: Lbound}
    \end{align}
    The goal thus becomes to establish a bound on the probability in the right-hand side of \cref{eq: Delta}.
    This bound will be established in two steps. First, we show that the event described by the probability implies that $(Z_t)_{t\in\{\ell^-,\ldots,\ell^+ \}\setminus \{t_1, t_1 - 1\}}$ has to visit the endpoints of $e_2,\ldots,e_R$ often.  
    This step is achieved in \cref{eq: RewriteIntoC}.
    Second, we show that visiting many endpoints is a rare event.  
    This step is achieved in \cref{eq: T'bound}.

    For every $j\in \{1,\ldots,R \}$ consider the following collections of times which measure when we are in $e_j$ or its endpoints 
    \begin{align}
        \mathcal{T}_{e_j} &:= \{t\in  \{\ell^- +1,\ldots,\ell^+ \}: E_{Z,t} = e_j\}, \qquad \mathcal{T}_{e_j}' &:= \cup_{t\in \mathcal{T}_{e_j}}\{t, t-1\}.
        \label{def: Tej}
    \end{align}
    Observe that in this notation it holds that $\Delta_{Z,j} = \# \mathcal{T}_{e_j}$. 
    Hence, the probability in the right-hand side of \cref{eq: Delta} may be rewritten as  
    \begin{align}
        \mathbb{P}\Bigg(&\begin{aligned}
            {\textstyle\sum}_{j=2}^R \1_{\Delta_{Z,j} >0} \geq g_1(q') \\ {\textstyle\sum}_{j=2}^{r'}\1_{\Delta_{Z,j} >0}  \geq g_3(q',r')
         \end{aligned}\  \bigg\vert\  \begin{aligned}
             L^- = \ell^-\\  L^+ = \ell^+
         \end{aligned}\Bigg)
         \label{eq: restatement2}
        \\ &
         = \mathbb{P}\Bigg(\begin{aligned}
            {\textstyle\sum}_{j=2}^R \1_{\mathcal{T}_{e_j} \neq \emptyset} \geq g_1(q') \\ {\textstyle\sum}_{j=2}^{r'} \1_{\mathcal{T}_{e_j}\neq \emptyset} \geq g_3(q',r')
         \end{aligned}\  \bigg\vert\  \begin{aligned}
             L^- = \ell^-\\  L^+ = \ell^+
         \end{aligned}\Bigg).  \nonumber  
    \end{align}
    We claim that whenever the event described in the probability in the right-hand side of \cref{eq: restatement2} holds, it follows that $\#(\cup_{j=2}^R\mathcal{T}_{e_j}'\setminus \{t_1, t_1-1\}) \geq g_1(q') + g_2(q',r')$.
    The key difficulty is that the sets $\mathcal{T}_{e_1}'$, $\ldots$, $\mathcal{T}_{e_R}'$ may not be disjoint due to the fact that the $e_j$ can share endpoints; see \Cref{fig: paths}.

    \begin{figure}[tb]
        \centering
        \includegraphics[width=0.7\textwidth]{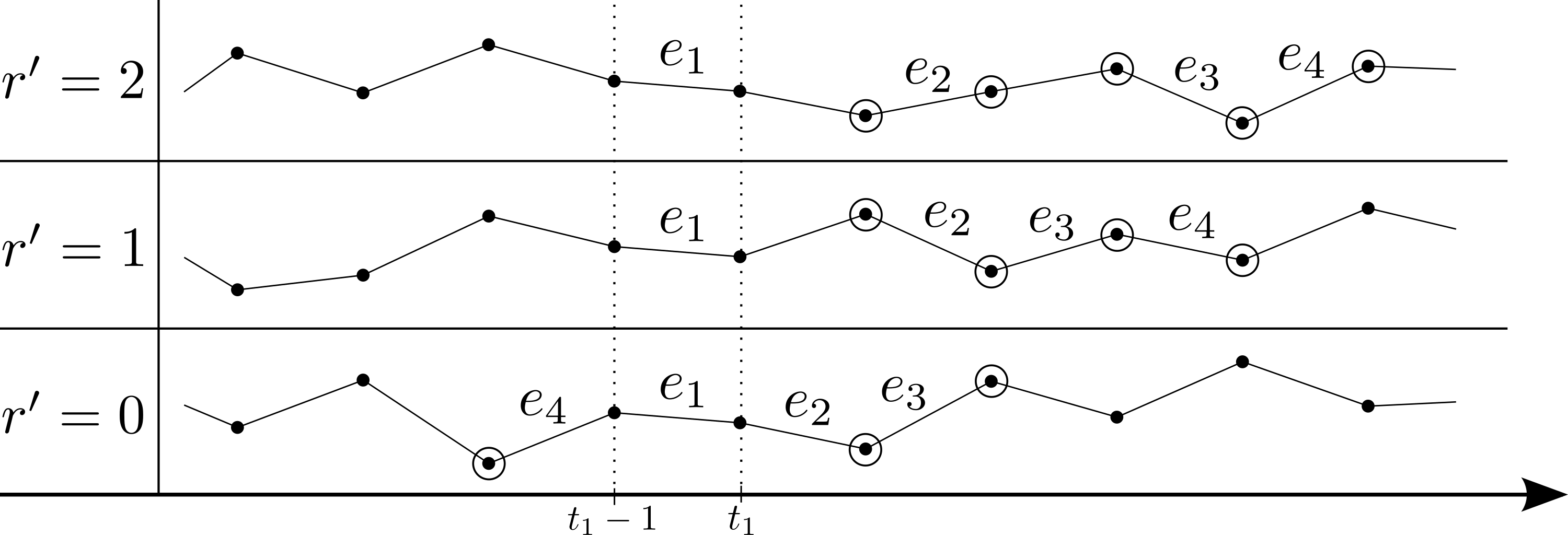}
        \caption{Visualization of some `worst case' realizations of $Z$ for which $\sum_{j=2}^R\1_{\mathcal{T}_{e_j} \neq \emptyset}\geq g_1(q')$ and $\sum_{j=2}^{r'}\1_{\mathcal{T}_{e_j} \neq \emptyset}  \geq g_3(q',r')$ when $g_1(q') = R-1 = 3$ and $g_3(q',r')=r'$ varies. 
        The points contributing to $\#(\cup_{j=2}^R\mathcal{T}_{e_j}'\setminus \{t_1, t_1-1\})$ are circled.
        Note that the number of circled points increases as $r'$ does so. 
        Indeed, $r' >0$ avoids losing points due to the exclusion of $\{t_1, t_1 - 1\}$ whereas $r'\geq j$ with $j\geq 2$ avoids losses due to the possibility that $e_j$ shares endpoints with the other $e_i$.  
        }
        \label{fig: paths}
    \end{figure}

    Recall the definition of $\vec{E}_{n,r'}^R$ from the paragraph preceding \Cref{prop: PowerSharperSharper}. 
    It follows that for every $(i,j) \in \{1,\ldots,R \}\times \{1,\ldots,r'\}$ with $i\neq j$ it holds that $\mathcal{T}_{e_i}' \cap \mathcal{T}_{e_j}' = \emptyset$.
    Hence, 
    \begin{align}
        \#\Big(\bigcup_{j=2}^R&\mathcal{T}_{e_j}'\setminus \{t_1, t_1-1\}\Big)\\ 
        &= \sum_{j=2}^{r'}\#\Big(\mathcal{T}_{e_j}'\setminus \{t_1, t_1-1\}\Big)+ \#\Big(\bigcup_{j=r' + 1}^R\mathcal{T}_{e_j}'\setminus \{t_1, t_1-1\}\Big)\\ 
        &= \sum_{j=2}^{r'}\#\mathcal{T}_{e_j}' + \#\Big(\bigcup_{j=r' + 1}^R\mathcal{T}_{e_j}'\setminus \{t_1, t_1-1\}\Big)
        \label{eq: unionrewrite}  
    \end{align}
    where in \cref{eq: unionrewrite} we used that $\{t_1, t_1 - 1 \} \subseteq \mathcal{T}_{e_1}'$. 

    Note that for every $j \in \{2, \ldots,R \}$ with $\mathcal{T}_{e_j}\neq \emptyset$ it holds that $\#\mathcal{T}_{e_j}' \geq 2$.
    Thus, it follows that 
    \begin{align}
        \sum_{j=2}^{r'}\#\mathcal{T}_{e_j}' \geq 2 \sum_{j=2}^{r'}\1_{\mathcal{T}_{e_j}\neq \emptyset}. \label{eq: firstpartun}
    \end{align} 
    When $r' >0$ it holds that $\mathcal{T}_{e_1}'$ is disjoint from $\mathcal{T}_{e_j}'$ for every $j \in \{2,\ldots, R\}$.
    In particular $\{t_1,t_1 - 1 \}$ is disjoint from $\mathcal{T}_{e_j}'$ for every $j \in \{2,\ldots, R\}$.  
    Hence,  
    \begin{align}
        \bigcup_{j=r' + 1}^R\mathcal{T}_{e_j}'\setminus \{t_1, t_1-1\} = \bigcup_{j=r' + 1}^R\mathcal{T}_{e_j}', \quad \text{ if }r' >0. \label{eq: r'ca}
    \end{align}  
    Whenever $\cup_{j = r' + 1}^R \mathcal{T}_{e_j}\neq \emptyset$ we can construct a subset of $\cup_{j=r' + 1}^R \mathcal{T}_{e_j}'$ as  
    \begin{align}
        \bigg\{\min\Big\{t-1: t\in \bigcup_{j = r' + 1}^R \mathcal{T}_{e_j}\Big\}\bigg\}\cup  \bigcup_{j = r' + 1}^R  \mathcal{T}_{e_j} \subseteq \bigcup_{j = r' + 1}^R\mathcal{T}_{e_j}'. \label{eq: subT}   
    \end{align}
    Observe that the left-hand side of \cref{eq: subT} is a union of disjoint sets due to the fact that $e_2,\ldots,e_R$ are distinct edges.  
    It now follows from \cref{eq: r'ca} that if $r' > 0 $,
    \begin{align}
        \# \Big(\bigcup_{j=r' + 1}^R\mathcal{T}_{e_j}'\setminus \{t_1, t_1-1\} \Big)
        &
        \geq  \1_{(\cup_{j=r' + 1}^R \mathcal{T}_{e_j}) \neq \emptyset} + \sum_{j=r' + 1}^R \# \mathcal{T}_{e_j}
        \\ 
        &
        \geq \1_{(\cup_{j=r' + 1}^R \mathcal{T}_{e_j}) \neq \emptyset} + \sum_{j=r' + 1}^R \1_{\mathcal{T}_{e_j} \neq \emptyset}.
        \label{eq: caser'>0}
    \end{align}  
    When $r' = 0$ we still have an injection from $\cup_{j=r' + 1}^R\mathcal{T}_{e_j}$ into $\cup_{j=r' + 1}^R\mathcal{T}_{e_j}'\setminus \{t_1, t_1-1\}$ defined by $t\mapsto t \1_{t > t_1} + (t-1)\1_{t < t_1}$.
    Hence, 
    \begin{align}
        \# \Big(\bigcup_{j=r' + 1}^R\mathcal{T}_{e_j}'\setminus \{t_1, t_1-1\} \Big) \geq \#\Big(\bigcup_{j=r' + 1}^R\mathcal{T}_{e_j}\Big) \geq \sum_{j=r' + 1}^R \1_{\mathcal{T}_{e_j} \neq \emptyset}.
        \label{eq: caser'=0}
    \end{align} 
    Combine \cref{eq: unionrewrite}--\cref{eq: caser'=0} to deduce that 
    \begin{align}
        \#\Big(\bigcup_{j=2}^R\mathcal{T}_{e_j}'\setminus \{&t_1, t_1-1\}\Big)\label{eq: conunion}
        \\ 
        & \geq \begin{cases}
            \sum_{j=2}^R \1_{\mathcal{T}_{e_j}\neq \emptyset} + \sum_{j=2}^{r'}\1_{\mathcal{T}_{e_j}\neq \emptyset} + \1_{(\cup_{j=r' + 1}^R \mathcal{T}_{e_j})\neq \emptyset}&\text{ if }r' >0,\\ 
            \sum_{j=2}^R \1_{\mathcal{T}_{e_j}\neq \emptyset}&\text{ if }r' = 0.
        \end{cases}
        \nonumber 
    \end{align}
    Recall the condition described in the probability on the right-hand side of \cref{eq: restatement2}. 
    Note that whenever this condition holds it follows that $\sum_{j=2}^R \1_{\mathcal{T}_{e_j}\neq \emptyset}\geq g_1(q')$ and $\sum_{j=2}^{r'}\1_{\mathcal{T}_{e_j}\neq \emptyset} + \1_{(\cup_{j=r' + 1}^R \mathcal{T}_{e_j})\neq \emptyset}\allowbreak \geq g_3(q',r') + \1_{g_1(q')>g_3(q',r')}$. 
    Further, recall from \cref{eq: defg2} that the definition of $g_2$ in terms of $g_3$ has three cases. 
    Now, using \cref{eq: conunion} in \cref{eq: restatement2} and checking each case from the definition of $g_2$ individually we find that 
    \begin{align}
        \mathbb{P}\Bigg(&\begin{aligned}
            {\textstyle\sum}_{j=2}^R \1_{\Delta_{Z,j} >0} \geq g_1(q') \\ {\textstyle\sum}_{j=2}^{r'}\1_{\Delta_{Z,j} >0}  \geq g_3(q',r')
         \end{aligned}\  \bigg\vert\  \begin{aligned}
             L^- = \ell^-\\  L^+ = \ell^+
         \end{aligned}\Bigg)\label{eq: RewriteIntoC}\\ 
         & \leq \mathbb{P} \bigg(\#\Big(\bigcup_{j=2}^R \mathcal{T}'_{e_j} \setminus \{t_1,t_1 - 1 \}\Big) \geq g_1(q') + g_2(q',r')\  \bigg\vert\  \begin{aligned}
             L^- = \ell^-\\  L^+ = \ell^+
         \end{aligned}\bigg).
         \nonumber
    \end{align} 
    Let us denote $V(e_2,\ldots,e_R)$ for the set of cardinality $\leq 2(R-1)$ consisting of the endpoints of the edges $e_2,\ldots,e_R$. 
    Observe that $\#(\cup_{j=2}^R \mathcal{T}'_{e_j} \setminus \{t_1,t_1 - 1 \}) \geq g_1(q') + g_2(q',r')$ implies that there exists some $T' \subseteq \{\ell^-,\ldots,\ell^+ \}\setminus \{t_1, t_1 -1 \}$ with $\#T' =  g_1(q') + g_2(q',r')$ such that $Z_{t'}\in V(e_2,\ldots,e_R)$ for all $t' \in T'$.  
    Consequently, by the union bound 
    \begin{align}
        \mathbb{P} &\bigg(\#\Big(\bigcup_{j=2}^R \mathcal{T}'_{e_j} \setminus \{t_1,t_1 - 1 \}\Big) \geq g_1(q') + g_2(q',r')\  \bigg\vert\  \begin{aligned}
            L^- = \ell^-\\  L^+ = \ell^+
        \end{aligned}\bigg)\label{eq: T'}\\ 
        &\leq \sum_{\#T' =  g_1(q') + g_2(q',r')}\mathbb{P}\bigg( \{ Z_{t'}: t'\in T' \} \subseteq V(e_2,\ldots, e_R)\  \bigg\vert\  \begin{aligned}
            L^- = \ell^-\\  L^+ = \ell^+
        \end{aligned}\bigg)
    \end{align}  
    where the sum runs over all subsets $T' \subseteq \{\ell^-, \ldots, \ell^+ \}\setminus \{t_1, t_1 - 1 \}$ with $\#T' =  g_1(q') + g_2(q',r')$.
    Fix such a subset $T'$ and denote $\Sigma_{Z} := (\sigma(Z_t))_{t =0}^{\ell'}$ for chain of clusters associated to $Z$. 
    Then, the law of total probability yields that 
    \begin{align}
        &\mathbb{P}\bigg( \{ Z_{t'}: t'\in T' \} \subseteq V(e_2,\ldots, e_R)\  \bigg\vert\  \begin{aligned}
            L^- = \ell^-\\  L^+ = \ell^+
        \end{aligned}\bigg)\\ 
        &= \sum_{s_Z} \mathbb{P}\bigg(\Sigma_Z = s_Z\  \bigg\vert\  \begin{aligned}
            L^- = \ell^-\\  L^+ = \ell^+
        \end{aligned} \bigg) \mathbb{P}\bigg( \{ Z_{t'}: t'\in T' \} \subseteq V(e_2,\ldots,e_R) \  \bigg\vert\  \Sigma_{Z} = s_Z\bigg)\nonumber  
    \end{align}
    where the sum runs over all $s_Z \subseteq \{1,\ldots,K \}^{\ell' + 1}$ with $\mathbb{P}(\Sigma_Z = s_Z \mid L^- = \ell^-, L^+ = \ell^+) \neq 0$. 
    Now, observe that $(Z_t)_{t\in \{0,\ldots,\ell' \}\setminus\{t_1, t_1 -1 \}}$ is uniformly distributed in $\prod_{t\in\{0,\ldots,\ell' \}\setminus \{t_1, t_1 - 1 \}} \mathcal{V}_{s_{Z,t}}$ given that $\Sigma_Z = s_Z$. 
    Hence, by using that $\# V(e_2,\ldots,e_R) \leq 2(R-1)$ and $\# \mathcal{V}_{k} \geq \alpha_{min}n$ for every $k\in \{1,\ldots,K \}$
    \begin{align}
        \mathbb{P}\big(\{Z_{t'}:t'\in T' \}\subseteq V(e_2,\ldots,e_R)&\mid \Sigma_Z = s_Z\big)\\
        &= \prod_{t' \in T'} \frac{\#(V(e_2,\ldots, e_R)\cap \mathcal{V}_{s_{Z,t'}})}{\# \mathcal{V}_{s_{Z,t'}}}\\
        &\leq(2(R-1)\alpha_{min}^{-1} n^{-1})^{g_1(q') + g_2(q',r')}\\ 
        &\leq  \cte{55} n^{-g_1(q') - g_2(q',r')} \label{eq: BoundNVProb}
    \end{align}  
    where we defined $\newconstant{55}:= \max_{0\leq q' \leq q}(2(R-1) \alpha_{min}^{-1})^{g_1(q') + g_2(q',r')}$.
    Now, by \cref{eq: T'}--\cref{eq: BoundNVProb} with the bound $\binom{\ell^+ - \ell^- -1 }{g_1(q')+ g_2(q', r')}\leq (\ell^+ - \ell^-)^{g_1(q') + g_2(q', r')}$ for the number of terms in the sum of \cref{eq: T'} 
    \begin{align}
        & 
        \mathbb{P} \bigg(
            \#\Big(\bigcup_{j=2}^R \mathcal{T}'_{e_j} \setminus \{t_1,t_1 - 1 \}\Big) \geq g_1(q') + g_2(q',r')\  \bigg\vert\  \begin{aligned}
            L^- = \ell^-\\  L^+ = \ell^+
            \end{aligned}
        \bigg)
        \label{eq: T'bound}
        \\ &
        \leq \cte{55} n^{-(g_1(q') + g_2(q',r'))}(\ell^+ - \ell^-)^{g_1(q')+ g_2(q',r')} 
        .
        \nonumber  
    \end{align}
    Combine \cref{eq: Delta}, \cref{eq: Lbound} and \cref{eq: RewriteIntoC} with the bound \cref{eq: T'bound} to conclude that 
    \begin{align}
        \mathbb{E}\bigg[\prod_{j=2}^R \Delta_{Z,j}^{q'_j} \bigg\vert \ \begin{aligned}
            L^- = \ell^-\\  L^+ = \ell^+
        \end{aligned}\bigg] \leq \cte{55} (\ell^+ - \ell^-)^{3\Vert q' \Vert_1}n^{-(g_1(q') + g_2(q',r'))}
        \label{eq: deltaZConclusion}
    \end{align}
    where it was used that $g_1(q') \leq \Vert q' \Vert_1$ and $g_2(q') \leq \Vert q' \Vert_1$. 
    This establishes the desired upper bound for the conditional expectation of $\prod_{j=2}^R \Delta_{Z,j}^{q'_j}$. 

    Let us remark that the only property about the chain $Z$ which was used in the foregoing argument is that $(Z_t)_{t\in \{0,\ldots,\ell' \}\setminus \{t_1, t_1 - 1 \}}$ is uniformly distributed in $\prod_{t\in \{0,\ldots,\ell' \}\setminus \{t_1, t_1 - 1 \}} \mathcal{V}_{s_Z}$ when conditioned on $\Sigma_Z = s_Z$. 
    Hence, the conclusion \cref{eq: deltaZConclusion} also applies to other chains with this property such as $Y$. 
    This is to say that by repeating the argument for \cref{eq: deltaZConclusion} word--for--word one finds a constant $\newconstant{Y}\in \mathbb{R}_{>0}$ such that 
    \begin{align}
        \mathbb{E}\bigg[\prod_{j=2}^R \Delta_{Y,j}^{q'_j} \bigg\vert \ \begin{aligned}
            L^- = \ell^-\\  L^+ = \ell^+
        \end{aligned}\bigg] \leq \cte{Y} (\ell^+ - \ell^-)^{3\Vert q' \Vert_1}n^{-(g_1(q') + g_2(q',r'))}
        \label{eq: DeltaYProdBound}
    \end{align}
    for any $0\leq q' \leq q$. 
    
    Next, we consider the factors $(\mathbb{E}[\hat{N}_{Y,e_j}] - \mathbb{E}[\hat{N}_{Y',e_j} \mid L^- = \ell^-, L^+ = \ell^+])$ in \cref{eq: deltaZEExpansion}.   
    Fix some $j\in \{2,\ldots,R\}$ and observe that 
    \begin{align}
        \mathbb{E}[\hat{N}_{Y,e_j}]
        -
        &
        \mathbb{E}\bigg[\hat{N}_{Y',e_j} \bigg\vert \ \begin{aligned}
            L^- = \ell^-\\  L^+ = \ell^+
        \end{aligned}\bigg] 
        = 
        \bigg(\mathbb{E}[\hat{N}_{Y,e_j}] - \mathbb{E}\bigg[\hat{N}_{Y,e_j} \bigg\vert\ \begin{aligned}
            L^- = \ell^-\\  L^+ = \ell^+
        \end{aligned}\bigg]\bigg)
        \nonumber \\ &
        + 
        \bigg(\mathbb{E}\bigg[\hat{N}_{Y,e_j} \bigg\vert\ 
        \begin{aligned}
            L^- = \ell^-\\  L^+ = \ell^+
        \end{aligned}\bigg] -\mathbb{E}\bigg[\hat{N}_{Y',e_j} \bigg\vert\ 
        \begin{aligned}
            L^- = \ell^-\\  L^+ = \ell^+
        \end{aligned}\bigg]
        \bigg).
        \label{eq: Ndiff}
    \end{align}
    Here, since $\hat{N}_{Y,e_j} - \hat{N}_{Y',e_j} = \Delta_{Y,j}$, the bound \cref{eq: DeltaYProdBound} yields a constant $\newconstant{56} \in \mathbb{R}_{>0}$ such that 
    \begin{align}
        \mathbb{E}\bigg[\hat{N}_{Y,e_j}\ \bigg\vert\ 
        \begin{aligned}
            L^- = \ell^-\\  L^+ = \ell^+
        \end{aligned}\bigg] -\mathbb{E}\bigg[\hat{N}_{Y',e_j}\ \bigg\vert&\ 
        \begin{aligned}
            L^- = \ell^-\\  L^+ = \ell^+
        \end{aligned}\bigg] 
        = 
        \mathbb{E}\bigg[\Delta_{Y,j}\ \bigg\vert\ \begin{aligned}
            L^- = \ell^-\\  L^+ = \ell^+
        \end{aligned}\bigg].
        \label{eq: DY}\\ 
        &\leq  \begin{dcases}
            \cte{56}n^{-1}(\ell^+ - \ell^-)^{3}\qquad\text{ if } r' = 0,\\
            \cte{56}n^{-2}(\ell^+ - \ell^-)^{3}\qquad\text{ if } r' >0.
        \end{dcases}
        \label{eq: SecondE}
    \end{align}
    It remains to consider the term $\mathbb{E}[\hat{N}_{Y,e_j}] - \mathbb{E}[\hat{N}_{Y,e_j} \mid L^- = \ell^-, L^+ = \ell^+]$ in \cref{eq: Ndiff}. 
    This term may be studied by means of a coupling argument.   
    Construct a path $G$ of length $\ell' + 1$ by using the following procedure: 
    \begin{enumerate}[label = \rm{(\roman*)}]
        \item Let $\widetilde{Y} := (\widetilde{Y}_{t})_{t=-\infty}^{\infty}$ be the path used in the construction of $Y$ in Part \ref{prt: construct}.
        Independently sample $\widetilde{G}:=(\widetilde{G}_t)_{t=-\infty}^\infty$ from the same distribution as $\widetilde{Y}$ conditioned on $L^- = \ell^{-}$ and $L^+ = \ell^+$. 
        \item Define
        \begin{align}
                \widetilde{T}^- &= \ell^- - \sup\{t\in \mathbb{Z}_{<\ell^-}: \widetilde{Y}_t \in \mathcal{V}_k, \widetilde{G}\in \mathcal{V}_k \} \\ 
                \widetilde{T}^+ &= \inf\{t\in \mathbb{Z}_{>\ell^+}: \widetilde{Y}_t \in \mathcal{V}_k, \widetilde{G}_t\in \mathcal{V}_k \}-\ell^+\label{eq: T+}
        \end{align}
        and note that these values are finite with probability one by the assumption that the Markov chain associated with $p$ is irreducible and acyclic. 
        Let $\widetilde{L}^- = \max\{0, \ell^- - \widetilde{T}^-\}$ and $\widetilde{L}^+ = \min\{\ell', \ell^+ + \widetilde{T}^+ \}$.
        \item Define $G:= (G_t)_{t=0}^{\ell'}$ by $G_t := \widetilde{G}_t$ for $t\in \{\widetilde{L}^-,\ldots,\widetilde{L}^+\}$ and $G_t = \widetilde{Y}_t$ otherwise.   
    \end{enumerate}
    Note that $G$ is here implicitly dependent on $\ell^-$ and $\ell^+$ but this is suppressed in the notation.
    Indeed, $G$ has the same distribution as $Y$ conditioned on $L^- = \ell^-$ and $L^+ = \ell^+$.
    For any $j\in \{2,\ldots,R\}$ let $\widetilde{\Delta}_{G,j}$ and $\widetilde{\Delta}_{Y,j}$ denote the number of times $(G_t)_{t= \widetilde{L}^-}^{\widetilde{L}^+}$ and $(Y_t)_{t=\widetilde{L}^-}^{\widetilde{L}^+}$ traversed edge $e_j$. 
    Then,  
    \begin{align}
        \bigg\lvert\mathbb{E}\bigg[\hat{N}_{Y,e_j}\bigg] - \mathbb{E}\bigg[\hat{N}_{Y,e_j} \ \bigg\vert \ \begin{aligned}
            L^- = \ell^-\\  L^+ = \ell^+
        \end{aligned}\bigg]\bigg\lvert &= \bigg\lvert\mathbb{E}\bigg[\hat{N}_{Y,e_j} - \hat{N}_{G,e_j}\bigg]\bigg\rvert\\
        &\leq \lvert \mathbb{E}[\widetilde{\Delta}_{Y,j}]\rvert + \lvert\mathbb{E}[ \widetilde{\Delta}_{G,j}]\rvert. 
        \label{eq: ECoupling}
    \end{align}
    As in \cref{eq: DeltaYProdBound} it may be established that for any $\widetilde{\ell}^+, \widetilde{\ell}^-\in \mathbb{Z}_{\geq 0}$ with $\mathbb{P}(\widetilde{L}^+ = \widetilde{\ell}^+ , \widetilde{L}^- =\widetilde{\ell}^-)> 0$ it holds that
    \begin{align}
        \bigg\lvert \mathbb{E}\bigg[\widetilde{\Delta}_{Y,j}\ \bigg\vert \ \begin{aligned}
            \widetilde{L}^- = \widetilde{\ell}^-\\  \widetilde{L}^+ = \widetilde{\ell}^+
        \end{aligned}\bigg] \bigg\rvert \leq \begin{dcases}
            \cte{56}n^{-1}(\widetilde{\ell}^+ - \widetilde{\ell}^-)^{3}\qquad &\text{ if }r' = 0,\\
            \cte{56}n^{-2}(\widetilde{\ell}^+ - \widetilde{\ell}^-)^{3}\qquad &\text{ if }r' >0.
        \end{dcases}
        \label{eq: tilde}
    \end{align}
    By the law of total expectation it now follows that 
    \begin{align}
        \lvert\mathbb{E}[\widetilde{\Delta}_{Y,j}] \leq 
        \begin{dcases}
            \cte{56}n^{-1}\mathbb{E}[(\widetilde{L}^+ - \widetilde{L}^-)^{3}]\qquad &\text{ if }r' =0,\\
            \cte{56}n^{-2} \mathbb{E}[(\widetilde{L}^+ - \widetilde{L}^-)^{3}]\qquad &\text{ if }r' >0,
        \end{dcases}
        \label{eq: TotalEZG}
    \end{align}
    and a similar conclusion applies to $\widetilde{\Delta}_{G,j}$. 

    We claim that $\mathbb{E}[(\widetilde{T}^+)^m] = O_{m}(1)$ and similarly $\mathbb{E}[(\widetilde{T}^-)^m] = O_{m}(1)$ for any $m \in \mathbb{Z}_{\geq 0}$.
    Indeed, this may be deduced as in \cref{eq: TBound} by consideration of $\Sigma_{(\widetilde{Y},\widetilde{G})}^+ := (\sigma(\widetilde{Y}_{\ell^+ + t}), \sigma(\widetilde{G}_{\ell^+ + t}))_{t=0}^\infty$, $\widetilde{V}:= (\widetilde{V}_t)_{t=0}^\infty$ and $\widetilde{V}':= (\widetilde{V}_t)_{t=0}^\infty$ where $\widetilde{V}$ is sample path from the block Markov chain and $\widetilde{V}'$ is sample path from the block Markov chain conditioned on the event $\widetilde{V}_{\ell^+}\in \mathcal{V}_k$. 
    Next, use a binomial expansion on $\widetilde{L}^+ - \widetilde{L}^- = \ell^+ - \ell^- +(\widetilde{T}^+ + \widetilde{T}^-)$ to write
    \begin{align}
        \mathbb{E}[(\widetilde{L}^+ - \widetilde{L}^-)^3]&= \sum_{m = 0}^3 \binom{3}{m}\mathbb{E}\big[(\widetilde{T}^+ + \widetilde{T}^-)^{m}\big](\ell^+ - \ell^-)^{3-m} \\ 
        &= \sum_{m=0}^3 \binom{3}{m}\sum_{m'=0 }^m \binom{m}{m'} \mathbb{E}\big[(\widetilde{T}^+)^{m'} (\widetilde{T}^-)^{m - m'}\big](\ell^+ - \ell^-)^{3-m}
        \label{eq: bqL}   
    \end{align}
    The coefficients $\mathbb{E}[(\widetilde{T}^+)^{m'} (\widetilde{T}^-)^{m - m'}]$ in \cref{eq: bqL} are $O_{m,m'}(1)$ by the Cauchy--Schwarz inequality.  
    Therefore, since $(\ell^+ - \ell^-)\geq 1$, there exists some constant $\newconstant{Inline1}\in \mathbb{R}_{>0}$ such that 
    \begin{align}
        \mathbb{E}[(\widetilde{L}^+ - \widetilde{L}^-)^3] \leq \cte{Inline1}(\ell^+ - \ell^-)^3.
        \label{eq: Inl}        
    \end{align}
    By \cref{eq: ECoupling}---\cref{eq: Inl} it follows that there exists some constant $\newconstant{CInl}\in \mathbb{R}_{>0}$ such that 
    \begin{align}
        \bigg\lvert\mathbb{E}\bigg[\hat{N}_{Y,e_j}\bigg] - \mathbb{E}\bigg[\hat{N}_{Y,e_j} \ \bigg\vert \ \begin{aligned}
            L^- = \ell^-\\  L^+ = \ell^+
        \end{aligned}\bigg]\bigg\lvert&\leq \begin{dcases}
            \cte{CInl}n^{-1}(\ell^+ - \ell^-)^{3}\qquad &\text{ if }r' = 0,\\
            \cte{CInl}n^{-2}(\ell^+ - \ell^-)^{3}\qquad &\text{ if }r' >0.
        \end{dcases}
        \label{eq: YcouplBound}
    \end{align}
    By \cref{eq: Ndiff}, \cref{eq: SecondE} and \cref{eq: YcouplBound} we may conclude that
    \begin{align}
        \bigg\lvert \mathbb{E}[\hat{N}_{Y,e_j}]-\mathbb{E}\bigg[\hat{N}_{Y',e_j} \ \bigg\vert \ \begin{aligned}
            L^- = \ell^-\\  L^+ = \ell^+
        \end{aligned}\bigg] \bigg\rvert 
        \leq
        \begin{dcases}
            \cte{O55}n^{-1} (\ell^+ - \ell^-)^{3}\quad &\text{ if }r' = 0,\\ 
            \cte{O55}n^{-2} (\ell^+ - \ell^-)^{3}\quad &\text{ if }r' >0.
        \end{dcases}
      \label{eq: Econclusion}
    \end{align}
    for some constant $\newconstant{O55}\in \mathbb{R}_{>0}$. 
    Combine \cref{eq: deltaZConclusion} and \cref{eq: Econclusion} to find a constant $\newconstant{c31}\in \mathbb{R}_{>0}$ such that for any $0\leq q' \leq q$
    \begin{align}
        &\mathbb{E}\bigg[\prod_{j=2}^R \Delta_{Z,j}^{q'_j}  \ \bigg\vert \ \begin{aligned}
            L^- = \ell^-\\  L^+ = \ell^+
        \end{aligned}\bigg] \prod_{j=2}^R \bigg(\mathbb{E}\bigg[\hat{N}_{Y,e_j}\bigg] - \mathbb{E}\bigg[\hat{N}_{Y',e_j} \ \bigg\vert \  \begin{aligned}
            L^- = \ell^-\\  L^+ = \ell^+
        \end{aligned}\bigg]\bigg)^{q_j -q'_j}\\
        &\leq 
        \begin{dcases}
            \cte{c31}n^{-g_1(q') - g_2(q',r') - \sum_{j=2}^R (q_j - q_j')} (\ell^+ - \ell^-)^{3\Vert q \Vert_1 }\quad &\text{ if }r' = 0,\\ 
            \cte{c31} n^{-g_1(q') - g_2(q',r') - 2\sum_{j=2}^R(q_j - q_j')} (\ell^+ - \ell^-)^{3\Vert q \Vert_1}\quad &\text{ if }r' >0,
        \end{dcases}
        \label{eq: p1}
    \end{align}
    where it was used that $\Vert q' \Vert_1  + \Vert q - q' \Vert_1 = \Vert q \Vert_1$. 
    Recall the definition of $g_1$ and $g_2$ in \cref{eq: defg2} and observe that each additional nonzero coordinate in $q'$ increases $g_1(q') + g_2(q',r')$ by at most $1 + \1_{r' >0}$. 
    Hence, the worst bound in \cref{eq: p1} is attained at $q' = q$ and we may conclude that   
    \begin{align}
        &\mathbb{E}\bigg[\prod_{j=2}^R \Delta_{Z,j}^{q'_j}  \ \bigg\vert \ \begin{aligned}
            L^- = \ell^-\\  L^+ = \ell^+
        \end{aligned}\bigg] \prod_{j=2}^R \bigg(\mathbb{E}\bigg[\hat{N}_{Y,e_j}\bigg] - \mathbb{E}\bigg[\hat{N}_{Y',e_j} \ \bigg\vert \  \begin{aligned}
            L^- = \ell^-\\  L^+ = \ell^+
        \end{aligned}\bigg]\bigg)^{q_j -q'_j}\\ 
        &\leq  
            \cte{c31}n^{-g_1(q) - g_2(q,r')} (\ell^+ - \ell^-)^{3\Vert q \Vert_1 }.
            \label{eq: conclusionDE}
    \end{align}
    Plug \cref{eq: conclusionDE} into \cref{eq: deltaZEExpansion} to find a constant $\newconstant{O56}\in \mathbb{R}_{>0}$ such that 
    \begin{align}
        \bigg\lvert
        \mathbb{E}\bigg[\prod_{j=2}^R \mathcal{E}_{Z,j}^{q_j}  \ \bigg\vert \ \begin{aligned}
            L^- = \ell^-\\  L^+ = \ell^+
        \end{aligned}\bigg] \bigg\rvert \leq \cte{O56}n^{-g_1(q) - g_2(q,r')}(\ell^+ - \ell^-)^{3\Vert q \Vert_1}
    \end{align}
    which is the desired result. 
\end{proof}

\section*{Acknowledgements}

This work is part of the project \emph{Clustering and Spectral Concentration in Markov Chains}
with project number OCENW.KLEIN.324 of the research programme \emph{Open Competition Domain
Science -- M} which is partly financed by the Dutch Research Council (NWO). 

We would like to thank Sem Borst, Martijn G\"osgens, Gianluca Kosmella, Albert Senen--Cerda and Haodong Zhu for providing feedback on a draft of this manuscript. 

\bibliographystyle{plain}

\end{document}